\newtheorem{theorem}{Theorem}[section]
\newtheorem{corollary}[theorem]{Corollary}
\newtheorem{lemma}[theorem]{Lemma}
\newtheorem{proposition}[theorem]{Proposition}
\theoremstyle{definition}
\newtheorem{definition}[theorem]{Definition}
\newtheorem{remark}[theorem]{Remark}
\newtheorem{assumption}{Assumption}
\newtheorem{example}{Example}
\newcommand{\R}{\mathbb{R}}
\newcommand{\C}{\mathbb{C}}
\newcommand{\N}{\mathbb{N}}
\newcommand{\Z}{\mathbb{Z}}
\newcommand{\eps}{\varepsilon}
\begin{document}
\begin{abstract}
We prove Strichartz estimates for Maxwell equations in media in the fully anisotropic case with H\"older-continuous coefficients.
To this end, we use the FBI transform to conjugate the problem to phase space. After reducing to a scalar estimate by means of
a matrix symmetrizer, we show oscillatory integral estimates for a variable-coefficient Fourier extension operator. The
characteristic surface has conical singularities for any non-vanishing time frequency.
Combined with energy estimates, we improve the local well-posedness for certain fully anisotropic quasilinear Maxwell equations.
\end{abstract}
\title[Fully anisotropic Maxwell equations]{Strichartz estimates for Maxwell equations in media: the fully anisotropic case}
\author{Robert Schippa}
\email{robert.schippa@kit.edu}
\author{Roland Schnaubelt}
\email{schnaubelt@kit.edu}
\address{Department of Mathematics, Karlsruhe Institute of Technology, Englerstrasse 2, 76131 Karlsruhe, Germany}
\keywords{Maxwell equations, Strichartz estimates,  quasilinear wave equation,  rough coefficients, half wave equation, FBI transform}
\subjclass[2020]{Primary: 35L45, 35B65, Secondary: 35Q61.}

\maketitle
\section{Introduction and main results}
Maxwell equations are the fundamental principles of electro-magnetism.
In media they relate \emph{displacement and electric field} $(\mathcal{D},\mathcal{E}) : \R \times \R^3 \to \R^{3} \times \R^{3}$
and \emph{magnetic and magnetizing field} $(\mathcal{B},\mathcal{H}): \R \times \R^3 \to \R^3 \times \R^3$	via
\begin{equation}
\label{eq:MaxwellEquationMedia}
\left\{ \begin{array}{rlrl}
\partial_t \mathcal{D}\!\!\!\! &= \nabla \times \mathcal{H} - \mathcal{J}_e,&  \quad\nabla \cdot \mathcal{D} \!\!\!\!&= \rho_e,
    \quad (t,x') \in \R \times \R^3, \\
\partial_t \mathcal{B}\!\! \!\! &= - \nabla \times \mathcal{E} - \mathcal{J}_m,& \quad \nabla \cdot \mathcal{B} \!\!\!\! &= \rho_m,
\end{array} \right.
\end{equation}
where $(\rho_e,\rho_m): \R \times \R^3 \to \R \times \R$ denote the \emph{electric and magnetic charges}
and $(\mathcal{J}_e,\mathcal{J}_m):\R \times \R^3 \to \R^3 \times \R^3$ the \emph{electric and magnetic current}.
(Space-time coordinates are written as $x=(t,x')=(x_0,x_1,\ldots,x_d) \in \R \times \R^d$ and
the dual variables in Fourier space as $\xi = (\tau,\xi')= (\xi_0,\xi_1,\ldots,\xi_d) \in \R \times \R^d$.)
 We refer to the physics' literature for further explanations \cite{FeynmanLeightonSands1964,LandauLifschitz1990}.
 We remark that magnetic charges and currents are hypothetical, but included in the analysis to highlight symmetry between
electric and magnetic field.

To obtain a closed system in \eqref{eq:MaxwellEquationMedia}, one has to link the fields via material laws. In the linear case,
we consider the pointwise laws
\begin{equation}
\label{eq:PointwiseMaterialLaws}
\mathcal{D}(x) = \varepsilon(x) \mathcal{E}(x), \quad \mathcal{B}(x) = \mu(x) \mathcal{H}(x)
\end{equation}
for the permittivity and permeability $\varepsilon, \mu \in C^{s}(\R \times \R^3;\R^{3 \times 3})$ with $0<s\leq 1$,
whose values are supposed to be symmetric and uniformly elliptic, i.e.,
\begin{equation}
\label{eq:Ellipticity}
\exists \, \lambda, \Lambda > 0: \, \forall v \in \R^3, \, \forall x \in \R^4: \;
   \quad \lambda |v|^2 \leq A^{ij}(x) v_i v_j \leq \Lambda |v|^2, \;\; A \in \{ \varepsilon, \mu \}.
\end{equation}
In the above display sum convention is in use.

In the previous work \cite{Schippa2021Maxwell3d}, the first author considered the partially anisotropic case with
$\varepsilon(x) = \text{diag}(\varepsilon_1(x),\varepsilon_2(x),\varepsilon_2(x))$ and $\mu(x) \equiv \mu_0$.
In \cite{Schippa2021Maxwell3d}, the Maxwell system was diagonalized to a system of half-wave equations, for which the sharp Strichartz range
for the three dimensional wave equation could be recovered in the isotropic case $\varepsilon_1(x) = \varepsilon_2(x)$. In the partially
anisotropic case, $\varepsilon_1(x) \neq \varepsilon_2(x)$, it remains an open question whether solutions to Maxwell equations satisfy
Strichartz estimates as for (half-)wave equations in the $C^2$-case. However, for Lipschitz coefficients
  the first author recovered  Strichartz estimates with derivative loss as for wave equations with Lipschitz coefficients
  (cf. \cite{Tataru2001}).

  In two space dimensions \cite{SchippaSchnaubelt2022} we established sharp Strichartz estimates in the fully anisotropic case
\begin{equation*}
 \varepsilon(x)=
 \begin{pmatrix}
  \varepsilon^{11}(x) & \varepsilon^{12}(x) \\
  \varepsilon^{21}(x) & \varepsilon^{22}(x)
 \end{pmatrix},
\quad \mu(x) = \mu_0(x)
\end{equation*}
via diagonalization without imposing additional assumptions on $\varepsilon$. In three dimensions the fully anisotropic case (see \eqref{eq:FullyAnisotropicCondition}), Strichartz estimates have not been studied before. In this case the diagonalization
procedure introduces singularities in the conjugation matrices, and we opt for a different approach.
The fully aniso\-tro\-pic case appears in the study of biaxial crystals  (cf. \cite{FladtBaur1975, Knoerrer1986}). Moreover, when
treating nonlinear material laws $(\varepsilon(\mathcal{E}), \mu(\mathcal{H}))$ one needs Strichatz estimates for differentiated
fields, see Section~\ref{section:QuasilinearEquations}. However, these fields only fullfil a Maxwell system with modified coefficients,
which are matrix-valued even if the original  $(\varepsilon(\mathcal{E}), \mu(\mathcal{H}))$ are scalar and thus isotropic.

We work under the
following assumption on $\varepsilon$ and $\mu$, which can be described as uniform anisotropic, possibly off-diagonal material law.
\begin{assumption}\label{AssumptionMaterialLaws}
 Let $\varepsilon,\mu: \R \times \R^3 \to \R_{\text{sym}}^{3 \times 3}$ satisfy \eqref{eq:Ellipticity} and suppose that there is
 $\Phi \in C^1(\R \times \R^3; \R^{3 \times 3})$ with\footnote{In $\Phi$ we deviate from the usual matrix index notation
 and consider $\varphi_i$ as column vectors.}
\begin{equation*}
\Phi = \begin{pmatrix}
\varphi_1 & \varphi_2 & \varphi_3
\end{pmatrix}
=
\begin{pmatrix}
\varphi_{11} & \varphi_{21} & \varphi_{31} \\
\varphi_{12} & \varphi_{22} & \varphi_{32} \\
\varphi_{13} & \varphi_{23} & \varphi_{33}
\end{pmatrix}
, \quad \Phi^t(x) \Phi(x) = 1_{3 \times 3},
\end{equation*} 
and $\varepsilon^d,\mu^d \in C(\R \times \R^3;\R^{3 \times 3})$ with
\begin{equation*}
\varepsilon^d(x) = \text{diag}(\varepsilon^1(x),\varepsilon^2(x),\varepsilon^3(x)) \text{ \ and \ }
    \mu^d(x) = \text{diag}(\mu^1(x),\mu^2(x),\mu^3(x))
\end{equation*}
such that
\begin{equation*}
\varepsilon^d = \Phi^t \varepsilon \Phi, \quad \mu^d = \Phi^t \mu \Phi,
\end{equation*}
and $\varepsilon^d$, $\mu^d$ satisfy
\begin{equation*}
 \exists c > 0: \, \forall x \in \R^4: \, \forall i \neq j: \, \big| \frac{\varepsilon^i(x)}{\mu^i(x)} - \frac{\varepsilon^j(x)}{\mu^j(x)} \big| \geq c.
\end{equation*}
\end{assumption}
We let
\begin{equation}
\label{eq:MaxwellConcise}
P(x,D) = 
\begin{pmatrix}
- \partial_t (\varepsilon \cdot) & \nabla \times \\
\nabla \times & \partial_t (\mu \cdot)
\end{pmatrix}
\end{equation}
such that \eqref{eq:MaxwellEquationMedia} is concisely written as
\begin{equation*}
P(x,D) \begin{pmatrix}
\mathcal{E} \\ \mathcal{H}
\end{pmatrix}
=
\begin{pmatrix}
\mathcal{J}_e \\ - \mathcal{J}_m
\end{pmatrix}, \quad \nabla \cdot( \varepsilon \mathcal{E}) = \rho_e , \; \nabla \cdot (\mu \mathcal{H}) = \rho_m.
\end{equation*}
We write $u = (\mathcal{E},\mathcal{H})$ and $\rho_{em}= (\rho_e,\rho_m)$.

As noted above in the isotropic and to some extent in the partially anisotropic case, by \cite{Schippa2022}
the Maxwell system possesses Strichartz estimates as the scalar wave equation. However,
in the fully anisotropic case already for constant coefficients $\varepsilon= \text{diag}(\varepsilon_1,\varepsilon_2,\varepsilon_3)$,
$\mu = \text{diag}(\mu_1,\mu_2,\mu_3)$ satisfying
\begin{equation}
\label{eq:FullyAnisotropicCondition}
\frac{\varepsilon_1}{\mu_1} \neq \frac{\varepsilon_2}{\mu_2} \neq \frac{\varepsilon_3}{\mu_3} \neq \frac{\varepsilon_1}{\mu_1},
\end{equation}
Liess \cite{Liess1991} showed that the dispersive properties in the charge-free case
$\nabla \cdot \mathcal{D} = \nabla \cdot \mathcal{B} = 0$ are only the same as for wave equations in two dimensions, namely
\begin{equation}\label{eq:liess}
 \| S_1'( \mathcal{E},\mathcal{H})(t) \|_{L^\infty(\R^3)} \lesssim (1+|t|)^{-\frac{1}{2}} \| (\mathcal{E},\mathcal{H})(0) \|_{L^1(\R^3)}.
\end{equation}
Here we use  a homogeneous dyadic decomposition $(S'_\lambda)_{\lambda \in 2^{\mathbb{Z}}}$ of spatial frequencies, and
$(S_\lambda)_{\lambda \in 2^{\mathbb{Z}}}$ denotes a decomposition of space-time frequencies, see \eqref{eq:Slambda}.

This surprising behavior is connected to the shape of the characteristic surface depending on the eigenvalues of $\varepsilon$ and
$\mu$, which has been discussed in \cite[Section~2.3]{Schippa2022}. In \cite{MandelSchippa2022} the first author and R.~Mandel
 proved the existence of solutions to the time-harmonic Maxwell equations in the fully anisotropic case under the assumption that $\varepsilon$ and $\mu$ are commuting. To the best of the authors' knowledge, this was the first time the condition \eqref{eq:FullyAnisotropicCondition} appeared explicitly in the literature. In this case the characteristic surface of Maxwell equations ceases to be regular as the union of two ellipsoids (as in the (partially) anisotropic case). Instead it  becomes the singular Fresnel
 surface, see \cite{MandelSchippa2022} for a detailed quantitative analysis.
The properties of the Fresnel surface are recalled in Section \ref{section:DiagonalProof}. We further note that the characteristic surface can have more singularities if $\varepsilon$ and $\mu$ do not commute,  \cite{FavaroHehl2016}.

The decreased dispersive effect for the Fresnel surface forces us to consider the wave admissibility condition in two dimensions:
\begin{equation}
\label{eq:Admissibility}
\frac{2}{p} + \frac{1}{q} \leq \frac{1}{2}, \quad 2 \leq p,q \leq \infty.
\end{equation}
By \eqref{eq:liess} and the interpolation argument of Keel--Tao \cite{KeelTao1998}, for $(p,q)$ satisfying \eqref{eq:Admissibility}
the solutions  $(\mathcal{E},\mathcal{H})$ to homogeneous Maxwell equations with constant coefficients fulfill the Strichartz estimate
\begin{equation*}
 \| |D'|^{-\rho} (\mathcal{E},\mathcal{H}) \|_{L_t^p(\R,L_{x'}^q(\R^3))} \lesssim \| (\mathcal{E},\mathcal{H})(0) \|_{L^2(\R^3)}
\end{equation*}
 in the charge-free case.
We give the details and local-in-time estimates in the case of non-vanishing charges in Section \ref{section:ConstantCoefficientEstimates}.
In the following we denote space-time Lebesgue spaces by $L_t^p L_{x'}^q = L^p L^q$ for the sake of brevity, where $L^p=L^pL^p$.
In the above display the derivative loss is denoted by $\rho$ which satisfies the scaling condition
\begin{equation}
\label{eq:DerivativeLoss}
\rho = 3 \big( \frac{1}{2} - \frac{1}{q} \big) - \frac{1}{p}.
\end{equation}
Furthermore, we use fractional derivatives given as Fourier multipliers by
\begin{equation*}
(|D|^\alpha f) \widehat (\xi) = |\xi|^\alpha \hat{f}(\xi), \; (|D'|^\alpha f) \widehat (\xi) = |\xi'|^\alpha \hat{f}(\xi), \; (\langle D \rangle^\alpha f) \widehat (\xi) = \langle \xi \rangle^\alpha \hat{f}(\xi), \quad \text{etc.},
\end{equation*}
where $\langle x \rangle = (1+|x|^2)^{1/2}$.

We first show the following Strichartz estimates for rough coefficients in the diagonal case.
\begin{theorem}
\label{thm:StrichartzEstimatesFullyAnisotropic}
Let $T>0$ and $\delta > 0$, and suppose that $\varepsilon,\mu \in C^s(\R \times \R^3; \R_{\text{sym}}^{3 \times 3})$ with
$0<s\leq 1$ and $\| \partial (\varepsilon,\mu) \|_{L_T^2 L_{x'}^\infty} < \infty$ satisfy Assumption \ref{AssumptionMaterialLaws}
with $\Phi \equiv 1$. Let  $P$ be as in \eqref{eq:MaxwellConcise}. Then the estimate
\begin{align}
\label{eq:StrichartzEstimatesRoughCoefficients}
\| \langle D' \rangle^{-\rho + \frac{s-2}{2p} - \delta} u \|_{L^p(0,T; L^q(\R^3))}
    &\lesssim \| u_0 \|_{L^2_{x'}} + \| P u \|_{L^1_T L_{x'}^2} + \| \langle D' \rangle^{-\frac{1}{2}+\frac{s-2}{8}} \rho_{em}(0) \|_{L^2_{x'}} \notag\\
&\quad + \| \langle D' \rangle^{-\frac{1}{2}+\frac{s-2}{8}} \partial_t \rho_{em} \|_{ L^1_T L^2_{x'}}
\end{align}
holds provided that $p,q$ satisfy \eqref{eq:Admissibility}, $q \neq 2$, and $\rho$ is given by \eqref{eq:DerivativeLoss}. The implicit
constant depends on $\delta$, $T$,  $\| (\varepsilon,\mu) \|_{C^s}$, and $\| \partial (\varepsilon,\mu) \|_{L_T^2 L_{x'}^\infty}$.
\end{theorem}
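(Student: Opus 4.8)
The plan is to reduce the matrix Maxwell system to a scalar half-wave estimate via an FBI-transform conjugation to phase space, combined with a symmetrizer. First I would use the divergence constraints $\nabla\cdot(\varepsilon\mathcal E)=\rho_e$, $\nabla\cdot(\mu\mathcal H)=\rho_m$ to split the field into its "curl part" (on which the true dispersion acts) and a "gradient part" controlled elliptically by the charges; this is where the terms $\langle D'\rangle^{-1/2+(s-2)/8}\rho_{em}$ on the right-hand side come from, after accounting for the $C^s$ loss in solving the (rough-coefficient) elliptic divergence equation. On the curl part, after a standard frequency-localization to dyadic pieces $S_\lambda u$ and the usual reduction to homogeneous data (Duhamel absorbs $\|Pu\|_{L^1_TL^2}$), the principal symbol of $P$ is a $6\times6$ matrix whose kernel away from the characteristic set is two-dimensional; on the characteristic cone its eigenvalues split into the two branches of the Fresnel surface. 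I would diagonalize the principal part with a smooth (in $x$, homogeneous degree $0$ in $\xi'$) matrix symmetrizer — here the hypothesis $\Phi\equiv1$ and the spectral-gap condition $|\varepsilon^i/\mu^i-\varepsilon^j/\mu^j|\ge c$ are exactly what make the eigenprojections of the symbol smooth and bounded, so that no singularities are introduced in the conjugation (contrast the generic case). This reduces \eqref{eq:StrichartzEstimatesRoughCoefficients} to proving, for each branch, a Strichartz estimate for a scalar half-wave operator $\partial_t - a(x,D')$ whose symbol $a$ is (a square root of) one sheet of the Fresnel surface, with $C^s$ coefficients in $x$.

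Next I would run the FBI/wave-packet transform to move this scalar half-wave equation to phase space. Following the now-standard scheme (Tataru, and its adaptation in \cite{Schippa2022, SchippaSchnaubelt2022}), I would regularize the coefficients at frequency $\lambda$ to obtain a symbol smooth on the relevant scale, estimate the error from replacing $a$ by its regularization (this produces the factor $\lambda^{(s-2)/(2p)}$, i.e. the shift $\langle D'\rangle^{(s-2)/(2p)}$ in the estimate — the gain over the Lipschitz case $s=1$ is precisely $\lambda^{(s-1)/(2p)}$), and then conjugate the regularized half-wave flow by the FBI transform to get an approximate transport along the Hamilton flow of $a$ in phase space, modulo errors controllable by the energy bound $\|\partial(\varepsilon,\mu)\|_{L^2_TL^\infty_{x'}}$. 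The $L^p_tL^q_{x'}$ norm on the physical side is then estimated, via the $TT^*$ argument of Keel--Tao, by an oscillatory-integral bound for the resulting variable-coefficient Fourier extension operator associated to one sheet of the Fresnel surface.

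The main obstacle, and the technical heart of the argument, is this oscillatory-integral estimate: the Fresnel surface is \emph{not} smooth — for every non-zero time frequency it has conical singularities at the four biaxial points where the two sheets touch. Near those points the Gaussian-curvature-based stationary-phase bound degrades, and the best uniform dispersive decay is the two-dimensional rate $(1+|t|)^{-1/2}$ of \eqref{eq:liess} rather than the $3$-d rate $(1+|t|)^{-1}$ — which is why the admissibility range is forced to \eqref{eq:Admissibility} and the derivative loss is $\rho$ of \eqref{eq:DerivativeLoss}. I would handle this by decomposing a neighborhood of each conical point dyadically in the distance to the singular set, using a non-degeneracy computation for the Fresnel polynomial from the quantitative analysis in \cite{MandelSchippa2022} to show that away from the singular point one recovers enough curvature in the directions transverse to the cone, and summing the dyadic pieces; the variable-coefficient version requires checking that these curvature bounds are stable under the small $C^s$-perturbation of the symbol, which is where the spectral-gap hypothesis is used once more. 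The exclusion $q\neq2$ enters because the endpoint $(p,q)=(4,2)$ of \eqref{eq:Admissibility} is the Keel--Tao endpoint, which the localized $TT^*$ argument does not reach; the $-\delta$ loss comes from summing the dyadic Littlewood--Paley pieces in $\ell^2$ versus $\ell^\infty$.
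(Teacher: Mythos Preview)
Your proposal contains a genuine gap at its central step. You claim that the spectral separation $|\varepsilon^i/\mu^i - \varepsilon^j/\mu^j| \ge c$ makes the eigenprojections of the principal symbol smooth, so that you can diagonalize $P$ to a system of scalar half-wave equations $\partial_t - a_\pm(x,D')$. This is false. The symbol $p(x,\xi)$ has characteristic polynomial $-\xi_0^2 q(x,\xi)$ with $q$ the Fresnel quartic; on $\{q=0\}$ the two nontrivial eigenvalue branches of $M_E(x,\xi') = -\varepsilon^{-1}\mathcal C(\xi')\mu^{-1}\mathcal C(\xi')$ are the two sheets of the Fresnel surface, and these sheets \emph{meet} at the four conical singular points. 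At those points the eigenvalues of $M_E$ coalesce and the eigenprojections blow up; the spectral gap in the $\varepsilon^i/\mu^i$ only prevents the surface from degenerating into a pair of ellipsoids, it does not separate the sheets. This is exactly why the paper states in the introduction that ``the diagonalization procedure introduces singularities in the conjugation matrices, and we opt for a different approach.'' The paper's substitute is to apply the symmetrizer $\sigma$ to pass to the block form $\mathrm{diag}(M_E-\xi_0^2, M_H-\xi_0^2)$ and then multiply by the \emph{adjugate} $\mathrm{adj}(M_E-\xi_0^2) = Z_{\varepsilon,\mu}\,\varepsilon/(\varepsilon_1\varepsilon_2\varepsilon_3)$, which reduces each component to a scalar estimate for the full quartic $q$ rather than for a single sheet. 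The conical singularities are then handled at the level of the $TT^*$ kernel by a dyadic decomposition around each singular point (Proposition~\ref{prop:OperatorEstimatesTT^*}, case $i=3$), not by absorbing them into a smooth conjugation.

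Two further points. First, the derivative loss $(s-2)/(2p)$ does not come directly out of the FBI error bounds: the phase-space analysis (Theorem~\ref{thm:LocalStrichartzEstimatesFullyAnisotropic}) only gives the weaker loss $(s-2)/4$, and the improvement to $(s-2)/8$ at the endpoint $p=4$ requires the separate short-time Strichartz argument of Proposition~\ref{prop:ShorttimeStrichartz} (localize in time on frequency-dependent intervals of length $\lambda^{(s-2)/2}$, use H\"older to convert $L^2_t$ to $L^\infty_t$, then Duhamel to pass to $L^1_TL^2_{x'}$ on the inhomogeneity); interpolating with the energy estimate then yields $(s-2)/(2p)$. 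You have not mentioned this step. Second, your explanation of the exclusion $q\neq 2$ is incorrect: $(4,2)$ is not on the admissibility line $\tfrac{2}{p}+\tfrac{1}{q}=\tfrac{1}{2}$. The point $q=2$ corresponds to $(p,q)=(\infty,2)$, the energy endpoint, which is reached only by interpolation; the phase-space argument itself requires $q>p$ (hence $q>6$) because at the conical points hyperbolicity fails and the $L^2\to L^2$ bound in the interpolation carries a growing factor $2^{2k}$ (see \eqref{eq:BadEnergyEstimate}).
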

We stress that both the different admissibility conditions and the appearance of the charges highlight the effect of the system
character of the Maxwell equations, compared to the case of scalar wave  equations treated in \cite{Tataru2000,Tataru2001, Tataru2002}.
%

To establish \eqref{eq:StrichartzEstimatesRoughCoefficients}, we first show
\begin{equation}
 \label{eq:StrichartzEstimatePhaseSpaceAnalysis}
 \| |D|^{-\rho+\frac{s-2}{4}} u \|_{L^pL^q}\lesssim_{\|(\varepsilon,\mu)\|_{C^s}} \| u \|_{L^2} + \| |D|^{\frac{s-2}{2}} Pu \|_{L^{2}}
   + \| |D|^{-\frac{1}{2} + \frac{s-2}{4}} \rho_{em} \|_{L^2}
\end{equation}
(with a modification at the endpoint $q=\infty$) if also  $\frac{2}{p} + \frac{1}{q} = \frac{1}{2}$ and $q>p$  via phase space analysis,
see Theorem~\ref{thm:LocalStrichartzEstimatesFullyAnisotropic}. The core step in the argument is an estimate of a Fourier extension
operator on the Fresnel surface $S$, where we can reduce to scalar problem using a symmetrizer. The singularities of $S$ are handled
by a dyadic scaling around them on annuli in Fourier space. Here we also use ideas from \cite{MandelSchippa2022} in the time-harmonic
case with constant coefficients. The degeneracy at $|\xi'|\gg |\xi_0|$ of the main symbol of the Maxwell system is treated  using the
charges in a separate microlocal argument.

We note that the derivative loss $\rho+\frac{2-s}{4}$ in \eqref{eq:StrichartzEstimatePhaseSpaceAnalysis} is the same as in \cite{Tataru2000}
for scalar wave equations. We actually improve the loss in \eqref{eq:StrichartzEstimatesRoughCoefficients}, but so far we cannot reach the
sharp regularity loss established in
\cite{Tataru2001, Tataru2002} for the wave equation or \cite{SchippaSchnaubelt2022,Schippa2021Maxwell3d} in the 2D or isotropic Maxwell case.
In contrast to these works up to now we cannot use deeper properties of the Hamilton flow of the problem because of the strong system
character in the fully isotropic case. In these papers also the case $s\in[1,2]$ and different norms on the right-hand side of
\eqref{eq:StrichartzEstimatesRoughCoefficients} have been treated. We plan to tackle these issues in future work.

Starting from \eqref{eq:StrichartzEstimatePhaseSpaceAnalysis} we reduce the regularity loss by passing through short-time Strichartz estimates. In Proposition~\ref{prop:ShorttimeStrichartz} we  show the endpoint estimate for $p=4$ and $q=\infty$ on finite-time intervals
\begin{align*}
 \| \langle D' \rangle^{-\rho + \frac{s-2}{8} - \delta} u \|_{L_T^4L^\infty_{x'}}
 &\lesssim \| u_0 \|_{L^2_{x'}} + \| Pu \|_{L^1_T L^2_{x'}} + \| \langle D' \rangle^{\frac{s-2}{8}-\frac{1}{2}} \rho_{em}(0) \|_{L^2_{x'}} \\
 &\quad + \| \langle D' \rangle^{\frac{s-2}{8}-\frac{1}{2}} \partial_t \rho_{em} \|_{L^1_T L^2_{x'}}
 \end{align*}
with implicit constant depending on $\|(\varepsilon,\mu) \|_{C^1}$, $T$ and $\delta$, where we set $L^p_T L^q_{x'}=L^p(0,T; L^q_{x'})$.
The different form of the charge term stems from an argument involving Duhamel's formula, see \eqref{eq:duhamel} and also
\cite{Schippa2021Maxwell3d,SchippaSchnaubelt2022}. Theorem~\ref{thm:StrichartzEstimatesFullyAnisotropic} then follows by interpolating
the above display with the standard energy estimate
\begin{equation*}
 \| u \|_{L^\infty_TL^2_{x'}} \lesssim_{\|(\varepsilon,\mu)\|_{C^1}, T} \| u_0 \|_{L^2} + \| P u \|_{L^1_T L_{x'}^2}.
\end{equation*}
We note that  \eqref{eq:StrichartzEstimatePhaseSpaceAnalysis} is shown for $q>6$ and that our method of proof breaks down for $(p,q)$
closer to the energy point $(p,q) = (\infty,2)$.
Broadly speaking, our arguments show that we can prove Strichartz estimates for characteristic surfaces $S= \{\tilde{q}(\xi_0) =  0\}$
with isolated degeneracies $\tilde{q}(\xi_0) = 0$, $\nabla \tilde{q}(\xi_0) = 0$, $\det ( \partial^2 \tilde{q}(\xi_0)) \neq 0$.

Second, we establish the following variant in the possibly non-diagonal case.
\begin{theorem}
\label{thm:StrichartzEstimatesFullyAnisotropicOffDiagonal}
Let $T>0$, $\delta > 0$, and suppose that $\varepsilon,\mu \in C^1(\R \times \R^3; \R_{\text{sym}}^{3 \times 3})$ satisfy
Assumption \ref{AssumptionMaterialLaws}. Let  $P$ be as in \eqref{eq:MaxwellConcise}. Then the estimate
\begin{align*}
\|\langle D'\rangle^{-\rho -\frac{1}{2p}-\delta} u \|_{L^p(0,T; L^q(\R^3))} &\lesssim \| u_0 \|_{L^2} + \| P u \|_{L^1_T L_{x'}^2}\notag \\
&\quad + \| \langle D' \rangle^{-5/8} \rho_{em}(0) \|_{L^2_{x'}} + \| \langle D' \rangle^{-5/8} \partial_t \rho_{em} \|_{ L^1_T L^2_{x'}}
\end{align*}
holds provided that $(p,q)$ satisfy \eqref{eq:Admissibility}, $q \neq 2$, and $\rho$ is given by \eqref{eq:DerivativeLoss}. The implicit constant depends on $\delta$, $T$, and $\| (\varepsilon,\mu) \|_{C^1}$.
\end{theorem}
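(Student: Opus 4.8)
The plan is to run the proof of Theorem~\ref{thm:StrichartzEstimatesFullyAnisotropic}, specialised to $s=1$ (which is allowed since $\varepsilon,\mu\in C^1$ forces $\varepsilon^d=\Phi^t\varepsilon\Phi,\mu^d\in C^1$), \emph{directly for the operator $P$ of \eqref{eq:MaxwellConcise}}, without changing variables. The point to exploit is that Assumption~\ref{AssumptionMaterialLaws} is exactly what is needed to bring the principal symbol of $P$ into the form treated in the diagonal case, up to a rotation. Writing $\Psi=\text{diag}(\Phi^t,\Phi^t)$ and using $\varepsilon\Phi=\Phi\varepsilon^d$, $\mu\Phi=\Phi\mu^d$ together with the identity $\Phi\big(\eta\times(\Phi^t w)\big)=(\Phi\eta)\times w$ for $\Phi\in\mathrm{SO}(3)$ (the determinant of $\Phi$ is constant on $\R^4$; the case $\det\Phi=-1$ is handled identically, up to an irrelevant sign in the curl), one finds
\begin{equation*}
 p(x,\xi)=\Psi^{-1}(x)\,p^d\big(x;\tau,\Phi^t(x)\xi'\big)\,\Psi(x),
\end{equation*}
where $p$ is the principal symbol of $P$ and $p^d(x;\,\cdot\,)$ that of the diagonal Maxwell operator with coefficients $(\varepsilon^d(x),\mu^d(x))$. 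Since similarity preserves the determinant, $\det p(x,\xi)=\det p^d\big(x;\tau,\Phi^t(x)\xi'\big)$, so for every $x$ and every $\tau\neq0$ the characteristic surface of $P$ is the Fresnel surface of $(\varepsilon^d(x),\mu^d(x))$ rigidly rotated by $\Phi(x)$. A rotation being an isometry, this surface has the same intrinsic geometry as the Fresnel surface — in particular the same four isolated conical singularities, with nondegenerate second fundamental form away from them — and it varies with $x$ at the same regularity as the coefficients, uniformly in $\|(\varepsilon,\mu)\|_{C^1}$ and $\|\Phi\|_{C^1}$.

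Next I would transport the two structural objects of the diagonal proof. The matrix symmetrizer becomes $q(x,\xi)=\Psi^t(x)\,q^d\big(x;\tau,\Phi^t(x)\xi'\big)\,\Psi(x)$, where $q^d$ is the symmetrizer of Section~\ref{section:DiagonalProof}; real-orthogonal conjugation preserves the algebraic properties of $q^dp^d$, and $q$ is again $C^1$ in $x$, homogeneous of degree $0$ in $\xi$ and elliptic, reducing the system to the scalar problem on (one sheet of) the rotated Fresnel surface. For the charges, the observation is that in the degenerate regime $|\xi'|\gg|\tau|$ the near-kernel of $p(x,\xi)$ is $\Psi^{-1}(x)$ applied to that of $p^d\big(x;\tau,\Phi^t(x)\xi'\big)$, i.e.\ the $(\mathcal{E},\mathcal{H})$ with $\mathcal{E},\mathcal{H}$ microlocally parallel to $\Phi(x)\Phi^t(x)\xi'=\xi'$ — precisely as in the diagonal case; hence $\nabla\cdot(\varepsilon\mathcal{E})=\rho_e$, $\nabla\cdot(\mu\mathcal{H})=\rho_m$ together with the ellipticity \eqref{eq:Ellipticity} control this near-kernel, and the separate microlocal argument of Theorem~\ref{thm:StrichartzEstimatesFullyAnisotropic} using $\rho_{em}$ applies with no change. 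The remaining hypotheses hold: $\varepsilon^d,\mu^d$ are diagonal (Assumption~\ref{AssumptionMaterialLaws}), symmetric, uniformly elliptic, satisfy the spectral gap, and have $\partial(\varepsilon^d,\mu^d)\in L^\infty\subset L^2_T L^\infty_{x'}$; the zeroth-order part of $P$ involves only $\partial_t\varepsilon,\partial_t\mu$ and is handled as before. (Changing variables $v=\Psi u$ would also work, but then one must commute $\Phi$ past $\langle D'\rangle^{-\rho-\frac{1}{2p}-\delta}$, which with $\Phi$ merely $C^1$ is delicate for $q>6$; working with $P$ directly avoids this.)

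With these two ingredients in hand the plan is to rerun the argument of Theorem~\ref{thm:StrichartzEstimatesFullyAnisotropic} line by line: the FBI-transform conjugation to phase space, the reduction to the scalar variable-coefficient Fourier extension estimate, the dyadic scaling on annuli around the conical singularities, and the microlocal treatment of $|\xi'|\gg|\tau|$ all depend only on the symmetrizer and on the geometric features above, none of which is disturbed by an orthogonal rotation whose $x$-dependence is of the same regularity as the coefficients; only the implicit constants change, acquiring in addition a dependence on $\|\Phi\|_{C^1}$. This yields \eqref{eq:StrichartzEstimatePhaseSpaceAnalysis} for $P$, and then, via the short-time Strichartz estimates and the interpolation with the energy estimate exactly as in Theorem~\ref{thm:StrichartzEstimatesFullyAnisotropic}, the bound \eqref{eq:StrichartzEstimatesRoughCoefficients} with $s=1$ — which is the asserted estimate, the loss $-\rho+\frac{s-2}{2p}-\delta$ and charge weight $\langle D'\rangle^{-1/2+\frac{s-2}{8}}$ specialising at $s=1$ to $-\rho-\frac{1}{2p}-\delta$ and $\langle D'\rangle^{-5/8}$.

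The hard part, though I expect it to be bookkeeping rather than a genuine obstacle, is the verification underlying the previous paragraph: that the proof of Theorem~\ref{thm:StrichartzEstimatesFullyAnisotropic} nowhere uses diagonality of $\varepsilon,\mu$ beyond (i) the intrinsic geometry of the Fresnel surface and (ii) the existence of the matrix symmetrizer, both of which are manifestly stable under a $C^1$-in-$x$ rotation. The remark in the introduction that the method applies to any characteristic surface with isolated degeneracies $\tilde q(\xi_0)=0$, $\nabla\tilde q(\xi_0)=0$, $\det(\partial^2\tilde q(\xi_0))\neq0$ is precisely this robustness, so no new phenomenon should appear; what genuinely needs care is tracking the $\Phi$-dependence through the symbol classes and the phase functions of the FBI parametrix, so that the regularity losses and the dependence of the constants on $\|\Phi\|_{C^1}$ come out as stated.
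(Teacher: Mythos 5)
Your route is genuinely different from the paper's, and for the most part it is viable. The paper does exactly what your parenthetical warns against: it changes variables, $\tilde u=\Phi^{-1}u$, derives the transformed system \eqref{eq:TransformedMaxwell} whose operator $\tilde P$ has diagonal $\varepsilon^d,\mu^d$ but modified derivatives $\mathcal{C}(\eta(D,x))$ with $\eta_k=\varphi_k(x)\cdot\nabla_{x'}$, proves the phase-space Strichartz estimate for $\tilde P$, and then transfers back to $u=\Phi\tilde u$ via Lemma \ref{lem:OrthogonalityTransformations}, whose paraproduct decompositions in space and space-time frequencies are precisely the ``delicate commutation of $\Phi$ past $\langle D'\rangle^{-\rho-\frac{1}{2p}-\delta}$'' that you want to avoid (this is where part of the $\delta$-loss is paid). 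Your direct approach conjugates only the \emph{symbol} in phase space, $p=\Psi^{-1}p^d(x;\tau,\Phi^t(x)\xi')\Psi$, where multiplication by the bounded matrix $\Psi(x)$ is harmless for the $L^2_\Phi$-norms and can be absorbed componentwise into the scalar estimate; this is a legitimate simplification that bypasses Lemma \ref{lem:OrthogonalityTransformations}. Note, however, that the two routes converge on exactly the same core object: since $\eta'=\Phi^t(x)\xi'$, the scalar symbol you must handle, $q(x;\xi_0,\Phi^t(x)\xi')$, is identical to the paper's $\tilde q$ in Section \ref{section:Reductions}, and the oscillatory-integral estimate on the rotated Fresnel surface (non-degenerate change of variables $\eta'_k=\varphi_k^{\leq\lambda}(x)\cdot\xi'$ in the kernel, then the dyadic rescaling around the rotated conical points) is the content of the paper's final proposition there. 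So you do not escape any of the genuinely hard analysis; you only reorganize the reduction around it. Your identification of the near-kernel in the regime $|\xi'|\gg|\tau|$ and the control of the oblique projection $v^p=\frac{(\xi'\cdot\varepsilon v)\xi'}{|\xi'|^2_\varepsilon}$ by $\rho_e=\nabla\cdot(\varepsilon\mathcal{E})$ is correct and coincides with the paper's treatment, since $\tilde\rho_e=\eta(D,x)\cdot(\varepsilon^d\tilde u^1)=\rho_e$.

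There is one concrete step you gloss over that does not survive verbatim: the frequency truncation of the coefficients in Proposition \ref{prop:ReductionFrequencyLocalizedEstimate} and in the FBI approximation theorem. In your setup the natural truncation is of $\varepsilon$ and $\mu$ themselves, but the truncated matrices $\varepsilon_{<\lambda},\mu_{<\lambda}$ need no longer commute, hence need not be simultaneously diagonalizable, and the factorization $p_\lambda=\Psi^{-1}p^d_\lambda\Psi$ on which your whole reduction rests simply fails for the truncated symbol. The fix is to truncate the \emph{factors} instead --- define $P_\lambda$ via $\varepsilon^{(\lambda)}:=\Phi_{<\lambda}\varepsilon^d_{<\lambda}\Phi_{<\lambda}^t$ and likewise for $\mu$ --- and then check (i) that $\|\varepsilon-\varepsilon^{(\lambda)}\|_{L^\infty}\lesssim\lambda^{-1}$ and $\|\varepsilon^{(\lambda)}\|_{C^k}\lesssim\lambda^{k-1}$ still hold, so the commutator and kernel estimates of Proposition \ref{prop:ReductionFrequencyLocalizedEstimate} go through, and (ii) that $\Phi_{<\lambda}$ is only $O(\lambda^{-1})$-close to orthogonal, so the conjugation identity and the identity $\Phi\mathcal{C}(\Phi^t\xi')\Phi^t=\mathcal{C}(\xi')$ hold only up to errors of size $\lambda^{-1}$ on the unit annulus; these errors contribute $\lambda^{\frac{2+s}{4}-1}\|v_\lambda\|_{L^2_\Phi}\leq\lambda^{\frac{2-s}{4}}\|v_\lambda\|_{L^2_\Phi}$, which is admissible, but this must be said. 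This is precisely the point where the paper's change-of-variables formulation is cleaner, because $\Phi$, $\varepsilon^d$, $\mu^d$ appear as separate coefficients of $\tilde P$ and can be truncated independently from the outset. With that repair, and with the $s=1$ specialization $-\rho+\frac{s-2}{2p}-\delta=-\rho-\frac{1}{2p}-\delta$ and $-\frac12+\frac{s-2}{8}=-\frac58$ that you correctly record, your argument delivers the theorem.
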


To show this  theorem, one uses the orthogonality transformation $\Phi$ to pass to a Maxwell-type system with the diagonal
coefficents $\varepsilon^d$ and $\mu^d$ from  Assumption \ref{AssumptionMaterialLaws}, but with modified differential operators
in $x'$ depending on $\Phi$. This system can be treated similar to the system arising in the proof of Theorem~\ref{thm:StrichartzEstimatesFullyAnisotropic}.

We believe that the global existence and regularity of eigenvectors as stated in Assumption \ref{AssumptionMaterialLaws} is non-trivial. Hence, we devote Section \ref{section:RegularityEigenvectors} to a discussion
of existence and regularity of eigenvectors for parameter-dependent matrices. It turns out that if the eigenvalues are uniformly separated and the parameter domain is simply connected, the eigenvectors are as regular as the eigenvalues and admit global parametrisations.
The results of Section \ref{section:RegularityEigenvectors} imply the following proposition, which leads to a corollary to Theorem \ref{thm:StrichartzEstimatesFullyAnisotropicOffDiagonal} stated below.

\begin{proposition}
\label{prop:RegularDiagonalizationPermittivity}
Suppose that $\mu \equiv 1$ and $\varepsilon \in C^1(\R\times\R^3; \R^{3 \times 3}_{\text{sym}})$ satisfies \eqref{eq:Ellipticity} and
\begin{equation*}
\exists c > 0: \, \forall x \in \R^4: \, |\varepsilon_i(x) - \varepsilon_j(x)| \geq c > 0, \quad i \neq j,
\end{equation*}
where $(\varepsilon_i(x))_{i=1,2,3}$ are the eigenvalues of $\varepsilon(x)$. Then $(\varepsilon,\mu)$ fulfill Assumption \ref{AssumptionMaterialLaws}.
\end{proposition}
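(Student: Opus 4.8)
The plan is to verify that the hypotheses of Proposition~\ref{prop:RegularDiagonalizationPermittivity} supply all the data required by Assumption~\ref{AssumptionMaterialLaws}. Since $\mu \equiv 1$, we have $\mu^i \equiv 1$ for any choice of $\Phi$, so the quotients $\varepsilon^i/\mu^i$ reduce to $\varepsilon^i$ and the separation condition $|\varepsilon^i/\mu^i - \varepsilon^j/\mu^j| \geq c$ becomes exactly the assumed eigenvalue separation $|\varepsilon_i - \varepsilon_j| \geq c$, provided $\varepsilon^i$ are genuinely the eigenvalues of $\varepsilon(x)$. Thus the whole content is: produce an orthogonal matrix-valued map $\Phi \in C^1(\R\times\R^3;\R^{3\times 3})$ with $\Phi^t \Phi = 1_{3\times 3}$ such that $\Phi^t \varepsilon \Phi = \varepsilon^d = \operatorname{diag}(\varepsilon_1,\varepsilon_2,\varepsilon_3)$ is continuous and diagonal, and that $\varepsilon^d,\mu^d$ are actually continuous on $\R\times\R^3$.

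First I would invoke the results of Section~\ref{section:RegularityEigenvectors}, which are advertised in the excerpt as showing that for a parameter-dependent symmetric matrix with uniformly separated eigenvalues over a simply connected parameter domain, the eigenprojections — and hence an orthonormal eigenframe — can be chosen globally and with the same regularity as the matrix entries. Here the parameter domain is $\R\times\R^3 \cong \R^4$, which is simply connected (indeed contractible), and $\varepsilon \in C^1$ with eigenvalues separated by $c>0$ by hypothesis; so those results yield $C^1$ unit eigenvectors $\varphi_1(x),\varphi_2(x),\varphi_3(x)$ associated to $\varepsilon_1(x),\varepsilon_2(x),\varepsilon_3(x)$. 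Assembling them as columns gives $\Phi = (\varphi_1 \ \varphi_2 \ \varphi_3) \in C^1(\R\times\R^3;\R^{3\times 3})$; since $\varepsilon(x)$ is symmetric with simple eigenvalues, eigenvectors for distinct eigenvalues are orthogonal, so after normalization $\Phi^t(x)\Phi(x) = 1_{3\times 3}$. By construction $\Phi^t \varepsilon \Phi = \operatorname{diag}(\varepsilon_1,\varepsilon_2,\varepsilon_3) =: \varepsilon^d$, and $\Phi^t \mu \Phi = \Phi^t \Phi = 1_{3\times 3} =: \mu^d = \operatorname{diag}(1,1,1)$; both are diagonal and continuous (in fact $C^1$), so the structural part of Assumption~\ref{AssumptionMaterialLaws} holds. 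The ellipticity \eqref{eq:Ellipticity} for $\varepsilon$ is assumed, and for $\mu \equiv 1$ it is trivial; note also that ellipticity forces each eigenvalue $\varepsilon_i(x) \in [\lambda,\Lambda]$, which is consistent.

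It then remains to check the final separation condition in Assumption~\ref{AssumptionMaterialLaws}: for all $x$ and all $i\neq j$, $|\varepsilon^i(x)/\mu^i(x) - \varepsilon^j(x)/\mu^j(x)| \geq c$. With $\mu^i \equiv 1$ this is $|\varepsilon^i(x) - \varepsilon^j(x)| = |\varepsilon_i(x) - \varepsilon_j(x)| \geq c$, which is precisely the hypothesis of the proposition. Hence $(\varepsilon,\mu)$ satisfies Assumption~\ref{AssumptionMaterialLaws}, with the explicit orthogonal conjugator $\Phi$ just constructed.

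The one genuinely nontrivial point — and the reason Section~\ref{section:RegularityEigenvectors} is needed rather than a one-line argument — is the \emph{global} existence of a $C^1$ (not merely locally $C^1$, or merely continuous-on-a-fundamental-domain) orthonormal eigenframe: local diagonalization near each point is classical, but patching the local frames into a single global map can be obstructed by monodromy of the eigenline bundles. The separation of eigenvalues makes each eigenprojection $P_i(x)$ a $C^1$ function of $x$ globally (via a contour-integral/resolvent formula, $P_i(x) = \frac{1}{2\pi i}\oint_{\gamma_i}(z - \varepsilon(x))^{-1}\,dz$ with $\gamma_i$ a fixed small loop around $\varepsilon_i(x)$, which stays valid on all of $\R^4$ thanks to the uniform gap), but choosing a continuous unit vector spanning its rank-one range requires triviality of a real line bundle over $\R\times\R^3$; this is where simple connectedness (orientability of the line bundle) of the contractible parameter space $\R^4$ is used, and this is exactly what the cited section establishes. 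Everything else is bookkeeping.
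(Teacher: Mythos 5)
Your proposal is correct and follows essentially the same route as the paper: the paper derives Proposition~\ref{prop:RegularDiagonalizationPermittivity} directly from Theorem~\ref{thm:GlobalEigenvectors} in Section~\ref{section:RegularityEigenvectors} (global $C^1$ unit eigenvectors for a symmetric $C^1$ matrix with uniformly separated eigenvalues over the simply connected domain $\R^4$), assembling them into the orthogonal conjugator $\Phi$ and observing that with $\mu\equiv 1$ the separation condition of Assumption~\ref{AssumptionMaterialLaws} reduces to the assumed eigenvalue gap. Your closing remark on eigenprojections via the resolvent integral and triviality of the eigenline bundles is an alternative phrasing of the globalization step that the paper instead handles with Rheinboldt's covering/path-lifting machinery, but the substance of the argument is the same.
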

\begin{corollary}
Let $\varepsilon$ and $\mu$ be like in Proposition \ref{prop:RegularDiagonalizationPermittivity}. Then the Strichartz estimates \eqref{eq:StrichartzEstimatesRoughCoefficients} hold true under the assumptions of Theorem \ref{thm:StrichartzEstimatesFullyAnisotropicOffDiagonal}.
\end{corollary}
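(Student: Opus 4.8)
The plan is simply to combine Proposition~\ref{prop:RegularDiagonalizationPermittivity} with Theorem~\ref{thm:StrichartzEstimatesFullyAnisotropicOffDiagonal}. Under the stated hypotheses we have $\mu \equiv 1$ and $\varepsilon \in C^1(\R \times \R^3; \R^{3\times 3}_{\mathrm{sym}})$ with uniformly elliptic values and pairwise uniformly separated eigenvalues. Proposition~\ref{prop:RegularDiagonalizationPermittivity} then yields that $(\varepsilon,\mu)$ satisfy Assumption~\ref{AssumptionMaterialLaws}; in particular it provides the $C^1$ orthogonal diagonalising matrix $\Phi$ (which in general is not the identity, so the diagonal Theorem~\ref{thm:StrichartzEstimatesFullyAnisotropic} does not apply directly) together with the diagonal coefficients $\varepsilon^d,\mu^d$ whose ratios $\varepsilon^i/\mu^i$ are pairwise uniformly separated. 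Since moreover $\varepsilon,\mu \in C^1$, all hypotheses of Theorem~\ref{thm:StrichartzEstimatesFullyAnisotropicOffDiagonal} are met, so for every $T>0$, $\delta>0$ and every pair $(p,q)$ satisfying \eqref{eq:Admissibility} with $q \neq 2$, the estimate of that theorem holds with $\rho$ given by \eqref{eq:DerivativeLoss} and an implicit constant depending only on $\delta$, $T$ and $\|(\varepsilon,\mu)\|_{C^1}$.

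It remains to observe that this is precisely \eqref{eq:StrichartzEstimatesRoughCoefficients} specialised to $s=1$: then $\tfrac{s-2}{2p} = -\tfrac{1}{2p}$ and $-\tfrac12 + \tfrac{s-2}{8} = -\tfrac58$, so the left-hand side weight $\langle D'\rangle^{-\rho + \frac{s-2}{2p} - \delta}$ becomes $\langle D'\rangle^{-\rho - \frac{1}{2p} - \delta}$ and both charge weights $\langle D'\rangle^{-\frac12 + \frac{s-2}{8}}$ become $\langle D'\rangle^{-5/8}$, matching the right-hand side of Theorem~\ref{thm:StrichartzEstimatesFullyAnisotropicOffDiagonal} term by term. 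Hence \eqref{eq:StrichartzEstimatesRoughCoefficients} holds under the stated assumptions. There is essentially no analytic obstacle here beyond the verification of Assumption~\ref{AssumptionMaterialLaws}, which is carried out in Section~\ref{section:RegularityEigenvectors} and Proposition~\ref{prop:RegularDiagonalizationPermittivity} via the regularity theory for eigenvectors of parameter-dependent symmetric matrices with uniformly separated eigenvalues on a simply connected parameter domain; the remaining work (reduction via $\Phi$ to a Maxwell-type system with diagonal coefficients and modified spatial operators, the symmetrizer reduction to a scalar estimate, the Fourier extension estimate on the Fresnel surface with its conical singularities, and the passage through short-time Strichartz estimates and energy interpolation) is already contained in the proofs of Theorems~\ref{thm:StrichartzEstimatesFullyAnisotropicOffDiagonal} and~\ref{thm:StrichartzEstimatesFullyAnisotropic}.
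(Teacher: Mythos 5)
Your proposal is correct and is exactly the argument the paper intends: the corollary is stated without proof as the immediate combination of Proposition~\ref{prop:RegularDiagonalizationPermittivity} (which verifies Assumption~\ref{AssumptionMaterialLaws}, noting that with $\mu\equiv 1$ the ratio-separation condition reduces to the eigenvalue separation of $\varepsilon$) with Theorem~\ref{thm:StrichartzEstimatesFullyAnisotropicOffDiagonal}. Your check that the exponents $-\rho-\tfrac{1}{2p}-\delta$ and $-5/8$ are precisely those of \eqref{eq:StrichartzEstimatesRoughCoefficients} at $s=1$ is the only substantive verification required, and it is accurate.
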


\smallskip

In Section \ref{section:QuasilinearEquations} we apply the Strichartz estimates from Theorem~\ref{thm:StrichartzEstimatesFullyAnisotropic}
to improve the local well-posedness theory of quasilinear Maxwell equations
\begin{equation}
 \label{eq:QuasilinearMaxwellIntroduction}
 \left\{ \begin{array}{rlrlrl}
  \partial_t \mathcal{D}\!\!\!\! &= \nabla \times \mathcal{H},& \quad \nabla \cdot \mathcal{D} \!\!\!\! &= 0,&
    \quad \mathcal{D}(0) \!\!\!\! &= \mathcal{D}_0 \in H^s(\R^3;\R^3), \\
  \partial_t \mathcal{B}\!\!\!\! &= - \nabla \times \mathcal{E},& \quad \nabla \cdot \mathcal{B} \!\!\!\! &= 0, &
     \quad \mathcal{B}(0)\!\!\!\! & = \mathcal{B}_0 \in H^s(\R^3;\R^3),
 \end{array} \right.
\end{equation}
with small initial fields and such that $\varepsilon(\mathcal{E})$ has uniformly separated eigenvalues. For sake of simplicity,
we suppose that $\mu = 1_{3 \times 3}$. Contrary to the isotropic Kerr case analyzed in \cite{SchippaSchnaubelt2022,Schippa2021Maxwell3d},
we cannot automatically deduce energy estimates by symmetrization. It turns out that this requires additional symmetries of the permittivity.
We shall rewrite $\mathcal{D} = \varepsilon(\mathcal{E}) \mathcal{E}$
to $\mathcal{E} = \psi(\mathcal{D}) \mathcal{D}$, which is possible for small fields under mild assumptions on $\varepsilon$ by invoking the implicit function theorem. In this form, we can phrase the condition for the existence of a symmetrizer as
\begin{equation}
\label{eq:SymmetryCoefficientsIntroduction}
 \varepsilon^{ijk} \big( \frac{\partial}{\partial \mathcal{D}_j} \psi(\mathcal{D})_{k \ell} \big) \mathcal{D}^{\ell} = 0
\end{equation}
for $i \in \{1,2,3\}$ and summation over $j,k \in \{1,2,3\}$, where $\varepsilon^{ijk}$ denotes the Levi-Civita symbol.
(Such a condition was also used in \cite{LPS}.)
We refer to the paragraph before Proposition \ref{prop:APrioriQuasilinear} for discussion of examples like
\begin{equation*}
\varepsilon = \text{diag}(\varepsilon_0^1, \varepsilon_0^2, \varepsilon_0^3) + \text{diag} (\alpha_1 |\mathcal{E}_1|^2, \alpha_2 |\mathcal{E}_2|^2, \alpha_3 |\mathcal{E}_3|^2 ).
\end{equation*} 
  It seems plausible that permittivities obtained like this can model biaxial crystals with nonlinear electric response.

We show the following theorem, which improves the local well-posedness via Strichartz estimates by $1/9$ derivatives compared to
energy arguments. It is the first result of this kind for cases of fully anisotropic Maxwell systems.
This improvement is smaller as in \cite{Tataru2002} or \cite{Schippa2021Maxwell3d,SchippaSchnaubelt2022}.
(For instance, for scalar quasilinear wave equations one gains $1/3$ derivatives compared to energy arguments by the results in \cite{Tataru2002}.)
Strichartz estimates with smaller regularity loss would imply better results, but it is unclear 
what can be achieved in the fully anisotropic Maxwell case.
\begin{theorem}
 \label{thm:QuasilinearAnisotropicMaxwellEquation}
Let $\varepsilon_i \in C^\infty(\R;\R)$ and let  $\varepsilon(\mathcal{E})= \mathrm{diag}(\varepsilon_1(\mathcal{E}_1), \varepsilon_2(\mathcal{E}_2), \varepsilon_3(\mathcal{E}_3))$ be uniformly positive definite with uniformly separated eigenvalues, i.e.,
there are $c,\delta>0$ such that for any $\mathcal{E} \in \R^3$ with $|\mathcal{E}| \leq \delta$ we have
\begin{equation*}
|\varepsilon_i (\mathcal{E}) - \varepsilon_j(\mathcal{E})| \geq c > 0.
\end{equation*}
Write $\mathcal{D} = \varepsilon(\mathcal{E}) \mathcal{E}$ as $\mathcal{E} = \psi(\mathcal{D}) \mathcal{D}$ for $\| \mathcal{D} \|_{L^\infty} \leq \delta$ and assume that $\psi$ satisfies \eqref{eq:SymmetryCoefficientsIntroduction}. Then there is $\delta' > 0$ such that \eqref{eq:QuasilinearMaxwellIntroduction} is locally well-posed for $s>2 + \frac{7}{18}$ for initial data $\| u_0 \|_{H^s} \leq \delta'$.
\end{theorem}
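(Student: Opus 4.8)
The plan is to combine the linear Strichartz estimates of Theorem~\ref{thm:StrichartzEstimatesFullyAnisotropic} with energy estimates for the linearized (and differentiated) Maxwell system, and to run a standard iteration/frequency-envelope scheme in the spirit of \cite{Tataru2002,Schippa2021Maxwell3d,SchippaSchnaubelt2022}. First I would rewrite \eqref{eq:QuasilinearMaxwellIntroduction} in terms of $u=(\mathcal{E},\mathcal{H})$ using $\mathcal{E}=\psi(\mathcal{D})\mathcal{D}$, so that the system takes the quasilinear form $P(u)u = $ (lower order), where $P(u)$ is the Maxwell operator from \eqref{eq:MaxwellConcise} with coefficients $\varepsilon=\varepsilon(\mathcal{E})$, $\mu=1$. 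The hypothesis that $\varepsilon_i(\mathcal{E}_i)$ are uniformly separated (for $|\mathcal{E}|\le\delta$) makes Proposition~\ref{prop:RegularDiagonalizationPermittivity} / Assumption~\ref{AssumptionMaterialLaws} applicable along the flow, and the symmetry condition \eqref{eq:SymmetryCoefficientsIntroduction} is exactly what is needed to produce a symmetric hyperbolic structure, hence $L^2$-based energy estimates at integer and then (by interpolation/commutator estimates) at fractional Sobolev levels $H^\sigma$.

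Next I would set up the a priori estimates. For the quasilinear problem at regularity $H^s$ with $s>2+\tfrac{7}{18}$, differentiate the equation to obtain a linear Maxwell-type system for $\partial^\alpha u$ (or for $\langle D'\rangle^{\sigma}u$) whose coefficients $\varepsilon(\mathcal{E})$ are only as smooth as $\mathcal{E}$; crucially, $C^s$-regularity of the coefficients with $s>2+\tfrac{7}{18}-2 = \tfrac{7}{18}$... — more precisely one needs the coefficients to lie in $C^{s'}$ with the $s'$ that Theorem~\ref{thm:StrichartzEstimatesFullyAnisotropic} can use together with $\|\partial(\varepsilon,\mu)\|_{L^2_TL^\infty_{x'}}<\infty$, and one bootstraps this from the Strichartz norm of $u$ itself. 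Concretely, Theorem~\ref{thm:StrichartzEstimatesFullyAnisotropic} with $(p,q)=(4,\infty)$ (up to $\delta$ and the charge terms, which vanish here since $\rho_{em}\equiv0$) gives $\langle D'\rangle^{-\rho+\frac{s-2}{8}-\delta}u\in L^4_TL^\infty_{x'}$; choosing $q\to\infty$ so $\rho\to 3/2 - 1/4 = ...$, and feeding this back, one controls $\|\partial\varepsilon(\mathcal{E})\|_{L^2_TL^\infty_{x'}}$ provided $s$ exceeds the stated threshold. The number $2+\tfrac{7}{18}$ should emerge from balancing: the Strichartz gain over energy estimates is $1/9$, and $2+\tfrac12 - \tfrac19 = 2 + \tfrac{7}{18}$, so the scheme closes exactly there.

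With a priori bounds in hand I would close the contraction in a weaker norm (difference of two solutions estimated in $H^{s-1}$ or $L^2$, using the already-established Strichartz/energy bounds for the coefficients), obtaining existence, uniqueness, and continuous dependence by the usual Bona--Smith-type argument; persistence of regularity and the lifespan lower bound in terms of $\|u_0\|_{H^s}$ follow from the frequency-envelope version of the a priori estimate. The smallness $\|u_0\|_{H^s}\le\delta'$ is used both to keep $\|\mathcal{E}\|_{L^\infty}\le\delta$ so that $\psi$ and the diagonalization are defined, and to guarantee that the eigenvalue-separation and ellipticity constants are uniform along the flow, so the implicit constants in Theorem~\ref{thm:StrichartzEstimatesFullyAnisotropic} stay under control.

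The main obstacle I anticipate is the coupling between the regularity of the coefficients and the Strichartz estimate: Theorem~\ref{thm:StrichartzEstimatesFullyAnisotropic} only yields a loss of $\rho+\tfrac{2-s}{4}+\ldots$ derivatives (not the sharp wave loss), and it requires $\|\partial(\varepsilon,\mu)\|_{L^2_TL^\infty_{x'}}<\infty$, which for $\varepsilon=\varepsilon(\mathcal{E})$ means $\partial_{t,x'}\mathcal{E}\in L^2_TL^\infty_{x'}$ — precisely a Strichartz-type bound on $\nabla u$ at one derivative below top order. Making this bootstrap consistent, i.e. verifying that the available Strichartz exponent at $s>2+\tfrac7{18}$ actually supplies the $L^2_tL^\infty_x$ control of $\partial\mathcal{E}$ needed to re-apply the theorem (rather than just $L^1_tL^\infty_x$ or $L^2_tL^\infty_x$ of $\mathcal{E}$ alone), together with handling the matrix-valued and merely $C^s$ (not $C^2$) coefficients in the commutator/paraproduct estimates for the energy step, is where the technical work concentrates; the fact that the Strichartz gain is only $1/9$ rather than $1/3$ is exactly the manifestation of this weaker input.
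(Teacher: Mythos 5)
Your proposal follows essentially the same route as the paper: rewrite the system in conservative form via $\psi(\mathcal{D})$, use the symmetry condition \eqref{eq:SymmetryCoefficientsIntroduction} to build a symmetrizer and hence $H^s$ energy estimates, apply the rough-coefficient Strichartz estimate of Theorem \ref{thm:StrichartzEstimatesFullyAnisotropic} at $(p,q)=(4,\infty)$ to the differentiated fields (whose modified coefficients are controlled by commutator/Moser estimates), close via a continuity/bootstrap argument balancing the $1/9$ Strichartz gain to get $s>2+\tfrac{7}{18}$, and finish with $L^2$ difference bounds and frequency envelopes. The numerology and the identified technical crux (bootstrapping $\partial\mathcal{E}\in L^2_TL^\infty_{x'}$ and the $C^\alpha$ coefficient regularity from the Strichartz norm itself) match the paper's actual argument.
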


For the proof we use Strichartz estimates to improve on Sobolev embedding. Under the above symmetry assumptions, we find energy estimates
\begin{equation*}
\| (\mathcal{E},\mathcal{H})(t) \|_{H^s} \lesssim e^{ c(A) \int_0^t \| \partial (\mathcal{E},\mathcal{H})(s) \|_{L_{x'}^\infty} ds} \| (\mathcal{E},\mathcal{H})(0) \|_{H^s}
\end{equation*}
with $A = \| (\mathcal{E},\mathcal{H}) \|_{L_T^\infty L_{x'}^\infty}$. An estimate of $\| \partial (\mathcal{E},\mathcal{H}) \|_{L_T^1 L_{x'}^\infty}$ based on Sobolev embedding would lead to a regularity of $s>5/2$. We want to use non-trivial Strichartz estimates for $\| \partial( \mathcal{E},\mathcal{H}) \|_{L_T^4 L_{x'}^\infty}$ provided that the coefficients are in $C^\alpha_x$. The H\"older norm is again controlled by a spatial Sobolev norm and we can close the argument for $s>3/2 + \alpha$, if the derivative loss in the Strichartz estimates \emph{for $ \partial (\mathcal{E},\mathcal{H}) $} is controlled in the regularity $H^s$. This leads to the condition $s > \frac{3}{2} + \frac{8}{9}$. In the next step, we control the $L^2$-norm of differences of solutions $v = (\mathcal{E}^1,\mathcal{H}^1) - (\mathcal{E}^2,\mathcal{H}^2)$ by
\begin{equation*}
\| v(t) \|_{L^2} \lesssim e^{c(A) \int_0^t B(s) ds} \| v(0) \|_{L^2}
\end{equation*}
with $A = \| (\mathcal{E}^1, \mathcal{H}^1) \|_{L^\infty_T L^\infty_{x'}} + \| (\mathcal{E}^2,\mathcal{H}^2) \|_{L^\infty_T L_{x'}^\infty}$ and $B(s) = \| (\mathcal{E}^1, \mathcal{H}^1)(s) \|_{L^\infty_{x'}} + \| (\mathcal{E}^2,\mathcal{H}^2)(s) \|_{L_{x'}^\infty}$. By the same argument as above, we obtain Lipschitz continuous dependence in $L^2$ for initial data in regularity $H^s$ if $s>\frac{3}{2} + \frac{8}{9}$. To find continuous dependence to hold in $H^s$, we use the frequency envelope approach due to Tao \cite{Tao2001}. (See also \cite{IfrimTataru2020} for an exposition, and \cite{SchippaSchnaubelt2022} for a previous application in the context of Maxwell equations.) This does not yield uniform continuous dependence, which cannot be expected for a quasilinear hyperbolic problem.

\smallskip

\emph{Outline of the paper.} In Section \ref{section:ConstantCoefficientEstimates} we revisit Strichartz estimates in the constant-coefficient case for fully anisotropic Maxwell equations.
In Section \ref{section:DiagonalProof} we prove Strichartz estimates for diagonal variable coefficients in the uniformly fully aniso\-tro\-pic case as stated in Theorem \ref{thm:StrichartzEstimatesFullyAnisotropic}.
In Section \ref{section:Reductions} we reduce the more general case of Assumption \ref{AssumptionMaterialLaws} to the case of diagonal permittivity and permeability and prove Theorem \ref{thm:StrichartzEstimatesFullyAnisotropicOffDiagonal}.
In Section \ref{section:QuasilinearEquations} we argue how the Strichartz estimates for rough coefficients improve the local well-posedness theory for quasilinear Maxwell equations. In Section
\ref{section:RegularityEigenvectors} we show the global existence and regularity of eigenvectors in case of separated eigenvalues of parameter-dependent matrices in simply connected domains. 

\section{Strichartz estimates in the constant-coefficient case}
\label{section:ConstantCoefficientEstimates}
In this section we revisit the Strichartz estimates in the fully anisotropic case for constant coefficients. For the remainder of the
section, let
\begin{equation}
 \label{eq:DiagonalConstantCoefficients}
 \varepsilon = \text{diag}(\varepsilon_1,\varepsilon_2,\varepsilon_3), \quad \mu = \text{diag}(\mu_1,\mu_2,\mu_3)
 \quad \text{with \ } \varepsilon_i, \ \mu_j \in \R_{>0}.
\end{equation}
 By the symmetries of Maxwell equations (cf.\ \cite{MandelSchippa2022}), we can reduce the
general case of positive definite $\varepsilon,\mu \in \R^{3 \times 3}_{\text{sym}}$ to \eqref{eq:DiagonalConstantCoefficients} provided
that $\varepsilon$ and $\mu$ commute. We supplement the linear Maxwell equations
\begin{equation}
 \label{eq:ConstantCoefficientMaxwell}
 \left\{ \begin{array}{rlrl}
  \partial_t \mathcal{D} \!\!\!\!&= \nabla \times \mathcal{H},& \quad \nabla \cdot \mathcal{D} \!\!\!\!&= \rho_e, \\
  \partial_t \mathcal{B}  \!\!\!\!&= - \nabla \times \mathcal{E},& \quad \nabla \cdot \mathcal{B}  \!\!\!\!&= \rho_m
 \end{array} \right.
\end{equation}
on $\R\times \R^3$ with the constant-coefficient material laws
\begin{equation}
 \label{eq:ConstantCoefficientMaterialLaws}
 \mathcal{D} = \varepsilon \mathcal{E}, \quad \mathcal{B} = \mu \mathcal{H}, \quad \text{where \eqref{eq:DiagonalConstantCoefficients}
 is true.}
\end{equation}

In the charge-free case, dispersive time-decay  of solutions to \eqref{eq:ConstantCoefficientMaxwell} was proved by Liess \cite{Liess1991}.
Here we deduce Strichartz estimates also allowing for charges. The first observation is that non-trivial charges in
\eqref{eq:ConstantCoefficientMaxwell} inhibit global-in-time Strichartz estimates
\begin{equation*}
 \| |D'|^{-\rho} u \|_{L^p(\R;L^q_{x'})} \lesssim \| u(0) \|_{L^2_{x'}}
\end{equation*}
with $u = (\mathcal{E},\mathcal{H})$, $\rho = 3 \big( \frac{1}{2} - \frac{1}{q} \big) - \frac{1}{p}$, and
$\frac{2}{p} + \frac{1}{q} \leq \frac{1}{2}$.

Indeed, given nonzero $(\rho_e,\rho_m)\in \dot{H}^{\gamma} \times \dot{H}^{\gamma}$, let $(\Phi_1,\Phi_2)$ solve
the elliptic equations
\begin{equation}
\label{eq:EllipticEquationCharges}
\nabla \cdot (\varepsilon \nabla \Phi_1) = \rho_e, \qquad \nabla \cdot (\mu \nabla \Phi_2) = \rho_m.
\end{equation}
Then \eqref{eq:ConstantCoefficientMaxwell} possesses the stationary solution $(\mathcal{E},\mathcal{H})(0) = u$ with $\mathcal{E}(0) = \nabla \Phi_1$ and $\mathcal{H}(0) = \nabla \Phi_2$. We have $(\Phi_1,\Phi_2) \in \dot{H}^{\gamma+2}$ and $u(0) \in \dot{H}^{\gamma+1}$. However, the solution does not decay, and hence global-in-time Strichartz estimates for \eqref{eq:ConstantCoefficientMaxwell} are not possible.
To obtain such estimates in the charge-free case, we use the  following dispersive estimate due to Liess \cite[Theorem~1.3]{Liess1991}.

\begin{proposition}
 Let $u=(\mathcal{E},\mathcal{H})$ solve \eqref{eq:ConstantCoefficientMaxwell} with \eqref{eq:ConstantCoefficientMaterialLaws}
and $\rho_e = \rho_m = 0$. We then have
 \begin{equation}
  \label{eq:DispersiveEstimate}
  \| S_1' u(t) \|_{L^\infty_{x'}} \lesssim (1+|t|)^{-\frac{1}{2}} \| u(0) \|_{L^1_{x'}}.
 \end{equation}
\end{proposition}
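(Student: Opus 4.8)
The plan is to go to the Fourier side, diagonalise the constant-coefficient Maxwell flow on the charge-free subspace, and then estimate the resulting oscillatory-integral kernel, the delicate point being the conical singularities of the Fresnel surface. This is in substance the argument of Liess \cite[Thm.~1.3]{Liess1991}; I only describe its structure.

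First I would rewrite \eqref{eq:ConstantCoefficientMaxwell}--\eqref{eq:ConstantCoefficientMaterialLaws} as $\partial_t u = i A(D') u$ for $u=(\mathcal E,\mathcal H)$, where $A(\xi')$ is the $6\times 6$ symbol built from $\varepsilon^{-1}(\xi'\times\,\cdot\,)$ and $-\mu^{-1}(\xi'\times\,\cdot\,)$. For $\xi'\neq0$ the matrix $A(\xi')$ is diagonalisable (self-adjoint for the inner product $\mathcal E\cdot\varepsilon\mathcal E'+\mathcal H\cdot\mu\mathcal H'$) with the double eigenvalue $0$ (longitudinal modes, $\mathcal E,\mathcal H\parallel\xi'$) and the nonzero, degree-one homogeneous eigenvalues $\pm\omega_1(\xi')$, $\pm\omega_2(\xi')$, whose graphs constitute the characteristic variety, i.e.\ the Fresnel surface $\Sigma$ whose properties are recalled in Section~\ref{section:DiagonalProof}. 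The charge-free condition $\nabla\cdot\mathcal D=\nabla\cdot\mathcal B=0$ annihilates exactly the component of $u(0)$ in the $0$-eigenspace (which meets the constraint space trivially by ellipticity), so
\[
 S_1' u(t,x')=\sum_{j\in\{1,2\}}\sum_{\pm}\int_{\R^3}e^{i(x'\cdot\xi'\pm t\,\omega_j(\xi'))}\,\chi(\xi')\,\Pi_j^{\pm}(\xi')\,\widehat{u(0)}(\xi')\,d\xi',
\]
with $\chi$ a cutoff to $\{|\xi'|\sim1\}$ and $\Pi_j^\pm$ the spectral projections, smooth where $\omega_1\neq\omega_2$. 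By Young's inequality it suffices to bound each convolution kernel $K_j^\pm(t,\cdot)$ in $L^\infty_{x'}$ by $(1+|t|)^{-1/2}$; for $|t|\le1$ this is clear since $\chi\Pi_j^\pm\in L^1$, so take $|t|\ge1$.

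I would then split $\chi$ by a partition of unity into a part supported near the four optic axes $\pm\xi'_{0,1},\pm\xi'_{0,2}$ (where $\omega_1=\omega_2$) and a remainder supported away from them. Away from the optic axes the sheets of $\Sigma$ are smooth and, by \eqref{eq:FullyAnisotropicCondition} together with the quantitative description of $\Sigma$ in \cite{MandelSchippa2022}, possess at least one non-vanishing principal curvature; hence Littman's oscillatory-integral estimate (stationary phase) gives $(1+|t|)^{-1/2}$ decay for that contribution, uniformly in $x'$ (indeed $(1+|t|)^{-1}$ away from the parabolic lines).

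The crux — and the obstruction to any decay faster than $t^{-1/2}$ — is the conical singularity at an optic axis $\xi_0'$. Condition \eqref{eq:FullyAnisotropicCondition} forces the polynomial $q$ defining $\Sigma$ to have an \emph{ordinary double point} there: $q(\xi_0')=0$, $\nabla q(\xi_0')=0$, $\det\partial^2 q(\xi_0')\neq0$; so after an affine change of variables $\Sigma$ agrees near $\xi_0'$ with a nondegenerate quadratic cone of signature $(2,1)$ up to higher-order terms, and in particular the two sheets meet transversally rather than tangentially. I would decompose a small ball $B(\xi_0',\rho_0)$ into the core $\{|\xi'-\xi_0'|\lesssim|t|^{-1}\}$, bounded by its measure, and dyadic annuli $\{|\xi'-\xi_0'|\sim2^{-k}\}$ with $|t|^{-1}\lesssim2^{-k}\lesssim\rho_0$; rescaling each annulus to unit size, the surface becomes a smooth cross-section of a cone — curved in its two angular directions but flat radially — so two-dimensional stationary phase yields a bound on the $k$-th annulus that sums to a geometric series dominated by $|t|^{-1/2}$. (Equivalently, one may invoke the classical fact that $\widehat{d\sigma}$ for the surface measure of a cone over a curve of non-vanishing curvature decays precisely like $|t|^{-1/2}$.) Summing the regular and the finitely many singular pieces, and the finite sum over $j$ and $\pm$, gives \eqref{eq:DispersiveEstimate}, whose rate is consistent with \eqref{eq:liess}. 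The main obstacle is exactly this uniform-in-$x'$ control near the conical points, where the branches $\omega_j$ lose smoothness and the critical point of the phase can collide with the cone vertex; \eqref{eq:FullyAnisotropicCondition} is what keeps that singularity quadratic and hence the loss no worse than $t^{-1/2}$.
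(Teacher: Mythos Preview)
The paper does not give its own proof of this proposition; it simply attributes the dispersive estimate to Liess \cite[Theorem~1.3]{Liess1991} and uses it as a black box. Your sketch is a faithful outline of that argument, and you correctly acknowledge this at the outset. The ingredients you identify --- spectral decomposition on the charge-free subspace, the decomposition of the Fresnel surface into regular pieces with at least one nonvanishing principal curvature and neighbourhoods of the four conical singular points, and the dyadic rescaling around each singular point --- are exactly the ones the paper later redeploys in Section~\ref{section:DiagonalProof} for the variable-coefficient problem (see the decomposition $a=a_1+a_2+a_3$ and the treatment of $a_{3,k}$ in Proposition~\ref{prop:OperatorEstimatesTT^*}). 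So your proposal is correct and, in fact, more informative than the paper itself on this point; there is nothing to compare against beyond the citation.
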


To define the needed dyadic frequency decomposition, let $\chi \in C^\infty_c(\R;\R_{\geq 0})$ be a radially decreasing with
$\chi(x) = 1$ for $|x| \leq 1$ and $\chi(x) = 0$ for $|x| \geq 2$. We set
\begin{equation}\label{eq:Slambda}
\begin{split}
(S'_\lambda f) \widehat (\xi) &= (\chi(\| \xi' \|/\lambda) - \chi(\| \xi' \|/ 2 \lambda)) \hat{f}(\xi), \\
(S_\lambda f) \widehat (\xi) &= (\chi(\| \xi\| /\lambda) - \chi(\| \xi \|/ 2 \lambda)) \hat{f}(\xi)
\end{split}
\end{equation}
for $\lambda \in 2^{\Z}$. Moreover, we write
\begin{align*}
S_0' &= 1 - \sum_{\lambda \in 2^{\N_0}} S'_\lambda, \quad S_0 = 1 - \sum_{\lambda \in 2^{\N_0}} S_\lambda, \qquad
S_{\geq 1}' = 1 - S'_0, \quad S_{\geq 1} = 1 - S_0,\\
S'_{\sim M} &= \sum_{\lambda = M/8}^{8 M} S'_\lambda, \quad S_{\sim M} = \sum_{\lambda = M/8}^{8M} S_\lambda.
\end{align*}

By Littlewood-Paley decomposition, rescaling, and the Keel--Tao interpolation argument \cite[Theorem~1.2]{KeelTao1998}, we find
the desired global  estimates.
\begin{theorem}
\label{thm:GlobalStrichartzConstantCoefficients}
Let $u=(\mathcal{E},\mathcal{H})$ be a solution to \eqref{eq:ConstantCoefficientMaxwell} with \eqref{eq:ConstantCoefficientMaterialLaws}
and $\rho_e = \rho_m = 0$. Then the
global Strichartz estimates
\begin{equation}
 \label{eq:GlobalStrichartz}
 \| |D'|^{-\rho} u \|_{L^p(\R;L^q_{x'})} \lesssim \| u(0) \|_{L^2_{x'}}
\end{equation}
hold with $\rho$ given by \eqref{eq:DerivativeLoss} and $(p,q)$ satisfying \eqref{eq:Admissibility} and $q<\infty$.
\end{theorem}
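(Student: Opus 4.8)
The plan is to derive \eqref{eq:GlobalStrichartz} from the dispersive estimate \eqref{eq:DispersiveEstimate} by a standard Littlewood--Paley, rescaling, and Keel--Tao $TT^*$ argument. First I would set up the frequency-localized propagator. Write the solution to the charge-free system \eqref{eq:ConstantCoefficientMaxwell}--\eqref{eq:ConstantCoefficientMaterialLaws} as $u(t) = U(t) u(0)$, where $U(t)$ is the solution operator; since the symbol of the Maxwell system with constant diagonal coefficients is homogeneous of degree one in $\xi'$ on the charge-free subspace (the divergence conditions $\nabla\cdot(\varepsilon\mathcal{E}) = \nabla\cdot(\mu\mathcal{H}) = 0$ are preserved by the flow, and on that subspace the evolution is a sum of half-wave-type propagators $e^{\pm it\,\omega_k(\xi')}$ with $\omega_k$ homogeneous of degree one), the operator $S_1' U(t) S_1'$ has a kernel obeying the parabolic-type scaling. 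Precisely, by \eqref{eq:DispersiveEstimate} combined with the trivial $L^2$ bound $\|S_1' U(t)\|_{L^2\to L^2}\lesssim 1$, and by Plancherel, the frequency-$\lambda$ piece satisfies, after rescaling $\xi' \mapsto \lambda \xi'$, $x'\mapsto x'/\lambda$, $t\mapsto t/\lambda$,
\begin{equation*}
 \| S_\lambda' U(t) S_\lambda' f\|_{L^\infty_{x'}} \lesssim \lambda^3 (1+\lambda|t|)^{-1/2} \| f\|_{L^1_{x'}},
\end{equation*}
while $\| S_\lambda' U(t) S_\lambda' f\|_{L^2_{x'}} \lesssim \|f\|_{L^2_{x'}}$.

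Next I would feed these two bounds into the abstract Keel--Tao machinery \cite[Theorem~1.2]{KeelTao1998}: with the decay exponent $\sigma = \tfrac12$ (the two-dimensional rate), any pair $(p,q)$ with $\tfrac{2}{p} + \tfrac{1}{q} \le \tfrac{\sigma}{1} = \tfrac12$... more precisely $\tfrac1p + \tfrac{\sigma}{q} \le \tfrac{\sigma}{2}$ with $\sigma=\tfrac12$, which is exactly \eqref{eq:Admissibility}, and $2\le p,q\le\infty$, $(p,q)\neq(2,\infty)$... is admissible. This yields, for each dyadic $\lambda$,
\begin{equation*}
 \| S_\lambda' U(t) u(0)\|_{L^p(\R;L^q_{x'})} \lesssim \lambda^{\rho} \| S_\lambda' u(0)\|_{L^2_{x'}},
\end{equation*}
where $\rho = 3(\tfrac12 - \tfrac1q) - \tfrac1p$ is forced by the scaling bookkeeping: the $\lambda^3$ loss in the $L^\infty$ endpoint, interpolated down to $L^q$, gives $3(\tfrac12-\tfrac1q)$, and the $L^p_t$ integration against $(1+\lambda|t|)^{-1/2}$ contributes $-\tfrac1p$. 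Equivalently $\||D'|^{-\rho} S_\lambda' U(t)u(0)\|_{L^pL^q} \lesssim \|S_\lambda' u(0)\|_{L^2}$, uniformly in $\lambda$.

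Finally I would sum over $\lambda \in 2^{\Z}$ (including the low-frequency piece $S_0'$, which is handled by the same rescaling since the estimate is scale-invariant, or absorbed harmlessly). Here the restriction $q < \infty$ matters: for $q < \infty$ one has the square-function/Littlewood--Paley equivalence $\|F\|_{L^pL^q} \sim \|(\sum_\lambda |S_\lambda' F|^2)^{1/2}\|_{L^pL^q}$ together with Minkowski's inequality (using $p,q \ge 2$) to interchange the $\ell^2_\lambda$ sum past the $L^pL^q$ norm, reducing the left side to $(\sum_\lambda \|S_\lambda' |D'|^{-\rho} U(t)u(0)\|_{L^pL^q}^2)^{1/2} \lesssim (\sum_\lambda \|S_\lambda' u(0)\|_{L^2}^2)^{1/2} = \|u(0)\|_{L^2}$ by orthogonality. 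The main obstacle — though still routine — is making the frequency-localized decay rigorous: one must check that $U(t)$ genuinely decomposes (on the solenoidal subspace that carries the charge-free data) into finitely many half-wave propagators with homogeneous-degree-one, smooth-away-from-origin phase functions so that the rescaling is exact and \eqref{eq:DispersiveEstimate} transfers cleanly to each dyadic block; the degenerate/non-smooth directions of the Fresnel surface are already accounted for in the $(1+|t|)^{-1/2}$ rate of \eqref{eq:DispersiveEstimate}, so no further case analysis of the geometry is needed here. The endpoint $q=\infty$ (equivalently the Keel--Tao endpoint $(p,q)=(4,\infty)$ on the line $\tfrac2p+\tfrac1q=\tfrac12$) is excluded precisely because the dyadic summation via the square function fails at $q=\infty$; this is consistent with the hypothesis $q<\infty$.
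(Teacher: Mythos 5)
Your proposal is correct and follows exactly the route the paper indicates (which it compresses into one sentence): frequency localization, rescaling of Liess's dispersive estimate \eqref{eq:DispersiveEstimate} to each dyadic block, Keel--Tao with decay exponent $\sigma=\tfrac12$, and Littlewood--Paley summation for $q<\infty$. The scaling bookkeeping producing $\rho=3(\tfrac12-\tfrac1q)-\tfrac1p$ and the identification of $q<\infty$ as the square-function restriction are both right.
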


\begin{remark}
For $q=\infty$,  estimate \eqref{eq:GlobalStrichartz} is true for the homogeneous Besov space:
\begin{equation*}
\| u \|_{L_t^p(\R;\dot{B}^{-\rho}_{q,2}(\R^3))} \lesssim \| u(0) \|_{L^2_{x'}}.
\end{equation*}
To lighten the discussion in the following, we suppose  $q < \infty$. See also Remark \ref{rem:Besov}.
\end{remark}

We next show local-in-time estimates involving charges.
\begin{theorem}
\label{thm:LocalStrichartzConstantCoefficients}
Let $\eps$, $\mu$, $\rho$, $p$, and $q$ be as in Theorem \ref{thm:GlobalStrichartzConstantCoefficients}. We then have
\begin{equation}
\label{eq:LocalStrichartzConstantCoefficients}
\|\langle D' \rangle^{-\rho} u \|_{L^p_TL^q_{x'}} \lesssim T^{\frac{1}{p}} ( \| u(0) \|_{L^2} + \| \rho_{em} \|_{H^{\frac{1}{p}-1}_{x'}} ).
 \end{equation}
\end{theorem}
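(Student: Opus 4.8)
The plan is to reduce Theorem~\ref{thm:LocalStrichartzConstantCoefficients} to the charge-free global estimate of Theorem~\ref{thm:GlobalStrichartzConstantCoefficients} by splitting the solution into a divergence-free part (which obeys the homogeneous dispersive estimate) and a curl-free/static part carrying the charges. First I would perform a Helmholtz-type decomposition adapted to the constant matrices $\varepsilon$ and $\mu$: write $\mathcal{E}(0)=\nabla\Phi_1 + \mathcal{E}^{df}(0)$ and $\mathcal{H}(0)=\nabla\Phi_2 + \mathcal{H}^{df}(0)$, where $(\Phi_1,\Phi_2)$ solve the elliptic problems \eqref{eq:EllipticEquationCharges} so that $\nabla\cdot(\varepsilon\mathcal{E}^{df})=\nabla\cdot(\mu\mathcal{H}^{df})=0$. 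The potential part yields exactly the stationary solution $(\nabla\Phi_1,\nabla\Phi_2)$ of \eqref{eq:ConstantCoefficientMaxwell} (since $\nabla\times\nabla\Phi_i=0$ and $\partial_t$ of the static datum vanishes), so the full solution is $u = (\nabla\Phi_1,\nabla\Phi_2) + u^{df}$ with $u^{df}$ the solution of the same system with charge-free, divergence-free data $u^{df}(0)=u(0)-(\nabla\Phi_1,\nabla\Phi_2)$.

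Next I would estimate the two pieces separately. For the static piece, elliptic regularity for \eqref{eq:EllipticEquationCharges} with constant uniformly elliptic coefficients gives $\|\nabla\Phi_i\|_{\dot H^{\gamma+1}}\lesssim \|\rho_{em}\|_{\dot H^{\gamma}}$; taking $\gamma = \tfrac1p - 1$ and noting that $\rho = 3(\tfrac12-\tfrac1q)-\tfrac1p \le \tfrac1p-1$ is automatic... actually one only needs to absorb the time integration, so $\|\langle D'\rangle^{-\rho}(\nabla\Phi_1,\nabla\Phi_2)\|_{L^p_T L^q_{x'}} \lesssim T^{1/p}\|\langle D'\rangle^{-\rho}(\nabla\Phi_1,\nabla\Phi_2)\|_{L^q_{x'}}$, and then Sobolev embedding $\dot H^{\rho}\hookrightarrow L^q$ combined with the frequency-localized form (splitting into $S_0'$ and $S_{\ge1}'$ to move between homogeneous and inhomogeneous norms) reduces this to $\|\rho_{em}\|_{H^{1/p-1}_{x'}}$. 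For the dynamic piece one applies \eqref{eq:GlobalStrichartz} directly: $\|\langle D'\rangle^{-\rho} u^{df}\|_{L^p_T L^q_{x'}} \le \|\langle D'\rangle^{-\rho} u^{df}\|_{L^p(\R;L^q_{x'})}\lesssim \| |D'|^{-\rho} u^{df}\|_{L^p(\R;L^q_{x'})} + \|S_0' u^{df}\|_{\cdots}\lesssim \|u^{df}(0)\|_{L^2_{x'}}$ (handling low frequencies by Bernstein and Hölder in time to produce the $T^{1/p}$), and finally $\|u^{df}(0)\|_{L^2}\lesssim \|u(0)\|_{L^2}+\|(\nabla\Phi_1,\nabla\Phi_2)\|_{L^2}\lesssim \|u(0)\|_{L^2}+\|\rho_{em}\|_{\dot H^{-1}_{x'}}\lesssim \|u(0)\|_{L^2}+\|\rho_{em}\|_{H^{1/p-1}_{x'}}$ since $-1\le \tfrac1p-1\le -\tfrac12$. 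Summing the two contributions gives \eqref{eq:LocalStrichartzConstantCoefficients}.

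The main technical nuisance — rather than a genuine obstacle — is the bookkeeping around homogeneous versus inhomogeneous derivatives and the low-frequency part $S_0'u$: the global estimate \eqref{eq:GlobalStrichartz} is stated with $|D'|^{-\rho}$, whereas the target has $\langle D'\rangle^{-\rho}$, so on $\{|\xi'|\gtrsim1\}$ the two are comparable but on $\{|\xi'|\lesssim1\}$ one must argue separately, using Bernstein's inequality $\|S_0'f\|_{L^q_{x'}}\lesssim\|f\|_{L^2_{x'}}$ together with Hölder in time to gain the factor $T^{1/p}$, and $L^2$-conservation of the charge-free flow. One also has to keep track of the fact that the static solution is genuinely in $\dot H^{\gamma+1}$ and not $L^2$ unless $\gamma\ge-1$; since $p\le\infty$ forces $\tfrac1p-1\in[-1,0]$ this causes no trouble, but the endpoint $p=\infty$ (where $T^{1/p}=1$ and $\gamma=-1$) should be checked directly. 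Everything else is a routine combination of Helmholtz decomposition, the Keel–Tao-based Theorem~\ref{thm:GlobalStrichartzConstantCoefficients}, and elliptic regularity for the constant-coefficient operators in \eqref{eq:EllipticEquationCharges}.
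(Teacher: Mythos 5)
Your proposal is essentially the paper's proof: the same splitting into the stationary solution $(\nabla\Phi_1,\nabla\Phi_2)$ built from the elliptic problems \eqref{eq:EllipticEquationCharges} plus a charge-free dispersive part, the global estimate \eqref{eq:GlobalStrichartz} for the latter, H\"older in time plus Sobolev/elliptic regularity for the former, and Bernstein plus H\"older in time for the low frequencies. The one step that does not survive as displayed is the chain $\|u^{df}(0)\|_{L^2}\lesssim \|u(0)\|_{L^2}+\|\rho_{em}\|_{\dot H^{-1}_{x'}}\lesssim \|u(0)\|_{L^2}+\|\rho_{em}\|_{H^{1/p-1}_{x'}}$: for negative exponents $H^{s}$ does \emph{not} control $\dot H^{s}$ (or $\dot H^{-1}$) at low frequencies, so if $\rho_{em}$ has low-frequency content the potentials $\nabla\Phi_i$ need not lie in $L^2$ and the dispersive datum is not even $L^2$. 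The remedy is exactly the one you allude to but should make primary: perform the stationary/dispersive decomposition only on $S'_{\geq 1}u$, where homogeneous and inhomogeneous norms coincide, after disposing of $S'_0u$ by Bernstein. The paper moreover sidesteps the charge norm entirely in this step by reading off from $\nabla\cdot(\varepsilon\mathcal{E}(0))=\nabla\cdot(\varepsilon\nabla\Phi_1)$ on the Fourier side that $\widehat{\nabla\Phi_1}$ is a bounded projection of $\hat{\mathcal{E}}(0)$, whence $\|\nabla\Phi_i\|_{L^2}\lesssim\|u(0)\|_{L^2}$ directly by Plancherel; this is a slightly cleaner bound for the dispersive datum than routing through $\|\rho_{em}\|_{\dot H^{-1}}$.
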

\begin{proof}
 We begin with estimating the low frequencies
 \begin{equation*}
  \| \langle D' \rangle^{-\rho} S'_{0} u \|_{L^p_TL^q_{x'}} \lesssim T^{\frac{1}{p}} \| u(0) \|_{L^2_{x'}}
 \end{equation*}
  by H\"older in time and Bernstein's inequality.

For the estimate of the high frequencies, we split the initial data into stationary and dispersive components. Let $(\Phi_1,\Phi_2)$
be the solutions to the elliptic equations in \eqref{eq:EllipticEquationCharges}. Since we confine to high frequencies,
we can replace the homogeneous with inhomogeneous norms. We have the stationary solution
$u_{\text{stat}}(t)=S'_{\geq 1}(\nabla \Phi_1,\nabla \Phi_2)$ and we write
\begin{equation*}
S'_{\geq 1} (\mathcal{E},\mathcal{H})(0) = S'_{\geq 1} (\nabla \Phi_1,\nabla \Phi_2) + S'_{\geq 1} (\mathcal{E},\mathcal{H})_{\text{disp}}(0).
\end{equation*}
We denote the solution emanating from the last term by $u_{\text{disp}}$, which is clearly charge free.
Taking the Fourier transform of $\nabla \cdot (\varepsilon \mathcal{E}(0)) = \nabla \cdot (\varepsilon \nabla \Phi_1)$, we obtain
\begin{equation*}
 \frac{i \xi_k \varepsilon^{km} \hat{\mathcal{E}}_m(0,\xi)}{\xi_i \xi_j \varepsilon^{ij}} = \hat{\Phi}_1(\xi).
\end{equation*}
 Plancherel's theorem thus yields
 $\| \nabla \Phi_1 \|_{L^2} \lesssim \|\mathcal{E}(0)\|_{L^2}$ and likewise $\| \nabla \Phi_2 \|_{L^2} \lesssim \| \mathcal{H}(0) \|_{L^2}$.
Hence $\| u_{\text{disp}}(0) \|_{L^2} \lesssim \| (\mathcal{E},\mathcal{H})(0) \|_{L^2}$.

By linearity, we have  $S'_{\geq 1} u = u_{\text{disp}} + u_{\text{stat}}$ and Theorem \ref{thm:GlobalStrichartzConstantCoefficients} implies
\begin{align*}
  \| |D'|^{-\rho} S'_{\geq 1} u \|_{L^p_TL^q_{x'})}
    &\leq \| |D'|^{-\rho} S'_{\geq 1} u_{\text{disp}} \|_{L^p_TL^q_{x'}} + \| |D'|^{-\rho} S'_{\geq 1} u_{\text{stat}}\|_{L^p_TL^q_{x'}} \\
  &\lesssim \| S'_{\geq 1} u(0) \|_{L^2_{x'}} + T^{\frac{1}{p}} \| (\rho_e,\rho_m) \|_{H^{\frac{1}{p}-1}_{x'}}.
\end{align*}
Inequality \eqref{eq:LocalStrichartzConstantCoefficients} follows.
\end{proof}

We turn to the inhomogeneous problem
\begin{equation}
\label{eq:InhomogeneousMaxwell}
 \left\{ \begin{array}{rlrlrl}
  \partial_t \mathcal{D} \!\!\!\! &= \nabla \times \mathcal{H} - \mathcal{J}_e, &\quad \nabla \cdot \mathcal{D}  \!\!\!\!&= \rho_e, &
   \quad \mathcal{D}(0) \!\!\!\!&= \mathcal{D}_0, \\
  \partial_t \mathcal{B}  \!\!\!\!&= - \nabla \times \mathcal{E} - \mathcal{J}_m,& \quad \nabla \cdot \mathcal{B} \!\!\!\!&= \rho_m,&
  \quad \mathcal{B}(0) \!\!\!\!& = \mathcal{B}_0.
 \end{array} \right.
\end{equation}
We write $u = (\mathcal{E},\mathcal{H})$ for solutions to \eqref{eq:InhomogeneousMaxwell} as well as
$\mathcal{J} = (\mathcal{J}_e,\mathcal{J}_m)$ and $\nabla \cdot \mathcal{J} = (\nabla \cdot \mathcal{J}_e,\nabla \cdot \mathcal{J}_m)$.
\begin{theorem}
\label{thm:InhogogeneousStrichartzConstantCoefficients}
 Let $u= (\mathcal{E},\mathcal{H})$ be a solution to \eqref{eq:InhomogeneousMaxwell},  $(\rho,p,q)$ and $(\tilde{\rho},\tilde{p},\tilde{q})$
 satisfy \eqref{eq:Admissibility} and \eqref{eq:DerivativeLoss} with $q,\tilde{q}<\infty$. For $T<\infty$, then the estimates
 \begin{equation}
  \label{eq:LocalInhomogeneousStrichartz}
  \begin{split}
  \| \langle D' \rangle^{-\rho} u \|_{L^p([0,T],L^q(\R^3))} &\lesssim T^{\frac{1}{p}} ( \| u(0) \|_{L^2} + \| \rho_{em}(0) \|_{H^{\frac{1}{p}-1}}) \\
  &\quad + T^{\frac{1}{p}} \big( \| \mathcal{J} \|_{L^1([0,T],L^2)} + \| \nabla \cdot \mathcal{J} \|_{L^1([0,T],H^{\frac{1}{p}-1})} \big).
  \end{split}
 \end{equation}
 hold.
 If $\rho_{em} = 0$ and $\nabla \cdot \mathcal J = 0$, we have the global estimates
 \begin{equation}
  \label{eq:GlobalInhomogeneousStrichartz}
  \| |D'|^{-\rho} u \|_{L^p(\R;L^q(\R^3))} \lesssim \| u(0) \|_{L^2(\R^3)} + \| |D'|^{\tilde{\rho}} \mathcal{J} \|_{L^{\tilde{p}'} L^{\tilde{q}'}}.
 \end{equation}
\end{theorem}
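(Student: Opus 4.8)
The plan is to establish the two estimates separately, in both cases building on the dispersive bound \eqref{eq:DispersiveEstimate} and the homogeneous Strichartz estimate of Theorem~\ref{thm:GlobalStrichartzConstantCoefficients}, and isolating the charge contribution by a Hodge-type splitting. Throughout we use the continuity equations $\partial_t\rho_e=-\nabla\cdot\mathcal J_e$, $\partial_t\rho_m=-\nabla\cdot\mathcal J_m$ (divergences of \eqref{eq:InhomogeneousMaxwell}) and the $L^2$ energy inequality $\|u\|_{L^\infty_T L^2_{x'}}\lesssim\|u(0)\|_{L^2_{x'}}+\|\mathcal J\|_{L^1_T L^2_{x'}}$, which follows from the (almost) conservation of $\int(\varepsilon\mathcal E\cdot\mathcal E+\mu\mathcal H\cdot\mathcal H)\,dx'$ together with Gronwall's inequality.

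For the global estimate \eqref{eq:GlobalInhomogeneousStrichartz}, I would first observe that $\rho_{em}=0$ and $\nabla\cdot\mathcal J=0$ force $\nabla\cdot\mathcal D\equiv0$ and $\nabla\cdot\mathcal B\equiv0$ via the continuity equations, so the evolution stays on the charge-free subspace, on which the Maxwell flow is a unitary group $U(t)$. Duhamel's formula $u(t)=U(t)u(0)+\int_0^t U(t-s)F(s)\,ds$ (with $F$ the suitably signed current) reduces matters to the homogeneous term, handled by Theorem~\ref{thm:GlobalStrichartzConstantCoefficients}, and the retarded term. For the latter I would repeat the Littlewood--Paley decomposition and parabolic rescaling from the proof of Theorem~\ref{thm:GlobalStrichartzConstantCoefficients}, so that \eqref{eq:DispersiveEstimate} turns into the corresponding frequency-$\lambda$ decay estimate dictated by scaling, and apply the retarded part of the Keel--Tao estimates \cite[Theorem~1.2]{KeelTao1998} with decay exponent $\sigma=\frac12$; since $\sigma<1$, the sharp admissible line $\frac2p+\frac1q=\frac12$ contains no excluded endpoint, so this works for any two pairs satisfying \eqref{eq:Admissibility} and \eqref{eq:DerivativeLoss}. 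Reassembling the dyadic pieces (using $q,\tilde q<\infty$ in the square-function step), the weights $|D'|^{-\rho}$ on the left and $|D'|^{\tilde\rho}$ on the right exactly absorb the scaling factors because both $\rho$ and $\tilde\rho$ obey \eqref{eq:DerivativeLoss}; this gives \eqref{eq:GlobalInhomogeneousStrichartz}.

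For the local estimate \eqref{eq:LocalInhomogeneousStrichartz}, I would split $u=S'_0 u+S'_{\geq1}u$. For $S'_0 u$, Hölder in time, Bernstein's inequality and the energy inequality give $\|\langle D'\rangle^{-\rho}S'_0 u\|_{L^p_T L^q_{x'}}\lesssim T^{1/p}(\|u(0)\|_{L^2_{x'}}+\|\mathcal J\|_{L^1_T L^2_{x'}})$. For $S'_{\geq1}u$, I would decompose $u=u_{\mathrm{disp}}+u_{\mathrm{ell}}$ by linearity as in the proof of Theorem~\ref{thm:LocalStrichartzConstantCoefficients}: $u_{\mathrm{ell}}$ solves \eqref{eq:InhomogeneousMaxwell} with the electro-/magnetostatic data $(\nabla\Phi_1,\nabla\Phi_2)$, where $\nabla\cdot(\varepsilon\nabla\Phi_1)=\rho_e(0)$, $\nabla\cdot(\mu\nabla\Phi_2)=\rho_m(0)$, and with the curl-free part of the current (Helmholtz projection), whereas $u_{\mathrm{disp}}=u-u_{\mathrm{ell}}$ solves \eqref{eq:InhomogeneousMaxwell} with the complementary charge-free data and divergence-free current, hence stays charge-free. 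On $u_{\mathrm{disp}}$, Theorem~\ref{thm:GlobalStrichartzConstantCoefficients} controls the homogeneous part by $\|u(0)\|_{L^2_{x'}}$ (all projections involved are bounded on $L^2$), while Minkowski in time plus Theorem~\ref{thm:GlobalStrichartzConstantCoefficients} applied to $U(t-s)$ acting on the divergence-free current controls the Duhamel part by $\|\mathcal J\|_{L^1_T L^2_{x'}}$, the factor $T^{1/p}$ being inserted as in Theorem~\ref{thm:LocalStrichartzConstantCoefficients}. On $u_{\mathrm{ell}}$, applying the energy inequality to each frequency piece together with the elliptic identities $\|S'_\lambda(\nabla\Phi_1,\nabla\Phi_2)\|_{L^2_{x'}}\sim\lambda^{-1}\|S'_\lambda\rho_{em}(0)\|_{L^2_{x'}}$ and $\|S'_\lambda(\text{curl-free part of }\mathcal J)(s)\|_{L^2_{x'}}\sim\lambda^{-1}\|S'_\lambda\nabla\cdot\mathcal J(s)\|_{L^2_{x'}}$ gives $\|S'_\lambda u_{\mathrm{ell}}\|_{L^\infty_T L^2_{x'}}\lesssim\lambda^{-1}(\|S'_\lambda\rho_{em}(0)\|_{L^2_{x'}}+\|S'_\lambda\nabla\cdot\mathcal J\|_{L^1_T L^2_{x'}})$; then Bernstein costs $\lambda^{3(1/2-1/q)}$, and since $-\rho+3(\frac12-\frac1q)-1=\frac1p-1$ by \eqref{eq:DerivativeLoss}, a Littlewood--Paley square-sum (valid since $p,q\geq2$), Hölder in time and Minkowski give the bound $T^{1/p}(\|\rho_{em}(0)\|_{H^{1/p-1}_{x'}}+\|\nabla\cdot\mathcal J\|_{L^1_T H^{1/p-1}_{x'}})$. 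Summing the three contributions yields \eqref{eq:LocalInhomogeneousStrichartz}.

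The hard part will be the retarded inhomogeneous bound in \eqref{eq:GlobalInhomogeneousStrichartz} with two a priori different admissible pairs, which genuinely needs the bilinear/retarded half of the Keel--Tao machinery (equivalently a Christ--Kiselev argument combined with the $TT^*$ identity for $f\mapsto|D'|^{-\rho}U(t)f$) rather than just the homogeneous estimate of Theorem~\ref{thm:GlobalStrichartzConstantCoefficients}; everything else is bookkeeping. A secondary, purely algebraic subtlety is to verify that the charge-carrying (elliptic) part of the solution can only be absorbed at the regularity $H^{1/p-1}$ — this is forced by the identity $-\rho+3(\frac12-\frac1q)-1=\frac1p-1$ and is the reason both $\|u(0)\|_{L^2}$ and the weaker norm $\|\rho_{em}(0)\|_{H^{1/p-1}}$ must appear on the right-hand side of \eqref{eq:LocalInhomogeneousStrichartz}.
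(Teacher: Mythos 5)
Your proof is correct and follows essentially the same route as the paper: the global inhomogeneous bound is obtained exactly as in the paper from Liess's dispersive estimate together with the retarded half of the Keel--Tao interpolation theorem, and the local bound rests on the same splitting of data and current into a charge-free (dispersive) part handled by Theorem \ref{thm:GlobalStrichartzConstantCoefficients} and a gradient (elliptic) part that gains one derivative and produces the $H^{\frac1p-1}$ norms. The only cosmetic difference is that the paper derives \eqref{eq:LocalInhomogeneousStrichartz} by applying Theorem \ref{thm:LocalStrichartzConstantCoefficients} with shifted initial time $s$ inside Duhamel's formula and then using Minkowski's inequality, whereas you perform the Helmholtz splitting of the current once and estimate the charge-carrying piece directly via the frequency-localized energy inequality; the two computations are interchangeable.
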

\begin{proof}
 We denote the propagator for the free solution by $U(t)$, so that we can write
\begin{equation*} 
 (\mathcal{D},\mathcal{B})(t) = U(t) (\mathcal{D},\mathcal{B})(0) - \int_0^t U(t-s) \mathcal{J}(s) ds
\end{equation*}
by Duhamel's formula. Since $\| \langle D' \rangle^s u \|_{L^2} \sim \| \langle D' \rangle^s (\mathcal{D},\mathcal{B}) \|_{L^2}$,
we can apply Theorem \ref{thm:LocalStrichartzConstantCoefficients} with initial time $s$ and Minkowski's inequality to find
\eqref{eq:LocalInhomogeneousStrichartz} to hold. Inequality \eqref{eq:GlobalInhomogeneousStrichartz} is a consequence of the dispersive estimate \eqref{eq:DispersiveEstimate} and  Keel-Tao interpolation \cite[Theorem~1.2]{KeelTao1998}.
\end{proof}

\section{Proof of Strichartz estimates in the fully anisotropic case with diagonal material coefficients}
\label{section:DiagonalProof}
In the following we consider diagonal permittivity and permeability
\begin{equation}
\label{eq:DiagonalCoefficients}
\varepsilon = \text{diag}(\varepsilon_1,\varepsilon_2,\varepsilon_3), \quad \mu = \text{diag}(\mu_1,\mu_2,\mu_3), \;
\end{equation}
which are supposed to satisfy the uniform ellipticity condition
\begin{equation}
 \label{eq:EllipticitySectionProof}
 \exists \lambda, \Lambda > 0: \forall x \in \R^4: \, \lambda \leq \nu_i(x) \leq \Lambda \text{ for } i =1,2,3 \text{ and } \nu \in \{ \varepsilon, \mu \}.
\end{equation}
Furthermore, we require the following separability condition, which guarantees uniformity of the curvature bounds for the characteristic surfaces (cf. Assumption \ref{AssumptionMaterialLaws}):
\begin{equation}
 \label{eq:SeparationEigenvalues}
\exists c > 0: \forall x \in \R^4: \forall i \neq j: \, \big| \frac{\varepsilon_i(x)}{\mu_i(x)} - \frac{\varepsilon_j(x)}{\mu_j(x)} \big| \geq c.
\end{equation}

In the following we show Strichartz estimates for Maxwell equations with the rough material laws as above
\begin{align}
\label{eq:3dMaxwellEquations}
\left\{ \begin{array}{rlrl}
\partial_t \mathcal{D} \!\!\!\!&= \nabla \times \mathcal{H} - \mathcal{J}_e, &\quad \nabla \cdot \mathcal{D} \!\!\!\!&= \rho_e, \qquad x \in \R^4, \\
\partial_t \mathcal{B} \!\!\!\!&= -\nabla \times \mathcal{E} - \mathcal{J}_m, &\quad \nabla \cdot \mathcal{B} \!\!\!\!&= \rho_m.
\end{array} \right.
\end{align}
We use the pointwise material laws \eqref{eq:PointwiseMaterialLaws}. Setting
\begin{equation*}
P(x,D) = 
\begin{pmatrix}
- \partial_t (\varepsilon \cdot) & \nabla \times \\
\nabla \times & \partial_t (\mu \cdot)
\end{pmatrix}
, \quad u = 
\begin{pmatrix}
\mathcal{E} \\ \mathcal{H}
\end{pmatrix}
, \quad \rho_{em} =
\begin{pmatrix}
\rho_e \\ \rho_m
\end{pmatrix}
,
\end{equation*}
the Maxwell system \eqref{eq:3dMaxwellEquations} becomes
\begin{equation*}
P(x,D) u = 
\begin{pmatrix}
\mathcal{J}_e \\ -\mathcal{J}_m
\end{pmatrix}, \quad \nabla \cdot \mathcal{D} = \rho_e, \quad \nabla \cdot \mathcal{B} = \rho_m.
\end{equation*}

We first show the basic energy estimate which will be used later in an interpolation argument.
\begin{proposition}
\label{prop:EnergyEstimate}
 Let $T > 0$, $\varepsilon$, $\mu \in C(\R \times \R^3; \R^{3 \times 3})$ satisfy \eqref{eq:EllipticitySectionProof} with $\partial_t (\varepsilon,\mu) \in L^1_{T} L^\infty_{x'}$. We then have the estimate
 \begin{equation}
  \label{eq:EnergyEstimate}
  \| u \|_{L^\infty_T L^2_{x'}} \lesssim \| u_0 \|_{L^2} + \| P u \|_{L_T^1 L^2}.
 \end{equation}
\end{proposition}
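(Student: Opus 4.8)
The estimate \eqref{eq:EnergyEstimate} is a standard $L^2$ energy estimate for a symmetric hyperbolic system, and the plan is to exploit the symmetric structure of $P(x,D)$ together with a Gronwall argument. First I would rewrite the equation $Pu = f$ with $f = (\mathcal{J}_e, -\mathcal{J}_m) = Pu$ in terms of the displacement and magnetizing fields $(\mathcal{D},\mathcal{B}) = (\varepsilon \mathcal{E}, \mu \mathcal{H})$: the system reads $\partial_t \mathcal{D} = \nabla \times \mathcal{H} - f_1$, $\partial_t \mathcal{B} = -\nabla \times \mathcal{E} - f_2$, with $\mathcal{E} = \varepsilon^{-1}\mathcal{D}$, $\mathcal{H} = \mu^{-1}\mathcal{B}$. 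The natural energy is the quadratic form $\mathcal{N}(t)^2 = \int_{\R^3} \big( \varepsilon^{-1}(t,x')\mathcal{D}(t,x') \cdot \mathcal{D}(t,x') + \mu^{-1}(t,x')\mathcal{B}(t,x') \cdot \mathcal{B}(t,x') \big)\, dx' = \int_{\R^3} \big( \varepsilon \mathcal{E} \cdot \mathcal{E} + \mu \mathcal{H}\cdot\mathcal{H}\big)\, dx'$, which by \eqref{eq:EllipticitySectionProof} satisfies $\mathcal{N}(t) \sim \|u(t)\|_{L^2_{x'}}$ uniformly in $t$ (with constants depending only on $\lambda,\Lambda$).

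Next I would differentiate $\mathcal{N}(t)^2$ in time. The terms where the time derivative falls on $\mathcal{D}$ or $\mathcal{B}$ produce $2\int (\mathcal{E}\cdot \partial_t \mathcal{D} + \mathcal{H}\cdot\partial_t\mathcal{B})\,dx' = 2\int (\mathcal{E}\cdot(\nabla\times\mathcal{H}) - \mathcal{H}\cdot(\nabla\times\mathcal{E}))\,dx' - 2\int(\mathcal{E}\cdot f_1 + \mathcal{H}\cdot f_2)\,dx'$. The first integral vanishes by integration by parts since $\nabla\times$ is formally skew-adjoint ($\int \mathcal{E}\cdot(\nabla\times\mathcal{H}) = \int (\nabla\times\mathcal{E})\cdot\mathcal{H}$), so the curl terms drop out entirely. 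The terms where the time derivative falls on $\varepsilon^{-1}$ or $\mu^{-1}$ are bounded by $\|\partial_t(\varepsilon^{-1},\mu^{-1})\|_{L^\infty_{x'}}\,\|u(t)\|_{L^2_{x'}}^2 \lesssim \|\partial_t(\varepsilon,\mu)\|_{L^\infty_{x'}}\,\mathcal{N}(t)^2$ using uniform ellipticity to control $\partial_t \varepsilon^{-1} = -\varepsilon^{-1}(\partial_t\varepsilon)\varepsilon^{-1}$. The source term is bounded by $2\|u(t)\|_{L^2_{x'}}\|f(t)\|_{L^2_{x'}} \lesssim \mathcal{N}(t)\|Pu(t)\|_{L^2_{x'}}$ via Cauchy--Schwarz and the equivalence $\|\langle D'\rangle^0 u\|_{L^2} \sim \|(\mathcal{D},\mathcal{B})\|_{L^2}$ (with constants from ellipticity), noting $\|f\|_{L^2} = \|(\mathcal{J}_e,-\mathcal{J}_m)\|_{L^2} = \|Pu\|_{L^2}$. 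Altogether $\frac{d}{dt}\mathcal{N}(t)^2 \lesssim a(t)\mathcal{N}(t)^2 + \mathcal{N}(t)\|Pu(t)\|_{L^2_{x'}}$ with $a(t) = \|\partial_t(\varepsilon,\mu)(t)\|_{L^\infty_{x'}} \in L^1(0,T)$.

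Finally, dividing by $\mathcal{N}(t)$ (after a standard regularization to avoid division by zero, e.g. working with $\sqrt{\mathcal{N}(t)^2+\kappa}$ and letting $\kappa\to 0$) gives $\frac{d}{dt}\mathcal{N}(t) \lesssim a(t)\mathcal{N}(t) + \|Pu(t)\|_{L^2_{x'}}$, and Gronwall's inequality yields $\mathcal{N}(t) \lesssim e^{C\int_0^T a}\big(\mathcal{N}(0) + \int_0^T \|Pu(s)\|_{L^2_{x'}}\,ds\big)$. Since $\int_0^T a = \|\partial_t(\varepsilon,\mu)\|_{L^1_T L^\infty_{x'}} < \infty$ the exponential is a finite constant, and re-expressing $\mathcal{N}$ in terms of the $L^2_{x'}$ norm of $u$ gives \eqref{eq:EnergyEstimate}. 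The main technical point — and the only place any care beyond bookkeeping is needed — is justifying the differentiation of the energy and the integration by parts in the curl terms at the low regularity assumed on $(\varepsilon,\mu)$ (merely continuous with $\partial_t(\varepsilon,\mu)\in L^1_T L^\infty_{x'}$); this is handled in the usual way by proving the estimate first for smooth data and smoothed coefficients and then passing to the limit, or by a Friedrichs mollification in $x'$ combined with a commutator estimate that vanishes in the limit.
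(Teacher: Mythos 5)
Your proposal is correct and follows essentially the same route as the paper: the weighted energy $\langle u,\mathrm{diag}(\varepsilon,\mu)u\rangle$, cancellation of the curl terms by the (self-adjointness, not skew-adjointness, as your own identity correctly shows) symmetry of $\nabla\times$, control of the terms where $\partial_t$ hits the coefficients by $\|\partial_t(\varepsilon,\mu)\|_{L^\infty_{x'}}$, and Gr\"onwall. The only cosmetic difference is that you absorb the source term directly into the differential inequality, whereas the paper first proves the homogeneous bound and then treats the inhomogeneity via Duhamel's formula in the $(\mathcal{D},\mathcal{B})$ variables and Minkowski's inequality; the two are equivalent here.
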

\begin{proof}
 We introduce an equivalent norm on $L^2(\R^3)$ by
 \begin{equation*}
  \| u \|^2_{E^0} = \Big\langle u, \begin{pmatrix}
                                \varepsilon & 0 \\
                                0 & \mu
                               \end{pmatrix}
u \Big\rangle.
 \end{equation*}
For homogeneous solutions we compute
\begin{equation*}
\begin{split}
 \partial_t \big( \int_{\R^3} \mathcal{E}. \mathcal{D} + \mathcal{H}.\mathcal{B} \big)
 &= \int_{\R^3} ( \partial_t \mathcal{E}).\mathcal{D} + \mathcal{E}.(\partial_t \mathcal{D}) + (\partial_t \mathcal{H}). \mathcal{B} + \mathcal{H}.(\partial_t \mathcal{B}) \\
 &=: I + II + III + IV.
 \end{split}
\end{equation*}
We find
\begin{equation*}
II + IV = \int_{\R^3} \mathcal{E}.(\nabla \times \mathcal{H}) - (\nabla \times \mathcal{E}).\mathcal{H} = 0
\end{equation*}
by the symmetry of the curl-operator. Note that
\begin{equation*}
 \begin{split}
  (\partial_t \varepsilon) \mathcal{E} + \varepsilon \partial_t \mathcal{E} = \nabla \times \mathcal{H}
  &\iff \partial_t \mathcal{E} = \varepsilon^{-1} \nabla \times \mathcal{H} - \varepsilon^{-1} (\partial_t \varepsilon) \mathcal{E}, \\
  (\partial_t \mu) \mathcal{H} + \mu \partial_t \mathcal{H} = - \nabla \times \mathcal{E}
  &\iff \partial_t \mathcal{H} = - \mu^{-1} (\nabla \times \mathcal{E}) - \mu^{-1} (\partial_t \mu) \mathcal{H},
 \end{split}
\end{equation*}
which yields
\begin{equation*}
\begin{split}
 I + III &= \int (\varepsilon^{-1} (\nabla \times \mathcal{H})). \mathcal{D} -  \mu^{-1} (\nabla \times \mathcal{E}).\mathcal{B} -  \varepsilon^{-1} ((\partial_t \varepsilon) \mathcal{E}).\mathcal{D} -\mu^{-1} ((\partial_t \mu) \mathcal{H}). \mathcal{B}\\
 &= - \int (\partial_t \varepsilon) \mathcal{E}. \mathcal{E} + \mathcal{H}. (\partial_t \mu) \mathcal{H}.
 \end{split}
\end{equation*}
We obtain
\begin{equation*}
 |I + III| \lesssim \| \partial_t (\varepsilon,\mu) \|_{L^\infty_{x'}} \| u \|^2_{E^0},
\end{equation*}
which leads to the estimate
\begin{equation*}
 \partial_t \| u \|^2_{E^0} \lesssim \| (\partial_t \varepsilon, \partial_t \mu) \|_{L_{x'}^\infty} \| u \|^2_{E^0}.
\end{equation*}
Gr\o nwall's lemma and the equivalence of the norms imply
\begin{equation}
 \label{eq:FreeSolutionEstimate}
  \| u(t) \|^2_{L^2} \lesssim e^{ C\| (\partial_t \varepsilon, \partial_t \mu) \|_{L_{t}^1 L_{x'}^\infty}} \| u(0) \|^2_{L^2}.
\end{equation}

For the proof of the inhomogeneous estimate we consider the evolution of the variables $(\mathcal{D},\mathcal{B})$.
By \eqref{eq:EllipticitySectionProof}, for vanishing currents $\mathcal{J}_{em}=0$ we have
\begin{equation*}
 \| (\mathcal{D},\mathcal{B})(t) \|_{L^2} \lesssim e^{ C \int_0^t \| \partial_t (\varepsilon,\mu) \|_{L^\infty_{x'}} ds} \| (\mathcal{D},\mathcal{B})(0) \|_{L^2}.
\end{equation*}
Let $U(t,s)$ be the propagator of the homogeneous Maxwell equations
\begin{equation*}
 \left\{ \begin{array}{cl}
  \partial_t (\mathcal{D},\mathcal{B}) \!\!\!\! &= Q (\mathcal{D},\mathcal{B}), \\
  (\mathcal{D},\mathcal{B})(s) \!\!\!\! &= (\mathcal{D},\mathcal{B})_0 \in L^2,
 \end{array} \right.
\end{equation*}
The claim now follows from Duhamel's formula, writing
\begin{equation*}
 (\mathcal{D},\mathcal{B})(t) = U(t,0) (\mathcal{D},\mathcal{B})_0 - \int_0^t U(t,s) (\mathcal{J}_e,\mathcal{J}_m)(s) ds
\end{equation*}
and applying Minkowski's inequality and \eqref{eq:FreeSolutionEstimate}.
\end{proof}

We next show the following local-in-time Strichartz estimates.
\begin{theorem}
\label{thm:LocalStrichartzEstimatesFullyAnisotropic}
Let $0<s \leq 1$, $\varepsilon , \mu \in C^s(\R^4;\R^{3\times3})$ be as in \eqref{eq:DiagonalCoefficients} satisfying \eqref{eq:EllipticitySectionProof} and \eqref{eq:SeparationEigenvalues}. Suppose that $\rho = 3 \big( \frac{1}{2} - \frac{1}{q} \big) - \frac{1}{p}$, $2 \leq p < q \leq \infty$, and $\frac{2}{p} + \frac{1}{q} = \frac{1}{2}$.
We find the following estimate to hold:
\begin{equation}
\label{eq:StrichartzEstimateFullyAnisotropicC1}
\| |D|^{-\rho+\frac{s-2}{4}} u \|_{(L^p L^q)_{2}} \lesssim (1+\| (\varepsilon,\mu) \|_{C^s}) \| u \|_{L^2} + \| |D|^{\frac{s-2}{2}} P u \|_{L^2} + \| |D|^{-\frac{1}{2}+\frac{s-2}{4}} \rho_{em} \|_{L^2}.
\end{equation}
\end{theorem}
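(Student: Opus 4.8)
The plan is to pass to phase space via the FBI (wave packet) transform and reduce the matrix-valued Maxwell operator $P(x,D)$ to a scalar half-wave estimate whose symbol is the defining function of the Fresnel surface, where the main difficulty is the conical singularities of that surface. I would proceed as follows.

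\textbf{Step 1: Littlewood–Paley reduction and rescaling.} Decompose $u = \sum_{\lambda} S_\lambda u$ in space-time frequency. For each dyadic piece at frequency $\lambda$, rescale $x \mapsto \lambda x$ so that the coefficients $(\varepsilon,\mu)$, which are only $C^s$, become functions that are slowly varying on the unit scale: after rescaling they are essentially constant up to errors of size $\lambda^{-s}$, which after undoing the scaling produces exactly the $\frac{s-2}{4}$-type derivative losses appearing on both sides of \eqref{eq:StrichartzEstimateFullyAnisotropicC1}. Here the factor $|D|^{\frac{s-2}{2}}$ in front of $Pu$ and the $|D|^{-\frac12+\frac{s-2}{4}}$ in front of $\rho_{em}$ are tuned to this scaling; the exponent $\frac{s-2}{4}$ is the Tataru loss for $C^s$ wave equations, matching \cite{Tataru2000}. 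Work with regularized coefficients $(\varepsilon_\lambda,\mu_\lambda)$ (mollified at scale $\lambda^{-1}$), treating the difference as an error absorbed into the $Pu$ term via the energy estimate, Proposition~\ref{prop:EnergyEstimate}.

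\textbf{Step 2: Symmetrizer and scalar reduction.} The principal symbol of $P(x,\xi)$ is a $6\times 6$ matrix $p(x,\xi)$ whose determinant (on the divergence-constrained subspace) factors through the Fresnel polynomial $\tilde q(x,\xi)$. Following the time-harmonic analysis of \cite{MandelSchippa2022}, construct a microlocal matrix symmetrizer that conjugates $p$ to a block-diagonal form whose relevant scalar block is the half-wave operator with symbol $\xi_0 - \sigma(x,\xi')$, $\sigma$ one of the sheets of the Fresnel surface. The kernel directions (gradient part) are controlled separately: the degeneracy at $|\xi'| \gg |\xi_0|$ is handled by the charge terms $\rho_{em}$ through the elliptic relation $\nabla\cdot(\varepsilon\mathcal{E}) = \rho_e$, as in the constant-coefficient argument of Section~\ref{section:ConstantCoefficientEstimates}, contributing the $|D|^{-1/2+\frac{s-2}{4}}\rho_{em}$ term.

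\textbf{Step 3: Oscillatory integral estimate on the Fresnel surface.} It remains to prove a variable-coefficient Fourier extension (Strichartz) estimate for the half-wave propagator associated to $\sigma$. Decompose Fourier space into dyadic annuli around each of the finitely many conical singularities of the Fresnel surface (the umbilic points, for any fixed $\xi_0 \neq 0$), plus a region bounded away from them. Away from the singularities the surface has two non-vanishing principal curvatures, so the standard stationary phase / $TT^*$ argument gives the full 2D-wave Strichartz range \eqref{eq:Admissibility} with the loss $\rho$ from \eqref{eq:DerivativeLoss}; near each singularity, on an annulus of radius $\sim 2^{-k}$ one rescales, uses that $\det(\partial^2\tilde q) \neq 0$ at the singular point (the stated condition on isolated nondegenerate degeneracies), and sums the resulting geometric series in $k$ — this is where the restriction $q > 6$ and $q \neq 2$ enters, since the $k$-summation converges only in that range. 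The wave-packet parametrix then transfers this fixed-time decay to the space-time Strichartz bound via Keel–Tao \cite{KeelTao1998}.

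\textbf{Step 4: Assembly.} Sum over dyadic frequencies $\lambda$, using the square-function (hence the $(L^pL^q)_2$ norm with its $\ell^2$ Besov-type summation on the left, and $q \neq 2$ to avoid the endpoint orthogonality failure), undo the rescaling to recover the stated powers of $|D|$, and absorb the coefficient-regularization errors into $\|(\varepsilon,\mu)\|_{C^s}\|u\|_{L^2}$ via Proposition~\ref{prop:EnergyEstimate}. The main obstacle is Step 3 — controlling the oscillatory integral uniformly across the conical singularities of the Fresnel surface and summing the dyadic-annuli contributions — since unlike the smooth-surface case one cannot invoke the Hamilton flow directly, and the conical structure is precisely what forces the 2D admissibility range and the loss of the full endpoint.
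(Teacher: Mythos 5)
Your proposal follows essentially the same route as the paper's proof: Littlewood--Paley reduction with frequency-truncated coefficients, Tataru's FBI-transform approximation theorem, reduction to a scalar symbol via the matrix symmetrizer of \cite{MandelSchippa2022} (with the charges absorbing the degenerate region $|\xi'|\gg|\xi_0|$ exactly as you describe), and a $TT^*$ argument with complex interpolation on an analytic family $q^{-\theta}$ together with a dyadic rescaling on $2^{-k}$-annuli around the conical points, whose $2^{k(3/q-1/2)}$ summation is precisely where $q>6$ (hence $p<q$ on the sharp line) enters. One correction to your Step 3: away from the conical points the Fresnel surface is \emph{not} everywhere nondegenerately curved --- along the Hamilton circles one principal curvature vanishes, so the paper splits the regular part into $S_1$ (Gaussian curvature bounded below, which even gives the wider range $\frac1p+\frac1q\le\frac12$) and $S_2$ (only one principal curvature bounded below, which already forces the admissibility \eqref{eq:Admissibility} independently of the singular points); this extra region is treated by the same kernel estimate with oscillatory decay exponent $\Re\theta-\frac32$ in place of $\Re\theta-2$, so it does not change the final range, but it cannot be omitted from the decomposition.
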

\begin{remark}
\label{rem:Besov}
We consider the Besov norm with mixed coefficients defined by
\begin{equation*}
\| f \|_{(L^p L^q)_{2}} = \big( \sum_{\lambda \in 2^{\mathbb{N}_0} \cup \{ 0 \}} \| S_\lambda f \|_{L^p L^q}^2 \big)^{\frac{1}{2}}
\end{equation*}
on the left hand-side because at the endpoint $q=\infty$ the Littlewood-Paley square-function estimate fails.
For $q<\infty$ we can simplify \eqref{eq:StrichartzEstimateFullyAnisotropicC1} to
\begin{equation*}
\| |D|^{-\rho+\frac{s-2}{4}} u \|_{L^p L^q} \lesssim (1+ \|(\varepsilon,\mu)\|_{C^s}) \| u \|_{L^2} + \| |D|^{\frac{s-2}{2}} P u \|_{L^2} + \| |D|^{-\frac{1}{2}+\frac{s-2}{4}} \rho_{em} \|_{L^2}.
\end{equation*}
\end{remark}
We prove Theorem \ref{thm:LocalStrichartzEstimatesFullyAnisotropic} by phase space analysis close to the endpoint.
To this end, we apply the FBI transform (cf. \cite{Tataru2000,Tataru2001,Tataru2002}) to change into phase space after
\begin{itemize}
\item reducing to high frequencies,
\item spatial  and dyadic frequency localisation of the solution,
\item frequency truncation of $\varepsilon(x)$ and $\mu(x)$.
\end{itemize}
After these reductions,  as in \cite{SchippaSchnaubelt2022,Schippa2021Maxwell3d} we arrive at the following proposition.
\begin{proposition}
\label{prop:ReductionFrequencyLocalizedEstimate}
Let $\lambda \gg \lambda_0$ with $\lambda_0$ depending on the ellipticity of $\varepsilon$ and $\mu$ and $\| (\varepsilon,\mu) \|_{C^s_{x}}$
for $0<s\leq 1$. Let $P_\lambda$ denote the operator $P$ with coefficients frequency truncated at frequencies $< \lambda/8$ and let $u$
be essentially supported in the unit cube. Under asumptions of Theorem \ref{thm:LocalStrichartzEstimatesFullyAnisotropic}, estimate
\eqref{eq:StrichartzEstimateFullyAnisotropicC1} follows from
\begin{equation}
\label{eq:FrequencyLocalizedEstimate}
\begin{split}
\lambda^{-\rho + \frac{s-2}{4}} \| S_\lambda u \|_{L^p L^q} &\lesssim (1 + \| (\varepsilon,\mu) \|_{C^s}) \| S_\lambda u \|_{L^2} + \lambda^{\frac{s-2}{2}} \| P_\lambda S_\lambda u \|_{L^2} \\
&\quad + \lambda^{- \frac{1}{2} + \frac{s-2}{4} } \| S_\lambda \rho_{em} \|_{L^2}.
\end{split}
\end{equation}
\end{proposition}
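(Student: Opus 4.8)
The plan is to carry out, in the order announced before the statement, the three reductions: to high frequencies, then spatial localisation, then dyadic frequency localisation together with truncation of the coefficients; at each step one checks that the norms on both sides of \eqref{eq:StrichartzEstimateFullyAnisotropicC1} are compatible with the $\ell^2$-summations over dyadic scales $\lambda\in 2^{\mathbb{N}_0}$ and over the unit cubes. First I would split $u=S_0u+\sum_{\lambda\geq1}S_\lambda u$. The low-frequency piece is harmless: once (in the next step) $u$ is supported in a fixed unit cube, its low-frequency part $S_0u$ is a fixed convolution of $u$ with an $L^1$-kernel of fast decay, so $\|\,|D|^{-\rho+(s-2)/4}S_0u\|_{L^pL^q}\lesssim\|u\|_{L^2}$, which is dominated by the right-hand side of \eqref{eq:StrichartzEstimateFullyAnisotropicC1}. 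Hence only $\lambda\geq1$ contributes, and there every homogeneous multiplier $|D|^\sigma$ is comparable to $\lambda^\sigma$, so the negative-order weights $|D|^{(s-2)/4}$ and $|D|^{(s-2)/2}$ are unproblematic.

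Next I fix a partition of unity $1=\sum_k\chi_k$ on $\R\times\R^3$ subordinate to a locally finite cover by unit cubes and write $u=\sum_k\chi_ku$. Since $2\leq p,q$ and each $S_\lambda(\chi_ku)$ is, modulo a rapidly decaying tail, supported in an $O(1)$-neighbourhood of the $k$-th cube (the kernel of $S_\lambda$ being concentrated at scale $\lambda^{-1}\leq1$), one has $\|S_\lambda u\|_{L^pL^q}^2\lesssim\sum_k\|S_\lambda(\chi_ku)\|_{L^pL^q}^2$ on the left, while $\sum_k\|\chi_ku\|_{L^2}^2\sim\|u\|_{L^2}^2$ and the analogous additivity holds for $Pu$ and $\rho_{em}$ on the right, the tails being summed by Schur's test. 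Moreover $P(\chi_ku)=\chi_kPu+[P,\chi_k]u$, and $[P,\chi_k]$ is multiplication by an $L^\infty$ matrix of order zero: the curl blocks give $\nabla\chi_k\times(\cdot)$, and the time-derivative blocks give $(\partial_t\chi_k)\varepsilon$ and $(\partial_t\chi_k)\mu$; likewise $\nabla\cdot(\varepsilon\chi_k\mathcal E)=\chi_k\rho_e+\nabla\chi_k\cdot\varepsilon\mathcal E$ adds only an order-zero term to the charge. All these error terms are bounded in the relevant weighted $L^2$-norms by $\lesssim\|u\|_{L^2(\mathrm{supp}\,\chi_k)}$ and sum up. Using the uniform ellipticity and $C^s$-bounds on $\varepsilon,\mu$, this reduces \eqref{eq:StrichartzEstimateFullyAnisotropicC1} to the same estimate for $u$ essentially supported in a single unit cube, uniformly over cubes.

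For such a $u$, the left side of \eqref{eq:StrichartzEstimateFullyAnisotropicC1} is, by definition of the $(L^pL^q)_2$-norm, the $\ell^2_\lambda$-sum of $\lambda^{-\rho+(s-2)/4}\|S_\lambda u\|_{L^pL^q}$, which is precisely the left side of \eqref{eq:FrequencyLocalizedEstimate}; it remains to $\ell^2_\lambda$-sum the right side of \eqref{eq:FrequencyLocalizedEstimate} and recover the right side of \eqref{eq:StrichartzEstimateFullyAnisotropicC1}. The term $\|S_\lambda u\|_{L^2}$ sums to $\|u\|_{L^2}$ and $\lambda^{-1/2+(s-2)/4}\|S_\lambda\rho_{em}\|_{L^2}$ to $\|\,|D|^{-1/2+(s-2)/4}\rho_{em}\|_{L^2}$ by Plancherel. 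For the source term I would write
\[
 P_\lambda S_\lambda u = S_\lambda Pu + [P_\lambda,S_\lambda]u + S_\lambda(P_\lambda-P)u ,
\]
where $P_\lambda$ carries the coefficients $\varepsilon_{<\lambda/8},\mu_{<\lambda/8}$. From the Littlewood--Paley characterisation of $C^s$ one has $\|\varepsilon-\varepsilon_{<\lambda/8}\|_{L^\infty}\lesssim\lambda^{-s}\|\varepsilon\|_{C^s}$ and $\|\nabla\varepsilon_{<\lambda/8}\|_{L^\infty}\lesssim\lambda^{1-s}\|\varepsilon\|_{C^s}$ (and similarly for $\mu$). Hence the truncation error, which is multiplication by $(\varepsilon-\varepsilon_{<\lambda/8},\mu-\mu_{<\lambda/8})$ followed by a first-order operator, obeys $\|S_\lambda(P_\lambda-P)u\|_{L^2}\lesssim\|(\varepsilon,\mu)\|_{C^s}\,\lambda^{-s}\cdot\lambda\cdot\|S_{\sim\lambda}u\|_{L^2}$, and multiplying by $\lambda^{(s-2)/2}$ gives $\lambda^{-s/2}\|S_{\sim\lambda}u\|_{L^2}$, which is $\ell^2_\lambda$-summable to $\|(\varepsilon,\mu)\|_{C^s}\|u\|_{L^2}$. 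The commutator $[P_\lambda,S_\lambda]u$ reduces, via the Lipschitz commutator bound $\|[a,S_\lambda]g\|_{L^2}\lesssim\lambda^{-1}\|\nabla a\|_{L^\infty}\|g\|_{L^2}$ applied to the smooth symbols $a\in\{\varepsilon_{<\lambda/8},\mu_{<\lambda/8}\}$, to the same $\lambda^{-s}$-gain; for the $\partial_t(\varepsilon\,\cdot)$-blocks one uses $[S_\lambda,\partial_t(\varepsilon_{<\lambda/8}\,\cdot)]=-\partial_t\circ[\varepsilon_{<\lambda/8},S_\lambda]$ and the fact that after truncation the output has frequency $\lesssim\lambda$, so the $\partial_t$ costs only a factor $\lambda$ and the same $\lambda^{-s/2}$-gain survives after the weight. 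The remaining term $S_\lambda Pu$ contributes exactly $\lambda^{(s-2)/2}\|S_\lambda Pu\|_{L^2}$, whose $\ell^2_\lambda$-sum is $\|\,|D|^{(s-2)/2}Pu\|_{L^2}$. The threshold $\lambda\gg\lambda_0$ is used precisely so that $\varepsilon_{<\lambda/8},\mu_{<\lambda/8}$ are uniformly elliptic with constants comparable to those of $\varepsilon,\mu$, and so that the truncation frequency $\lambda/8$ lies genuinely below the frequency $\lambda$ of $u$.

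The reduction itself is structurally routine, following the scheme of \cite{Tataru2000,Tataru2001,Tataru2002} and \cite{SchippaSchnaubelt2022,Schippa2021Maxwell3d}, and the separation condition \eqref{eq:SeparationEigenvalues} is not needed here (it enters only in the proof of \eqref{eq:FrequencyLocalizedEstimate}, through the curvature of the Fresnel surface). The main point is the bookkeeping: with only $C^s$ coefficients the commutator and truncation gains are merely $\lambda^{-s}$, and one must verify — as above — that they are always at least compensated by the $|D|^{(s-2)/2}$-weight on the source term (and the $|D|^{-1/2+(s-2)/4}$-weight on the charge), so that every error genuinely lands in the term $\|(\varepsilon,\mu)\|_{C^s}\|u\|_{L^2}$ after the $\ell^2_\lambda$-summation. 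Since the curl block has constant coefficients and commutes with $S_\lambda$, all these commutators originate from the $\partial_t(\varepsilon\,\cdot)$ and $\partial_t(\mu\,\cdot)$ blocks — the only place where the rough coefficients meet a derivative in $P$ — which is what dictates both the form of the hypotheses and the need for the small paraproduct argument above.
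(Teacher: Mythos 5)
Your proposal is correct and follows essentially the same route as the paper: low frequencies are absorbed into $\|u\|_{L^2}$, the spatial localisation is the standard partition-of-unity argument (left implicit in the paper), and the heart of the matter is replacing $P_\lambda S_\lambda u$ by $S_\lambda Pu$ up to commutator and coefficient-truncation errors of size $\lambda^{1-s}\|(\varepsilon,\mu)\|_{C^s}\|u\|_{L^2}$, which the weight $\lambda^{\frac{s-2}{2}}$ turns into the $\ell^2_\lambda$-summable $\lambda^{-s/2}\|u\|_{L^2}$. Two harmless inaccuracies: the operator $|D|^{-\rho+\frac{s-2}{4}}S_0$ is not convolution with an $L^1$ kernel (its symbol is singular at $\xi=0$ and the kernel decays too slowly in $\R^4$), so one should instead sum Bernstein's inequality over dyadic blocks $\mu\le 1$, where the net exponent $\frac{s}{4}>0$ gives the convergence; and in $S_\lambda(P_\lambda-P)u$ the high--high interactions (coefficient and $u$ both at frequency $\gg\lambda$) are not controlled by $\|S_{\sim\lambda}u\|_{L^2}$ alone, but the slightly weaker bound $\lambda^{1-s}\|(\varepsilon,\mu)\|_{C^s}\|u\|_{L^2}$ — which is what the paper itself records — still suffices for the summation.
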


The estimate corresponds to Strichartz estimates from \cite{Tataru2000} for wave equations with rough coefficients
\emph{with integrability exponents as in the two-dimensional case}. The reason that we cannot hope for the Strichartz range of the
wave equation in three dimensions is that the characteristic surface is no longer regular, but has conical singularities.
In \cite{Tataru2001,Tataru2002} Tataru used the regularity of the Hamilton flow to recover the Euclidean Strichartz estimates locally-in-time for $C^2$-coefficients and coefficients with $\partial^2 g \in L_t^1 L_{x'}^\infty$. In the present context it is not clear how to take advantage of the Hamilton flow due to the strong system character. For this reason, we do not recover the derivative loss over scaling as for scalar wave equations with H\"older-continuous coefficients in two space dimensions proved in \cite{Tataru2001}. (Also note that the results in \cite{Tataru2001} were shown to be essentially optimal in \cite{SmithTataru2002}).

\medskip

In a first step we show the above reduction result.

\subsection{Proof of Proposition \ref{prop:ReductionFrequencyLocalizedEstimate}}
Choosing $\lambda_0$ large enough, the uniform ellipticity is preserved because
\begin{equation*}
\| \kappa_{\gtrsim \lambda} \|_{L^\infty_x} \lesssim \lambda^{-s} \| \kappa_{\gtrsim \lambda} \|_{C^s_{x}}, \quad \kappa \in \{ \varepsilon; \mu \}.
\end{equation*}
Using Sobolev's embedding,  we first  estimate  low frequencies by
\[\| |D|^{-\rho + \frac{s-2}{4}} S_{0} u \|_{L^p L^q} \lesssim \| |D|^{\frac12 + \frac{s-2}{4}} S_{0} u \|_{L^2}
  \lesssim \|S_{0} u \|_{L^2}.\]
It remains to show the estimate
\begin{align*}
\| |D|^{-\rho + \frac{s-2}{4}} S_{\geq 1} u \|_{(L^p L^q)_2} &\lesssim (1+\| (\varepsilon,\mu) \|_{C^s}) \| u \|_{L^2}
    + \| |D|^{\frac{s-2}{2}} P u \|_{L^2} \\
   & \quad + \| |D|^{-\frac{1}{2}+\frac{s-2}{4}}  \rho_{em} \|_{L^2}
\end{align*}
for high frequencies.
To deduce this inequality from \eqref{eq:FrequencyLocalizedEstimate} by means of the definition of $(L^p L^q)_{2}$,
we have to bound $\|P_\lambda S_\lambda u\|_{L^2}$  by   $\|S_\lambda Pu\|_{L^2}$ plus summable error terms. We write
$\tilde{S}_\lambda = \sum_{\mu = \lambda/8}^{8 \lambda} S_\mu$ and $P_{\sim\lambda}$ for the operator $P$ with coefficients
frequency localized near $\lambda$. We then obtain
\begin{align*}
\| \tilde{S}_\lambda P_\lambda S_\lambda u \|_{L^2}
  &\le  \| \tilde{S}_\lambda P S_\lambda u \|_{L^2}  + \| \tilde{S}_\lambda P_{\sim\lambda}S_\lambda u \|_{L^2}\\
&\leq \| S_\lambda P u \|_{L^2} + \| \tilde{S}_\lambda [P,S_\lambda] u \|_{L^2}
    +\|\tilde{S}_\lambda P_{\sim\lambda} S_\lambda u\|_{L^2}.
\end{align*}
Since $P$ is in divergence form, we can factor out the derivatives and estimate
\begin{align*}
\| \tilde{S}_\lambda [P,S_\lambda] u \|_{L^2}
  &\lesssim \lambda \| [(\varepsilon,\mu),S_\lambda] u \|_{L^2} \lesssim \lambda^{1-s} \| (\varepsilon,\mu) \|_{C^s} \| u \|_{L^2},\\
\|\tilde{S}_\lambda P_{\sim\lambda} S_\lambda u\|_{L^2}
  &\lesssim \lambda^{1-s}\| \lambda^s (\tilde{S}_\lambda(\varepsilon,\mu))  S_\lambda u\|_{L^2}
   \lesssim \lambda^{1-s} \| (\varepsilon,\mu) \|_{C^s} \| u \|_{L^2}.
\end{align*}
In the last step, we used a well-known kernel estimate.
This finishes the proof of Proposition \ref{prop:ReductionFrequencyLocalizedEstimate}. $\hfill \Box$

\medskip

In the following we tacitly suppose $\varepsilon$ and $\mu$ are truncated to frequencies $< \lambda / 8$ to lighten the notation.

\subsection{Applying the FBI transform}
In this section we prove \eqref{eq:FrequencyLocalizedEstimate}. In the following we use the FBI transform to reduce to an estimate in phase space. We refer to \cite[Section~2]{SchippaSchnaubelt2022} for further explanations.

For $\lambda \in 2^{\mathbb{Z}}$, the FBI transform of an integrable function $f:\R^m \to \C$ is defined by
\begin{equation*}
\begin{split}
T_\lambda f(z) &= C_m\lambda^{\frac{3m}{4}}\int_{\R^m} e^{-\frac{\lambda}{2}(z-y)^2} f(y) dy,\quad z = x-i\xi \in T^*\R^m \simeq \R^{2m},\\
C_m &= 2^{-\frac{m}{2}} \pi^{-\frac{3m}{4}}.
\end{split}
\end{equation*}
The FBI transform is an isometric mapping $T_\lambda : L^2(\R^m) \to L^2_\Phi(T^* \R^{m})$, where $\Phi(z) = e^{-\lambda \xi^2}$.
But $T_\lambda$ is not surjective because $T_\lambda f$ is holomorphic (which motivates to write $z = x-i\xi$).
The adjoint in $L^2_\Phi$ provides the inversion formula $T^*_\lambda T_\lambda f=f$ with
\begin{equation*}
T_\lambda^* F(y) = C_m \lambda^{\frac{3m}{4}} \int_{\R^{2m}} e^{-\frac{\lambda}{2}(\bar{z}-y)^2} \Phi(z) F(z) dx d\xi.
\end{equation*} 
Originally, the FBI transform was used in the context of microlocal analysis to obtain an expansion of analytic symbols. Tataru reversed
the logic in \cite{Tataru2000,Tataru2001} to find expansions of rough symbols $a \in C^s_x C^\infty_c$, which satisfy suitable error
bounds. Let $a \in C^s_x C^\infty_c$ with $a(x,\xi) = 0$ for $\xi \notin B(0,2) \backslash B(0,1/2)$ and
\begin{equation*}
\tilde{a}^s_\lambda = \sum_{|\alpha| + |\beta| < s} (\partial_\xi - \lambda \xi)^\alpha \frac{\partial_x^\alpha \partial_\xi^\beta a(x,\xi)}{|\alpha|! |\beta|! (-i \lambda)^{|\alpha|} \lambda^{|\beta|}} ( \frac{1}{i} \partial_x - \lambda \xi)^\beta.
\end{equation*}
For $s \leq 1$, we have $\tilde{a}^s_\lambda = a$. Tataru proved the following approximation theorem in
\cite[Theorem~5]{Tataru2001} and \cite[Theorem~2]{Tataru2000}.
\begin{theorem} \label{thm:ApproximationTheoremFBITransform}
Suppose that $a \in C^s_x C^\infty_c$. Set $a_\lambda(x,\xi) = a(x,\xi/\lambda)$ and $R^s_\lambda = T_\lambda a_\lambda(x,D) - \tilde{a}^s_\lambda(x,\xi) T_\lambda$. Then,
\begin{equation}
\label{eq:FBITransformErrorEstimateI}
\| R^s_\lambda \|_{L^2 \to L^2_\Phi} \lesssim \lambda^{-\frac{s}{2}}.
\end{equation}
Moreover, if $\partial a \in L^2_{x_0} L^\infty_{x'}$, then
\begin{equation}
\label{eq:FBITransformErrorEstimateII}
 \| R^s_\lambda \|_{L^\infty_{x_0} L^2_{x'} \to L^2_\Phi} \lesssim \lambda^{-\frac{1}{2}}.
\end{equation}
\end{theorem}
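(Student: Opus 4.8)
This approximation theorem is due to Tataru \cite[Theorem~5]{Tataru2001}, \cite[Theorem~2]{Tataru2000}; here is the shape of the argument in the range $0<s\le1$ relevant to us, where $\tilde a^s_\lambda=a$ and the claim reduces to
\[
\big\| T_\lambda a_\lambda(x,D)-a(x,\xi)\,T_\lambda\big\|_{L^2\to L^2_\Phi}\lesssim\lambda^{-s/2},
\]
$a(x,\xi)$ now denoting the operator on $L^2_\Phi$ of multiplication by the phase-space function $(x,\xi)\mapsto a(x,\xi)$. Since $T_\lambda$ is an isometry with $T_\lambda^*T_\lambda=\mathrm{Id}$, one has $R^s_\lambda=\big(T_\lambda a_\lambda(x,D)T_\lambda^*-a(x,\xi)\big)T_\lambda$, so it suffices to bound the operator $B:=T_\lambda a_\lambda(x,D)T_\lambda^*-a(x,\xi)$ on $L^2_\Phi$. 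The operator $B$ has an explicit kernel: inserting the Gaussian kernels of $T_\lambda$ and $T_\lambda^*$ and the oscillatory kernel of $a_\lambda(x,D)$, and carrying out the (exact, Gaussian) integrations, one obtains a kernel $K(z,w)$ which, up to lower-order terms, is the Bergman-type kernel of $T_\lambda T_\lambda^*$ — a Gaussian of width $\lambda^{-1/2}$ concentrated on $z=w$ — times $a$ evaluated at the critical point $(y,\zeta)=\big(\frac12(z+\bar w),\frac{i}{2}(z-\bar w)\big)$ of the quadratic phase; for $w=z$ this critical point is $(x,\xi)$, which is how the term $a(x,\xi)$ arises, with normalisation constant exactly $1$ — seen either from a direct Gaussian computation or from testing with $a\equiv\text{const}$, where $B=0$ by $T_\lambda^*T_\lambda=\mathrm{Id}$.

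I would then decompose $a$ into Littlewood--Paley pieces $a=\sum_k a^k$ in the $x$-variable, with $\|a^k\|_{L^\infty}+2^{-k}\|\partial_x a^k\|_{L^\infty}\lesssim 2^{-ks}\|a\|_{C^s_x}$, and split at the FBI uncertainty scale $\lambda^{1/2}$. For the high pieces $2^k\ge\lambda^{1/2}$ no cancellation is needed: one has $\|a^k_\lambda(x,D)\|_{L^2\to L^2}\lesssim\|a^k\|_{L^\infty}$ by Schur's test (integrating by parts only in $\xi$, where the symbol is smooth and compactly supported), and multiplication by $a^k$ on $L^2_\Phi$ has norm $\le\|a^k\|_{L^\infty}$; since $T_\lambda$ is an isometry, $\|T_\lambda a^k_\lambda(x,D)-a^k(x,\xi)T_\lambda\|\lesssim\|a^k\|_{L^\infty}\lesssim 2^{-ks}\|a\|_{C^s_x}$, and summing over $2^k\ge\lambda^{1/2}$ gives $\lesssim\lambda^{-s/2}\|a\|_{C^s_x}$.

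The low pieces $2^k\le\lambda^{1/2}$ carry the main term and need the kernel analysis above. After extracting $a^k(x,\xi)T_\lambda$, the remaining kernel is again the Gaussian of width $\lambda^{-1/2}$ in $z-w$, now weighted by $a^k(y,\zeta)-a^k(x,\zeta)$, which is pointwise $\lesssim\|\partial_x a^k\|_{L^\infty}|y-x|$ (the $\zeta$-variable is smooth with compact support and is handled by Taylor expansion, each further $\zeta$-derivative costing $\lambda^{-1}$). A Schur-test estimate with this kernel gains a factor $\lambda^{-1/2}$ from the weight $|y-x|$ against the Gaussian, giving operator norm $\lesssim\|\partial_x a^k\|_{L^\infty}\lambda^{-1/2}\lesssim 2^{k(1-s)}\lambda^{-1/2}\|a\|_{C^s_x}$; summing over $2^k\le\lambda^{1/2}$ (a sum dominated by its top term since $1-s\ge0$; the borderline $s=1$ is instead handled directly by the Lipschitz bound $|a(y,\zeta)-a(x,\zeta)|\le\|\partial_x a\|_{L^\infty}|y-x|$, without decomposition) once more produces $\lesssim\lambda^{-s/2}\|a\|_{C^s_x}$. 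Combining the two regimes gives \eqref{eq:FBITransformErrorEstimateI}; for $s>1$ one Taylor-expands in $x$ to higher order and the higher-order terms in the sum defining $\tilde a^s_\lambda$ are precisely those Taylor coefficients transported to phase space, with remainder of order $\lambda^{-s/2}$ as before.

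For \eqref{eq:FBITransformErrorEstimateII} one does not decompose in $x$, but writes, directly in the kernel of the previous paragraph, $a(y,\zeta)-a(x,\zeta)=\int_0^1\partial_x a\big(x+t(y-x),\zeta\big)\cdot(y-x)\,dt$. The factor $|y-x|$ again produces $\lambda^{-1/2}$ from the Gaussian, while the resulting factor of $\partial_x a$ is integrated in the $x_0$-variable against the $L^\infty_{x_0}L^2_{x'}$-norm of the input; Cauchy--Schwarz in $x_0$ converts $\|\partial_x a\|_{L^\infty}$ into $\|\partial a\|_{L^2_{x_0}L^\infty_{x'}}$, which is the hypothesis, and yields $\|R^s_\lambda\|_{L^\infty_{x_0}L^2_{x'}\to L^2_\Phi}\lesssim\lambda^{-1/2}\|\partial a\|_{L^2_{x_0}L^\infty_{x'}}$. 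The main obstacle throughout is the bookkeeping of this oscillatory Gaussian kernel: one must verify that the contributions away from the critical point decay like $e^{-c\lambda|z-w|^2}$ so that Schur's test applies despite the weight $\Phi$ and the complex (almost-analytic) Gaussians, and that the leading constant equals $1$; the low Hölder regularity $s\le1$ in $x$ is exactly what forces the dyadic split at $\lambda^{1/2}$ rather than a clean Taylor expansion.
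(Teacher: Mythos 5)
The paper does not prove this theorem at all: it is quoted verbatim from Tataru (\cite[Theorem~2]{Tataru2000}, \cite[Theorem~5]{Tataru2001}), so there is no in-paper argument to compare against. Your sketch is a correct reconstruction of Tataru's kernel-based proof: the reduction to the conjugated operator $T_\lambda a_\lambda(x,D)T_\lambda^*-a(x,\xi)$ on the range of $T_\lambda$, the explicit Gaussian computation identifying the critical point $\bigl(\tfrac12(z+\bar w),\tfrac{i}{2}(z-\bar w)\bigr)$ and the width-$\lambda^{-1/2}$ reproducing kernel, the $\lambda^{-1/2}$ gain from the weight $|y-x|$ against the Gaussian, and the pairing of $\partial a\in L^2_{x_0}L^\infty_{x'}$ against inputs in $L^\infty_{x_0}L^2_{x'}$ for \eqref{eq:FBITransformErrorEstimateII} are all the right ingredients. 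One remark: the Littlewood--Paley decomposition of $a$ in $x$ with the split at $2^k\sim\lambda^{1/2}$ is an unnecessary detour in the range $0<s\le1$. Tataru estimates the remainder directly via $|a(y,\zeta)-a(x,\zeta)|\le\|a\|_{C^s_x}|y-x|^s$ and $\int\lambda^{m/2}|y-x|^s e^{-c\lambda|y-x|^2}\,dy\lesssim\lambda^{-s/2}$, which handles all $s\le1$ uniformly; your dyadic route reproduces this but, as you yourself note, degenerates logarithmically at $s=1$ and has to be patched there by exactly this direct bound, so you may as well use it throughout. The only step of your outline that genuinely requires care in a full write-up is the last one: in \eqref{eq:FBITransformErrorEstimateII} there are three time-like variables in play (the phase-space variable $x_0$, the intermediate integration variable, and the input's $y_0$), and one must check that Cauchy--Schwarz can be applied so that $\partial a$ is measured in $L^2$ of the correct variable while $f$ keeps its $L^\infty_{x_0}$ norm; this works because the Gaussian localizes all three to within $O(\lambda^{-1/2})$ of each other, but it is bookkeeping rather than a one-line substitution.
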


We return to the proof of \eqref{eq:FrequencyLocalizedEstimate}. Write $P_\lambda S_\lambda u = J_\lambda$ and $T_\lambda S_\lambda u = v_\lambda$, $T_\lambda (J_\lambda / \lambda)= f_\lambda$. An application of Theorem \ref{thm:ApproximationTheoremFBITransform} yields
\begin{equation*}
p(x,\xi) v_\lambda = f_\lambda + g_\lambda \quad
       \text{with \ } \| g_\lambda \|_{L^2_\Phi} \lesssim_{ \| (\varepsilon,\mu)  \|_{C^s}} \lambda^{-\frac{s}{2}} \| S_\lambda u \|_{L^2}
\end{equation*}
and
\begin{equation*}
p(x,\xi) = 
\begin{pmatrix}
-i \xi_0 \varepsilon & i \mathcal{C}(\xi') \\
i \mathcal{C}(\xi') & i \xi_0 \mu
\end{pmatrix}
\in \C^{6 \times 6}
.
\end{equation*}
In the above display we denote $\mathcal{C}(\xi') = (- \varepsilon^{ijk} \xi_k)_{ij}$ with $(\varepsilon^{ijk})$ the Levi-Civita symbol such that $(\nabla \times f) \widehat (\xi) = i \mathcal{C}(\xi') \hat{f}(\xi)$. The estimate \eqref{eq:FrequencyLocalizedEstimate} becomes
\begin{equation*}
\lambda^{-\rho + \frac{s-2}{4}} \| T_\lambda^* v_\lambda \|_{L^p L^q} \lesssim (1+\| (\varepsilon,\mu) \|_{C^s}) \| v_\lambda \|_{L^2_\Phi} + \lambda^{\frac{s}{2}} \| f_\lambda \|_{L^2_\Phi} + \lambda^{\frac{s-2}{4} - \frac{1}{2} } \| S_\lambda \rho_{em} \|_{L^2}
\end{equation*}
by invoking \eqref{eq:FBITransformErrorEstimateI}. 
It thus suffices to prove
\begin{equation}
\label{eq:EstimateinPhaseSpace}
\| T_\lambda^* v_\lambda \|_{L^p L^q} \lesssim \lambda^\rho ( \lambda^{\frac{2-s}{4}} \| v_\lambda \|_{L^2_\Phi} + \lambda^{\frac{2+s}{4}} \| p(x,\xi) v_\lambda \|_{L^2_\Phi} + \lambda^{-\frac{1}{2}} \| S_\lambda \rho_{em} \|_{L^2} ).
\end{equation}

By non-stationary phase, we can suppose that $\text{supp} (v_\lambda) \subseteq B(0,1) \times \{ 1/4 \leq |\xi| \leq 4 \} =: U$ (up to a negligible error). In the following we reduce the vector-valued estimate to a scalar one. For this purpose we adapt arguments from the analysis of the constant-coefficient time-harmonic case given in \cite{MandelSchippa2022}. Let
\begin{equation}
\label{eq:Characteristic}
  \tilde{q}(x,\xi)= - \xi_0^2 q(x,\xi)
  := - \xi_0^2 (\xi_0^4 - \xi_0^2 q_0(x,\xi) + q_1(x,\xi)) ,
\end{equation}
where
\begin{align*}
q_0(x,\xi) &= \xi_1^2 \big( \frac{1}{\varepsilon_2 \mu_3} + \frac{1}{\mu_2 \varepsilon_3} \big) + \xi_2^2 \big( \frac{1}{\varepsilon_1 \mu_3} + \frac{1}{\mu_1 \varepsilon_3} \big) + \xi_3^2 \big( \frac{1}{\varepsilon_1 \mu_2} + \frac{1}{\varepsilon_2 \mu_1} \big), \\
q_1(x,\xi) &= \frac{1}{\varepsilon_1 \varepsilon_2 \varepsilon_3 \mu_1 \mu_2 \mu_3} (\varepsilon_1 \xi_1^2 + \varepsilon_2 \xi_2^2 + \varepsilon_3 \xi_3^2) (\mu_1 \xi_1^2 + \mu_2 \xi_2^2 + \mu_3 \xi_3^2).
\end{align*}
The dependence of $\varepsilon$ and $\mu$ on $x$ is suppressed in the above display for the sake of brevity.
\begin{proposition}
\label{prop:ScalarReduction}
For the proof of \eqref{eq:EstimateinPhaseSpace} under the assumptions of Proposition \ref{prop:ReductionFrequencyLocalizedEstimate}, it suffices to show the estimate
\begin{equation}
\label{eq:ScalarReducedEstimate}
\| T_\lambda^* w_\lambda \|_{L^p L^q} \lesssim \lambda^\rho (\lambda^{\frac{2-s}{4}} \| w_\lambda \|_{L^2_\Phi} + \lambda^{\frac{2+s}{4}} \| q(x,\xi) w_\lambda \|_{L^2_\Phi} )
\end{equation}
for $w_\lambda:\R^4 \times \R^4 \to \C$ with $\text{supp} (w_\lambda) \subseteq [-1,1]^4 \times \{(\xi_0,\xi') \in \R \times \R^3 : |\xi_0| \sim |\xi'| \sim 1 \}$.
\end{proposition}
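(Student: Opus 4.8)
The goal of Proposition~\ref{prop:ScalarReduction} is to reduce the $6\times 6$ system $p(x,\xi) v_\lambda = f_\lambda + g_\lambda$ to the scalar equation governed by the Fresnel determinant $q(x,\xi)$. The first step is to identify $q$ as (up to the overall factor $-\xi_0^2$) the determinant of $p(x,\xi)$: block elimination of the $6\times 6$ matrix, using the upper-left block $-i\xi_0\varepsilon$ (invertible on the relevant frequency region $|\xi_0|\sim 1$), yields a Schur complement whose determinant is exactly $\xi_0^6 q(x,\xi)$ after expanding the quadratic forms $\mathcal C(\xi')\varepsilon^{-1}\mathcal C(\xi')$ and $\mu$; this is the computation already carried out in \cite{MandelSchippa2022} in the constant-coefficient case, and here it is done pointwise in $x$. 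Since $\det p$ is (a multiple of) $q$, on the support $U$ the matrix $p(x,\xi)$ has a one-dimensional kernel precisely where $q$ vanishes, and $\|p(x,\xi)^{-1}\|$ is comparable to $|q(x,\xi)|^{-1}$ times smooth bounded factors away from that kernel.

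**The symmetrizer.** The key algebraic input is a matrix symmetrizer: I would construct a smooth, bounded, invertible $6\times 6$ matrix-valued symbol $M(x,\xi)$ on $U$ such that $M(x,\xi) p(x,\xi)$ has the form $q(x,\xi)\,\Pi(x,\xi) + (\text{elliptic part})$, where $\Pi$ is the rank-one spectral projector onto the kernel direction and the elliptic part is invertible with bounded inverse. Concretely, write $p = p_{\mathrm{ell}}\oplus (q\cdot \text{(scalar on kernel)})$ in the eigenbasis of the (self-adjoint, after conjugating by $\mathrm{diag}(\varepsilon^{1/2},\mu^{1/2})$) pencil; the separation condition \eqref{eq:SeparationEigenvalues} guarantees that the non-degenerate eigenvalues of $p$ stay bounded away from zero uniformly in $x$ and in $\xi\in U$, so that $p_{\mathrm{ell}}^{-1}$ is a bounded $C^s_x$ symbol. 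Setting $w_\lambda = \langle e(x,\xi), v_\lambda\rangle$ with $e$ the (normalized, $C^s$ in $x$) kernel eigenvector, and multiplying $p v_\lambda = f_\lambda + g_\lambda$ by the symmetrizer, one peels off the elliptic directions: their contribution to $v_\lambda$ is $O(\|f_\lambda\|_{L^2_\Phi} + \|g_\lambda\|_{L^2_\Phi})$ pointwise, which after applying $T_\lambda^*$ and Sobolev embedding (exactly as in the low-frequency estimate) is dominated by the right-hand side of \eqref{eq:EstimateinPhaseSpace}. The surviving piece is $e(x,\xi) w_\lambda$, and $q(x,\xi) w_\lambda$ is controlled by $\|p v_\lambda\|_{L^2_\Phi} + (\text{elliptic errors})$. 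One must also check that multiplication by the $C^s_x$ symbols $M$, $e$ is bounded on $L^2_\Phi$ and essentially commutes with $T_\lambda^*$ up to errors of size $\lambda^{-s/2}\|S_\lambda u\|_{L^2}$, which follows from the same FBI calculus (Theorem~\ref{thm:ApproximationTheoremFBITransform}) already invoked; the $\lambda$-powers in \eqref{eq:ScalarReducedEstimate} versus \eqref{eq:EstimateinPhaseSpace} are arranged precisely to absorb these losses.

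**The charge term.** The subtle point — and the reason $\rho_{em}$ appears in \eqref{eq:EstimateinPhaseSpace} but not in \eqref{eq:ScalarReducedEstimate} — is the degeneracy of $p(x,\xi)$ as $|\xi_0|\to 0$ relative to $|\xi'|$: there $\det p$ vanishes to high order and the kernel of $p$ acquires extra dimensions (the gradient/longitudinal modes). On the region $|\xi_0|\ll|\xi'|$ the symmetrizer above degenerates, so I would treat this region separately: there the divergence constraints $\nabla\cdot(\varepsilon\mathcal E)=\rho_e$, $\nabla\cdot(\mu\mathcal H)=\rho_m$ pin down the longitudinal part of $v_\lambda$ in terms of $S_\lambda\rho_{em}$, contributing the term $\lambda^{-1/2}\|S_\lambda\rho_{em}\|_{L^2}$ after applying $T_\lambda^*$ and Bernstein. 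Microlocally cutting off to $|\xi_0|\sim|\xi'|\sim 1$ — which is legitimate by non-stationary phase since, after the reduction to the unit cube, $v_\lambda$ is already morally frequency-localized — then leaves exactly the hypothesis on $\mathrm{supp}(w_\lambda)$ in \eqref{eq:ScalarReducedEstimate}.

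**Main obstacle.** The technical heart is showing that the symmetrizer $M(x,\xi)$ and the kernel eigenvector $e(x,\xi)$ can be chosen \emph{smooth in $\xi$ and $C^s$ in $x$, uniformly on $U$}, including near the conical singularities of the Fresnel surface where two sheets of $\{q=0\}$ cross. At such points the kernel of $p$ is still one-dimensional generically but the eigenprojection is only Lipschitz, not smooth, across the singular set; one has to check that the \emph{product} $e(x,\xi)w_\lambda$ and the quantity $q w_\lambda$ remain well-defined and that the estimates survive — this is where the dyadic rescaling around the singularities (announced after Theorem~\ref{thm:LocalStrichartzEstimatesFullyAnisotropic}) and the quantitative analysis of the Fresnel surface from \cite{MandelSchippa2022} enter. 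Verifying that the separation condition \eqref{eq:SeparationEigenvalues} is exactly what makes the elliptic block uniformly invertible — and thus that no regularity is lost in passing to $w_\lambda$ — is the crux.
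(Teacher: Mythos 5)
Your overall architecture is close to the paper's: symmetrize, identify $q$ with the determinant of the symmetrized blocks, split into the regions $|\xi_0|\gg|\xi'|$, $|\xi_0|\ll|\xi'|$ (where the $\varepsilon$-longitudinal part of $v_{\lambda,1}$ is controlled by the charge, producing the $\lambda^{-1/2}\|S_\lambda\rho_{em}\|_{L^2}$ term), and $|\xi_0|\sim|\xi'|\sim1$. The gap is in the mechanism you propose for the scalar reduction in the main region: writing $Mp=q\,\Pi+(\text{elliptic})$ with $\Pi$ a rank-one spectral projector and setting $w_\lambda=\langle e(x,\xi),v_\lambda\rangle$ for a kernel eigenvector $e$. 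Near the four conical points of the Fresnel surface the kernel of $M_E(x,\xi)-\xi_0^2$ becomes two-dimensional and the one-dimensional eigenprojection has no continuous limit (the limiting kernel direction depends on the sheet along which the singular point is approached), so $e$ and $\Pi$ cannot be chosen bounded and continuous on the whole support, let alone smooth in $\xi$ and $C^s$ in $x$. You flag this yourself as the ``crux'' but leave it unresolved; as stated, the construction fails exactly where the analysis is hardest --- the singular points are what force the two-dimensional admissibility range in the first place.

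The paper's proof avoids eigenvectors entirely and this is the point you are missing. The symmetrizer $\sigma(x,\xi)$ gives $\sigma p=\mathrm{diag}(M_E-\xi_0^2,\,M_H-\xi_0^2)$ with bounded entries, and the reduction to \eqref{eq:ScalarReducedEstimate} in the region $|\xi_0|\sim|\xi'|\sim1$ is done \emph{componentwise} via the adjugate identity $Z_{\varepsilon,\mu}\tfrac{\varepsilon}{\varepsilon_1\varepsilon_2\varepsilon_3}(M_E-\xi_0^2)=q(x,\xi)\,1_{3\times3}$: applying the scalar estimate to each component $v_{\lambda,1,k}$, the term $\|q\,v_{\lambda,1,k}\|_{L^2_\Phi}$ is bounded by $\|(M_E-\xi_0^2)v_{\lambda,1}\|_{L^2_\Phi}$ because the entries of $Z_{\varepsilon,\mu}$ are uniformly bounded polynomials in $\xi$ and in the coefficients --- no diagonalization, no projector, hence no regularity issue at the conical points. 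The same adjugate, restricted to $\varepsilon$-divergence-free vectors where $Z_{\varepsilon,\mu}$ collapses to $\xi_0^2\tilde Z^{\mathrm{eff}}$, is also what yields the lower bound $\|(M_E-\xi_0^2)v^s\|\gtrsim\|v^s\|$ on the transversal part in the region $|\xi_0|\ll|\xi'|$ --- a step your sketch needs (the charge only pins down the longitudinal part) but does not supply. If you replace the spectral projector by this adjugate argument, and note that the separation condition \eqref{eq:SeparationEigenvalues} enters only through the uniform curvature bounds and the location of the singular points rather than through invertibility of an ``elliptic block,'' the rest of your outline goes through.
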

\begin{proof}
In the first step, we separate the evolution of the $\mathcal{E}$- and $\mathcal{H}$-variables by passing to a second order system. We multiply $p$ with the symmetrizer $\sigma$
\begin{equation*}
\sigma(x,\xi) = 
\begin{pmatrix}
-i \xi_0 \varepsilon^{-1} & i \varepsilon^{-1} \mathcal{C}(\xi') \mu^{-1} \\
i \mu^{-1} \mathcal{C}(\xi') \varepsilon^{-1} & i \xi_0 \mu^{-1}
\end{pmatrix}
\in \C^{6 \times 6}
\end{equation*}
 (cf. \cite[Proposition~1.2]{MandelSchippa2022}), obtaining
\begin{equation}\label{eq:ME-MH}
\sigma(x,\xi) p(x,\xi) = 
\begin{pmatrix}
M_E(x,\xi) - \xi_0^2 & 0 \\
0 & M_H(x,\xi) - \xi_0^2
\end{pmatrix}
.
\end{equation}
Here we set
\begin{align*}
M_E(x,\xi)= - \varepsilon^{-1}(x) \mathcal{C}(\xi') \mu^{-1}(x) \mathcal{C}(\xi'), \quad
M_H(x,\xi) = -\mu^{-1}(x) \mathcal{C}(\xi') \varepsilon^{-1}(x) \mathcal{C}(\xi').
\end{align*}
Moreover, in \cite[p.~1835,~Eq.~(13)]{MandelSchippa2022} it was pointed out that
\begin{equation*}
 q(x,\xi) = \det(M_E(x,\xi') - \xi_0^2) = \det(M_H(x,\xi') - \xi_0^2).
\end{equation*}

Let $v_\lambda = (v_{\lambda,1} , v_{\lambda,2})$ denote the grouping into $3$-vectors. By boundedness of the entries of $\sigma(x,\xi)$ for $\xi \in U$, it suffices to prove
\begin{equation*}
\| T_\lambda^* v_\lambda \|_{L^p L^q} \lesssim \lambda^\rho ( \lambda^{\frac{2-s}{4}} \| v_\lambda \|_{L^2_\Phi} + \lambda^{\frac{2+s}{4}} \| \sigma(x,\xi) p(x,\xi) v_\lambda \|_{L^2_\Phi} + \lambda^{-\frac{1}{2}} \| S_\lambda \rho_{em} \|_{L^2})
\end{equation*}
which can be written  as
\begin{align}
\label{eq:PhaseSpaceEstimateA}
\| T_\lambda^* v_{\lambda,1} \|_{L^p L^q} &\lesssim \lambda^\rho (\lambda^{\frac{2-s}{4}} \| v_{\lambda,1} \|_{L^2_\Phi}
   + \lambda^{\frac{2+s}{4}} \| (M_E(x,\xi') - \xi_0^2) v_{\lambda,1} \|_{L^2_\Phi} \notag\\
   &\quad + \lambda^{-\frac{1}{2}} \| S_\lambda \rho_e \|_{L^2}), \\
\| T_\lambda^* v_{\lambda,2} \|_{L^p L^q} &\lesssim \lambda^\rho (\lambda^{\frac{2-s}{4}} \| v_{\lambda,2} \|_{L^2_\Phi}
  + \lambda^{\frac{2+s}{4}} \| (M_H(x,\xi') - \xi_0^2) v_{\lambda,2} \|_{L^2_\Phi} \notag\\
  &\quad+ \lambda^{-\frac{1}{2}} \| S_\lambda \rho_m \|_{L^2} ).\label{eq:PhaseSpaceEstimateB}
\end{align}
The estimates will be proved in the regions
\begin{itemize}
 \item[(1)] $\{ 1 \sim |\xi_0| \gg |\xi'| \}$,
 \item[(2)] $\{ 1 \sim |\xi'| \gg |\xi_0| \}$, \ for which we have to consider the charges,
 \item[(3)] $\{ 1 \sim |\xi_0| \sim |\xi'| \}$.
\end{itemize}

The region $\{ 1 \sim |\xi_0| \gg |\xi'| \}$ is easy to handle. Here we have
\begin{equation*}
\| (M_X(x,\xi') - \xi_0^2) v_\lambda \|_{L^2_\Phi} \gtrsim \| v_\lambda \|_{L^2_\Phi}, \quad X \in \{ E; H \},
\end{equation*}
and so the claim follows by Sobolev embedding via
\begin{equation*}
\| T_\lambda^* v_{\lambda,i} \|_{L^p L^q} \lesssim \lambda^{\rho + \frac{1}{2}} \| v_{\lambda,i} \|_{L^2_\Phi} \lesssim \lambda^{\rho + \frac{1}{2} + \frac{s}{4}} \| (M_X(x,\xi') - \xi_0^2) v_\lambda \|_{L^2_\Phi}.
\end{equation*}

\smallskip

For the second region $\{|\xi_0| \ll |\xi'| \sim 1 \}$ the charges come into play. We decompose $v_{\lambda,1} = v_{\lambda,1}^s + v_{\lambda,1}^p$ with
\begin{equation*}
v_{\lambda,1}^p = \frac{(\xi'.\varepsilon^\lambda v_{\lambda,1}) \xi'}{|\xi'|^2_\varepsilon}, \quad
  |\xi'|^2_{\varepsilon} = \xi'.\varepsilon^\lambda \xi',
\end{equation*}
 again indicating  the frequency cutoff of the coefficients temporarily.
The contribution of $v_{\lambda,1}^p$ is estimated by Sobolev embedding:
\begin{equation*}
\| T_\lambda^* v_{\lambda,1}^p \|_{L^p L^q} \lesssim \lambda^{\rho + \frac{1}{2}} \| v_{\lambda,1}^p \|_{L^2_\Phi} \lesssim \lambda^{\rho + \frac{1}{2}} \| (\frac{\xi'}{|\xi'|_\varepsilon}.\varepsilon^\lambda v_{\lambda,1}) \|_{L^2_\Phi}.
\end{equation*}
 Theorem \ref{thm:ApproximationTheoremFBITransform} and commutator estimates yield
\begin{equation*}
\| \frac{\xi'}{|\xi'|_\varepsilon}.(\varepsilon^\lambda v_{\lambda,1}) \|_{L^2_\Phi} \lesssim \lambda^{-\frac{s}{2}} \| u_\lambda \|_{L^2} + \lambda^{-1} \| S_\lambda \nabla'.(\varepsilon^\lambda \mathcal{E}) \|_{L^2}.
\end{equation*}
By expanding $\varepsilon^\lambda = \varepsilon + (\varepsilon - \varepsilon^\lambda)$, we find
\begin{equation*}
\begin{split}
\lambda^{-1} \| S_\lambda \nabla'.(\varepsilon^\lambda \mathcal{E}) \|_{L^2} &\lesssim \lambda^{-1} \| S_\lambda \nabla'. (\varepsilon \mathcal{E}) \|_{L^2} + \| (\varepsilon - \varepsilon^\lambda) \tilde{S}_\lambda \mathcal{E} \|_{L^2} \\
&\lesssim \lambda^{-1} \| S_\lambda \rho_e \|_{L^2} + \lambda^{-s} \| \tilde{S}_\lambda \mathcal{E} \|_{L^2}.
\end{split}
\end{equation*}
This is an acceptable estimate for $v_{\lambda,1}^p$ in terms of $\rho_e$ and $\mathcal{E}$.

We turn to $v_{\lambda,1}^s$ noting that  $\xi'. (\varepsilon^\lambda v_{1,\lambda}^s) = 0$.
In the following we show that $|(M_E(x,\xi)- \xi_0^2) v_1^\lambda| \gtrsim |v_1^\lambda|$ for $\xi'.(\varepsilon^\lambda v_1^\lambda) = 0$
and $\{|\xi_0| \ll |\xi'| \}$. We mit again the superscript $\lambda$ of the coefficients.
For $\xi_0 \neq 0$ we have $q(x,\xi) \neq 0$. Let $w = (M_E - \xi_0^2) v_1$. We have to show that
\begin{equation*}
|(M_E - \xi_0^2)^{-1} w | \lesssim |w|
\end{equation*}
noting that $\xi'.(\varepsilon w) = 0$. In \cite[Section~2.1]{MandelSchippa2022} the matrix $(M_E - \xi_0^2)$ was inverted by
\begin{equation*}
(M_E - \xi_0^2)^{-1} = q^{-1}(x,\xi) \text{adj}(M_E - \xi_0^2) = \frac{-1}{\xi_0^2( \xi_0^4 - \xi_0^2 q_0(x,\xi) + q_1(x,\xi))} Z_{\varepsilon,\mu} \frac{\varepsilon}{\varepsilon_1 \varepsilon_2 \varepsilon_3},
\end{equation*}
wher the components of $Z=Z_{\varepsilon,\mu}$ are given by
\begin{align*} 
  \begin{aligned}
  Z_{11}(\xi)
  &= \xi_1^2( \frac{\xi_1^2}{\mu_2\mu_3}+\frac{\xi_2^2}{\mu_1\mu_3}+\frac{\xi_3^2}{\mu_1\mu_2}) 
  - \xi_0^2(
  \frac{\eps_2}{\mu_2}\xi_1^2+\frac{\eps_3}{\mu_3}\xi_1^2
  +\frac{\eps_2}{\mu_1}\xi_2^2+\frac{\eps_3}{\mu_1}\xi_3^2) +\xi_0^4\eps_2\eps_3, \\
  Z_{12}(\xi)
  &= Z_{21}(\xi) 
  =  \xi_1\xi_2(\frac{\xi_1^2}{\mu_2\mu_3}+\frac{\xi_2^2}{\mu_1\mu_3}+\frac{\xi_3^2}{\mu_1\mu_2}-\xi_0^2\frac{\eps_3}{\mu_3}),
  \\
  Z_{13}(\xi)
  &= Z_{31}(\xi)
  =
  \xi_1\xi_3(\frac{\xi_1^2}{\mu_2\mu_3}+\frac{\xi_2^2}{\mu_1\mu_3}+\frac{\xi_3^2}{\mu_1\mu_2}-\xi_0^2\frac{\eps_2}{\mu_2}),
  \\
  Z_{22}(\xi)
  &= \xi_2^2( \frac{\xi_1^2}{\mu_2\mu_3}+\frac{\xi_2^2}{\mu_1\mu_3}+\frac{\xi_3^2}{\mu_1\mu_2}) 
  - \xi_0^2(
  \frac{\eps_1}{\mu_2}\xi_1^2+\frac{\eps_3}{\mu_3}\xi_2^2
  +\frac{\eps_1}{\mu_1}\xi_2^2+\frac{\eps_3}{\mu_2}\xi_3^2) +\xi_0^4\eps_1\eps_3, \\
  Z_{23}(\xi)
  &= Z_{32}(\xi)
   =\xi_2\xi_3(\frac{\xi_1^2}{\mu_2\mu_3}+\frac{\xi_2^2}{\mu_1\mu_3}+\frac{\xi_3^2}{\mu_1\mu_2}-\xi_0^2\frac{\eps_1}{\mu_1}),\\ 
  Z_{33}(\xi) 
  &= \xi_3^2( \frac{\xi_1^2}{\mu_2\mu_3}+\frac{\xi_2^2}{\mu_1\mu_3}+\frac{\xi_3^2}{\mu_1\mu_2}) 
  - \xi_0^2(
  \frac{\eps_1}{\mu_3}\xi_1^2+\frac{\eps_2}{\mu_3}\xi_2^2
  +\frac{\eps_1}{\mu_1}\xi_3^2+\frac{\eps_2}{\mu_2}\xi_3^2) +\xi_0^4\eps_1\eps_2.
\end{aligned}
\end{align*}
It is a crucial observation that $Z_{\varepsilon,\mu}$ will be applied to vectors $v$ with $\xi'. v = 0$. This reduces to
$Z_{\varepsilon,\mu}^{\text{eff}}$, having the coefficcients
\begin{align*} 
  \begin{aligned}
  Z^{\text{eff}}_{11}(\xi)
  &= - \xi_0^2(
  \frac{\eps_2}{\mu_2}\xi_1^2+\frac{\eps_3}{\mu_3}\xi_1^2
  +\frac{\eps_2}{\mu_1}\xi_2^2+\frac{\eps_3}{\mu_1}\xi_3^2) +\xi_0^4\eps_2\eps_3, \\
  Z^{\text{eff}}_{12}(\xi)
  &= Z^{\text{eff}}_{21}(\xi) 
  =  - \xi_1 \xi_2 \xi_0^2\frac{\eps_3}{\mu_3},
  \\
  Z^{\text{eff}}_{13}(\xi)
  &= Z^{\text{eff}}_{31}(\xi)
  =
  -\xi_0^2\frac{\eps_2}{\mu_2} \xi_1 \xi_3,
  \\
  Z^{\text{eff}}_{22}(\xi)
  &= - \xi_0^2(
  \frac{\eps_1}{\mu_2}\xi_1^2+\frac{\eps_3}{\mu_3}\xi_2^2
  +\frac{\eps_1}{\mu_1}\xi_2^2+\frac{\eps_3}{\mu_2}\xi_3^2) +\xi_0^4\eps_1\eps_3, \\
  Z^{\text{eff}}_{23}(\xi)
  &= Z^{\text{eff}}_{32}(\xi)
   = - \xi_2 \xi_3 \xi_0^2\frac{\eps_1}{\mu_1},\\ 
  Z^{\text{eff}}_{33}(\xi) 
  &= - \xi_0^2(
  \frac{\eps_1}{\mu_3}\xi_1^2+\frac{\eps_2}{\mu_3}\xi_2^2
  +\frac{\eps_1}{\mu_1}\xi_3^2+\frac{\eps_2}{\mu_2}\xi_3^2) +\xi_0^4\eps_1\eps_2.
\end{aligned}
\end{align*}
We can write $Z^{\text{eff}} = \xi_0^2 \tilde{Z}^{\text{eff}} $ with $\tilde{Z}^{\text{eff}}$ having bounded entries for $|\xi_0| \lesssim 1$. This gives
\begin{align*}
(M_E(x,\xi) - \xi_0^2)^{-1} w &= q^{-1}(x,\xi) \text{adj}(M_E - \xi_0^2) w \\
&= \frac{-1}{(\xi_0^4 - \xi_0^2 q_0(x,\xi) + q_1(x,\xi))} \tilde{Z}^{\text{eff}}_{\varepsilon,\mu} \frac{\varepsilon}{\varepsilon_1 \varepsilon_2 \varepsilon_3} w.
\end{align*}
For $|\xi_0| \ll |\xi'|$ or $|\xi_0| \gg |\xi'|$, we have
\begin{equation*}
| \xi_0^4 - \xi_0^2 q_0(\xi) + q_1(\xi) | \sim 1 \text{ provided that } \max(|\xi_0|,|\xi'|) \sim 1.
\end{equation*}
Consequently, for $|\xi_0| \ll |\xi'| \sim 1$, it follows
\begin{equation*}
\| (M_E(x,\xi) - \xi_0^2) v \|_{L^2_\Phi(U)} \gtrsim \| v \|_{L^2_{\Phi}}.
\end{equation*}
This shows estimate \eqref{eq:PhaseSpaceEstimateA} in the second region. \eqref{eq:PhaseSpaceEstimateB} follows by exchanging the roles of $\varepsilon$ and $\mu$.

\smallskip

We turn to the third region. Again we focus on \eqref{eq:PhaseSpaceEstimateA}, for which we prove
\begin{equation*}
 \| T_\lambda^* v_{\lambda,1} \|_{L^p L^q} \lesssim \lambda^\rho (\lambda^{\frac{2-s}{4}} \| v_{\lambda,1} \|_{L^2_\Phi} + \lambda^{\frac{2+s}{4}} \| (M_E - \xi_0^2) v_{\lambda,1} \|_{L^2_\Phi})
\end{equation*}
for $v_\lambda$ supported in $B(0,1) \times \{ |\xi_0| \sim |\xi'| \sim 1 \}$. This inequality is a consequence from
\eqref{eq:ScalarReducedEstimate}. Indeed, for any component $v_{\lambda,1,k}$, $k=1,2,3$, the estimate \eqref{eq:ScalarReducedEstimate} yields
\begin{equation*}
 \| T_\lambda^* v_{\lambda,1,k} \|_{L^p L^q} \lesssim \lambda^{\rho} ( \lambda^{\frac{2-s}{4}} \| v_{\lambda,1,k} \|_{L^2_\Phi} + \lambda^{\frac{2+s}{4}} \| (Z_{\varepsilon,\mu} \frac{\varepsilon}{\varepsilon_1 \varepsilon_2 \varepsilon_3} (M_E - \xi_0^2) v_{\lambda,1})_k \|_{L^2_\Phi} )
\end{equation*}
because $Z_{\varepsilon,\mu} \frac{\varepsilon}{\varepsilon_1 \varepsilon_2 \varepsilon_3} (M_E - \xi_0^2) = q(x,\xi)$. The assertion easily follows.
\end{proof}

It thus suffices to show for scalar $v_\lambda \in L^2_\Phi$ supported in $\{ \frac{1}{4} \leq |\xi| \leq 4 \}$ that the estimate
\begin{equation}
\label{eq:ScalarEstiamte}
\| T_\lambda^* v_\lambda \|_{L^p L^q} \lesssim \lambda^\rho ( \lambda^{\frac{2-s}{4}} \| v_\lambda \|_{L^2_\Phi} + \lambda^{\frac{2+s}{4}} \| q(x,\xi) v_\lambda \|_{L^2_\Phi} )
\end{equation}
holds true. We consider the operator
\begin{equation*}
W_\lambda = T_\lambda^* \frac{a(x,\xi)}{\lambda^{-\frac{s}{4}} + \lambda^{\frac{s}{4}} |q(x,\xi)|}
\end{equation*}
with $a \in C^\infty_c$ localizing to $B(0,1) \times \{ \frac{1}{4} \leq |\xi| \leq 4 \}$. Hence it is enough to establish
\begin{equation*}
\| W_\lambda \|_{L_{\Phi}^2 \to L^p L^q} \lesssim \lambda^{\rho + \frac{1}{2}}.
\end{equation*}
By the $TT^*$-argument, we can likewise prove the estimate
\begin{equation*} 
\| \tilde{V}_\lambda \|_{L^{p'} L^{q'} \to L^p L^q} \lesssim \lambda^{1 +2 \rho}, \quad \tilde{V}_\lambda = T_\lambda^* \frac{a^2(x,\xi) \Phi(\xi)}{(\lambda^{-\frac{s}{4}} + \lambda^{\frac{s}{4}} |q(x,\xi)|)^2} T_\lambda.
\end{equation*}

For this estimate it is crucial to understand the curvature properties of $\{ \xi \in \R^4: q(x,\xi) = 0 \}$.
Since $q(x,\xi)$ is $4$-homogeneous in $\xi$, we have
\begin{equation*}
q(x,\xi) = \xi_0^4 q(x,1,\xi'/\xi_0).
\end{equation*}
So it suffices to consider the Fresnel surface $S= \{ \xi' \in \R^3: q(x,1,\xi') = 0 \}$. The Fresnel surface is classical and
has been studied since the 19th century (cf. \cite{Darboux1993,Knoerrer1986,FladtBaur1975}). We refer to detailed computations
laid out in \cite{MandelSchippa2022}
and recall properties of the Fresnel surface $S$. We can consider $S$ as a union of three components $S = S_1 \cup S_2 \cup S_3$ with
\begin{itemize}
\item $S_1$ describes a regular bounded surface with Gaussian curvature bounded from below and above,
\item $S_2$ is a neighbourhood of the Hamilton circles, which is regular, but has one principal curvature vanishing along the Hamilton circles,
\item $S_3$ is a neighbourhood of four conical singularities.
\end{itemize}
We redenote $a^2(x,\xi)$ by $a(x,\xi)$ for the sake of brevity, and split $a = a_1 + a_2 + a_3$  and the operator
$V_\lambda = V_{\lambda,1} + V_{\lambda,2} + V_{\lambda,3}$ according to the above components.
We establish the three estimates separately, which is meant to highlight a different dispersive behavior. To see that the uniformity of the curvature bounds follows from the separability condition \eqref{eq:SeparationEigenvalues}, we revisit the analysis of the
Fresnel surface provided in \cite[Section~3]{MandelSchippa2022}.

First, the analysis of Fresnel's surface for $|\xi_0| \sim |\xi'| \sim 1$
\begin{equation*}
 S_{\xi_0} = \{ \xi' \in \R^3: p(\xi_0,\xi') = - \xi_0^2 (\xi_0^4 - \xi_0^2 q_0(\xi') + q_1(\xi')) = 0 \}
\end{equation*}
is reduced to the standard form $\mu_1 = \mu_2 = \mu_3 = \xi_0 = 1$ by means of the substitution
\begin{equation*}
 \lambda_i = \frac{\xi_i}{\xi_0 \sqrt{\mu_{i+1} \mu_{i+2}}} \qquad (i =1,2,3).
\end{equation*}
Note that the change of coordinates is as smooth as $\mu$. We use cyclic notation in the following, i.e., $\mu_4 := \mu_1$ and $\mu_5 := \mu_2$, likewise for $\varepsilon_i$. In the following let
$\varepsilon_i:= \frac{\varepsilon_i}{\mu_i}$ (the right-hand side refers to the original quantities).

We shall analyze $S^* = \{ \lambda \in \R^3: 1 - q_0^*(\lambda) + q_1^*(\lambda) = 0 \}$ with $q_0^*$ and $q_1^*$ denoting the reduced polynomials $q_0$ and $q_1$ for
$\mu_1 = \mu_2 = \mu_3 = 1$. We find that the coordinates of the four singular points are given by
\begin{equation*}
 \lambda_j^2 = \frac{\varepsilon_{j+2} (\varepsilon_j - \varepsilon_{j+1})}{\varepsilon_j - \varepsilon_{j+2}}, \quad \lambda_{j+1} = 0, \quad \lambda_{j+2}^2 = \frac{\varepsilon_j (\varepsilon_{j+2} - \varepsilon_{j+1})}{\varepsilon_{j+2} - \varepsilon_j }
\end{equation*}
for $\varepsilon_{j+1} \in \langle \varepsilon_j, \varepsilon_{j+2} \rangle$, which denotes $\varepsilon_{j+1} \in [\varepsilon_j, \varepsilon_{j+2}] \cup [\varepsilon_{j+2},\varepsilon_j]$.
By uniform ellipticity and the separation condition \eqref{eq:SeparationEigenvalues}, the singular points vary as smoothly in $x \in \R^4$
as $(\varepsilon,\mu)$.

To analyze  the regularity of the curvature in $(\varepsilon,\mu)$, we recall the parametrization of
the regular part of the Fresnel surface given by
\begin{equation*}
 s = \lambda_1^2 + \lambda_2^2 + \lambda_3^2, \quad \varepsilon_1 \lambda_1^2+ \varepsilon_2 \lambda_2^2 + \varepsilon_3 \lambda_3^2 = \varepsilon_1 \varepsilon_2 \varepsilon_3 t^{-1},
\end{equation*}
and
\begin{equation*}
 \alpha(s,t) = \frac{(t-s) \varepsilon_1 \varepsilon_2 \varepsilon_3}{s^2 t - (\varepsilon_1 + \varepsilon_2 + \varepsilon_3) st + (\varepsilon_1 \varepsilon_2 + \varepsilon_1 \varepsilon_3 + \varepsilon_2 \varepsilon_3) t - \varepsilon_1 \varepsilon_2 \varepsilon_3}.
\end{equation*}
In $(s,t)$ coordinates, we have for the Gaussian curvature
\begin{equation*}
 K(s,t) = 
 \frac{(\alpha(s,t) - \varepsilon_1) (\alpha(s,t) - \varepsilon_2) (\alpha(s,t) - \varepsilon_3) }{\alpha(s,t) (s- \varepsilon_1) (s- \varepsilon_2)(s - \varepsilon_3) }
\end{equation*}
and for the mean curvature
\begin{equation*}
\begin{split}
 K_m(s,t) &= - \frac{1}{2} \big( \frac{s}{\sqrt{\alpha}} K(s,t) - \frac{1}{\sqrt{\alpha}} \big( \frac{(\alpha- \varepsilon_1)(\alpha - \varepsilon_2)}{(s-\varepsilon_1)(s-\varepsilon_2)} + \frac{(\alpha- \varepsilon_2)(\alpha-\varepsilon_3)}{(s-\varepsilon_2)(s-\varepsilon_3)} \\
 &\qquad + \frac{(\alpha- \varepsilon_1)(\alpha-\varepsilon_3)}{(s-\varepsilon_1)(s-\varepsilon_3)} \big) \big).
 \end{split}
\end{equation*}
The limiting case $s=t$ corresponds to singular points. For separated $s$ and $t$ we have $\alpha \geq d > 0$ and the uniformity of the curvature bounds follows. For the analysis of the
curvature close to the singular points, we rescale a dyadic decomposition away from the singular points to unit distance (see below), for which the previous arguments apply.

\smallskip

We now  conclude the proof of the  Strichartz estimates in Theorem \ref{thm:LocalStrichartzEstimatesFullyAnisotropic} for sharp Strichartz exponents close to the endpoint.
Note that the following proposition implies Strichartz estimates for the contribution of $V_{\lambda,1}$ like in \eqref{eq:ScalarEstiamte} for the wider range $\frac{1}{p} + \frac{1}{q} \leq \frac{1}{2}$ and $2<p < q \leq \infty$ by Sobolev embedding. We recall
the defintion of $V_{\lambda,i}$ in \eqref{eq:TT*Estimate} below.
\begin{proposition}
\label{prop:OperatorEstimatesTT^*}
Let $\rho = 3 \big( \frac{1}{2} - \frac{1}{q} \big) - \frac{1}{p}$ and $2 < p < q \leq \infty$. The estimate
\begin{equation}
\label{eq:LocalizedTT^*Estimate}
\| V_{\lambda,i} \|_{L^{p'} L^{q'} \to L^p L^q} \lesssim \lambda^{1+2\rho} 
\end{equation}
holds true, if
\begin{itemize}
\item $i=1$ and $\frac{1}{p} + \frac{1}{q} = \frac{1}{2}$,
\item $i \in \{2,3\}$ and $\frac{2}{p} + \frac{1}{q} = \frac{1}{2}$.
\end{itemize}
\end{proposition}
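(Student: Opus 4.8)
The plan is to run the standard $TT^*$/Keel--Tao scheme componentwise, the geometric input being a fixed-time dispersive estimate for each $V_{\lambda,i}$ whose decay rate reflects the number of non-vanishing principal curvatures of the corresponding piece $S_i$ of the Fresnel surface. Since $V_{\lambda,i}$ is self-adjoint and given by an operator-valued kernel in the time variable, write $(V_{\lambda,i}F)(x_0)=\int_{\R}\mathcal{K}^{x_0,y_0}_{\lambda,i}F(y_0)\,dy_0$ with $x=(x_0,x')$; it then suffices to prove, uniformly in $x_0,y_0$,
\[
\|\mathcal{K}^{x_0,y_0}_{\lambda,i}\|_{L^2_{x'}\to L^2_{x'}}\lesssim\lambda^{\beta_0},\qquad \|\mathcal{K}^{x_0,y_0}_{\lambda,i}\|_{L^1_{x'}\to L^\infty_{x'}}\lesssim\lambda^{\beta_1}\,(1+\lambda|x_0-y_0|)^{-\sigma_i},
\]
with $\sigma_1=1$ and $\sigma_2=\sigma_3=\tfrac12$. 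Interpolating these, one gets an $L^{q'}_{x'}\to L^q_{x'}$ bound with decay $\sigma_i(1-\tfrac2q)$; integrating in $x_0$ via the Hardy--Littlewood--Sobolev inequality --- which at the scaling-critical exponents is exactly the endpoint case treated by the Keel--Tao argument \cite{KeelTao1998} --- produces $\lambda^{1+2\rho}$ precisely when $\tfrac1p+\tfrac1q=\tfrac12$ in the case $\sigma_i=1$ and when $\tfrac2p+\tfrac1q=\tfrac12$ in the case $\sigma_i=\tfrac12$. This is the only place the two different admissibility ranges enter, and the powers $\beta_0,\beta_1$ (which carry the weight $\lambda^{s/2}$ of the symbol and the $d\xi$-volume of the characteristic slab) are matched by a direct computation as in \cite{Tataru2000,SchippaSchnaubelt2022}.

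To obtain $\mathcal{K}^{x_0,y_0}_{\lambda,i}$ I would expand $V_{\lambda,i}=T_\lambda^*\,m_{\lambda,i}(x,\xi)\,T_\lambda$ with $m_{\lambda,i}=a_i\Phi\,\big(\lambda^{-s/4}+\lambda^{s/4}|q(x,\xi)|\big)^{-2}$ using the FBI composition formula: integrating out the Gaussian localization variable leaves, modulo a rapidly decaying tail supported in $|x-y|\lesssim\lambda^{-1/2}$ and modulo the rough-symbol error operators controlled by Theorem~\ref{thm:ApproximationTheoremFBITransform}, an oscillatory integral $\lambda^{N}\int e^{i\lambda(x-y)\cdot\xi}\,m_{\lambda,i}\big(\tfrac{x+y}{2},\xi\big)\,d\xi$. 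Since $m_{\lambda,i}$ is concentrated on the $\lambda^{-s/2}$-neighbourhood of $\{q=0\}$, where it has size $\sim\lambda^{s/2}$, the task reduces to a van der Corput/stationary-phase estimate for oscillatory integrals carried by (a slab around) $S_i$. Here I would invoke that the substitution $\lambda_i=\xi_i/(\xi_0\sqrt{\mu_{i+1}\mu_{i+2}})$ brings the slices $S_{\xi_0}$ to the normalized Fresnel surface, is as regular as $\mu$, and --- crucially, by uniform ellipticity and the separation condition \eqref{eq:SeparationEigenvalues} --- has all constants uniform in $x\in\R^4$; likewise the location of the four singular points, the curvature formulae $K(s,t)$, $K_m(s,t)$, and the lower bound $\alpha\ge d>0$ away from $s=t$ recalled above are uniform in $x$.

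With this geometry in hand: for $V_{\lambda,1}$ the Gaussian curvature of $S_1$ is bounded above and below, so stationary phase in the two transverse directions gives the decay $(1+\lambda|x_0-y_0|)^{-1}$ and the three-dimensional wave range $\tfrac1p+\tfrac1q=\tfrac12$ (Sobolev embedding then widens it to $\tfrac1p+\tfrac1q\le\tfrac12$ as noted in the text preceding the proposition). For $V_{\lambda,2}$ one principal curvature vanishes along the Hamilton circles; I would perform a dyadic decomposition in the distance to these circles down to scale $\lambda^{-1/2}$, rescale each shell to unit size so that the rescaled piece carries a uniform non-degeneracy bound, apply the $S_1$-type estimate there, and sum --- the sum being geometric, hence free of a $\log\lambda$ loss --- which yields the two-dimensional decay $(1+\lambda|x_0-y_0|)^{-1/2}$ and the condition $\tfrac2p+\tfrac1q=\tfrac12$. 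For $V_{\lambda,3}$ the same device is applied to dyadic annuli $\{|\xi'-\xi_*|\sim\mu\}$, $\mu\in[\lambda^{-1/2},1]$, around each conical point: after rescaling by $\mu^{-1}$ each annular piece of the cone is a fixed ($\mu$-independent) surface with one non-vanishing principal curvature, contributing $(1+\lambda\mu|x_0-y_0|)^{-1/2}$ together with the measure factor of the thinner slab, and summing over $\mu$ --- as in the quantitative analysis of \cite{MandelSchippa2022} --- reproduces the $(1+\lambda|x_0-y_0|)^{-1/2}$ decay.

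I expect the main obstacle to be the degenerate pieces $V_{\lambda,2}$ and especially $V_{\lambda,3}$: one must carry out the rescaling of the FBI-conjugated operator near the singularities while keeping \emph{all} implicit constants uniform (in $x$, via \eqref{eq:SeparationEigenvalues}, and in the dyadic scale $\mu$), verify that the error operators of Theorem~\ref{thm:ApproximationTheoremFBITransform} remain acceptable after rescaling given that $\varepsilon,\mu$ are only $C^s$ and have been truncated at frequency $\lambda/8$, and check that the dyadic sums close without a logarithmic loss so that the clean power $\lambda^{1+2\rho}$ --- rather than $\lambda^{1+2\rho+\delta}$ --- is attained. The regular piece $V_{\lambda,1}$ and the bookkeeping of powers are comparatively routine, following \cite{Tataru2000} and \cite{SchippaSchnaubelt2022}.
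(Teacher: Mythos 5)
Your scheme for $i=1,2$ is sound and close in spirit to the paper's (the paper implements the interpolation via Stein's analytic family $(q+i0)^{-\theta}$ rather than real interpolation of frozen-time kernels, and for $i=2$ it simply uses the one uniformly non-vanishing principal curvature of the regular surface $S_2$ instead of your extra dyadic decomposition near the Hamilton circles, but these are cosmetic differences). The genuine gap is in the case $i=3$, and it sits exactly in the first displayed hypothesis of your plan: the uniform frozen-time bound $\|\mathcal{K}^{x_0,y_0}_{\lambda,3}\|_{L^2_{x'}\to L^2_{x'}}\lesssim\lambda^{\beta_0}$ with the same $\beta_0$ as for $i=1,2$ is \emph{false}. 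That bound is equivalent to controlling $\sup_{\xi'}\int d\xi_0\,(\lambda^{-s/4}+\lambda^{s/4}|q(x,\xi)|)^{-2}$, i.e. to the hyperbolicity condition $\partial_{\xi_0}q\neq 0$ on $\operatorname{supp}a_3$ — and the conical points satisfy $\partial_\xi q(x,\xi_x)=0$. This is precisely why the paper remarks that the $(p,q)=(\infty,2)$ estimate fails for $i=3$. If your two frozen-kernel bounds held as stated, Keel--Tao/HLS would deliver the estimate on the whole region $\frac2p+\frac1q\le\frac12$ down to the energy point, contradicting that observation; so the obstruction you flag at the end is not a possible logarithmic loss but a polynomial degeneration of the energy-type input.

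What the paper actually does near each singular point is your dyadic decomposition into annuli $|\xi'-\xi_x'|\sim 2^{-k}$, but it then tracks \emph{both} interpolation inputs per scale: after rescaling, $q$ becomes $2^{-2k}q_k$ with $q_k$ a nondegenerate quadric, the $L^2\to L^2$ bound of the $k$-th piece degenerates like $2^{2k}$ (the thickened level sets of $q$ are $2^{2k}$ times wider after rescaling), while the dispersive-type bound improves like $2^{2k/p_1-k}$; interpolation gives an operator norm $2^{k(3/q-1/2)}$, which is summable exactly for $q>6$ — and $q>6$ is equivalent to the stated restriction $p<q$ on the sharp line $\frac2p+\frac1q=\frac12$. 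So the restriction to the near-endpoint range in the proposition is not an artifact but the output of balancing the $2^{2k}$ energy loss against the dispersive gain, and any correct proof must exhibit this trade-off. Your outline, which keeps the energy input scale-independent and only sums the dispersive estimates, cannot produce (or explain) the condition $q>6$ and therefore does not close for $i=3$.
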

For $i=1,2$, the assumption $2<p < q \leq \infty$ is not necessary because we can interpolate with \eqref{eq:LocalizedTT^*Estimate}
for $p=\infty$ and $q=2$. We show this case. In the proof we will see that if fails for $i=3$ at the singular points
because these satisfy $\partial_\xi q(x,\xi_x) = 0$. 

After factoring out the FBI transform in the $x'$ variables by virtue of its $L^2$-boundedness, we it remains to prove the boundedness of
\begin{equation}
\label{eq:EnergyEstimateTT^*}
\lambda^{-1} T_{\lambda,0}^* a_i(x,\xi) \Phi(\xi) \delta_{q=0} T_{\lambda,0}: L_t^1 L_{x',\xi'}^2 \to L_t^\infty L_{x',\xi'}^2
\end{equation}
for the restricted FBI transform $T_{\lambda,0}: L^2(\R^n) \to L^2_{\Phi}(\R^2 \times \R^{n-1})$ given by
\begin{equation*}
T_{\lambda,0} f(x_0,\xi_0,y') = 2^{-\frac{1}{2}} \pi^{-\frac{3}{4}} \lambda^{\frac{3}{4}} \int e^{-\frac{\lambda}{2}(z-y_0)^2} f(y_0,y') dy_0
\end{equation*} 
with $\Phi(x_0,\xi_0,x') = e^{-\lambda |\xi_0|^2}$. (Compare the proof of Proposition \ref{prop:OperatorEstimatesTT^*} below.)
Hence, to establish \eqref{eq:EnergyEstimateTT^*}, we have  to show the boundedness of the kernel
\begin{equation*}
K(y_0,\bar{y}_0) =  \lambda^{\frac{1}{2}} \int_{\R} dx_0 e^{-\frac{\lambda}{2}(x_0-y_0)^2} e^{-\frac{\lambda}{2}(x_0-\bar{y}_0)^2} \int_{\R} d\xi_0 e^{i \lambda \xi_0.(y_0-\bar{y}_0)} a_0(x_0,\xi_0) \delta_{q=0}.
\end{equation*}
It is enough to check the hyperbolicity condition 
\begin{equation}
\label{eq:Hyperbolicity}
\partial_{\xi_0} q(x,\xi) \neq 0 \quad \text{for \ } a_i(x,\xi) \neq 0, \;\; i=1,2.
\end{equation}
Indeed, from \eqref{eq:Hyperbolicity} follows
\begin{equation*}
\big| \int_{\R} d\xi_0 e^{i\lambda \xi_0.(y_0-\bar{y}_0)} a_0(x_0,\xi_0) \delta_{q=0} \big| \lesssim 1
\end{equation*}
and
\begin{equation*}
\int_{\R} dx_0 e^{-\frac{\lambda}{2}(x_0-y_0)^2} e^{-\frac{\lambda}{2}(x_0-\bar{y}_0)^2} \lesssim \lambda^{- \frac{1}{2}},
\end{equation*}
yielding \eqref{eq:LocalizedTT^*Estimate} for $(p,q)=(\infty,2)$.
For the remaining proof of \eqref{eq:Hyperbolicity}, we consider
\begin{equation*}
\partial_{\xi_0} q = 4 \xi_0^3 - 2 \xi_0 q_0 = 0.
\end{equation*}
This implies $q_0 = 2 \xi_0^2$. Since $q(\xi_0,\xi')= 0$, we find $q_1 = \xi_0^4$ and so $q_1= ( \frac{q_0}{2})^2$.
By homogeneity of $q$, it is enough to consider $\xi_0 = 1$. Then we use again the parametrisations of \cite{MandelSchippa2022}:
\begin{align*}
s=\xi_1^2 + \xi_2^2 + \xi_3^2, &\qquad \varepsilon_1 \xi_1^2 + \varepsilon_2 \xi_2^2 + \varepsilon_3 \xi_3^2 = \varepsilon_1 \varepsilon_2 \varepsilon_3 t^{-1}, \\
q_1^*(\xi) = st^{-1}, &\qquad q_0^*(\xi) = 1 + st^{-1}.
\end{align*}

Let $b = st^{-1}$. From $q_1= ( \frac{q_0}{2} )^2$ follows $b = \big( \frac{1+b}{2} \big)^2$ and hence $(b-1)^2 = 0$.
In the regular part of $S$ we have $s<t$ or $t<s$ corresponding to the two sheets of $S$, where $t=s$ formally corresponds to the singular
points. This ensures hyperbolicity within $\text{supp}(a_1) \cup \text{supp}(a_2)$.
However, within $\text{supp} (a_3)$ the argument fails. We can thus prove \eqref{eq:LocalizedTT^*Estimate} only close to the endpoint,
which yields the corresponding Strichartz estimates.

\begin{proof}[Proof of Proposition \ref{prop:OperatorEstimatesTT^*}]
For $i=1,2$, the result is shown by a straight-forward adaptation of Tataru's arguments \cite{Tataru2000}. We shall be brief, but repeat
the argument because in the most difficult case $i=3$ we will elaborate on it. To prove a uniform bound for the family of operators
\begin{equation}
\label{eq:TT*Estimate}
\lambda^{-1-2\rho} V_{\lambda,i}= \lambda^{-1-2\rho}T_\lambda^* \frac{a_i(x,\xi) \Phi(\xi)}{(\lambda^{-\frac{s}{4}} + \lambda^{\frac{s}{4}} |q(x,\xi)|)^2} T_\lambda: L^{p'} L^{q'} \to L^p L^q,
\end{equation}
we use the integrability of the weight
\begin{equation*}
\int \frac{1}{(\lambda^{-\frac{s}{4}} + \lambda^{\frac{s}{4}} |q|)^2} dq \lesssim 1
\end{equation*}
and can reduce to level sets $\delta_{q(x,\xi) = c}$ by foliation. It is enough to consider $c=0$ as $\{ \xi \in \R^4: q(x,\xi) = 0 \}$
is a regular surface within the supports of $a_i$, $i=1,2$, and the supports can be properly foliated.

For the proof of the non-endpoint estimate \eqref{eq:LocalizedTT^*Estimate} in case $2 < p < q \leq \infty$, we consider the analytic family of operators
\begin{equation*}
V_{\lambda,i}^\theta = \lambda^{-\theta(2 \rho + 1)} \frac{e^{(\theta-1)^2}}{\Gamma(1-\theta)} T_\lambda^* a_i(x,\xi) \Phi(\xi) q^{-\theta}(x,\xi) T_\lambda
\end{equation*}
such that $\lambda^{-1-2 \rho} V_{\lambda,i} = V_{\lambda,i}^1$.

The estimate follows from interpolation between $\lambda$-independent bouns of
\begin{align*}
&V_{\lambda,i}^\theta: L^2 \to L^2, \qquad \qquad \quad \ \, \Re \theta = 0, \\
&V_{\lambda,i}^\theta: L^{p_1'} L^1 \to L^{p_1} L^\infty, \qquad \Re \theta = \theta_1,
\end{align*}
where $p_1,\theta_1$ are chosen so that the following points are collinear:
\begin{equation}
\label{eq:InterpolationPoints}
\big( \frac{1}{2}, \frac{1}{2}, 0 \big), \qquad \big( \frac{1}{p}, \frac{1}{q}, 1 \big), \qquad \big( \frac{1}{p_1},0, \theta_1 \big).
\end{equation}

The $L^2$-estimate is immediate from the $L^2$-mapping properties of the FBI transform. For the $L^{p_1'} L^1 \to L^{p_1} L^\infty$ bound, we compute the kernel $K^\theta_{\lambda,i}$ of $ V^\theta_{\lambda,i}$ as
\begin{equation*}
\begin{split}
K^\theta_{\lambda,i}(y,\bar{y}) &= \lambda^{6- \theta(2\rho + 1)} \int_{\R^4} dx e^{-\frac{\lambda}{2}(y-x)^2} e^{-\frac{\lambda}{2}(\bar{y}-x)^2} \\
&\quad \times \frac{e^{(\theta-1)^2}}{\Gamma(1-\theta)} \int_{\R^4} d\xi e^{i\lambda(y-\bar{y}).\xi} q^{-\theta}(x,\xi) a_i(x,\xi).
\end{split}
\end{equation*}

We have the well-known oscillatory integral estimate (cf.\ \cite[Chapter~IX,~Section~1.2.3]{Stein1993})
\begin{equation}
\label{eq:OscillatoryIntegralDecayAnalyticFamily}
\big| \frac{e^{(\theta-1)^2}}{\Gamma(1-\theta)} \int_{\R^d} e^{iy.\xi} a_i(x,\xi) q^{-\theta}(x,\xi) d\xi \big| \lesssim (1+|y|)^{(\Re \theta-1) - \frac{k_i}{2}},
\end{equation}
where $k_1 = 2$ and $k_2 = 1$ are the numbers of principal curvatures of the regular surface $\{q(x,\xi) = 0\}$\footnote{The surface is
regular in $\xi$ within the support of $a_i$, $i=1,2$.} bounded from below. Estimate \eqref{eq:LocalizedTT^*Estimate} now follows in the
same spirit as in \cite{Tataru2000}. We give the details only for $i=2$ because the case $i=1$ is covered in \cite[pp.~363-364]{Tataru2000}
verbatim.  For $i=2$, we find the kernel bound
\begin{equation*}
|K^{\theta}_{\lambda,i}(y,\bar{y})| \lesssim \lambda^{4- \Re \theta (2\rho +1)} (1+\lambda |y-\bar{y}|)^{\Re \theta - \frac{3}{2}}.
\end{equation*}
The relations
\begin{equation*}
\theta - \frac{3}{2} = -\frac{2}{p} - \frac{1}{q}, \quad 4- \theta(2\rho + 1) = \frac{2}{p} + \frac{6}{q},
\end{equation*}
hold for the first two points in \eqref{eq:InterpolationPoints}. So they hold for the third, which gives
\begin{equation}
\label{eq:RelationsB}
\theta_1 - \frac{3}{2} = - \frac{2}{p_1}, \quad 4 - \theta_1(2 \rho +1) = \frac{2}{p_1}.
\end{equation}
Consequently, we have proved the kernel estimate
\begin{equation*}
|K^{\theta}_{ \lambda, i}(y,\bar{y})| \lesssim |y-\bar{y}|^{-\frac{2}{p_1}}, \qquad \Re \theta = \theta_1.
\end{equation*}
The Hardy--Littlewood--Sobolev inequality then yields \eqref{eq:LocalizedTT^*Estimate} for $i=2$.

\smallskip
 
We turn to the most difficult case $i=3$. Since there are four isolated singular points  $\xi_x'$, it suffices to estimate the contribution
of each one separately. We make the  decomposition $a_3(x,\xi) = \sum_{k \geq 0} a_{3,k}(x,\xi)$ where
\begin{equation*}
a_{3,k}(x,1,\xi') = a_{3,0}(x,1,(\xi'-\xi'_x) 2^k)
\end{equation*}
 localizes smoothly to a $2^{-k}$-annulus around $\xi_x'$. This cutoff is extended $1$-homoge\-neously in $\xi_0$, i.e.,
 $a_{3,k}(x,\xi_0,\xi') = \xi_0 a_{3,k}(x,1,\xi'/\xi_0)$. This is possible because $|\xi_0| \in [1/4,4]$. The kernel $K_k$ of
\begin{equation*}
A_{\lambda, k} = T_\lambda^* \frac{a_{3,k}(x,\xi) \Phi(\xi)}{(\lambda^{-\frac{s}{4}} + \lambda^{\frac{s}{4}} |q(x,\xi)|)^2} T_\lambda
\end{equation*}
is given by
\begin{equation*}
K_k(y,\bar{y}) = \lambda^{6-(2\rho +1)} \int_{\R^4} dx e^{-\frac{\lambda(y-x)^2}{2}} e^{-\frac{\lambda(\bar{y}-x)^2}{2}} \int_{\R^4} d\xi \frac{e^{i\lambda \xi.(y-\bar{y})} a_{3,k}(x,\xi_0,\xi')}{(\lambda^{-\frac{s}{4}} + \lambda^{\frac{s}{4}} |q(x,\xi)|)^2}.
\end{equation*}
By homogeneity of $q(x,\xi_0,\xi')$ and $a_3(x,\xi_0,\xi')$ in $\xi_0$, we find
\begin{equation*}
\frac{a_{3,k}(x,\xi_0,\xi') a_0(\xi_0)}{ (\lambda^{-\frac{s}{4}} + \lambda^{\frac{s}{4}} |q(x,\xi_0,\xi')|)^2} = \frac{\xi_0 a_{3,k}(x,1,\tilde{\xi}') a_0(\xi_0)}{\xi_0^8 ( \lambda^{-\frac{s}{4}} \xi_0^{-4} + \lambda^{\frac{s}{4}} |q(x,1,\tilde{\xi}')|)^2}
\end{equation*}
with $\tilde{\xi}' = \xi'/\xi_0$ and a suitable smooth cutoff $a_0$.

Furthermore, we carry out a Taylor expansion of $q(x,1,\cdot)$ around $\xi_x'$ to find
\begin{equation*}
q(x,1,\xi'+\xi_x') = \frac{\langle \partial^2_{\xi' \xi'} q(x,1,\xi_x') \xi', \xi' \rangle }{2} + O(|\xi'|^3),
\end{equation*}
recalling that
\begin{equation*}
q(x,1,\xi_x')= 0, \quad \nabla_{\xi'} q(x,1,\xi_x') = 0.
\end{equation*}
We write
\begin{equation*}
\begin{split}
q(x,1,\tilde{\xi}') =  q(x,1,2^{-k} \xi' + \xi_x') &= 2^{-2k} \big( \frac{\langle \partial^2_{\xi' \xi'} q(x,1,\xi_x') \xi', \xi' \rangle}{2} + O(2^{-k} |\xi'|^3) \big) \\
&=: 2^{-2k} q_k(x,1,\xi').
\end{split}
\end{equation*}
For the analysis of the kernel we perform the change of variables $\tilde{\xi}' = \tilde{\tilde{\xi}}' + \xi_x'$ and $\xi' = 2^k \tilde{\tilde{\xi}}'$ to find
\begin{equation*}
\begin{split}
K_k(y,\bar{y}) &= \lambda^{6-(2\rho+1)}\!\! \int_{\R^4}\!\! dx e^{-\frac{\lambda}{2}(y-x)^2} e^{-\frac{\lambda}{2}(\bar{y}-x)^2} \!\!
  \int_{\R} \!d\xi_0 \frac{e^{i \lambda \xi_0.(y_0-\bar{y}_0)} e^{i \lambda \xi_0 \xi_x'.(y'-\bar{y}')}  a_0(\xi_0)}{\xi_0^4} \\
&\quad \times \int_{\R^3} d\xi' \frac{2^{-3k} a_{3,0}(x,1,\xi') e^{i \lambda 2^{-k} \xi_0 \xi'.(y'-\bar{y}')}}{(\lambda^{-\frac{s}{4}} \xi_0^{-4} + \lambda^{\frac{s}{4}} 2^{-2k} |q_k(x,1,\xi')|)^2} \\
&= \lambda^{6-(2\rho+1)}\!\! \int_{\R^4} \!\!dx e^{-\frac{\lambda}{2}(y-x)^2} e^{-\frac{\lambda}{2}(\bar{y}-x)^2} \!\!\int_{\R}\! d\xi_0
  \frac{e^{i \lambda \xi_0.(y_0-\bar{y}_0)} e^{i \lambda \xi_0 \xi_x'.(y'-\bar{y}')}  a_0(\xi_0)}{\xi_0^4} \\
&\quad \times \int_{\R^3} d\xi' \frac{2^{k} a_{3,0}(x,1,\xi') e^{i \lambda 2^{-k} \xi_0 \xi'.(y'-\bar{y}')}}{(\lambda^{-\frac{s}{4}} \xi_0^{-4} 2^{2k} + \lambda^{\frac{s}{4}} |q_k(x,1,\xi')|)^2}.
\end{split}
\end{equation*}

The weight $(2^{2k} \lambda^{-\frac{s}{4}} \xi_0^{-4} + \lambda^{\frac{s}{4}} |q_k(x,1,\xi')|)^{-2}$ is integrable with respect to the level sets, and we have
\begin{equation*}
\int_0^\infty \frac{dq}{(2^{2k} \lambda^{-\frac{s}{4}} + q \lambda^{\frac{s}{4}})^2} \lesssim 2^{-2k}.
\end{equation*}
After foliation over level sets $q(x,\xi)=c$, we are thus left with the operator having the kernel
\begin{align*}
K^1_k(y,\bar{y})&= \lambda^{6-(2\rho+1)} \!\!\int_{\R^4} dx e^{-\frac{\lambda}{2}(y-x)^2} e^{-\frac{\lambda}{2}(\bar{y}-x)^2}\!\!
\int_{\R} d\xi_0 \frac{e^{i \lambda \xi_0.(y-\bar{y})} e^{i \lambda \xi_0 \xi_x'.(y'-\bar{y}')}  a_0(\xi_0)}{\xi_0^4} \notag\\
&\quad \times \int_{\R^3} d\xi' 2^{-k} a_{3,0}(x,1,\xi') e^{i \lambda 2^{-k} \xi_0 \xi'.(y'-\bar{y}')} \delta_{\tilde{q}_k(x,1,\xi') = 0}.
\end{align*}
We estimate it  by interpolation using the analytic family of operators $V^\theta_{k,\lambda}$ given by the kernels
\begin{equation}
\label{eq:AnalyticFamily}
\begin{split}
K^\theta_{\lambda,k}&= \lambda^{6-\theta(2\rho+1)} \int_{\R^4} dx e^{-\frac{\lambda}{2}(y-x)^2} e^{-\frac{\lambda}{2}(\bar{y}-x)^2}
   \int_{\R} d\xi_0 e^{i \lambda \xi_0.(y_0-\bar{y}_0)} \frac{e^{i\lambda \xi_0 \xi_x'.(y'-\bar{y}')} a_0(\xi_0)}{\xi_0^4} \\
&\quad \times \frac{e^{(\theta-1)^2}}{\Gamma(1-\theta)} \int_{\R^3} d\xi' 2^{-k} a_{3,0}(x,1,\xi') e^{i \lambda 2^{-k} \xi_0 \xi'.(y'-\bar{y}')}  (q_k+i0)^{-\theta}.
\end{split}
\end{equation}
We show+ that the corresponding operators uniformly bounded on the spaces
\begin{equation*}
\begin{split}
&V^\theta_{\lambda,k}: L^2 \to L^2 \qquad \quad \; \quad \; \Re \theta = 0, \\
&V^\theta_{\lambda,k}: L^{p_1'} L^1 \to L^{p_1} L^\infty \quad \Re \theta = \theta_1,
\end{split}
\end{equation*}
where $p_1$, $\theta_1$ are chosen so that the points
\begin{equation*}
\big( \frac{1}{2}, \frac{1}{2}, 0 \big), \quad \big( \frac{1}{p}, \frac{1}{q},1 \big), \quad \big( \frac{1}{p_1}, 0 , \theta_1 \big)
\end{equation*}
are collinear.
\begin{itemize}
 \item Estimate of $V^\theta_{\lambda,k}: L^2 \to L^2, \quad \Re \theta = 0$:
 After inverting the changes of variables and using the $L^2$-boundedness of the FBI transform, we find
 \begin{equation}
 \label{eq:BadEnergyEstimate}
  \| V^\theta_{\lambda,k} \|_{L^2 \to L^2} \lesssim 2^{2k}.
 \end{equation}
\item Estimate of $V^\theta_{\lambda,k}: L^{p_1'} L^1 \to L^{p_1} L^\infty$, $\Re \theta = \theta_1$:
Inequality \eqref{eq:OscillatoryIntegralDecayAnalyticFamily} implies the kernel estimates
\begin{equation*}
 |K_{\lambda,k}^\theta | \lesssim \lambda^{4- \theta (2 \rho +1)} 2^{-k} (1 + \lambda 2^{-k} |y'-\bar{y}'|)^{\Re \theta - \frac{3}{2}}.
\end{equation*}
By the relations \eqref{eq:RelationsB} and provided that $|y_0-\bar{y}_0| \lesssim |y'-\bar{y}'|$, this gives
\begin{equation*}
|K^\theta_{\lambda,k}| \lesssim 2^{\frac{2k}{p_1}-k} |y-\bar{y}|^{-\frac{2}{p_1}}\le 2^{\frac{2k}{p_1}-k} |y_0-\bar{y}_0|^{-\frac{2}{p_1}}.
\end{equation*}
Since $p_1 \geq 4$, we can apply the Hardy--Littlewood--Sobolev inequality
On the other hand, if $|y_0 - \bar{y}_0| \gg |y' - \bar{y}'|$, we can integrate by parts in $\xi_0$ in \eqref{eq:AnalyticFamily},
which gains factors $(\lambda |y_0 - \bar{y}_0|)^{-1}$. So we obtain
\[\| V^\theta_{k,\lambda} \|_{L^{p_1'} L^1 \to L^{p_1} L^\infty}\lesssim  2^{\frac{2k}{p_1}-k}.\]
\end{itemize}
Interpolation yields
\begin{equation*}
\| V_{k,\lambda}^\theta \|_{L^{p'} L^{q'} \to L^p L^q} \lesssim \| V^\theta_{k,\lambda} \|_{L^2 \to L^2}^\alpha \| V^\theta_{k,\lambda} \|_{L^{p_1'} L^1 \to L^{p_1} L^\infty}^{1-\alpha}
\end{equation*}
with
\begin{equation*}
\big( \frac{1}{p}, \frac{1}{q},1 \big) = \alpha \big( \frac{1}{2},\frac{1}{2},0 \big) + (1-\alpha) \big( \frac{1}{p_1}, 0,\theta_1 \big)
\quad \Longrightarrow \quad \alpha = \frac{2}{q}.
\end{equation*}
These identities imply $\theta_1 = \frac{1/2}{1/2 - 1/q}$ and $(2k/p_1) ( 1 - 2/q) = k(2/p - 2/q)$, and thus  the operator norm
\begin{equation*}
\| V_{k,\lambda}^\theta \|_{L^{p'} L^{q'} \to L^p L^q} \lesssim 2^{k \big( \frac{6}{q} - 1 \big)} 2^{\frac{2k}{p_1} (1-\alpha)} \lesssim 2^{k \big( \frac{3}{q} - \frac{1}{2} \big)}.
\end{equation*}
This is summable for $q > 6$, but since $\frac{2}{p} + \frac{1}{q} = \frac{1}{2}$ and $p<q$ this is always the case under the assumptions.
\end{proof}
Note that \eqref{eq:BadEnergyEstimate} reflects the lack of hyperbolicity at the singular point. For this purpose, we only prove the bounds at the level of the scalar equation close to the endpoint. 
We remark that in two dimensions, it suffices to prove the (allowed) endpoint estimate $L_t^4 L_x^\infty$. We included the above arguments to show that the proof remains valid for a characteristic surface
$\tilde{S} = \{ q(\xi) = 0 \}$ in higher dimensions with the degeneracy $q(\xi_0) = 0$, $\nabla q(\xi_0) = 0$, and $\det (\partial^2 q(\xi_0)) \neq 0$.

\subsection{Short-time Strichartz estimates}
\label{subsection:ShorttimeStrichartz}
In the estimate \eqref{eq:StrichartzEstimateFullyAnisotropicC1} proved so far, there is no gain in the regularity of the homogeneous part
$\| u \|_{L^2}$ by the Strichartz estimate over Sobolev embedding. Nonetheless, by passing through short-time Strichartz estimates we show the claimed improvement (see also Bahouri--Chemin \cite{BahouriChemin1999}).
In the following we recast the smoothing in the inhomogeneous term $Pu$ as derivative gain by using frequency dependent time localization.
\begin{proposition}
\label{prop:ShorttimeStrichartz}
Let $(\varepsilon,\mu) \in C^s_x$ for $0<s \leq 1$ and $\partial(\varepsilon,\mu) \in L_t^2 L_{x'}^\infty$.
 Suppose that the estimate
 \begin{equation}
 \label{eq:SmoothingInhomogeneity}
  \| |D|^{-\rho+\frac{s-2}{4}} u \|_{(L_{t}^4 L^{\infty}_{x'})_2} \lesssim_{C} \| u \|_{L_x^2} + \| |D|^{\frac{s-2}{2}} Pu \|_{L^2_x} + \| | D |^{\frac{s-2}{4}-\frac{1}{2}} \rho_{em} \|_{L^2_x}
 \end{equation}
holds true with $C=C(\|(\varepsilon,\mu) \|_{C^s_x},\| \partial (\varepsilon, \mu) \|_{L_t^2 L_{x'}^\infty})$.
Let $\delta > 0$ and $T> 0$. We then obtain the inquality
\begin{equation}
\label{eq:ImprovementStrichartz}
\begin{split}
 \| \langle D' \rangle^{-\rho + \frac{s-2}{8} - \delta} u \|_{L^4(0,T;L^\infty_{x'})} &\leq C( \| u_0 \|_{L^2_{x'}} + \| Pu \|_{L^1_T L^2_{x'}} + \| \langle D' \rangle^{\frac{s-2}{8}-\frac{1}{2}} \rho_{em}(0) \|_{L^2_{x'}} \\
 &\quad + \| \langle D' \rangle^{\frac{s-2}{8}-\frac{1}{2}} \partial_t \rho_{em} \|_{L^1_T L^2_{x'}})
 \end{split}
\end{equation}
with $C = C( \| (\varepsilon,\mu) \|_{C_x^s}, \| \partial (\varepsilon,\mu) \|_{L_t^2 L_{x'}^\infty}, T, \delta)$.
\end{proposition}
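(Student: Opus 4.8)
The plan is to run the Bahouri--Chemin short-time iteration (cf.\ \cite{BahouriChemin1999}, and \cite{Tataru2001,Tataru2002,SchippaSchnaubelt2022,Schippa2021Maxwell3d}) in order to halve the loss. The point is that the loss $-\rho+\tfrac{s-2}{4}$ in \eqref{eq:SmoothingInhomogeneity} comes from controlling the \emph{space-time} $L^2$-norm of $u$ globally; on a time interval of length $\ell$ this norm carries an extra factor $\ell^{1/2}$, whereas reassembling the $L^4_t$-Strichartz norm out of $\sim\ell^{-1}$ such intervals costs only $\ell^{-1/4}$. The resulting net gain $\ell^{1/4}$, with $\ell=\lambda^{-\sigma}$ and $\sigma=\tfrac{2-s}{2}\in[\tfrac12,1)$, is exactly the passage from $\tfrac{s-2}{4}$ to $\tfrac{s-2}{8}$. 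First I would reduce \eqref{eq:ImprovementStrichartz} to dyadic pieces, bounding $\|\langle D'\rangle^{-\rho+\frac{s-2}{8}-\delta}u\|_{L^4(0,T;L^\infty_{x'})}\le\sum_{\lambda\in2^{\N_0}}\lambda^{-\rho+\frac{s-2}{8}-\delta}\|S'_\lambda u\|_{L^4(0,T;L^\infty_{x'})}$ plus a low-frequency term handled by Bernstein and the energy estimate Proposition~\ref{prop:EnergyEstimate} (recall $\rho=\tfrac54$ since $(p,q)=(4,\infty)$). On each dyadic piece one works near the characteristic set, where $|\xi_0|\lesssim|\xi'|\sim\lambda$, so that $|D|^\alpha$ acts like $\lambda^\alpha\sim\langle D'\rangle^\alpha$ on the relevant functions (the region $|\xi_0|\gg|\xi'|$ is disposed of trivially by Sobolev embedding, as in Proposition~\ref{prop:ScalarReduction}); the $\ell^1$-summation over $\lambda$, and the passage from the $(L^4_tL^\infty_{x'})_2$-norm in \eqref{eq:SmoothingInhomogeneity} to it, is afforded by $\sum_\lambda\lambda^{-\delta}<\infty$.

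Next I would partition $[0,T]$ into $N\sim\lambda^{\sigma}$ intervals $I_j=[t_j,t_{j+1}]$ of length $\lambda^{-\sigma}$, and on each $I_j$ split $S'_\lambda u=w_j+v_j$ by Duhamel's formula, where $w_j=U(\cdot,t_j)S'_\lambda u(t_j)$ is the free Maxwell evolution of the frequency-localised data $S'_\lambda u(t_j)$ (frequency localisation being approximately preserved by the flow, up to rapidly $\lambda$-decaying tails) and $v_j(t)=\int_{t_j}^tU(t,s)(Pu)(s)\,ds$. For the Duhamel part I would \emph{not} invoke Strichartz but Bernstein in $x'$ together with the energy estimate: $\|S'_\lambda v_j\|_{L^4(I_j;L^\infty_{x'})}\le|I_j|^{1/4}\lambda^{3/2}\|v_j\|_{L^\infty(I_j;L^2_{x'})}\lesssim\lambda^{-\sigma/4+3/2}\|Pu\|_{L^1(I_j;L^2_{x'})}$ by Proposition~\ref{prop:EnergyEstimate} with zero data. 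This is precisely the step that makes the inhomogeneity enter the final bound only through $\|Pu\|_{L^1_TL^2_{x'}}$ and not through a space-time $L^2$-norm of $Pu$; summing over $j$ with $\ell^4_j\hookrightarrow\ell^1_j$ and then over $\lambda$ leaves the exponent $-\rho+\tfrac{s-2}{8}-\delta-\tfrac{\sigma}{4}+\tfrac32=\tfrac{s-1}{4}-\delta\le-\delta$, which closes this contribution (this is where $s\le1$ is used).

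For the free part $w_j$ I would apply \eqref{eq:SmoothingInhomogeneity} to the time-localised function $\chi_jw_j$, with $\chi_j\equiv1$ on $I_j$ supported in a twice-larger interval and $\|\partial_t^k\chi_j\|_\infty\lesssim\lambda^{k\sigma}$, using that $\chi_jw_j$ is essentially frequency-localised at $\lambda$ to read off $\|S'_\lambda u\|_{L^4(I_j;L^\infty_{x'})}$ on the left. Since $P(\chi_jw_j)=(\partial_t\chi_j)\,\mathrm{diag}(-\varepsilon,\mu)\,w_j$ and the charge of $\chi_jw_j$ equals $\chi_j$ times the time-independent charge of $w_j$, which is $\approx S'_\lambda\rho_{em}(t_j)$, Hölder in $t$ together with Proposition~\ref{prop:EnergyEstimate} give $\|\chi_jw_j\|_{L^2}\lesssim\lambda^{-\sigma/2}\|S'_\lambda u(t_j)\|_{L^2}$, $\|\,|D|^{\frac{s-2}{2}}P(\chi_jw_j)\|_{L^2}\lesssim\lambda^{\frac{s-2}{2}+\sigma/2}\|S'_\lambda u(t_j)\|_{L^2}$, and a charge term $\lesssim\lambda^{\frac{s-2}{4}-\frac12-\sigma/2}\|S'_\lambda\rho_{em}(t_j)\|_{L^2}$. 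Using $\sup_{t\in[0,T]}\|S'_\lambda\rho_{em}(t)\|_{L^2}\le\|S'_\lambda\rho_{em}(0)\|_{L^2}+\|S'_\lambda\partial_t\rho_{em}\|_{L^1_TL^2}$ (fundamental theorem of calculus and $\partial_t\rho_{em}=-\nabla\cdot\mathcal J$) — which is the origin of the two charge norms on the right-hand side of \eqref{eq:ImprovementStrichartz} — and $\sup_t\|S'_\lambda u(t)\|_{L^2}\le\|u\|_{L^\infty_TL^2}\lesssim\|u_0\|_{L^2}+\|Pu\|_{L^1_TL^2}$, one checks that after summing over $j$ and choosing $\sigma=\tfrac{2-s}{2}$ every $\lambda$-power equals $-\delta$ (with harmless extra slack on the charge term), so the sum over $\lambda$ converges.

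The main obstacle is the bookkeeping of the error terms in the last two steps. First, one must check that the cutoff error $(\partial_t\chi_j)\,\mathrm{diag}(-\varepsilon,\mu)\,w_j$, after being smoothed by $|D|^{\frac{s-2}{2}}$ and summed over the $\sim\lambda^{\sigma}$ intervals, is genuinely lower order; this is what forces $w_j$ to be taken frequency-localised (so that the $\lambda^{\frac{s-2}{2}}$-gain is actually available), and here — together with the energy estimate for $v_j$ — the hypotheses $(\varepsilon,\mu)\in C^s_x$ and $\partial(\varepsilon,\mu)\in L^2_tL^\infty_{x'}$ are used, and the value $\sigma=\tfrac{2-s}{2}$ is pinned down. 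Secondly, and most delicately, the split $S'_\lambda u=w_j+v_j$ must be organised so that the inhomogeneity never appears in a space-time $L^2$-norm of $Pu$ (which is \emph{not} controlled by $\|Pu\|_{L^1_TL^2}$): this is exactly why the Duhamel term is estimated by energy rather than by \eqref{eq:SmoothingInhomogeneity}, and it is at this point, in the dyadic summation, and in the borderline case $s=1$, that the small loss $\delta$ is spent. The commutators $[P,S'_\lambda]$ occurring throughout are treated exactly as in the proof of Proposition~\ref{prop:ReductionFrequencyLocalizedEstimate}.
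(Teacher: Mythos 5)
Your argument is correct and runs on the same engine as the paper's proof: time localization to intervals of length $\lambda^{-\frac{2-s}{2}}$, application of the hypothesis \eqref{eq:SmoothingInhomogeneity} to the cut-off, frequency-localized solution, H\"older in time to convert $L^2_t L^2_{x'}$ into $\lambda^{\frac{s-2}{4}}L^\infty_tL^2_{x'}$, and $\ell^4$-reassembly over the $\sim T\lambda^{\frac{2-s}{2}}$ intervals producing the gain $\lambda^{\frac{2-s}{8}}$; the cutoff commutator, the elliptic region $|\tau|\gg|\xi'|$, and the $[P,S'_\lambda]$ commutators are treated as in the paper. The one genuine divergence is how $\|Pu\|_{L^1_TL^2_{x'}}$ and the charge terms are reached: the paper first proves the intermediate estimate \eqref{eq:EnergyEstimateI} with $\|Pu\|_{L^2_tL^2_{x'}}$ and $\|\rho_{em}\|_{L^2_x}$ on the right, and only then converts by a single global Duhamel/Minkowski step \eqref{eq:duhamel}, using that $U(t,s)(Pu)(s)$ is a homogeneous solution with charge $\mp\partial_t\rho_{em}(s)$; you instead split off the Duhamel piece on each short interval, bound it by Bernstein plus the energy estimate (Proposition \ref{prop:EnergyEstimate}), and obtain the charges by the fundamental theorem of calculus on $\sup_t\|S'_\lambda\rho_{em}(t)\|_{L^2}$. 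Your route avoids the intermediate $L^2_t$ estimate entirely, but pays the Bernstein factor $\lambda^{3/2}$ on the Duhamel piece, which is absorbed only because $-\rho+\frac{s-2}{8}-\frac{\sigma}{4}+\frac32=\frac{s-1}{4}\le 0$ for $s\le1$; the paper's global Duhamel argument needs no such numerical coincidence and would survive a larger loss. Within the stated hypotheses both arguments close, and your exponent bookkeeping (including the extra slack $\lambda^{\frac{s-2}{4}-\frac12}$ versus the required $\lambda^{\frac{s-2}{8}-\frac12}$ on the charge term) is accurate.
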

\begin{proof}
 An application of \eqref{eq:SmoothingInhomogeneity} with $P_{\lambda}$ (recall this is the Maxwell operator with coefficients truncated at frequencies $\leq \lambda/8$) on $S_\lambda u$ yields 
\begin{equation}
 \label{eq:DyadicEstimate}
 \lambda^{-\rho+\frac{s-2}{4}} \| S_\lambda u \|_{L_{t}^4 L_{x'}^\infty} \lesssim \| S_\lambda u \|_{L_x^2} + \lambda^{\frac{s-2}{2}} \| P_{\lambda} S_\lambda u \|_{L^2_{t} L_{x'}^2} + \lambda^{\frac{s-2}{4}-\frac{1}{2}} \| S_\lambda \rho_{em} \|_{L^2}.
\end{equation}

For the proof of \eqref{eq:ImprovementStrichartz}, we use Minkowski's inequality to split
\begin{equation*}
 \begin{split}
  \| \langle D' &\rangle^{-\rho + \frac{s-2}{8} - \frac{\delta}{2}} u \|_{L_T^4 L_{x'}^\infty} \\
  &\leq \sum\nolimits_\lambda \| \langle D' \rangle^{-\rho + \frac{s-2}{8} - \frac{\delta}{2}} S_\lambda u \|_{L_T^4 L_{x'}^\infty} \\
  &\leq \sum\nolimits_\lambda \big[ \| \langle D' \rangle^{-\rho + \frac{s-2}{8} - \frac{\delta}{2}} S_\lambda S'_\lambda u \|_{L_T^4 L_{x'}^\infty} + \| \langle D' \rangle^{-\rho + \frac{s-2}{8} - \frac{\delta}{2}} S_\lambda^{\ll \tau} u \|_{L_T^4 L_{x'}^\infty} \big],
 \end{split}
\end{equation*}
where $S_\lambda^{\ll \tau}$ projects to the space-time frequencies $\{ \lambda \sim |\tau| \gg |\xi'| \}$. In this region the operator $P$
is elliptic, which gains one derivative for Lipschitz coefficients. The summand can then be bounded by Sobolev embedding. We refer to the
proof of \cite[Corollary~1.7]{SchippaSchnaubelt2022} for details on this estimate if $\partial (\varepsilon,\mu) \in L^2_{x_0} L^\infty_{x'}$.

For the first term
we want to apply \eqref{eq:DyadicEstimate}. Let $\chi \in C^\infty_c(-2,2)$ be radially decreasing with $\chi(t) = 1$ for $t \in [-1,1]$.
Note that passing to $\chi(\lambda^{\frac{2-s}{2}} t - n) u$ does not essentially change the Fourier support in time because the inverse
Fourier transform of $\chi_{\lambda^{\frac{2-s}{2}}}(t) = \chi(\lambda^{\frac{2-s}{2}} t - n)$ is essentially
supported in a $\lambda^{\frac{2-s}{2}}$-ball. We suppress dependence on $n$ in the following due to uniformity of the estimates.
We will apply \eqref{eq:DyadicEstimate} to $S'_\lambda \chi_{\lambda^{\frac{2-s}{2}}} u$ with
$I_\lambda = \text{supp}(\chi_{\lambda^{\frac{2-s}{2}}})$  and $I_\lambda^*=\{\chi_{\lambda^{\frac{2-s}{2}}}=1\}$.
At first, applying $\partial_t$ to $\chi_{\lambda^{\frac{2-s}{2}}}$ in $[P_\lambda,\chi_{\lambda^{\frac{2-s}{2}}}]$
we derive
\begin{equation*}
\lambda^{\frac{s-2}{2}} \| P_\lambda \chi_{\lambda^{\frac{2-s}{2}}} S_\lambda S'_\lambda u \|_{L_x^2}
  \lesssim \| S_\lambda S'_\lambda u \|_{L_t^2(I_\lambda; L^2_{x'})} + \lambda^{\frac{s-2}{2}} \| P_\lambda S_\lambda S'_\lambda u \|_{L_t^2(I_\lambda;L^2_{x'})} .
\end{equation*}
This gives
\begin{equation*}
\begin{split}
\lambda^{-\rho + \frac{s-2}{4} - \frac{\delta}{2}} \| S_\lambda S'_\lambda u \|_{L_t^4(I_\lambda^*;L_{x'}^\infty)}
 &\lesssim  \| S_\lambda' u \|_{L_t^2(I_\lambda; L^2_{x'})} + \lambda^{\frac{s-2}{2}} \| P_\lambda S_\lambda S'_\lambda u \|_{L_t^2(I_\lambda;L^2_{x'})} \\
&\quad + \lambda^{\frac{s-2}{4}- \frac{1}{2}} \| S'_\lambda \rho_{em} \|_{L_t^2(I_\lambda;L^2_{x'})}.
\end{split}
\end{equation*}
An application of H\"older's inequality on $\| S_\lambda' u \|_{L_t^2(I_\lambda;L_{x'}^2)}$ yields
\begin{equation*}
\begin{split}
 \lambda^{-\rho + \frac{s-2}{4} - \frac{\delta}{2}} \| S_\lambda S'_\lambda u \|_{L_t^4(I_\lambda^*;L^\infty_{x'})} &\lesssim \lambda^{\frac{s-2}{4}} \| S'_\lambda u \|_{L_t^\infty(I_\lambda; L^2_{x'})} + \lambda^{\frac{s-2}{2}} \| P_{\lambda} S_\lambda S'_\lambda u \|_{L_t^2(I_\lambda;L^2_{x'})} \\
 &\quad + \lambda^{\frac{s-2}{4}-\frac{1}{2}} \| S'_\lambda \rho_{em} \|_{L_{t}^2(I_\lambda;L^2_{x'})}.
 \end{split}
\end{equation*}
We sum this estimate over the $T \lambda^{\frac{2-s}{2}}$ disjoint intervals $I_\lambda^*$ partitioning $I = (0,T)$ in $\ell^4$ such that
\begin{equation*}
\begin{split}
 \lambda^{-\rho + \frac{s-2}{4} - \frac{\delta}{2}} \| S_\lambda S'_\lambda u \|_{L_T^4(I;L_{x'}^\infty)} &\lesssim_T \lambda^{\frac{2-s}{8} + \frac{s-2}{4}} \| S'_\lambda u \|_{L_t^{\infty} L^2_{x'}} + \lambda^{\frac{s-2}{2}} \|P_{\lambda} S_\lambda S'_\lambda u \|_{L_t^2 L^2_{x'}} \\
 &\quad + \lambda^{\frac{s-2}{4}-\frac{1}{2}} \| S'_\lambda \rho_{em} \|_{L_x^2}.
 \end{split}
\end{equation*}
Because of $\ell^2 \hookrightarrow \ell^4$ we do not lose powers of $\lambda$ when summing $P_\lambda S_\lambda S'_\lambda u$ and $S'_\lambda \rho_{em}$.
By commutator arguments using $\| \partial (\varepsilon,\mu) \|_{L^2_t L_{x'}^\infty} \lesssim 1$, cf.\ (4.4) in \cite{Tataru2002}, we find
\begin{equation*}
\| P_\lambda S_\lambda S'_\lambda u \|_{L_t^2 L_{x'}^2} \lesssim \| S_\lambda' P u \|_{L_t^2 L_{x'}^2} + \| S'_\lambda u \|_{L_t^\infty L_{x'}^2}
\end{equation*}
and hence,
\begin{align*}
\lambda^{-\rho + \frac{s-2}{8} - \frac{\delta}{2}} \| S_\lambda S'_\lambda u \|_{L_t^4(I;L^\infty_{x'})}
&\lesssim_T \| S'_\lambda u \|_{L^\infty_t L^2_{x'}} + \lambda^{\frac{3(s-2)}{8}} \| P u \|_{L^2_t L^2_{x'}} \\
   &\quad + \lambda^{\frac{s-2}{8}-\frac{1}{2} }\| S'_\lambda \rho_{em} \|_{L_x^2}.
\end{align*}
Summing over $\lambda$, we find the estimate
\begin{equation}
\label{eq:EnergyEstimateI}
\| \langle D' \rangle^{-\rho + \frac{s-2}{8} - \delta} u \|_{L_t^4(I;L^\infty_{x'})} \lesssim_{T,\delta,X} \| u \|_{L_t^\infty L^2_{x'}} + \| P u \|_{L^2_t L^2_{x'}} + \| \langle D' \rangle^{-\frac{1}{2}+\frac{s-2}{8}} \rho_{em} \|_{L^2_x}
\end{equation}
with $X = \| (\varepsilon,\mu) \|_{C^s_x} + \| \partial(\varepsilon,\mu)\|_{L_t^2 L_{x'}^\infty}$. (Note that we can treat frequencies
$\lambda \sim |\tau| \gg |\xi'|$ as above.)

The $L_t^2 L_{x'}^2$-norms are changed to $L_t^1 L_{x'}^2$-norms by the energy estimate and Duha\-mel's formula: An application of \eqref{eq:EnergyEstimateI} to homogeneous solutions gives
\begin{equation*}
\| \langle D' \rangle^{-\rho + \frac{s-2}{8} - \delta} u \|_{L_t^4(I;L^\infty_{x'})} \lesssim_{T,\delta,X} \| u_0 \|_{L^2_{x'}} + \| \langle D' \rangle^{-\frac{1}{2}+\frac{s-2}{8}} \rho_{em}(0) \|_{L^2_{x'}}.
\end{equation*}
Now we write the general function $u$ by Duhamel's formula $u(t) = U(t,0) u_0 + \int_0^t U(t,s) (Pu)(s) ds$ and note that $\nabla \cdot ((Pu)_1(s),(Pu)_2(s)) = (-\partial_t \rho_e(s),\partial_t \rho_m(s))$. We find after applying Minkowski's inequality
\begin{align}\label{eq:duhamel}
\| \langle D' \rangle^{-\rho + \frac{s-2}{8} - \delta} u \|_{L_t^4(I;L^\infty_{x'})} &\lesssim_{T,\delta,X} \| u_0 \|_{L^2_{x'}}
 + \| P u \|_{L^1_T L^2_{x'}} + \| \langle D' \rangle^{-\frac{1}{2}+\frac{s-2}{8}} \rho_{em}(0) \|_{L^2_{x'}} \notag\\
 &\quad + \| \langle D' \rangle^{-\frac{1}{2}+\frac{s-2}{8}} \partial_t \rho_{em} \|_{L^1_T L^2_{x'}}.
\end{align}
The proof is complete.
\end{proof}
Interpolation with the energy estimate from Proposition \ref{prop:EnergyEstimate}
\begin{equation*}
\| u \|_{L^\infty_T L^2_{x'}} \lesssim \| u(0) \|_{L^2_{x'}} + \| Pu \|_{L^1_T L^2_{x'}}
\end{equation*}
yields Theorem \ref{thm:StrichartzEstimatesFullyAnisotropic} on the sharp line $\frac{2}{p} + \frac{1}{q} = \frac{1}{2}$.
The general case follows from Sobolev embedding.
 In the next section we generalize the result in the Lipschitz case to diagonalizable coefficients.

\section{Reducing to the case of diagonal material laws}
\label{section:Reductions}
We now prove Theorem \ref{thm:StrichartzEstimatesFullyAnisotropicOffDiagonal} by transforming permittivity and permeability to diagonal matrices. In the following we consider $\varepsilon,\mu \in C^1(\R \times \R^3; \R^{3 \times 3}_{\text{sym}})$.
\subsection{Orthogonality transformations}
\label{subsection:OrthogonalityTransformations}
Suppose there is $\Phi \in C^1(\R^4; \R^{3 \times 3})$  with
\begin{equation}\label{eq:PermittivityPermeabilityTransformation}
\Phi^t(x) \Phi(x) = 1_{3 \times 3}, \quad \varepsilon^d = \Phi^t \varepsilon \Phi, \quad \mu^d = \Phi^t \mu \Phi.
\end{equation}
Write $\Phi = (\varphi_1 \; \varphi_2 \; \varphi_3)$. We use the transformations
\begin{equation*}
\tilde{\mathcal{E}} = \Phi^{-1} \mathcal{E}, \quad \tilde{\mathcal{H}} = \Phi^{-1} \mathcal{H}, \quad \tilde{\mathcal{J}}_k = \Phi^{-1} \mathcal{J}_k, \; k \in \{ e, m \}
\end{equation*}
to reduce the analysis to the case that $\varepsilon$ and $\mu$ are diagonal matrices. If $\mathcal{E}$, $\mathcal{H}$, $\mathcal{J}$ satisfy \eqref{eq:3dMaxwellEquations}, then $\tilde{\mathcal{E}}$, $\tilde{\mathcal{H}}$, $\mathcal{\tilde{J}}$ satisfy the system
\begin{equation}
\label{eq:TransformedMaxwell}
\left\{ \begin{array}{cl}
\partial_t (\varepsilon^d \tilde{\mathcal{E}}) &= \Phi^t \nabla \times (\Phi \tilde{\mathcal{H}}) - (\partial_t \Phi) \varepsilon^d \tilde{\mathcal{E}} - \tilde{\mathcal{J}}_e, \\
\partial_t (\mu^d \tilde{\mathcal{H}}) &= - \Phi^t \nabla \times (\Phi \tilde{\mathcal{E}}) - (\partial_t \Phi) \mu^d \tilde{\mathcal{H}} - \tilde{\mathcal{J}}_m, \\
\rho_e &= \nabla \cdot (\Phi \varepsilon^d \tilde{\mathcal{E}}), \quad \rho_m = \nabla \cdot (\Phi \mu^d \tilde{\mathcal{H}}).
\end{array} \right.
\end{equation}
We write $\tilde{u} = (\tilde{\mathcal{E}},\tilde{\mathcal{H}})$. The curl transforms as follows. We denote
\begin{equation*}
\nabla \times f = i \mathcal{C}(D) f, \quad \mathcal{C}(\xi') = (-\varepsilon^{ijk} \xi_k)_{ij}
, \quad \mathcal{C}(\xi') v = \xi' \times v.
\end{equation*}
The leading order term of $\Phi^t (\nabla \times (\Phi \cdot))$ can be written as
\begin{equation*}
\begin{split}
\begin{pmatrix}
\varphi_1^t \\ \varphi_2^t \\ \varphi_3^t
\end{pmatrix}
\begin{pmatrix}
\xi' \times \varphi_1 & \xi' \times \varphi_2 & \xi' \times \varphi_3
\end{pmatrix}
&= (\varphi_i \cdot (\xi' \times \varphi_j))_{ij} = (\xi' \cdot (\varphi_j \times \varphi_i))_{ij} \\
&= ( - \varepsilon^{ijk} \varphi_k(x) \cdot \xi')_{ij} =: (\mathcal{C}(\eta))_{ij}.
\end{split}
\end{equation*}
We let $\eta_k(x,\xi) = \varphi_k(x) \cdot \xi'$ and\footnote{It is important to work with operators in divergence form. We highlight this by the notation $a(D,x)$.} $\eta_k(D,x) = \sum_j \partial_j (\varphi_{kj}(x) \cdot)$. Consequently,
\begin{equation*}
\big( \Phi^t ( \nabla \times (\Phi \cdot)) \big)_{ij} = (- \varepsilon^{ijk} \varphi_k(x) \cdot \nabla_{x'} )_{ij} + b_{ij}(x)
\end{equation*}
with $\| b_{ij} \|_{L^\infty} \lesssim \| \Phi \|_{C^1}$. In the following we use that $\varepsilon^d$ and $\mu^d$ are diagonal as in Assumption \ref{AssumptionMaterialLaws}:
\begin{equation}
\label{eq:DiagonalMaterialLawsReduction}
\varepsilon^d(x)= \text{diag}(\varepsilon_1^d(x),\varepsilon^d_2(x),\varepsilon^d_3(x)), \quad \mu^d(x) = \text{diag}(\mu^d_1(x),\mu_2^d(x),\mu^d_3(x)).
\end{equation}

 For the electric charge we note that
\begin{equation*}
\rho_e = \nabla \cdot( \varepsilon \mathcal{E}) = \nabla \cdot (\Phi \varepsilon^d \tilde{u}^1) = \sum_{j,k = 1}^3 \partial_j (\varphi_{kj} \varepsilon_k^d \tilde{u}_k^1) =: \eta(D,x) \cdot \varepsilon^d \tilde{u}^1.
\end{equation*}
For the magnetic charge we find similarly
\begin{equation*}
 \rho_m = \eta(D,x) \cdot (\mu^d \tilde{u}^2).
\end{equation*}
We define
\begin{equation}
\label{eq:TransformedCharges}
 \tilde{\rho}_e = \eta(D,x) \cdot (\varepsilon^d \tilde{u}^1), \quad \tilde{\rho}_m = \eta(D,x) \cdot (\mu^d \tilde{u}^2).
\end{equation}
It follows
\begin{equation*}
 \begin{pmatrix}
  - \partial_t (\varepsilon^d \cdot) & \Phi^t (\nabla \times (\Phi(\cdot)) ) \\
  \Phi^t( \nabla \times (\Phi(\cdot)) ) & \partial_t (\mu^d \cdot) 
 \end{pmatrix}
= 
\begin{pmatrix}
 - \partial_t (\varepsilon^d \cdot) & \mathcal{C}(\eta(D,x)) \\
 \mathcal{C}(\eta(D,x)) & \partial_t (\mu^d \cdot)
\end{pmatrix}
+ R(x)
\end{equation*}
with $R(x) \in L^\infty$. Let
\begin{equation}
 \label{eq:TransformedMaxwellOperator}
 \tilde{P} =
 \begin{pmatrix}
  - \partial_t( \varepsilon^d \cdot) & \mathcal{C}(\eta(D,x)) \\
  \mathcal{C}(\eta(D,x)) & \partial_t (\mu^d \cdot)
 \end{pmatrix}
 .
\end{equation}
We have the following lemma:
\begin{lemma}
\label{lem:OrthogonalityTransformations}
Let $\Phi \in C^1(\R^4;\R^{3 \times 3})$ satisfy \eqref{eq:PermittivityPermeabilityTransformation} and $\varepsilon^d$, $\mu^d$ be like in \eqref{eq:DiagonalMaterialLawsReduction}.
Set $\tilde{u} = (\tilde{\mathcal{E}},\tilde{\mathcal{H}}) = (\Phi^{-1} \mathcal{E}, \Phi^{-1} \mathcal{H})$,
$\tilde{\mathcal{J}}_k = \Phi^{-1} \mathcal{J}_k$, $k \in \{e,m\}$, $\varepsilon^d = \Phi^t \varepsilon \Phi$, $\mu^d = \Phi^t \mu \Phi$, and
\begin{equation*}
\tilde{P} =
\begin{pmatrix}
- \partial_t (\varepsilon^d \cdot) & \mathcal{C}(\eta(D,x)) \\
\mathcal{C}(\eta(D,x)) & \partial_t (\mu^d \cdot)
\end{pmatrix}; \quad
\tilde{\rho}_{em} = (\eta(D,x) (\varepsilon^d \tilde{u}^1), \eta(D,x) (\mu^d \tilde{u}^2)).
\end{equation*}
Suppose that
\begin{equation}
\label{eq:HypothesisStrichartzTransformed}
\| |D|^{-\rho-\frac{1}{4}} \tilde{u} \|_{(L^4 L^{\infty})_2} \lesssim \| \tilde{u } \|_{L^2} + \| |D|^{-\frac{1}{2}} \tilde{P} \tilde{u} \|_{L^2_x} + \| |D|^{-\frac{3}{4}} \tilde{\rho}_{em} \|_{L^2}.
\end{equation}
Then, for $0<\delta < \frac{1}{4}$, the following estimate holds:
\begin{equation}
\label{eq:StrichartzEstimatesOriginal}
\| |D|^{-\rho-\frac{1}{4}-\delta} u \|_{(L^4 L^\infty)_2} \lesssim \| u \|_{L^2} + \| |D|^{-\frac{1}{2}} P u \|_{L^2} + \| |D|^{-\frac{3}{4}} \rho_{em} \|_{L^2}.
\end{equation}
The implicit constants depend on $\| (\Phi,\Phi^t) \|_{C^1}$, $\| (\varepsilon,\mu) \|_{C^1}$.
\end{lemma}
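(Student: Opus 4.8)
The plan is to transport the assumed estimate \eqref{eq:HypothesisStrichartzTransformed} for the diagonalized system $\tilde P$ back to the original fields through $u=\Phi\tilde u$, $\tilde u=\Phi^{-1}u=\Phi^{t}u$. As in Section~\ref{section:DiagonalProof} one first reduces to spatial frequencies $\gtrsim1$: the low frequencies of all quantities are handled by Bernstein's inequality and Sobolev embedding at no cost, while on the high-frequency part homogeneous and inhomogeneous derivatives agree and commutators with the Littlewood--Paley projectors $S_\lambda$ behave as in the proof of Proposition~\ref{prop:ReductionFrequencyLocalizedEstimate} with $s=1$. The right-hand side transfers almost for free. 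Since $\Phi(x)$ is orthogonal, $\|\tilde u\|_{L^{2}}\sim\|u\|_{L^{2}}$; by the computation of $\tilde\rho_{em}$ in \eqref{eq:TransformedCharges} and the lines preceding it one has $\tilde\rho_{em}=\rho_{em}$, so the charge terms coincide; and comparing \eqref{eq:TransformedMaxwellOperator} with \eqref{eq:TransformedMaxwell}, while using $\tilde{\mathcal J}_k=\Phi^{-1}\mathcal J_k$ and $Pu=(\mathcal J_e,-\mathcal J_m)$, gives $\tilde P\tilde u=\Phi^{-1}(Pu)+B(x)\tilde u$ with $B\in L^{\infty}$ collecting the $\partial_t\Phi$-terms and the zeroth-order remainder of $\Phi^{t}(\nabla\times(\Phi\,\cdot))$, so $\|B\|_{L^{\infty}}\lesssim\|\Phi\|_{C^{1}}$. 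Since multiplication by a Lipschitz function is bounded on $H^{-1/2}$, this yields $\||D|^{-1/2}\tilde P\tilde u\|_{L^{2}}\lesssim\||D|^{-1/2}Pu\|_{L^{2}}+\|u\|_{L^{2}}$, whence the right-hand side of \eqref{eq:HypothesisStrichartzTransformed} is bounded by that of \eqref{eq:StrichartzEstimatesOriginal}.

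The substance of the proof is to estimate the left-hand side of \eqref{eq:StrichartzEstimatesOriginal} by $\||D|^{-\rho-1/4}\tilde u\|_{(L^{4}L^{\infty})_{2}}$ plus terms already dominated by the energy estimate of Proposition~\ref{prop:EnergyEstimate} applied to the diagonal system (legitimate since $\partial_t(\varepsilon^{d},\mu^{d},\Phi)\in L^{\infty}$). Writing $u=\Phi\tilde u$ and using a paraproduct splitting $\Phi\tilde u=T_{\Phi}\tilde u+T_{\tilde u}\Phi+R(\Phi,\tilde u)$, the low--high piece $T_{\Phi}\tilde u$ is essentially frequency localized, $\|S_{\lambda}T_{\Phi}\tilde u\|_{L^{4}L^{\infty}}\lesssim\|\Phi\|_{L^{\infty}}\|S_{\sim\lambda}\tilde u\|_{L^{4}L^{\infty}}$, so its contribution is controlled once \eqref{eq:HypothesisStrichartzTransformed} is applied dyadically to $S_{\lambda}\tilde u$; in doing so the errors $[\tilde P,S_{\lambda}]\tilde u$ and $[\eta(D,x)\varepsilon^{d},S_{\lambda}]\tilde u^{1}$ are $O(\|(\varepsilon,\mu,\Phi)\|_{C^{1}}\|\tilde u\|_{L^{2}})$ in $L^{2}$, exactly as in Section~\ref{section:DiagonalProof}, hence summable after the weights $\lambda^{-1/2}$ resp.\ $\lambda^{-3/4}$ and against $\|u\|_{L^{2}}$. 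For the high--low and high--high pieces one exploits $\|S_{\mu}\Phi\|_{L^{\infty}}\lesssim\mu^{-1}\|\Phi\|_{C^{1}}$ for $\mu\gtrsim1$, combined with Bernstein's inequality in $x'$, the dyadic Strichartz bound $\|S_{\mu}\tilde u\|_{L^{4}L^{\infty}}\lesssim\mu^{\rho+1/4}(\text{right-hand side of }\eqref{eq:HypothesisStrichartzTransformed}\text{ for }S_\mu\tilde u)$ on the low pieces, and the uniform bound $\|S_{\mu}\tilde u\|_{L^{\infty}_{T}L^{2}}\lesssim\|\tilde u\|_{L^{\infty}_{T}L^{2}}$ from the energy estimate on the high pieces; this reduces the two remaining contributions to double dyadic sums which are borderline convergent. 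Since $\rho+\tfrac14=\tfrac32$ at $(p,q)=(4,\infty)$, the extra factor $\lambda^{-\delta}$ is precisely what a Schur test needs to sum them, and the restriction $\delta<\tfrac14$ keeps us in the range where the first-step Sobolev reductions still apply. Finally, interpolating with the $(\infty,2)$ energy estimate extends the result from the sharp line to the whole admissible range.

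I expect the main obstacle to be this left-hand side transfer. The difficulty is that the exponent $-\rho-\tfrac14$ on the left is $\le-1$ on the relevant range (it equals $-\tfrac32$ at the endpoint), so multiplication by the $C^{1}$ matrix $\Phi$ is \emph{not} bounded on the function space underlying $\||D|^{-\rho-1/4}\,\cdot\,\|_{(L^{4}L^{\infty})_{2}}$, and one cannot simply pull $\Phi$ outside. The remedy is to trade the missing regularity of $\Phi$ against the smoothing already built into the Strichartz estimate — dyadic pieces of $\tilde u$ measured in $L^{4}L^{\infty}$ effectively carry $\rho+\tfrac14$ extra negative derivatives, hence are large in $L^{2}$ — at the cost of the $\delta$-loss needed to close the borderline dyadic summations; organizing the various commutator errors so that they stay bounded by $\|u\|_{L^{2}}+\||D|^{-1/2}Pu\|_{L^{2}}+\||D|^{-3/4}\rho_{em}\|_{L^{2}}$ is then routine and mirrors the corresponding steps of Section~\ref{section:DiagonalProof}.
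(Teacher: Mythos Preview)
Your approach is essentially the paper's: paraproduct decomposition of $u=\Phi\tilde u$, apply the hypothesis to the low--high piece, bound the remaining pieces using $\|S_\mu\Phi\|_{L^\infty}\lesssim\mu^{-1}\|\Phi\|_{C^1}$ together with Bernstein, and absorb the borderline dyadic sums with the $\lambda^{-\delta}$ factor; the right-hand side transfer via $\tilde\rho_{em}=\rho_{em}$ and $\tilde P\tilde u=\Phi^{-1}Pu+B\tilde u$ with $B\in L^\infty$ is exactly what the paper does. Two minor points: the paper actually performs a \emph{double} paraproduct (first in spatial frequencies $S'_K$, then in space-time frequencies $S_M$), because the Bernstein step $L^2_{x'}\to L^\infty_{x'}$ needs the spatial frequency scale and $S_M$ alone only bounds it by $M$; your single space-time paraproduct also closes if you feed the dyadic Strichartz bound back into the low pieces as you indicate. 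Second, the invocation of the energy estimate is unnecessary and slightly misleading---the error terms are bounded directly by $\|\tilde u\|_{L^2_{t,x'}}=\|u\|_{L^2}$, which is already on the right-hand side of \eqref{eq:StrichartzEstimatesOriginal}; routing through $L^\infty_TL^2$ would produce $\|u_0\|_{L^2}+\|Pu\|_{L^1_TL^2}$ instead, which is not what the lemma claims. The final remark about interpolation to the full admissible range is extraneous here, as the lemma is stated only at $(p,q)=(4,\infty)$.
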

\begin{proof}
The low frequencies can be treated like in the proof of Proposition \ref{prop:ReductionFrequencyLocalizedEstimate} by Sobolev embedding.
For $0<\delta<\frac{1}{4}$, we thus obtain
\begin{equation*}
\| S_{0} |D|^{-\rho- \frac{1}{4}-\delta} u \|_{L^4 L^\infty} \lesssim \| u \|_{L^2}.
\end{equation*}
For high frequencies $M \gtrsim 1$, we write
\begin{equation*}
\begin{split}
\| S_M |D|^{-\rho - \frac{1}{4}-\delta} u \|_{(L^4 L^\infty)_2} &\sim M^{-\rho - \frac{1}{4}-\delta} \| S_M (\Phi^t \tilde{u}) \|_{L^4 L^\infty} \\
&\lesssim M^{-\rho-\frac{1}{4}-\delta}  \sum_{1 \leq K \leq M} \| S_M S'_K (\Phi^t \tilde{u}) \|_{L^4 L^\infty}  \\
&\quad + M^{-\rho - \frac{1}{4}-\delta} \| S_M S'_{0} (\Phi^t \tilde{u}) \|_{L^4 L^\infty}.
\end{split}
\end{equation*}
The second term is estimated
\begin{equation*}
M^{-\frac{3}{2}-\delta} \| S_M S'_{0} (\Phi^t u) \|_{L^4 L^\infty} \lesssim M^{-1} \| \Phi^t u \|_{L^2}
\end{equation*}
 by Bernstein's inequality, with easy summation in $M$. Hence, it suffices to estimate the spatial frequencies, which are greater than $1$. To this end,
 we also use paraproduct decompositions  both in  space and space-time frequencies. First, we write (with $X = L_{x_0}^4 L^\infty_{x'}$)
\begin{align}
\label{eq:ParaproductDecompositionI}
\| S_M S'_K (\Phi^t \tilde{u}) \|_X &\leq \| S_M((S'_{\sim K} \Phi^t) (S'_{\ll K} \tilde{u})) \|_X
       + \| S_M (S'_K ((S'_{\gtrsim K} \Phi^t) (S'_{\gtrsim K} \tilde{u})) \|_X \notag\\
&\quad + \| S_M ((S'_{\ll K} \Phi^t) (S'_{\sim K} \tilde{u})) \|_X.
\end{align}
Bernstein's inequality implies for  the first term in \eqref{eq:ParaproductDecompositionI}
\begin{equation*}
\begin{split}
\sum_{M \geq 1} &M^{-2\rho - \frac{1}{2}} \big( \sum_{1 \leq K \leq M} \| S_M (S'_K \Phi^t S'_{\ll K} \tilde{u} ) \|_{L_t^4 L_{x'}^\infty} \big)^2 \\
&\lesssim \sum_{M \geq 1} M^{-2\rho - \frac{1}{2}} M^{\frac{1}{2}} \big( \sum_{K \leq M} K^\frac32 \| S'_K \Phi^t S'_{\ll K} \tilde{u} \|_{L^2_{t,x'}} \big)^2 \\
&\lesssim \sum_{M \geq 1} M^{-\frac{5}{2}} \big( \sum_{K \leq M} K^\frac12 \| \nabla_{x'} S'_K \Phi^t \|_{L^\infty} \big)^2 \| \tilde{u} \|^2_{L^2} \lesssim \| \Phi^t \|_{C^1}^2 \| \tilde{u} \|_{L^2}^2.
\end{split}
\end{equation*}
The second term in \eqref{eq:ParaproductDecompositionI} can be estimated likewise:
\begin{equation*}
\begin{split}
\sum_{M \geq 1}& M^{-2\rho - \frac{1}{2}} \big( \sum_{1 \leq K \leq M} \| S_M S'_K (S'_{\gtrsim K} \Phi^t S'_{\gtrsim K} \tilde{u} \big)\|_{L_t^4 L_{x'}^\infty} \big)^2 \\
&\lesssim \sum_{M \geq 1} M^{-2\rho} \big( \sum_{1 \leq K \leq M} K^{\frac{3}{2}} \| S'_{\gtrsim K} \Phi^t \|_{L^\infty_{x'}} \| S'_{\gtrsim K} \tilde{u} \|_{L^2_x} \big)^2 \lesssim \| \Phi^t \|_{C^1}^2 \| \tilde{u} \|_{L^2}^2.
\end{split}
\end{equation*}

For the last term in \eqref{eq:ParaproductDecompositionI}, we additionally make the paraproduct decomposition in space-time frequencies
\begin{align}
\label{eq:ParaproductDecompositionII}
\| S_M( S'_{\ll K} \Phi^t S'_K \tilde{u}) \|_X & \le\| (S_M S'_{\ll K} \Phi^t) (S_{\ll M} S'_{\sim K} \tilde{u}) \|_X \notag\\
&\quad  + \| S_M (S_{\gtrsim M} S'_{\ll K} \Phi^t S_{\gtrsim M} S'_{\sim K} \tilde{u}) \|_X \\
&\quad + \| (S_{\ll M} S'_{\ll K} \Phi^t) (S_{\sim M} S'_{\sim K} \tilde{u}) \|_X.\notag
\end{align}
The first and second term in \eqref{eq:ParaproductDecompositionII} can be estimated using Bernstein's inequality. For the first term in \eqref{eq:ParaproductDecompositionII} we find
\begin{equation*}
\begin{split}
 \sum_{M \geq 1} &M^{-2\rho - \frac{1}{2}} \big( \sum_{1 \leq K \leq M} \| (S_M S'_{\ll K} \Phi^t) (S_{\ll M} S'_{\sim K} \tilde{u}) \|_{L^4 L^\infty} \big)^2 \\
&\lesssim \sum_{M \geq 1} M^{-2\rho - \frac{1}{2}} \big( \sum_{1 \leq K \leq M} K^{\frac{3}{2}} M^{\frac{1}{4}} \| S_M \Phi^t \|_{L^\infty} \| \tilde{u} \|_{L^2} \big)^2 \\
&\lesssim \sum_{M \geq 1} M^{-3} M^{3} M^{\frac{1}{2}} \| S_M \Phi^t \|_{L^\infty}^2 \| \tilde{u} \|_{L^2}^2
\lesssim \| \Phi^t \|_{C^1}^2 \| \tilde{u} \|_{L^2}^2.
\end{split}
\end{equation*}
The estimate of the second term in \eqref{eq:ParaproductDecompositionII} is given by
\begin{equation*}
\begin{split}
\sum_{M \geq 1}& M^{-2\rho - \frac{1}{2}} \big( \sum_{1 \leq K \leq M} \| S_M (S_{\gtrsim M} S'_{\ll K} \Phi^t S_{\gtrsim M} S'_{\sim K} \tilde{u} \big) \|_{L^4 L^\infty} \big)^2 \\
&\lesssim \sum_{M \geq 1} M^{-3} M^{\frac{3}{2}+\epsilon} \| \Phi \|_{C^1}^2 \| \tilde{u} \|_{L^2}^2
\lesssim \| \Phi \|_{C^1}^2 \| \tilde{u} \|_{L^2}^2.
\end{split}
\end{equation*}
 For the third term in \eqref{eq:ParaproductDecompositionII}, we find
\begin{equation*}
\begin{split}
 \sum_{M \geq 1} &M^{-2\rho-\frac{1}{2}-2\delta} \big( \sum_{1 \leq K \leq M} \| (S_{\ll M} S'_{\ll K} \Phi^t) (S_{\sim M} S'_{\sim K} \tilde{u}) \|_{L^4 L^\infty} \big)^2 \\
&\lesssim \| \Phi^t \|_{L^\infty}^2 \sum_{M \geq 1} M^{-2\delta} \big( \sum_{1 \leq K \leq M} \| \langle D \rangle^{- \rho - \frac{1}{4}} S_{\sim M} S'_{\sim K} \tilde{u} \|_{L^4 L^\infty} \big)^2 \\
&\lesssim \| \Phi^t \|_{L^\infty}^2 (\| \tilde{u} \|_{L^2} + \| |D|^{-\frac{1}{2}} \tilde{P} \tilde{u} \|_{L^2} + \| |D|^{-\frac{3}{4}} \tilde{\rho}_{em} \|_{L^2} )^2.
\end{split}
\end{equation*}
In the last step we used the hypothesis. It remains to show
\begin{equation*}
\| \tilde{u} \|_{L^2} + \| |D|^{-\frac{1}{2}} \tilde{P} \tilde{u} \|_{L^2} + \| |D|^{-\frac{3}{4}} \tilde{\rho}_{em} \|_{L^2} \lesssim \| u \|_{L^2} + \| |D|^{-\frac{1}{2}} P u \|_{L^2} + \| |D|^{-\frac{3}{4}} \rho_{em} \|_{L^2}.
\end{equation*}

We first note that  the definitions imply
\begin{equation*}
\| \tilde{u} \|_{L^2} + \| |D|^{-\frac{3}{4}} \tilde{\rho}_{em} \|_{L^2} \lesssim \| u \|_{L^2} + \| |D|^{-\frac{3}{4}} \rho_{em} \|_{L^2}.
\end{equation*}
 We turn to the estimate of the second term. By the above we can write $\tilde{P} \tilde{u} = P' \tilde{u} + R u$ with
\begin{equation*}
P'  =
\begin{pmatrix}
-\partial_t (\varepsilon^d \cdot) & \Phi^t (\nabla \times (\Phi \cdot)) \\
\Phi^t \nabla \times (\Phi \cdot) & \partial_t (\mu^d \cdot)
\end{pmatrix}, \qquad R \in L^\infty.
\end{equation*}
Moreover, $P' \tilde{u}= \tilde{R} \tilde{u} +\Phi^t\mathcal{J}$, where we write $\Phi^t$ instead of  $\text{diag}(\Phi^t,\Phi^t).$
Since $\tilde{P}$ is in divergence form, the low frequencies satisfy with implicit constant depending on the coefficients:
\begin{equation*}
\| |D|^{-\frac{1}{2}} S_{0} (\tilde{P} \tilde{u}) \|_{L^2} \lesssim \| \tilde{u} \|_{L^2}.
\end{equation*}
For the high frequencies, we plug in the above relations to find
\begin{equation*}
\begin{split}
\| |D|^{-\frac{1}{2}} S_{\geq 1} (\tilde{P} \tilde{u}) \|_{L^2_x} 
&\lesssim \| |D|^{-\frac{1}{2}} S_{\geq 1} (\tilde{R} \tilde{u} + R u + \Phi^t Pu )\|_{L^2} \\
&\lesssim ( \| \tilde{R} \|_{L^\infty} + \| R \|_{L^\infty}) \| u \|_{L^2}+ \| |D|^{-\frac{1}{2}} S_{\geq 1} (\Phi^t Pu ) \|_{L^2_x}.
\end{split}
\end{equation*}
For the last term we again use a paraproduct decomposition, writing
\begin{equation}
\label{eq:ParaproductDecompositionIII}
\begin{split}
\| S_M (\Phi^t Pu ) \|_{L_x^2} &\leq \| (S_M \Phi^t) S_{\ll M} (Pu) \|_{L^2_x} + \| (S_{\ll M} \Phi^t) S_M (Pu ) \|_{L^2_x} \\
&\quad + \| \sum_{K \gtrsim M} S_K \Phi^t S_K (Pu) \|_{L^2_x}
\end{split}
\end{equation}
for $M\ge1$. The first term in \eqref{eq:ParaproductDecompositionIII} is estimated by
\begin{equation*}
\begin{split}
\sum_{M \geq 1} M^{-1} \| S_M \Phi^t S_{\ll M} (Pu) \|_{L^2}^2 &\lesssim \sum_{M \geq 1} M^{-1} \| S_M \Phi^t \|_{L^\infty}^2 \sum_{K \ll M} \| S_K (Pu) \|_{L^2}^2 \\
&\lesssim \sum_{M \geq 1} M^{-1} \| S_M \Phi^t \|^2_{L^\infty}\! \sum_{1 \leq K \ll M} \!K \| |D|^{-\frac{1}{2}} S_K (Pu) \|_{L^2}^2 \\
&\lesssim \sum_{M \geq 1} \|  S_M\Phi^t \|_{L^\infty}^2 \| |D|^{-\frac{1}{2}} Pu \|_{L^2}^2 \\
&\lesssim \| \Phi^t \|_{C^1}^2 \| |D|^{-\frac{1}{2}} Pu \|_{L^2}^2.
\end{split}
\end{equation*}
Similarly, for the second term in \eqref{eq:ParaproductDecompositionIII} we obtain
\begin{equation*}
\sum_{M \geq 1} M^{-1} \| S_{\ll M} \Phi^t S_M (Pu ) \|_{L^2}^2 \lesssim \| \Phi^t \|_{C^1}^2 \| |D|^{-\frac{1}{2}} Pu \|_{L^2}^2.
\end{equation*}
Finally, we estimate the third term in \eqref{eq:ParaproductDecompositionIII} by
\begin{align*}
\sum_{M \geq 1}& M^{-1} \sum_{K \geq M} \| S_K \Phi^t S_K (Pu) \|_{L^2}^2 \\
&\lesssim \sum_{M \geq 1} M^{-1} \sum_{K \gtrsim M} K \| S_K \Phi^t \|_{L^\infty}^2 K^{-1} \| S_K (Pu) \|_{L^2}^2 \\
&\lesssim \sum_{M \geq 1} M^{-1} \| \Phi^t \|_{C^1}^2 \| |D|^{-\frac{1}{2}} P u \|_{L^2}^2
\lesssim \| \Phi^t \|_{C^1}^2 \| |D|^{-\frac{1}{2}} Pu \|_{L^2}^2.
\qedhere
\end{align*}
\end{proof}

\subsection{Proof of Strichartz estimates for the transformed equation}
The proof of Theorem \ref{thm:StrichartzEstimatesFullyAnisotropicOffDiagonal} in the general case revolves around the proof of analogs of \eqref{eq:StrichartzEstimateFullyAnisotropicC1}, which is 
\begin{equation}
\label{eq:TransformedLocalStrichartzEstimatesI}
\| |D|^{-\rho - \frac{1}{4}} \tilde{u} \|_{(L^p L^q)_2} \lesssim (1+\| (\varepsilon^d,\mu^d) \|_{C^1}) \| \tilde{u} \|_{L^2} + \| |D|^{-\frac{1}{2}} \tilde{P} \tilde{u} \|_{L^2} + \| |D|^{-\frac{3}{4}} \tilde{\rho}_{em} \|_{L^2}.
\end{equation}
Using Lemma \ref{lem:OrthogonalityTransformations}, this inequality allows us to complete the proof of Theorem \ref{thm:StrichartzEstimatesFullyAnisotropicOffDiagonal}.
\begin{proposition}
\label{prop:TransformedMaxwellReductionI}
 Let $\varepsilon^d$, $\mu^d \in C^1(\R \times \R^3;\R^{3 \times 3}_{sym})$ be diagonal matrices, which are uniformly elliptic. Let $\tilde{P}$, $\tilde{\rho}_e$, and $\tilde{\rho}_m$ be and in
 \eqref{eq:TransformedCharges} and \eqref{eq:TransformedMaxwellOperator}. 
Then we find \eqref{eq:TransformedLocalStrichartzEstimatesI} to hold for $2 \leq p < q \leq \infty$,
$\rho = 3 \big( \frac{1}{2} - \frac{1}{q} \big) - \frac{1}{p}$, and $\frac{2}{p} + \frac{1}{q} = \frac{1}{2}$.
\end{proposition}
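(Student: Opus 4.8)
The plan is to run the proof of Theorem~\ref{thm:LocalStrichartzEstimatesFullyAnisotropic} with $s=1$, replacing $\varepsilon,\mu$ by the diagonal $\varepsilon^d,\mu^d$ and the constant‑coefficient curl $\mathcal{C}(\xi')$ by the divergence‑form operator $\mathcal{C}(\eta(D,x))$ of \eqref{eq:TransformedMaxwellOperator}, whose principal symbol is $\mathcal{C}(\eta(x,\xi'))$ with $\eta_k(x,\xi')=\varphi_k(x)\cdot\xi'$. For $s=1$ the loss $-\rho+\tfrac{s-2}{4}$ reads $-\rho-\tfrac14$ and the powers $\tfrac{s-2}{2}=-\tfrac12$, $-\tfrac12+\tfrac{s-2}{4}=-\tfrac34$ on the right‑hand side are exactly those in \eqref{eq:TransformedLocalStrichartzEstimatesI}, so the target is \eqref{eq:StrichartzEstimateFullyAnisotropicC1} for $\tilde P$, with the mixed norm $(L^pL^q)_2$ again covering the endpoint $q=\infty$.

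First I would reduce as in Proposition~\ref{prop:ReductionFrequencyLocalizedEstimate}: dispatch low frequencies by Sobolev embedding, localise spatially and dyadically in frequency, and truncate \emph{all} coefficients $\varepsilon^d_i,\mu^d_i,\varphi_{kj}$ to frequencies $<\lambda/8$. Since $\tilde P$ is in divergence form, the spatial derivative can be factored out and hit by $S_\lambda$, so the commutator and high--low errors satisfy $\|\tilde S_\lambda[\tilde P,S_\lambda]u\|_{L^2}+\|\tilde S_\lambda\tilde P_{\sim\lambda}S_\lambda u\|_{L^2}\lesssim \|(\varepsilon^d,\mu^d,\Phi)\|_{C^1}\|u\|_{L^2}$ by the same kernel estimates as there, and these are summable after multiplication by $\lambda^{-1/2}$. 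Then I would apply the FBI transform and Theorem~\ref{thm:ApproximationTheoremFBITransform} (with $s=1$, so $\tilde a^1_\lambda=a$); the lower‑order, $L^\infty$‑bounded pieces of $\tilde P_\lambda$ — namely $\partial_t\varepsilon^d_\lambda$, $\partial_t\mu^d_\lambda$ and the divergence part of the truncated curl — are not conjugated but carried in the right‑hand side data and contribute an $O(\lambda^{-1})$ error, while the FBI conjugation error is $O(\lambda^{-1/2})$. This yields the phase‑space estimate for
\[ p(x,\xi)=\begin{pmatrix} -i\xi_0\varepsilon^d & i\mathcal{C}(\eta(x,\xi')) \\ i\mathcal{C}(\eta(x,\xi')) & i\xi_0\mu^d \end{pmatrix}. \]

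Next I would reduce to a scalar estimate as in Proposition~\ref{prop:ScalarReduction}. Multiplying $p$ by the symmetrizer $\sigma$ of \eqref{eq:ME-MH} with $\mathcal{C}(\xi')$ replaced by $\mathcal{C}(\eta(x,\xi'))$ and $\varepsilon,\mu$ by $\varepsilon^d,\mu^d$ decouples the system into the blocks $M_E-\xi_0^2$, $M_H-\xi_0^2$ with $M_E=-(\varepsilon^d)^{-1}\mathcal{C}(\eta(x,\xi'))(\mu^d)^{-1}\mathcal{C}(\eta(x,\xi'))$; since $\mathcal{C}(\eta(x,\xi'))v=\eta(x,\xi')\times v$ and $\eta(x,\xi')=\Phi^t(x)\xi'$ with $\Phi(x)$ orthogonal, one still has $\det(M_E-\xi_0^2)=\det(M_H-\xi_0^2)=q(x,\xi)$, where $q$ is the Fresnel polynomial \eqref{eq:Characteristic} with $(\varepsilon_i,\mu_i)$ replaced by $(\varepsilon^d_i,\mu^d_i)$ and $\xi'$ replaced by $\eta(x,\xi')=\Phi^t(x)\xi'$. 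As in Proposition~\ref{prop:ScalarReduction} it then suffices to prove the scalar estimate \eqref{eq:ScalarReducedEstimate} for this $q$, split over the three regions: region $\{|\xi_0|\gg|\xi'|\}$ is immediate by Sobolev embedding; in region $\{|\xi_0|\ll|\xi'|\sim1\}$ the charge $\tilde\rho_e=\eta(D,x)\cdot(\varepsilon^d\tilde u^1)$ from \eqref{eq:TransformedCharges} replaces $\nabla\cdot(\varepsilon\mathcal E)$, one splitting $v_{\lambda,1}$ into its $\varepsilon^d$‑component along $\eta$ (controlled by $\tilde\rho_e$) and the complement on which $|(M_E-\xi_0^2)v|\gtrsim|v|$, the $Z^{\text{eff}}$‑computation being unchanged under the rotation $\xi'\mapsto\eta$; and region $\{|\xi_0|\sim|\xi'|\sim1\}$ is reduced to \eqref{eq:ScalarReducedEstimate} via the identity $Z_{\varepsilon^d,\mu^d}\tfrac{\varepsilon^d}{\varepsilon^d_1\varepsilon^d_2\varepsilon^d_3}(M_E-\xi_0^2)=q(x,\xi)$.

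Finally I would prove \eqref{eq:ScalarReducedEstimate} by the $TT^*$ and analytic‑interpolation argument of Proposition~\ref{prop:OperatorEstimatesTT^*}. The only input needing fresh verification is that the geometry of $\{q(x,\cdot)=0\}$ carries over \emph{uniformly in} $x$: but $\xi'\mapsto\Phi^t(x)\xi'$ is a $C^1$ family of linear isometries of $\R^3$ with uniformly bounded inverse, so $\{q(x,\cdot)=0\}$ is the image under $\Phi(x)$ of the diagonal Fresnel surface associated with $(\varepsilon^d_i/\mu^d_i)$ and inherits, uniformly in $x$, the decomposition $S=S_1\cup S_2\cup S_3$ (regular part with two‑sided bounds on the Gaussian curvature, a neighbourhood of the Hamilton circles with one nonvanishing principal curvature, and neighbourhoods of four conical singularities at $\Phi(x)\xi'_x$ with non‑degenerate Hessian of $q$), as well as the hyperbolicity $\partial_{\xi_0}q\neq0$ away from the singular points — here $\partial_{\xi_0}$ commutes with the rotation, so \eqref{eq:Hyperbolicity} reduces to the computation already done in Section~\ref{section:DiagonalProof}. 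Uniformity of all implied constants comes from the uniform ellipticity of $\varepsilon^d,\mu^d$ and the separation condition \eqref{eq:SeparationEigenvalues} of Assumption~\ref{AssumptionMaterialLaws}. Assembling the regions and the pieces $S_1,S_2,S_3$ as in Section~\ref{section:DiagonalProof} gives \eqref{eq:TransformedLocalStrichartzEstimatesI} on the line $\tfrac2p+\tfrac1q=\tfrac12$, and the full range $2\le p<q\le\infty$ follows by Sobolev embedding. The main obstacle is exactly this bookkeeping of uniformity: one must check that no curvature or hyperbolicity constant degenerates as $\Phi(x)$ varies, which is soft since isometries preserve all the relevant differential‑geometric quantities and $\Phi\in C^1$ is uniformly bounded with uniformly bounded inverse.
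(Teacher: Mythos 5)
Your proposal is correct and follows essentially the same route as the paper: reduction and FBI conjugation as in Section~\ref{section:DiagonalProof} with $\partial_k$ replaced by the divergence-form operators $\eta_k(D,x)$, scalar reduction via the symmetrizer in the three frequency regions, and the $TT^*$/analytic-interpolation argument transported through the change of variables $\xi'\mapsto\eta'(x,\xi')$, whose uniform non-degeneracy (the paper uses $|\det(\varphi_1^{\le\lambda},\varphi_2^{\le\lambda},\varphi_3^{\le\lambda})|\sim1$ rather than exact orthogonality of the truncated $\Phi$) is what transfers the curvature and hyperbolicity bounds uniformly in $x$.
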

In the following we apply the analysis of Section \ref{section:DiagonalProof} with the role of the partial derivatives $\partial_k$
presently played by the differential operator $\eta_k = \varphi_k^{\leq \lambda} \cdot \nabla_{x'}$. By dyadic frequency localization
and the usual commutator estimates, \eqref{eq:TransformedLocalStrichartzEstimatesI} follows from
\begin{equation*}
\lambda^{-\rho - \frac{1}{4}} \| S_\lambda \tilde u \|_{L^p L^q} \lesssim \| S_\lambda u \|_{L^2} + \lambda^{-\frac{1}{2}} \|\tilde P_\lambda S_\lambda u \|_{L^2} + \lambda^{-\frac{3}{4}} \| S_\lambda \tilde\rho_{em} \|_{L^2}.
\end{equation*}
Above $\tilde{P}_\lambda$ denotes the frequency truncated version at frequencies $< \lambda / 8$ of $\tilde{P}$ given by
\begin{equation*}
\tilde{P}_\lambda = 
\begin{pmatrix}
- \partial_t (\varepsilon^d_\lambda \cdot) & \mathcal{C}(\eta_{\leq \lambda}(D,x)) \\
\mathcal{C}(\eta_\lambda(D,x)) & \partial_t (\mu^d_\lambda \cdot)
\end{pmatrix}
.
\end{equation*}
In $\eta_{\leq \lambda}$ the coefficients of $\varphi$ are also frequency truncated at frequencies $\lambda / 8$.
The frequency truncation is suppressed in the following to lighten the notation.

 We apply the FBI transform and let $v_\lambda = T_\lambda \tilde{u}_\lambda$ and
$T_\lambda(\tilde{\mathcal{J}} /\lambda) = f_\lambda$. For $C^1$-coefficients an application of Theorem \ref{thm:ApproximationTheoremFBITransform} yields
\begin{equation*}
 \tilde{p}(x,\xi) v_\lambda = f_\lambda + g_\lambda \quad
    \text{with \ } \| g_\lambda \|_{L^2_\Phi} \lesssim_{\| (\varepsilon^d,\mu^d) \|_{C^1}} \lambda^{-1/2} \| S_\lambda u \|_{L^2}
\end{equation*}
and
\begin{equation*}
 \tilde{p}(x,\xi) = 
 \begin{pmatrix}
  -i \xi_0 \varepsilon^d & i \mathcal{C}(\eta(x,\xi)) \\
  i \mathcal{C}(\eta(x,\xi)) & i \xi_0 \mu^d
 \end{pmatrix}
\in \mathbb{C}^{6 \times 6}.
\end{equation*}
The estimate \eqref{eq:TransformedLocalStrichartzEstimatesI} becomes
\begin{equation*}
 \lambda^{-\rho-\frac14} \| T_\lambda^* \tilde{v}_\lambda \|_{L^p L^q} \lesssim (1+ \| (\varepsilon^d, \mu^d) \|_{C^1}) \| \tilde{v}_\lambda \|_{L^2_\Phi} + \lambda^{\frac12} \| \tilde{f}_\lambda \|_{L^2_\Phi} + \lambda^{-\frac34} \| S_\lambda \tilde{\rho}_{em} \|_{L^2}.
\end{equation*}
We recast this as
\begin{equation}
\label{eq:ApplicationFBITransformGeneralCoefficientsII}
 \| T_\lambda^* \tilde{v}_\lambda \|_{L^p L^q} \lesssim \lambda^{\rho} ( \lambda^{\frac14} \| \tilde{v}_\lambda \|_{L^2_\Phi} + \lambda^{\frac34} \| \tilde{p}(x,\xi) \tilde{v}_\lambda \|_{L^2_\Phi} + \lambda^{-\frac12} \| S_\lambda \tilde{\rho}_{em} \|_{L^2} ).
\end{equation}

Next, we reduce \eqref{eq:ApplicationFBITransformGeneralCoefficientsII} to a scalar estimate. For this purpose, corresponding to \eqref{eq:Characteristic}  we let
\begin{equation*}
 \tilde{q}(x,\xi) = - \xi_0^2 (\xi_0^4 - \xi_0^2 q_0(x,\eta'(\xi)) + q_1(x,\eta'(\xi))),
\end{equation*}
with $q_0$ and $q_1$ defined like in Section \ref{section:DiagonalProof} and define the symmetrizer
\begin{equation*}
 \tilde{\sigma}(x,\xi)= 
 \begin{pmatrix}
   -i \xi_0 \varepsilon_d^{-1} & i \varepsilon_d^{-1} \mathcal{C}(\eta(x,\xi)) \mu_d^{-1} \\
   i \mu_d^{-1} \mathcal{C}(\eta(x,\xi)) \varepsilon_d^{-1} & i \xi_0 \mu_d^{-1}
 \end{pmatrix}
\in \mathbb{C}^{6 \times 6},
\end{equation*}
obtaining
\begin{equation*}
 \tilde{\sigma}(x,\xi) \tilde{p}(x,\xi) = 
 \begin{pmatrix}
  M_E(x,\eta) - \xi_0^2 & 0 \\
  0 & M_H(x,\eta) - \xi_0^2
 \end{pmatrix}
 ,
\end{equation*}
compare \eqref{eq:ME-MH}. We state the announced reduction.
\begin{proposition}
 For the proof of \eqref{eq:ApplicationFBITransformGeneralCoefficientsII} under the assumptions of Proposition \ref{prop:TransformedMaxwellReductionI}, it suffices to show the estimate
 \begin{equation}
 \label{eq:ScalarEstimate}
  \| T_\lambda^* w_\lambda \|_{L^p L^q} \lesssim \lambda^{\rho} (\lambda^{1/4} \| w_\lambda \|_{L^2_\Phi} + \lambda^{3/4} \| \tilde{q}(x,\xi) w_\lambda \|_{L^2_\Phi} ).
 \end{equation}
\end{proposition}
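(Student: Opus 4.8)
The plan is to follow the proof of Proposition~\ref{prop:ScalarReduction} almost line by line, with the partial derivatives $\partial_k$ replaced by the divergence-form operators $\eta_k(D,x)$ and the covariable $\xi'$ replaced throughout by $\eta(x,\xi)=\Phi^t(x)\xi'=(\varphi_k(x)\cdot\xi')_k$. The observation that makes this transfer work is that, since $\Phi^t(x)\Phi(x)=1_{3\times3}$, for each fixed $x$ the map $\xi'\mapsto\eta(x,\xi)$ is an orthogonal transformation of $\R^3$; hence $|\eta(x,\xi)|=|\xi'|$ pointwise, the three frequency regions $\{1\sim|\xi_0|\gg|\xi'|\}$, $\{1\sim|\xi'|\gg|\xi_0|\}$, $\{1\sim|\xi_0|\sim|\xi'|\}$ coincide with the corresponding regions in $\eta$, and the reduced characteristic set $\{q(x,1,\eta)=0\}$ is the image under a ($C^1$-in-$x$) rotation of the Fresnel surface $S$ studied in Section~\ref{section:DiagonalProof}. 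Consequently the curvature bounds, the location and nondegeneracy of the four conical singularities, and the hyperbolicity away from them all carry over, the uniformity being provided by the ellipticity of $\varepsilon^d,\mu^d$ and by the separation hypothesis $|\varepsilon^d_i/\mu^d_i-\varepsilon^d_j/\mu^d_j|\ge c$ built into Assumption~\ref{AssumptionMaterialLaws}. Likewise the entries of $\tilde\sigma(x,\xi)$, of $Z_{\varepsilon^d,\mu^d}(\eta)$ and of $\mathrm{adj}(M_E(x,\eta)-\xi_0^2)$ are bounded uniformly on $\mathrm{supp}(v_\lambda)\subseteq B(0,1)\times\{1/4\le|\xi|\le4\}$, to which we may reduce as before by non-stationary phase.

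Concretely, I would first multiply $\tilde p(x,\xi)v_\lambda=f_\lambda+g_\lambda$ by the bounded symmetrizer $\tilde\sigma(x,\xi)$ and use $\tilde\sigma\tilde p=\mathrm{diag}(M_E(x,\eta)-\xi_0^2,\,M_H(x,\eta)-\xi_0^2)$, which splits \eqref{eq:ApplicationFBITransformGeneralCoefficientsII} into separate estimates for the three-vectors $v_{\lambda,1}$ and $v_{\lambda,2}$ as in \eqref{eq:PhaseSpaceEstimateA}--\eqref{eq:PhaseSpaceEstimateB}; by the $\varepsilon^d\leftrightarrow\mu^d$ symmetry it suffices to treat $v_{\lambda,1}$. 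In region~(1), $M_E(x,\eta)-\xi_0^2$ is uniformly invertible, so $\|v_{\lambda,1}\|_{L^2_\Phi}\lesssim\|(M_E(x,\eta)-\xi_0^2)v_{\lambda,1}\|_{L^2_\Phi}$ and the claim follows from the Sobolev bound $\|T_\lambda^*v_{\lambda,1}\|_{L^pL^q}\lesssim\lambda^{\rho+\frac12}\|v_{\lambda,1}\|_{L^2_\Phi}$. In region~(2) I would write $v_{\lambda,1}=v_{\lambda,1}^s+v_{\lambda,1}^p$ with $v_{\lambda,1}^p=(\eta\cdot\varepsilon^d v_{\lambda,1})\,\eta/(\eta\cdot\varepsilon^d\eta)$, so that $\eta\cdot(\varepsilon^d v_{\lambda,1}^s)=0$; the $v_{\lambda,1}^p$-part is handled by Sobolev embedding together with Theorem~\ref{thm:ApproximationTheoremFBITransform} and the commutator estimates, which express $\eta(D,x)\cdot(\varepsilon^d\tilde{\mathcal{E}})$ as $\tilde{\rho}_e$ plus summable H\"older errors (here the divergence form of $\eta(D,x)$ is essential), while for $v_{\lambda,1}^s$ the matrix inversion of \cite[Section~2.1]{MandelSchippa2022} written with $\eta$ in place of $\xi'$, together with $|\xi_0^4-\xi_0^2 q_0(x,\eta)+q_1(x,\eta)|\sim1$ on $\{|\xi_0|\ll|\eta|\sim1\}$ and the reduction to $Z^{\text{eff}}_{\varepsilon^d,\mu^d}$, gives $\|(M_E(x,\eta)-\xi_0^2)v_{\lambda,1}^s\|_{L^2_\Phi}\gtrsim\|v_{\lambda,1}^s\|_{L^2_\Phi}$. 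In region~(3) I would apply \eqref{eq:ScalarEstimate} to each component $w_\lambda=v_{\lambda,1,k}$: since $Z_{\varepsilon^d,\mu^d}(\eta)\frac{\varepsilon^d}{\varepsilon^d_1\varepsilon^d_2\varepsilon^d_3}(M_E(x,\eta)-\xi_0^2)=q(x,\eta)$ and $\tilde q(x,\xi)=-\xi_0^2 q(x,\eta)$ with $\xi_0^2\sim1$ and $Z_{\varepsilon^d,\mu^d}(\eta)$ bounded on the support, we get $\|\tilde q(x,\xi)v_{\lambda,1,k}\|_{L^2_\Phi}\lesssim\|(M_E(x,\eta)-\xi_0^2)v_{\lambda,1}\|_{L^2_\Phi}$, and summing the three component bounds yields the estimate for $v_{\lambda,1}$.

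The only genuinely new point, and hence the step to check with care, is that precomposing with $\eta(x,\xi)=\Phi^t(x)\xi'$ leaves all three ingredients intact: the boundedness of the symmetrizer and of the adjugate matrices (immediate from $|\eta|=|\xi'|$ and ellipticity), the FBI-calculus and commutator arguments of region~(2) for the $x$-dependent divergence-form operator $\eta(D,x)$ (which rests only on the $C^1$-regularity of $\Phi$, available by hypothesis), and above all the curvature analysis of Section~\ref{section:DiagonalProof}, which survives because the surface in $\eta$-coordinates is a $C^1$-in-$x$ rotation of $S$, and rotations preserve Gaussian and mean curvature, the singular points and their nondegeneracy, and hyperbolicity. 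I do not expect any obstacle beyond this bookkeeping: once it is in place the three regional estimates are exactly those of Proposition~\ref{prop:ScalarReduction} with $\xi'$ replaced by $\eta$, $\varepsilon,\mu$ by $\varepsilon^d,\mu^d$, and $\partial_k$ by $\eta_k(D,x)$.
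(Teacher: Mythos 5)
Your proposal is correct and follows essentially the same route as the paper: multiplication by the symmetrizer $\tilde\sigma$, the split into the three regions in $(\xi_0,\eta')$ using $|\eta'|\sim|\xi'|$, the decomposition $\tilde v_{\lambda,1}=\tilde v_{\lambda,1}^s+\tilde v_{\lambda,1}^p$ with the generalized charges in region (2), and the componentwise reduction via the adjugate identity $Z_{\varepsilon^d,\mu^d}\tfrac{\varepsilon^d}{\varepsilon^d_1\varepsilon^d_2\varepsilon^d_3}(M_E(x,\eta)-\xi_0^2)=q(x,\eta)$ in region (3). Your observation that $\xi'\mapsto\eta(x,\xi)$ is an orthogonal map, so that the characteristic set is a pointwise rotation of the Fresnel surface and curvature, singular points and hyperbolicity are preserved, is exactly the justification the paper invokes (there phrased as non-degeneracy of the change of variables $\eta_k'=\varphi_k^{\leq\lambda}(x)\cdot\xi'$) in the subsequent proposition where \eqref{eq:ScalarEstimate} itself is proved.
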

\begin{proof}
 Clearly,  \eqref{eq:ApplicationFBITransformGeneralCoefficientsII} follows from
 \begin{equation*}
  \| T_\lambda^* \tilde{v}_\lambda \|_{L^p L^q} \lesssim \lambda^{\rho} ( \lambda^{1/4} \| \tilde{v}_\lambda \|_{L^2_\Phi} + \lambda^{3/4} \| \tilde{\sigma}(x,\xi) \tilde{p}(x,\xi) \tilde{v}_\lambda \|_{L^2_\Phi} + \lambda^{-1/2} \| S_\lambda \tilde{\rho}_{em} \|_{L^2} ).
 \end{equation*}
For this purpose, we show that
 \begin{equation}
  \label{eq:ReductionGeneralCoefficientsI}
  \| T^*_\lambda \tilde{v}_{\lambda,1} \|_{L^p L^q} \lesssim \lambda^{\rho} ( \lambda^{1/4} \| \tilde{v}_{\lambda,1} \|_{L^2_\Phi} + \lambda^{3/4} \| (M_E(x,\eta) - \xi_0^2) \tilde{v}_{\lambda,1} \|_{L^2_\Phi} + \lambda^{-1/2} \| S_\lambda \tilde{\rho}_e \|_{L^2}),
  \end{equation}
  \begin{equation}
  \label{eq:ReductionGeneralCoefficientsII}
  \| T_\lambda^* \tilde{v}_{\lambda,2} \|_{L^p L^q} \lesssim \lambda^{\rho} ( \lambda^{1/4} \| \tilde{v}_{\lambda,2} \|_{L^2_\Phi} + \lambda^{3/4} \| (M_H(x,\eta) - \Xi_0^2) \tilde{v}_{\lambda,2} \|_{L^2_\Phi} + \lambda^{-1/2} \| S_\lambda \tilde{\rho}_m \|_{L^2}).
 \end{equation}
In the following it becomes relevant that $|\eta'| \sim |\xi'|$. We prove the  estimates in the  regions
\begin{itemize}
 \item[(1)] $\{ |\xi_0| \gg |\eta'| \}$, 
 \item[(2)] $\{ |\xi_0| \ll |\eta'| \}$, for which we take the generalized charges into account,
 \item[(3)] $\{ |\xi_0| \sim |\eta'| \}$.
\end{itemize}
The first region is handled like in the proof of Proposition \ref{prop:ScalarReduction} using $|\eta'| \sim |\xi'|$.
For the estimate in the second region, we focus on \eqref{eq:ReductionGeneralCoefficientsI} because \eqref{eq:ReductionGeneralCoefficientsII} can be proved in a similar way. We decompose 
$ \tilde{v}_{\lambda,1} = \tilde{v}^s_{\lambda,1} + \tilde{v}_{\lambda,1}^p $
with
\begin{equation*}
 \tilde{v}^p_{\lambda,1} = \frac{ (\eta'.\tilde{\varepsilon}^\lambda \tilde{v}_{\lambda,1}) \eta'}{|\eta'|^2_{\varepsilon^d}}.
\end{equation*}
The contribution of $\tilde{v}^p_{\lambda,1}$ is treated by Sobolev embedding
\begin{equation*}
 \| T_\lambda^* \tilde{v}^p_{\lambda,1} \|_{L^p L^q} \lesssim \lambda^{\rho + \frac{1}{2}} \| \tilde{v}^p_{\lambda,1} \|_{L^2_\Phi} \lesssim \lambda^{\rho + \frac{1}{2}} \| (\eta'.\varepsilon^d_\lambda \tilde{v}_{\lambda,1}) \|_{L^2_\Phi}.
\end{equation*}
By Theorem \ref{thm:ApproximationTheoremFBITransform} and a commutator estimate, we find
\begin{equation*}
 \| \eta'.(\varepsilon^d_\lambda v_{\lambda,1}) \|_{L^2_\Phi} \lesssim \lambda^{-\frac{1}{2}} \| v_{\lambda,1} \|_{L^2} + \| S_\lambda \frac{\eta'(x,D)}{\lambda} (\varepsilon^d \tilde{\mathcal{E}}) \|_{L^2}.
\end{equation*}
Secondly, it follows like in the proof of Proposition \ref{prop:ScalarReduction} by the same algebraic relations, replacing $\xi'$ with $\eta'$, that $|(M_E(x,\eta') - \xi_0^2) v_1^\lambda | \gtrsim |v_1^\lambda|$ for $\eta'.(\varepsilon^d_\lambda \tilde{v}^\lambda_1) = 0$
and $\{ |\xi_0| \ll |\xi'| \}$. The details are omitted.

Similarly, by replacing $\eta'$ with $\xi'$, we argue that the estimate
\begin{equation*}
 \| T_\lambda^* v_{\lambda,1} \|_{L^p L^q} \lesssim \lambda^\rho ( \lambda^{1/4} \| v_{\lambda,1} \|_{L^2_\Phi} + \lambda^{3/4} \| (M_E - \xi_0^2) v_{\lambda,1} \|_{L^2_\Phi})
\end{equation*}
holds true provided that
\begin{equation*}
 \| T_\lambda^* v_{\lambda,1,k} \|_{L^p L^q} \lesssim \lambda^{\rho} ( \lambda^{1/4} \| v_{\lambda,1,k} \|_{L^2_\Phi} + \lambda^{3/4} \| (Z_{\varepsilon,\mu} \frac{\varepsilon^d}{\varepsilon^d_1 \varepsilon^d_2 \varepsilon^d_3} (M_E - \xi_0^2) v_{\lambda,1})_k \|_{L^2_\Phi} ),
\end{equation*}
but this is \eqref{eq:ScalarEstimate}.
\end{proof}

To prove \eqref{eq:ScalarEstimate}, we modify the arguments of Section \ref{section:DiagonalProof}. We consider the operator
\begin{equation*}
 \tilde{W}_\lambda = T_\lambda^* \frac{a(x,\xi)}{\lambda^{\frac{1}{4}} + \lambda^{\frac{3}{4}} |\tilde{q}(x,\xi)|},
\end{equation*}
for which we shall prove
\begin{equation*}
 \| \tilde{W}_\lambda \|_{L^2 \to L^p L^q} \lesssim \lambda^{\rho + \frac{1}{2}}.
\end{equation*}
By the $TT^*$-argument, we can likewise prove the estimate
\begin{equation*}
 \tilde{V}_\lambda = T_\lambda^* \frac{a^2(x,\eta) \Phi(\xi)}{(\lambda^{-\frac{1}{4}} + \lambda^{\frac{1}{4}} |\tilde{q}(x,\xi)|)^2} T_\lambda.
\end{equation*}
Again we have to understand the curvature properties of $\{ \xi \in \R^4: \, \tilde{q}(x,\xi) = 0 \}$.
Note that $\tilde{q}(x,\xi)$ is $4$-homogeneous in $\xi$, i.e.,
\begin{equation*}
 \tilde{q}(x,\xi_0,\xi') = \xi_0^4 \tilde{q}(x,1,\xi'/\xi_0).
\end{equation*}
This reduces again to the analysis of the surface $\tilde{S} = \{ \xi' \in \R^3: \tilde{q}(x,1,\xi') = 0\}$. By the change of variables $\eta_k' = \varphi_k^{\leq \lambda}(x) \cdot \xi'$ we can 
reduce to the Fresnel surface $S = \{ \xi' \in \R^3: q(x,1,\xi') = 0 \}$. Like in Section \ref{section:DiagonalProof} we split
replace $a^2(x,\xi)$ by $a(x,\xi)$ and split it into $a=a_1 + a_2 + a_3$ according to the
regions of the Fresnel surface $S$. Correspondingly, we decompose $\tilde{V}_\lambda = \tilde{V}_{\lambda,1} + \tilde{V}_{\lambda,2} + \tilde{V}_{\lambda,3}$. We arrive at the following:
\begin{proposition}
 Let $\rho = 3 \big( \frac{1}{2} - \frac{1}{q} \big) - \frac{1}{p}$ and $2 < p \leq q \leq \infty$. The estimate
 \begin{equation*}
  \| \tilde{V}_{\lambda,i} \|_{L^{p'} L^{q'} \to L^p L^q} \lesssim \lambda^{1 + 2 \rho}
 \end{equation*}
holds true, if
\begin{itemize}
 \item $i=1$ and $\frac{1}{p} + \frac{1}{q} = \frac{1}{2}$,
 \item $i \in \{2,3\}$ and $\frac{2}{p} + \frac{1}{q} = \frac{1}{2}$.
\end{itemize}
\end{proposition}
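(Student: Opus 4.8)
The plan is to run the proof of Proposition~\ref{prop:OperatorEstimatesTT^*} essentially verbatim, with the partial derivatives $\partial_k$ replaced throughout by the divergence-form operators $\eta_k(D,x)=\sum_j\partial_j(\varphi_{kj}^{\leq\lambda}(x)\,\cdot\,)$ and the Fresnel polynomial $q$ replaced by
\[\tilde q(x,\xi)=-\xi_0^2\bigl(\xi_0^4-\xi_0^2 q_0(x,\eta'(\xi))+q_1(x,\eta'(\xi))\bigr),\qquad \eta'(\xi)=\Phi^{\leq\lambda}(x)^t\xi'.\]
First I would record that, for $\lambda\gg\lambda_0$, the frequency-truncated matrix $\Phi^{\leq\lambda}(x)$ stays uniformly close to an orthogonal matrix, so that $\xi'\mapsto\eta'(\xi)$ is a linear (in $\xi'$), $C^1$ (in $x$) bijection with Jacobian bounded above and below; in particular $|\eta'|\sim|\xi'|$ on the relevant annulus, as already used in Proposition~\ref{prop:TransformedMaxwellReductionI}. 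Consequently $\tilde S=\{\xi'\in\R^3:\tilde q(x,1,\xi')=0\}$ is the image of the Fresnel surface $S=\{q(x,1,\cdot)=0\}$ under the uniformly bilipschitz linear map $\Phi^{\leq\lambda}(x)^{-t}$; hence the splitting $S=S_1\cup S_2\cup S_3$, the uniform lower bounds on the two principal curvatures of $S_1$, the single non-vanishing one of $S_2$, and the data $q=0$, $\nabla_{\xi'}q=0$, $\det(\partial^2_{\xi'\xi'}q)\neq0$ at the four conical points of $S_3$ (all valid under \eqref{eq:SeparationEigenvalues} by Section~\ref{section:DiagonalProof}) transfer to $\tilde S$. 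We split $a=a_1+a_2+a_3$ and $\tilde V_\lambda=\tilde V_{\lambda,1}+\tilde V_{\lambda,2}+\tilde V_{\lambda,3}$ accordingly; here the coefficients are $C^1$, so the weight is $(\lambda^{-1/4}+\lambda^{1/4}|\tilde q|)^{-2}$.

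For $i=1,2$ I would introduce the analytic family $\tilde V^\theta_{\lambda,i}=\lambda^{-\theta(2\rho+1)}\tfrac{e^{(\theta-1)^2}}{\Gamma(1-\theta)}T_\lambda^* a_i\,\Phi\,\tilde q^{-\theta}T_\lambda$ with $\tilde V_{\lambda,i}^1=\lambda^{-1-2\rho}\tilde V_{\lambda,i}$ and compute its kernel exactly as in Section~\ref{section:DiagonalProof}. After factoring out the $x'$-variables of $T_\lambda$ by its $L^2$-isometry, the decisive oscillatory integral is $\int_{\R^3}e^{i\lambda y'\cdot\xi'}a_i(x,\xi)\tilde q^{-\theta}(x,\xi)\,d\xi'$; the substitution $\xi'=\Phi^{\leq\lambda}(x)^{-t}\eta'$ turns it into the corresponding integral over $\{q(x,1,\eta')=0\}$, with phase $\lambda\,\Phi^{\leq\lambda}(x)^{-1}y'\cdot\eta'$ and a smooth bounded Jacobian absorbed into the amplitude. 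Since $|\Phi^{\leq\lambda}(x)^{-1}y'|\sim|y'|$ uniformly, the stationary-phase decay $(1+\lambda|y'|)^{(\Re\theta-1)-k_i/2}$ with $k_1=2$, $k_2=1$ carries over from \eqref{eq:OscillatoryIntegralDecayAnalyticFamily}; the $L^2\to L^2$ bound at $\Re\theta=0$ is immediate from the FBI isometry. Interpolating the $\Re\theta=0$ estimate with the $L^{p_1'}L^1\to L^{p_1}L^\infty$ kernel bound (Hardy--Littlewood--Sobolev in the $x'$-variables, integration by parts in $\xi_0$ when $|y_0-\bar y_0|\gg|y'-\bar y'|$) yields the claim for $i=1$ on $\tfrac1p+\tfrac1q=\tfrac12$ and $i=2$ on $\tfrac2p+\tfrac1q=\tfrac12$ in the range $2<p<q$, while the endpoint $(p,q)=(\infty,2)$ follows from the hyperbolicity $\partial_{\xi_0}\tilde q\neq0$ on $\text{supp}(a_1)\cup\text{supp}(a_2)$, which reduces after the same substitution to $\partial_{\xi_0}q\neq0$, checked in Section~\ref{section:DiagonalProof}; the remaining cases follow by interpolation.

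For $i=3$ I would repeat the singular-point argument: decompose $a_3=\sum_{k\geq0}a_{3,k}$ onto $2^{-k}$-annuli about each of the four singular points $\xi'_x=\Phi^{\leq\lambda}(x)^{-t}\eta'_x$ (extended $1$-homogeneously in $\xi_0$ using $|\xi_0|\sim1$), rescale to unit scale, Taylor-expand $\tilde q(x,1,\cdot)$ about $\xi'_x$ using $\tilde q(x,1,\xi'_x)=0$, $\nabla_{\xi'}\tilde q(x,1,\xi'_x)=0$, $\det(\partial^2\tilde q(x,1,\xi'_x))\neq0$, and foliate over level sets via $\int_0^\infty(2^{2k}\lambda^{-1/4}+q\lambda^{1/4})^{-2}\,dq\lesssim2^{-2k}$. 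The analytic family then gives the bad energy bound $\|\tilde V^\theta_{\lambda,k}\|_{L^2\to L^2}\lesssim2^{2k}$ and the kernel bound $|\tilde K^\theta_{\lambda,k}|\lesssim\lambda^{4-\theta(2\rho+1)}2^{-k}(1+\lambda2^{-k}|y'-\bar y'|)^{\Re\theta-3/2}$, and interpolation with the collinear points $(\tfrac12,\tfrac12,0)$, $(\tfrac1p,\tfrac1q,1)$, $(\tfrac1{p_1},0,\theta_1)$ produces the operator norm $\lesssim2^{k(3/q-1/2)}$, summable in $k$ precisely when $q>6$; since $\tfrac2p+\tfrac1q=\tfrac12$ and $p<q$ force $q>6$, this closes the estimate.

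The main obstacle is bookkeeping rather than conceptual: one must check that the $x$-dependent, frequency-truncated substitution $\xi'\mapsto\eta'(\xi)=\Phi^{\leq\lambda}(x)^t\xi'$ can be inserted after the FBI transform without breaking the symbol class required by Theorem~\ref{thm:ApproximationTheoremFBITransform} or the non-stationary-phase localization of $v_\lambda$, and that the lower-order operators $b_{ij}(x)$, $R(x)\in L^\infty$ arising in Section~\ref{subsection:OrthogonalityTransformations} — together with the bounded $x$-dependent Jacobian — contribute only summable errors, controlled by Bernstein's and Sobolev's inequalities exactly as in Lemma~\ref{lem:OrthogonalityTransformations}. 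Once this is in place, all curvature input is literally that of the Fresnel surface and the oscillatory-integral estimates are unchanged.
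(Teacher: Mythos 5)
Your proposal is correct and follows essentially the same route as the paper: the paper's proof likewise observes that the change of variables $\eta_k'=\varphi_k^{\leq\lambda}(x)\cdot\xi'$ is uniformly non-degenerate for $\lambda\gg1$, transfers the curvature properties of the Fresnel surface to $\{\tilde q=0\}$, foliates over level sets using the integrability of the weight for $i=1,2$, and for $i=3$ performs the dyadic decomposition in $\eta$ around the conical points and then repeats the argument of Proposition~\ref{prop:OperatorEstimatesTT^*}. Your write-up simply fills in the interpolation and kernel-bound details that the paper leaves implicit by reference to the diagonal case.
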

\begin{proof}
 For fixed $x \in [-1,1]^4$ we observe that the change of variables
 \begin{equation}
  \label{eq:ChangeVariables}
  \eta_k' = \varphi_k^{\leq \lambda}(x) \cdot \xi'
 \end{equation}
is non-degenerate for $\lambda \gg 1$. Indeed, $|J \eta'| = |\det (\varphi_1^{\leq \lambda}, \varphi_2^{\leq \lambda}, \varphi_3^{\leq \lambda} )| \sim 1$.

Hence, for $i=1,2$, a uniform bound for the family of operators
\begin{equation*}
 \lambda^{-1-2\rho} T_\lambda^* \frac{a_i(x,\eta) \Phi(\xi)}{(\lambda^{-\frac{1}{4}} + \lambda^{\frac{1}{4}} |\tilde{q}(x,\xi)|)^2} T_\lambda : L^{p'} L^{q'} \to L^p L^q
\end{equation*}
follows from the integrability of the weight
\begin{equation*}
 \int \frac{1}{(\lambda^{-\frac{1}{4}} + \lambda^{\frac{1}{4}} |\tilde{q}|)^2} d\tilde{q} \lesssim 1,
\end{equation*}
by which we can reduce to level sets $\delta_{\tilde{q}(x,\xi) = c}$ by foliation. Note that within
$\text{supp}(a_1) \cup \text{supp}(a_2)$ the surface $\{ \xi \in \R^4: \tilde{q}(x,\xi) = 0 \}$ is a regular surface.
By the regular change of variables \eqref{eq:ChangeVariables} the curvature properties are inherited.

For the more involved case of neighbourhoods of the conical singularities, the additional dyadic decomposition is carried out
in $\eta$. After rescaling to unit distance of the singularity, one can then proceed as in Proposition \ref{prop:OperatorEstimatesTT^*}.
\end{proof}

\subsection{Conclusion of the proof of Theorem \ref{thm:StrichartzEstimatesFullyAnisotropicOffDiagonal}}

With the estimates for $\tilde{V}_{\lambda,i}$ at disposal, the propositions of the previuous subsection
yield the Strichartz estimates
\begin{equation*}
\| |D|^{-\rho - \frac{1}{4}} \tilde{u} \|_{(L^4 L^\infty)_2} \lesssim (1+ \| \partial(\varepsilon^d,\mu^d) \|_{L^\infty}) \| \tilde{u} \|_{L^2} + \| |D|^{-\frac{1}{2}} \tilde{P} \tilde{u} \|_{L^2} + \| |D|^{-\frac{3}{4}} \tilde{\rho}_{em} \|_{L^2}.
\end{equation*}
These estimates transpire to the original quantities by the considerations in Section \ref{subsection:OrthogonalityTransformations},
leading to
\begin{equation*}
\| |D|^{-\rho-\frac{1}{4}} u \|_{L_t^4 L^\infty_{x'}} \lesssim (1+ \| \partial (\varepsilon,\mu) \|_{L^\infty}) \| u \|_{L^2} +\| |D|^{-\frac{1}{2}} P u \|_{L^2} + \| |D|^{-\frac{3}{4}} \rho_{em} \|_{L^2}.
\end{equation*}

We can now conclude like in Section \ref{subsection:ShorttimeStrichartz}. Proposition \ref{prop:ShorttimeStrichartz} implies
\begin{equation*}
\begin{split}
\| \langle D' \rangle^{-\rho -\frac{1}{8} - \delta} u \|_{L_t^4(0,T;L^\infty_{x'})} &\leq C ( \| u_0 \|_{L^2} + \| P u \|_{L_T^1 L^2_{x'}}\\
&\qquad + \| \langle D' \rangle^{-\frac{5}{8}} \rho_{em}(0) \|_{L^2_{x'}} + \| \langle D' \rangle^{-\frac{5}{8}} \partial_t \rho_{em} \|_{L^1_t L^2_{x'}} )
\end{split}
\end{equation*}
with $C = C(X,T,\delta)$, $X = \| \partial(\varepsilon, \mu) \|_{L^\infty}$.
Interpolation with  Proposition \ref{prop:EnergyEstimate}
yields Theorem \ref{thm:StrichartzEstimatesFullyAnisotropic} for $\frac{2}{p} + \frac{1}{q} = \frac{1}{2}$. The general case follows from Sobolev embedding. $\hfill \Box$

\section{Application to quasilinear Maxwell equations}
\label{section:QuasilinearEquations}
With the Strichartz estimates at disposal, we can improve local well-posedness results for quasilinear Maxwell equations in the fully anisotropic case. This section is devoted to the proof of Theorem \ref{thm:QuasilinearAnisotropicMaxwellEquation}. By local well-posedness we mean existence, uniqueness, and continuous dependence of the data-to-solution mapping. We focus on proving estimates in rough norms for smooth solutions, whose existence we take for granted. The data-to-solution mapping can then be extended by continuity to rough initial data via standard arguments. We also refer to our previous work \cite{SchippaSchnaubelt2022}.

We consider
\begin{equation}
 \label{eq:QuasilinearMaxwell}
 \left\{ \begin{array}{rlrlrl}
  \partial_t \mathcal{D}\!\!\!\! &= \nabla \times \mathcal{H},& \quad \nabla \cdot \mathcal{D} \!\!\!\! &= 0,&
    \quad \mathcal{D}(0) \!\!\!\! &= \mathcal{D}_0 \in H^s(\R^3;\R^3), \\
  \partial_t \mathcal{B}\!\!\!\! &= - \nabla \times \mathcal{E},& \quad \nabla \cdot \mathcal{B} \!\!\!\! &= 0, &
     \quad \mathcal{B}(0)\!\!\!\! & = \mathcal{B}_0 \in H^s(\R^3;\R^3),
 \end{array} \right.
\end{equation}
with fields 
\begin{equation}
\label{eq:SmallFields}
\| \mathcal{E} \|_{L^\infty} + \| \mathcal{B} \|_{L^\infty} \leq \delta,
\end{equation}
$\delta$ to be specified later, and constitutive relations 
\begin{equation*}
 \varepsilon = \varepsilon(\mathcal{E}) \in \R^{3 \times 3}_{\text{sym}} , \quad \mu = 1_{3 \times 3}
\end{equation*}
such that $\varepsilon(\mathcal{E})$ is uniformly elliptic and has uniformly separated eigenvalues for $|\mathcal{E}| \leq \delta$.
We use this smallness condition to guarantee these two crucial properties of the permittivities, it is not used in the wellposedness
analysis given below.

We let $u = (\mathcal{D},\mathcal{B})$ in the following because these are the variables of which the
time-derivative is given. The proof of the theorem follows along the general scheme of \cite{Schippa2021Maxwell3d,SchippaSchnaubelt2022}:
Set $A = \sup_{0 \leq t' \leq t} \| u(t') \|_{L^\infty_{x'}}$ and $B(t) = \| \nabla_{x'} u(t) \|_{L^\infty_{x'}}$.

\begin{itemize}
 \item We show energy estimates to hold for $s \geq 0$:
 \begin{equation}
 \label{eq:AprioriEstimate}
  E^s(u(t)) \lesssim E^s(u(0)) e^{C \int_0^t B(s) ds}
 \end{equation}
with $E^s(u) \approx_{A} \| u \|_{H^s}$.
\item For differences of solutions we prove $L^2$-bounds depending on the $H^s$-norm $v= u^1 - u^2$ of the initial data:
\begin{equation*}
  \| v(t) \|_{L^2} \lesssim \| v_0 \|_{L^2} \text{ for } 0 \leq t \leq T = T(\| u^i(0) \|_{H^s}).
\end{equation*}
\item We use the frequency envelope approach due to Tao \cite{Tao2001} (see also \cite{IfrimTataru2020}) to show continuous dependence (but no locally uniform continuous dependence).
\end{itemize}

To improve on the threshold $s>\frac52$ dictated by the energy method, we want to estimate $\| \partial_x u \|_{L_T^1 L_{x'}^\infty}$ by Strichartz estimates. This is carried out via a bootstrap argument. For $u$ a solution in $H_{x'}^{\frac{3}{2}+\alpha}$, the coefficients of the quasilinear Maxwell equation are $\alpha$ H\"older-continuous and in the scope of Theorem \ref{thm:StrichartzEstimatesFullyAnisotropic}. If the provided Strichartz estimates  can control $\| \partial_x u \|_{L_T^1 L_{x'}^\infty}$ in terms of the $H_{x'}^{\frac{3}{2}+\alpha}$-norm, we can close the iteration. To find $\alpha$, we equate the derivative loss $\rho + 1$ of the Strichartz estimates with $\frac{3}{2}+\alpha$, obtaining
\begin{equation*}
3 \big( \frac{1}{2} - \frac{1}{q} \big) - \frac{1}{p} + \frac{2-\alpha}{2p} + 1 = \frac{3}{2} + \alpha, \quad q = \infty, \; p =4.
\end{equation*}
This gives $\alpha = \frac{8}{9}$ and suggests that the argument closes for initial data with regularity $s > \frac{5}{2} - \frac{1}{9}$.

\smallskip

One new ingredient compared to previous works is to find the energy norm for \eqref{eq:QuasilinearMaxwell}, which is carried out in detail below.
It turns out that in the fully anisotropic case introducing a suitable energy norm requires more care than in the isotropic case
considered in \cite{SchippaSchnaubelt2022,Schippa2021Maxwell3d}. We recall the following fact from \cite{Schippa2021Maxwell3d}.

\begin{proposition}[Existence~of~energy~estimates]
Suppose that there is symmetric $C(u)$ such that
\begin{equation}
 \label{eq:SymmetrizingCondition}
 \mathcal{A}^j(u)^t C(u) = C(u) \mathcal{A}^j(u),
\end{equation}
and $C(u)$ is uniformly elliptic. Then, we have
\begin{equation*}
 E^s(u) := \langle \langle D' \rangle^s u, C(u) \langle D' \rangle^s u \rangle \approx_A \| u \|^2_{H^s}
\end{equation*}
and  \eqref{eq:AprioriEstimate} holds, for $s \geq 0$.
\end{proposition}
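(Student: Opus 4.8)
The plan is to run the classical Friedrichs--Kato symmetrizer argument, keeping explicit track of the dependence of all constants on $A=\sup_{t'\le t}\|u(t')\|_{L^\infty_{x'}}$. Write the quasilinear system \eqref{eq:QuasilinearMaxwell} (in the variables $u=(\mathcal{D},\mathcal{B})$, after inserting $\mathcal{E}=\psi(\mathcal{D})\mathcal{D}$ and $\mu=1_{3\times3}$) in the form $\partial_t u=\sum_{j=1}^3\mathcal{A}^j(u)\partial_j u$, where the $\mathcal{A}^j(\cdot)$ are smooth on $\{|u|\le\delta\}$. The norm equivalence is immediate: since $C(u)$ is symmetric and uniformly elliptic, $\lambda|v|^2\le v^tC(u)v\le\Lambda|v|^2$ for $|u|\le A$, so with $w:=\langle D'\rangle^s u$ one gets $E^s(u)=\langle w,C(u)w\rangle\in[\lambda\|u\|_{H^s}^2,\Lambda\|u\|_{H^s}^2]$, i.e.\ $E^s(u)\approx_A\|u\|_{H^s}^2$.

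Next, differentiate in time:
\[
\tfrac{d}{dt}E^s(u)=2\operatorname{Re}\langle\partial_t w,C(u)w\rangle+\langle w,(\partial_t C(u))w\rangle .
\]
For the last term write $\partial_t C(u)=(D_uC)(u)\,\partial_t u$ and bound $\partial_t u=\sum_j\mathcal{A}^j(u)\partial_j u$ pointwise by $C(A)B(t)$, so it is $\lesssim_A B(t)E^s(u)$. In the first term commute $\langle D'\rangle^s$ past $\mathcal{A}^j(u)$, giving $\partial_t w=\sum_j\mathcal{A}^j(u)\partial_j w+\sum_j[\langle D'\rangle^s,\mathcal{A}^j(u)]\partial_j u$. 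The top-order contribution $2\operatorname{Re}\sum_j\langle\mathcal{A}^j(u)\partial_j w,C(u)w\rangle$ is where \eqref{eq:SymmetrizingCondition} enters: since $(\mathcal{A}^j(u))^tC(u)=C(u)\mathcal{A}^j(u)=:S^j(u)$ is symmetric, integration by parts yields
\[
2\operatorname{Re}\sum_j\langle\mathcal{A}^j(u)\partial_j w,C(u)w\rangle=-\sum_j\langle w,(\partial_j S^j(u))w\rangle,
\]
and $\partial_j S^j(u)$ is a smooth function of $u$ times $\partial_j u$, hence $\lesssim_A B(t)$ in $L^\infty$, so this term is $\lesssim_A B(t)E^s(u)$. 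The commutator term is controlled by the Kato--Ponce estimate together with Moser-type composition bounds (using smoothness of $\mathcal{A}^j$ and $|u|\le A$): $\|[\langle D'\rangle^s,\mathcal{A}^j(u)]\partial_j u\|_{L^2}\lesssim_A\|\nabla u\|_{L^\infty}\|u\|_{H^s}=B(t)\|u\|_{H^s}$, which after pairing with $C(u)w$ is again $\lesssim_A B(t)E^s(u)$. Collecting, $\tfrac{d}{dt}E^s(u)\le C(A)B(t)E^s(u)$, and Gr\o nwall's inequality gives \eqref{eq:AprioriEstimate}; the case $s=0$ is the same computation without commutators.

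The only non-routine point, and the main obstacle, is the exact cancellation of the top-order term: this is precisely what symmetrizability \eqref{eq:SymmetrizingCondition} provides, and one must use that $C(u)$ depends on $u$ carefully, so that the genuinely present errors (from $\partial_t C(u)$ and from $\partial_j(C(u)\mathcal{A}^j(u))$) are controlled by $B(t)=\|\nabla_{x'}u(t)\|_{L^\infty_{x'}}$ rather than by a higher-order norm of $u$. A secondary technical point is invoking the commutator estimate in the form that only $\|\partial u\|_{L^\infty}$ (and $\|u\|_{H^s}$), not $\|\partial^2 u\|_{L^\infty}$, appears; this is the standard Kato--Ponce bound but should be cited. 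We refer to \cite{Schippa2021Maxwell3d,SchippaSchnaubelt2022} for the analogous arguments.
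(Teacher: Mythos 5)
Your argument is correct and is precisely the standard Friedrichs symmetrizer computation that the paper itself does not reproduce but recalls from \cite{Schippa2021Maxwell3d}: the norm equivalence from uniform ellipticity, the cancellation of the top-order term via integration by parts using the symmetry of $S^j(u)=C(u)\mathcal{A}^j(u)$, the Kato--Ponce/Moser control of the commutator by $B(t)\,\|u\|_{H^s}$, and Gr\o nwall. No gaps; this matches the intended proof.
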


To this end, we rewrite Maxwell equations in conservative form $\partial_t u = \mathcal{A}^j(u) \partial_j u$.
However, to state \eqref{eq:QuasilinearMaxwell} in conservative form, we have to recast $\mathcal{E} = \psi(\mathcal{D}) \mathcal{D}$, for which we can formulate a necessary and sufficient condition
on the existence of symmetrizers. Note that $\psi(\mathcal{D})$ is symmetric for symmetric $\varepsilon(\mathcal{E})$.
\begin{proposition}
 \label{prop:ExistenceSymmetrizer}
 There exists
 \begin{equation}
  \label{eq:AnsatzSymmetrizer}
  C(u) = 
  \begin{pmatrix}
   C^1(u) & 0 \\
   0 & 1_{3 \times 3}
  \end{pmatrix}
\in \R^{6 \times 6}_{sym}
 \end{equation}
 with $C^1(u) \in \R^{3 \times 3}$ that satisfies \eqref{eq:SymmetrizingCondition} if and only if
 \begin{equation}
  \label{eq:ConditionsCoefficientsForSymmetrizer}
  \varepsilon^{ijk} \partial_j \psi(\mathcal{D})_{k \ell} \mathcal{D}_{\ell} = 0
 \end{equation}
for $i \in \{1,2,3\}$ with summation over $k,j \in \{1,2,3\}$.
\end{proposition}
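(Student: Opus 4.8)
The plan is to put \eqref{eq:QuasilinearMaxwell} into conservative quasilinear form and then to reduce the symmetrizability condition \eqref{eq:SymmetrizingCondition} to a pointwise linear-algebra statement. Since $\mu=1_{3\times 3}$ we have $\mathcal{H}=\mathcal{B}$, and with $u=(\mathcal{D},\mathcal{B})$ and $\mathcal{E}=\psi(\mathcal{D})\mathcal{D}$ the system reads $\partial_t\mathcal{D}_i=\varepsilon^{ijk}\partial_j\mathcal{B}_k$ and $\partial_t\mathcal{B}_i=-\varepsilon^{ijk}\partial_j\mathcal{E}_k=-\varepsilon^{ijk}G_{km}(\mathcal{D})\,\partial_j\mathcal{D}_m$, where $G(\mathcal{D})$ denotes the Jacobian of $\mathcal{D}\mapsto\mathcal{E}=\psi(\mathcal{D})\mathcal{D}$, that is $G_{km}=\psi_{km}+(\partial_{\mathcal{D}_m}\psi_{k\ell})\mathcal{D}_\ell$. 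Writing $\partial_t u=\sum_{j=1}^3\mathcal{A}^j(u)\partial_j u$ in $3\times 3$-block form, this yields
\[
\mathcal{A}^j(u)=\begin{pmatrix}0 & N^j\\ -M^j & 0\end{pmatrix},\qquad N^j_{ik}=\varepsilon^{ijk},\qquad M^j_{im}=\varepsilon^{ijk}G_{km}=(N^jG)_{im},
\]
so that $(N^j)^t=-N^j$ and, by construction, $M^j=N^jG$.

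Next I would substitute the ansatz \eqref{eq:AnsatzSymmetrizer}, $C=\text{diag}(C^1,1_{3\times 3})$ with $C^1=(C^1)^t$, into \eqref{eq:SymmetrizingCondition} and compare the four $3\times 3$ blocks of $(\mathcal{A}^j)^tC$ and $C\mathcal{A}^j$. The diagonal blocks give $0=0$; the $(2,1)$-block gives $(N^j)^tC^1=-M^j$, equivalently $N^jC^1=M^j$ (using $(N^j)^t=-N^j$); and the $(1,2)$-block gives $-(M^j)^t=C^1N^j$. Because $C^1$ is symmetric and $N^j$ antisymmetric, $(N^jC^1)^t=C^1(N^j)^t=-C^1N^j$, so the $(1,2)$-condition is automatically a consequence of the $(2,1)$-condition. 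Hence, for $C$ of the form \eqref{eq:AnsatzSymmetrizer} with $C^1$ symmetric, condition \eqref{eq:SymmetrizingCondition} is equivalent to the identities $N^jC^1=M^j$ for $j=1,2,3$, i.e.\ $\varepsilon^{ijk}\bigl(C^1_{km}-G_{km}\bigr)=0$ for all $i,j,m$.

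The crucial step is then the elementary observation that a vector $v\in\R^3$ with $\varepsilon^{ijk}v_k=0$ for all $i,j$ must vanish (the choices $(i,j)=(1,2),(1,3),(2,3)$ recover $v_3$, $-v_2$, $v_1$). Applying this to each column $m$ of $C^1-G$ shows that the identities $N^jC^1=M^j$, $j=1,2,3$, hold \emph{if and only if} $C^1=G(\mathcal{D})$. Thus a symmetrizer of the prescribed form exists exactly when the uniquely determined candidate $C^1=G(\mathcal{D})$ is symmetric. Finally, using that $\psi(\mathcal{D})$ is symmetric, one computes $G_{km}-G_{mk}=(\partial_{\mathcal{D}_m}\psi_{k\ell})\mathcal{D}_\ell-(\partial_{\mathcal{D}_k}\psi_{m\ell})\mathcal{D}_\ell$, and this antisymmetric part vanishes for all $k,m$ precisely when its contraction with the Levi-Civita symbol vanishes, i.e.\ when $\varepsilon^{ijk}(\partial_{\mathcal{D}_j}\psi_{k\ell})\mathcal{D}_\ell=0$ for $i=1,2,3$, which is exactly \eqref{eq:ConditionsCoefficientsForSymmetrizer}. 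This establishes both implications.

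The argument is purely algebraic, so the only real obstacle is index bookkeeping: keeping the positions in $\varepsilon^{ijk}$ consistent when passing between the curl, the blocks $N^j,M^j$, and the contraction appearing in \eqref{eq:ConditionsCoefficientsForSymmetrizer}, and making sure that the forward direction genuinely pins $C^1$ down (rather than only determining it on the range of each individual $(N^j)^t$). I would also record, for use in the preceding proposition on energy estimates, that for $\|\mathcal{D}\|_{L^\infty}\leq\delta$ small the matrix $G(\mathcal{D})$ is a small perturbation of $G(0)=\varepsilon(0)^{-1}$ and hence uniformly positive definite, so that under \eqref{eq:ConditionsCoefficientsForSymmetrizer} the resulting $C=\text{diag}(G(\mathcal{D}),1_{3\times 3})$ is uniformly elliptic and thus a genuine symmetrizer.
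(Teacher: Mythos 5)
Your argument is correct and follows essentially the same route as the paper: insert the block-diagonal ansatz into \eqref{eq:SymmetrizingCondition}, observe that only the off-diagonal block condition $(N^j)^tC^1=-M^j$ is nontrivial (the other block being its transpose), and read off that the unique candidate is $C^1_{km}=\psi(\mathcal{D})_{km}+(\partial_m\psi(\mathcal{D})_{k\ell})\mathcal{D}_\ell$, whose symmetry is exactly \eqref{eq:ConditionsCoefficientsForSymmetrizer}. The paper carries this out by writing the matrices $\mathcal{B}^j,\mathcal{C}^j$ explicitly and comparing entries, whereas you package the same computation via the Jacobian $G$ and the invertibility of contraction with the Levi-Civita symbol; both are the same proof.
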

\begin{proof}
We write for $j=1,2,3$
\begin{equation*}
\mathcal{A}^j(u) = 
\begin{pmatrix}
0 & A_1^j(u) \\
A_2^j(u) & 0
\end{pmatrix};
\quad A_1^j(x), A_2^j(x) \in \R^{3 \times 3}
\end{equation*}
with $(A_1^j)_{mn} = - \varepsilon_{jmn}$. For the special form \eqref{eq:AnsatzSymmetrizer} the equation \eqref{eq:SymmetrizingCondition} translates to 
 \begin{equation}
 \label{eq:ReducedSymmetrizingCondition}
  (A_1^j)^t C^1 = A^j_2.
 \end{equation}
We compute $A^j_2$ from
\begin{equation*}
 - (\nabla \times (\psi(\mathcal{D}) \mathcal{D}))_i= (A_2^j(\mathcal{D}) \partial_j \mathcal{D})_i,
\end{equation*}
where we have
\begin{equation*}
\begin{split}
 (\nabla \times (\psi(\mathcal{D}) \mathcal{D}))_i &= \varepsilon^{ijk} \partial_j(\psi(\mathcal{D}) \mathcal{D})_k \\
 &= \varepsilon^{ijk} \partial_j( \psi(\mathcal{D})_{km} \mathcal{D}_m) \\
 &= \varepsilon^{ijk} [\psi(\mathcal{D})_{km} \partial_j \mathcal{D}_m] + \varepsilon^{ijk} \partial_{\ell} \psi(\mathcal{D})_{km} (\partial_j \mathcal{D}_\ell) \mathcal{D}_m.
\end{split}
\end{equation*}
We define the matrices
\begin{equation*}
 \mathcal{B}^{j}_{im} = - \varepsilon^{ijk} \psi(\mathcal{D})_{km}, \quad \mathcal{C}^j_{im} = - \varepsilon^{ijk} \partial_m \psi(\mathcal{D})_{kl} \mathcal{D}_l
\end{equation*}
and let $A_2^j= \mathcal{B}^j + \mathcal{C}^j$. We obtain
\begin{align*}
  \mathcal{B}^1 &=
 \begin{pmatrix}
  0 & 0 & 0 \\
 \psi(\mathcal{D})_{31} & \psi(\mathcal{D})_{32} & \psi(\mathcal{D})_{33} \\
 - \psi(\mathcal{D})_{21} & - \psi(\mathcal{D})_{22} & - \psi(\mathcal{D})_{23}
 \end{pmatrix}
 ,\\
 \mathcal{B}^2 &=
 \begin{pmatrix}
  - \psi(\mathcal{D})_{31} & - \psi(\mathcal{D})_{32} & - \psi(\mathcal{D})_{33} \\
  0 & 0 & 0 \\
  \psi(\mathcal{D})_{11} & \psi(\mathcal{D})_{12} & \psi(\mathcal{D})_{13}
 \end{pmatrix}
, \\
\mathcal{B}^3 &= 
\begin{pmatrix}
\psi(\mathcal{D})_{21} & \psi(\mathcal{D})_{22} & \psi(\mathcal{D})_{23} \\
- \psi(\mathcal{D})_{11} & - \psi(\mathcal{D})_{12} & - \psi(\mathcal{D})_{13} \\
0 & 0 & 0
\end{pmatrix}
.
\end{align*}
Moreover,
\begin{equation*}
 \begin{split}
  \mathcal{C}^1 &= 
  \begin{pmatrix}
   0 & 0 & 0 \\
   \partial_1 \psi(\mathcal{D})_{3 \ell} \mathcal{D}_\ell & \partial_2 \psi(\mathcal{D})_{3 \ell} \mathcal{D}_\ell & \partial_3 \psi(\mathcal{D})_{3 \ell} \mathcal{D}_\ell \\
   - \partial_1 \psi(\mathcal{D})_{2 \ell} \mathcal{D}_\ell & - \partial_2 \psi(\mathcal{D})_{2 \ell} \mathcal{D}_\ell & - \partial_3 \psi(\mathcal{D})_{2 \ell} \mathcal{D}_\ell
  \end{pmatrix}
, \\ \mathcal{C}^2 &= 
\begin{pmatrix}
 - \partial_1 \psi(\mathcal{D})_{3 \ell} \mathcal{D}_\ell & - \partial_2 \psi(\mathcal{D})_{3 \ell} \mathcal{D}_\ell & - \partial_3 \psi(\mathcal{D})_{3 \ell} \mathcal{D}_\ell \\
 0 & 0 & 0 \\
 \partial_1 \psi(\mathcal{D})_{1 \ell} \mathcal{D}_\ell & \partial_2 \psi(\mathcal{D})_{1 \ell} \mathcal{D}_{\ell} & \partial_3 \psi(\mathcal{D})_{1 \ell} \mathcal{D}_\ell
\end{pmatrix}
, \\
\mathcal{C}^3 &= 
\begin{pmatrix}
 \partial_1 \psi(\mathcal{D})_{2 \ell} \mathcal{D}_\ell & \partial_2 \psi(\mathcal{D})_{2 \ell} \mathcal{D}_\ell & \partial_3 \psi(\mathcal{D})_{2 \ell} \mathcal{D}_\ell \\
 - \partial_1 \psi(\mathcal{D}_{1 \ell} \mathcal{D}_\ell & -\partial_2 \psi(\mathcal{D})_{1 \ell} \mathcal{D}_\ell & - \partial_3 \psi(\mathcal{D})_{1 \ell} \mathcal{D}_\ell \\
 0 & 0 & 0
\end{pmatrix}
.
 \end{split}
\end{equation*}
Note that
\begin{equation*}
A_1^1 =
\begin{pmatrix}
0 & 0 & 0 \\
0 & 0 & -1 \\
0 & 1 & 0
\end{pmatrix}, \;
A_1^2 =
\begin{pmatrix}
0 & 0 & 1 \\
0 & 0 & 0 \\
-1 & 0 & 0
\end{pmatrix}, 
\;
A_1^3 =
\begin{pmatrix}
0 & - 1 & 0 \\
1 & 0 & 0 \\
0 & 0 & 0
\end{pmatrix}
.
\end{equation*}
This yields
\begin{equation*}
\begin{split}
(A_1^1)^t C^1 &= 
\begin{pmatrix}
0 & 0& 0 \\
C^1_{31} & C^1_{32} & C^1_{33} \\
-C^1_{21} & - C^1_{22} & - C^1_{23}
\end{pmatrix}, \;
(A_1^2)^t C^1 =
\begin{pmatrix}
-C^1_{31} & - C^1_{32} & -C^1_{33} \\
0 & 0 & 0 \\
C^1_{11} & C^1_{12} & C^1_{13}
\end{pmatrix}, \\
(A_1^3)^t C^1 &=
\begin{pmatrix}
C^1_{21} & C^1_{22} & C^1_{23} \\
-C^1_{11}  & - C^1_{12} & - C^1_{13} \\
0 & 0 & 0
\end{pmatrix}
.
\end{split}
\end{equation*}

 A comparison of the coefficients in \eqref{eq:ReducedSymmetrizingCondition} yields \eqref{eq:ConditionsCoefficientsForSymmetrizer} by symmetry of $\psi(\mathcal{D})$. For instance, we find
\begin{equation*}
\begin{split}
((A^1_1)^t C^1)_{21} &= C^1_{31} = \psi(\mathcal{D})_{31} + \partial_1 \psi(\mathcal{D})_{3 \ell} \mathcal{D}_{\ell}, \\ ((A^2_1)^t C^1)_{31} &= C^1_{13} = \psi(\mathcal{D})_{13} + \partial_3 \psi(\mathcal{D})_{1 \ell} \mathcal{D}_{\ell}.
\end{split}
\end{equation*} 
This gives $\varepsilon^{1 jk } \partial_j \psi(\mathcal{D})_{k \ell} \mathcal{D}_{\ell} = 0$. The other conditions are found likewise. 
\end{proof}

We remark that this assumption can be difficult to verify starting with $\varepsilon = \varepsilon(\mathcal{E})$. It turns out that the seemingly natural ansatz 
\begin{equation*}
 \varepsilon(\mathcal{E}) = \text{diag}(\varepsilon_0^1,\varepsilon_0^2,\varepsilon_0^3) + \text{diag}(\alpha_1,\alpha_2,\alpha_3) |\mathcal{E}|^2
\end{equation*}
for $\varepsilon_0^1 \neq \varepsilon_0^2 \neq \varepsilon_0^3 \neq \varepsilon_0^1$ fails if $\alpha_i \neq \alpha_j$ for some $i \neq j$.
An admissible choice of $\varepsilon$ is
\begin{equation}
\label{eq:ExampleQuasilinearMaterialLaw}
 \varepsilon (\mathcal{E})= \text{diag}(\varepsilon_0^1,\varepsilon_0^2,\varepsilon_0^3) + \text{diag}(\alpha_1 |\mathcal{E}_1|^2, \alpha_2 |\mathcal{E}_2|^2, \alpha_3 |\mathcal{E}_3|^2)
\end{equation}
with $\varepsilon_0^i>0$.
In this case, we have $\mathcal{D}_i = (\varepsilon_0^i + \alpha_i |\mathcal{E}_i|^2) \mathcal{E}_i$, which admits inversion as
\begin{equation*}
 \mathcal{E}_i = \psi_i(\mathcal{D}_i) \mathcal{D}_i \text{ and } \psi_i(0) = (\varepsilon_0^i)^{-1}.
\end{equation*}
This allows for verification of \eqref{eq:ConditionsCoefficientsForSymmetrizer}. Instead of starting with $\varepsilon = \varepsilon(\mathcal{E})$, one can make the ansatz
\begin{equation}
 \label{eq:AnsatzPsi}
 \psi(\mathcal{D})_{ij} = \varepsilon^0_{ij} + \alpha_{ij} \mathcal{D}_i \mathcal{D}_j,
\end{equation}
for which it is easy to see that \eqref{eq:ConditionsCoefficientsForSymmetrizer} requires symmetry of $(\varepsilon^0_{ij})_{ij}$ and $(\alpha_{ij})_{ij}$. Then, by the implicit function theorem,
for small fields 
\begin{equation}
\label{eq:SmallFieldsDelta}
\| \mathcal{E} \|_{L^\infty} + \| \mathcal{D} \|_{L^\infty} \leq \delta
\end{equation}
 we can rewrite $\mathcal{D} = \varepsilon(\mathcal{E}) \mathcal{E}$ with $\varepsilon(\mathcal{E})=\psi(\mathcal{D}(\mathcal{E}))^{-1}$.
Clearly, $\varepsilon(\mathcal{E})$ is symmetric as an inverse of a symmetric matrix, and the uniform anisotropy
and ellipticity are true for small $\| \mathcal{E} \|_{L^\infty}$. Taking $\delta$ smaller than in \eqref{eq:SmallFieldsDelta}, if necessary, specifies $\delta$ in \eqref{eq:SmallFields}.

By the assumptions of Theorem \ref{thm:QuasilinearAnisotropicMaxwellEquation}, the energy estimate
\begin{equation}
\label{eq:EnergyEstimateAux}
\| u(t) \|_{H^s} \lesssim_\delta \| u(0) \|_{H^s} e^{C \int_0^t B(s) ds}
\end{equation}
is true for $\| u \|_{L_{t,x'}^\infty} \leq \delta$. In the next step we use Strichartz estimates to show a priori estimates for
$s > 2 + \frac{7}{18}$. Note that by Sobolev embedding \eqref{eq:EnergyEstimateAux} yields a priori estimates for $s > \frac{5}{2}$.

\begin{proposition}
\label{prop:APrioriQuasilinear}
Under the assumptions of Theorem \ref{thm:QuasilinearAnisotropicMaxwellEquation}, the a priori estimate
\begin{equation*}
\sup_{t \in [0,T]} \| u(t) \|_{H^s} \lesssim \| u(0) \|_{H^s}
\end{equation*}
holds for $s > 2 + \frac{7}{18}$,  $T = T(\| u_0 \|_{H^s})$ and $\|u_0 \|_{H^s} \leq \delta$ for some $\delta >0 $.
\end{proposition}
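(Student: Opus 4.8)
The plan is to close the a priori bound by a standard continuity argument in $T$. First choose $\delta>0$ so small that every symmetric diagonal $\varepsilon(\mathcal E)$ with $|\mathcal E|\le\delta$ is uniformly elliptic with pairwise uniformly separated eigenvalues (possible by the hypotheses at $\mathcal E=0$ and continuity), and then $\delta'\le\delta$ so small that $\|u_0\|_{H^s}\le\delta'$ forces $\|u\|_{L^\infty_{t,x'}}\le\delta$ on the maximal interval of existence of the smooth solution, via $H^s\hookrightarrow L^\infty$ and continuity in time. Let $C_0\ge1$ be the constant in the energy inequality \eqref{eq:EnergyEstimateAux} (for fields bounded by $\delta$) and fix $\delta_1$ with $0<\delta_1<s-(2+\tfrac{7}{18})$. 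Suppose on $[0,T]$ that $\sup_{t\in[0,T]}\|u(t)\|_{H^s}\le 2C_0\|u_0\|_{H^s}$; we will show that $T=T(\|u_0\|_{H^s})$ can be chosen so that in fact $\sup_t\|u(t)\|_{H^s}\le\tfrac32 C_0\|u_0\|_{H^s}$, which together with the local existence of smooth solutions (taken for granted) and the frequency-envelope argument for continuous dependence yields the statement.

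By \eqref{eq:EnergyEstimateAux} it suffices to make $\int_0^T B(t)\,dt=\|\nabla_{x'}u\|_{L^1_TL^\infty_{x'}}$ so small that $e^{C\int_0^TB}\le\tfrac32$. H\"older in time gives $\|\nabla_{x'}u\|_{L^1_TL^\infty_{x'}}\le T^{3/4}\|\nabla_{x'}u\|_{L^4_TL^\infty_{x'}}$, and since $|\nabla_{x'}\mathcal D_i|\lesssim|\nabla_{x'}\mathcal E_i|$ and $\mathcal B=\mathcal H$ for $|\mathcal E|\le\delta$, it is enough to bound $\|\nabla_{x'}(\mathcal E,\mathcal H)\|_{L^4_TL^\infty_{x'}}$. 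The first step is to check that, under the bootstrap hypothesis, the diagonal coefficient $\varepsilon(\mathcal E)$ satisfies the hypotheses of Theorem \ref{thm:StrichartzEstimatesFullyAnisotropic} with $\Phi\equiv1$ and H\"older exponent $\alpha=\tfrac89$: spatial regularity follows from $\varepsilon(\mathcal E)\in L^\infty_TH^s_{x'}\hookrightarrow L^\infty_TC^{s-3/2}_{x'}$ with $s-\tfrac32>\tfrac89$; the equations $\partial_t\mathcal D=\nabla\times\mathcal H$, $\partial_t\mathcal B=-\nabla\times\mathcal E$ together with $\mathcal E=\psi(\mathcal D)\mathcal D$ express $\partial_t\varepsilon(\mathcal E)$ through $\nabla\times\mathcal H$, $\mathcal E$, $\mathcal H$, which gives joint $C^\alpha$-regularity in $(t,x')$ and $\partial(\varepsilon(\mathcal E))\in L^2_TL^\infty_{x'}$; uniform ellipticity and eigenvalue separation come from the choice of $\delta$. (After freezing the coefficients in $t$ outside $[0,T]$ they are defined on all of $\R^4$.)

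Next, apply Theorem \ref{thm:StrichartzEstimatesFullyAnisotropic} with $p=4$, $q=\infty$ (so $\rho=\tfrac54$ and $\tfrac2p+\tfrac1q=\tfrac12$) to $v=\langle D'\rangle^{\sigma}(\mathcal E,\mathcal H)$ with $\sigma=1+\rho-\tfrac{\alpha-2}{8}+\delta_1=2+\tfrac{7}{18}+\delta_1$, chosen so that the left-hand side $\|\langle D'\rangle^{-\rho+\frac{\alpha-2}{8}-\delta_1}v\|_{L^4_TL^\infty}$ controls $\|\nabla_{x'}(\mathcal E,\mathcal H)\|_{L^4_TL^\infty}$ up to low frequencies; this is exactly the index balance of the outline, where equating the derivative loss with $\tfrac32+\alpha$ forces $\alpha=\tfrac89$, and then the requirements $\sigma<s$ and $\alpha\le s-\tfrac32$ both amount to $s>2+\tfrac{7}{18}$. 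Since $P(x,D)(\mathcal E,\mathcal H)=0$ for these coefficients, the inhomogeneous term of the Strichartz estimate for $v$ is $[P,\langle D'\rangle^{\sigma}](\mathcal E,\mathcal H)$; because $\mu\equiv1$ and the curl is a Fourier multiplier in $x'$, this equals $-\partial_t\big([\varepsilon(\mathcal E),\langle D'\rangle^{\sigma}]\mathcal E\big)$ in the electric block and $0$ in the magnetic block, while the charge of $v$ is $\nabla\cdot\big([\varepsilon(\mathcal E),\langle D'\rangle^{\sigma}]\mathcal E\big)$, the genuine charges $\nabla\cdot(\varepsilon(\mathcal E)\mathcal E)$ and $\nabla\cdot\mathcal H$ vanishing. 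The data term is $\|v(0)\|_{L^2}=\|u_0\|_{H^{\sigma}}\lesssim\|u_0\|_{H^s}$ since $\sigma<s$. Altogether,
\begin{equation*}
\|\nabla_{x'}(\mathcal E,\mathcal H)\|_{L^4_TL^\infty_{x'}}\lesssim\|u_0\|_{H^s}+\big\|\partial_t\big([\varepsilon(\mathcal E),\langle D'\rangle^{\sigma}]\mathcal E\big)\big\|_{L^1_TL^2_{x'}}+(\text{analogous negative-order norms of the charge commutator}).
\end{equation*}

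The heart of the argument, and the main obstacle, is controlling these commutator terms at the regularity $s<\tfrac52$, where pure Sobolev embedding does not even place $\partial_tu=\mathcal A^j(u)\partial_ju$ in $L^\infty_{x'}$. The plan is to expand $\partial_t([\varepsilon(\mathcal E),\langle D'\rangle^{\sigma}]\mathcal E)=[\partial_t\varepsilon(\mathcal E),\langle D'\rangle^{\sigma}]\mathcal E+[\varepsilon(\mathcal E),\langle D'\rangle^{\sigma}]\partial_t\mathcal E$, substitute $\partial_t\varepsilon(\mathcal E)=\varepsilon'(\mathcal E)\partial_t\mathcal E$ and the evolution equation for $\partial_t\mathcal E$, and estimate by Kato--Ponce/Coifman--Meyer commutator and Moser product inequalities, using the $H^s_{x'}$-regularity of $\varepsilon(\mathcal E)$ rather than merely its H\"older regularity. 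The decisive point is to exploit the Strichartz gain that is itself being proved: once $\nabla_{x'}u\in L^4_TL^\infty_{x'}$, the equation gives $\partial_tu\in L^4_TL^\infty_{x'}$, so every $L^1_TL^2$-norm above is bounded by $T^{3/4}$ times a polynomial in $\sup_t\|u(t)\|_{H^s}\le2C_0\|u_0\|_{H^s}$ and $\|\nabla_{x'}u\|_{L^4_TL^\infty_{x'}}$, the latter with a small constant that is absorbed into the left-hand side for $T$ small. Hence $\|\nabla_{x'}u\|_{L^4_TL^\infty_{x'}}\lesssim C(\|u_0\|_{H^s})$ and $\|\nabla_{x'}u\|_{L^1_TL^\infty_{x'}}\le T^{3/4}C(\|u_0\|_{H^s})\to0$ as $T\to0$; inserting this into \eqref{eq:EnergyEstimateAux} improves the bootstrap constant from $2C_0$ to $\tfrac32C_0$, which closes the continuity argument. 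Frequencies with $|\tau|\gg|\xi'|$, where $P$ is elliptic and one gains a derivative, are handled separately by Sobolev embedding exactly as in \cite{SchippaSchnaubelt2022}.
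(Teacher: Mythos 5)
Your overall strategy is the one the paper uses: a continuity/bootstrap argument coupling the energy inequality \eqref{eq:EnergyEstimateAux} with the linear Strichartz estimate applied to $\langle D'\rangle^{\sigma}(\mathcal E,\mathcal H)$, with the same index balance $\alpha=\tfrac89$, $\sigma=2+\tfrac{7}{18}+\delta_1$, the same verification that the coefficients are admissible for small fields, and the same treatment of the region $|\tau|\gg|\xi'|$. The numerology and the closing of the bootstrap in $T$ are correct in outline.

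However, there is a genuine gap at the commutator step, and it is precisely the point the paper flags with the remark that the commutator arguments ``only apply after changing Maxwell equations to non-divergence form.'' You keep the operator $P$ in divergence form, so the inhomogeneity for $v=\langle D'\rangle^{\sigma}\mathcal E$ is $-\partial_t\bigl([\varepsilon(\mathcal E),\langle D'\rangle^{\sigma}]\mathcal E\bigr)$, and your Leibniz expansion produces the term $[\partial_t\varepsilon(\mathcal E),\langle D'\rangle^{\sigma}]\mathcal E$. This term is not controllable at regularity $s<\tfrac52$: Kato--Ponce bounds it by
\begin{equation*}
\|\nabla_{x'}\partial_t\varepsilon\|_{L^\infty_{x'}}\,\|\mathcal E\|_{H^{\sigma-1}}+\|\partial_t\varepsilon\|_{H^{\sigma}}\,\|\mathcal E\|_{L^\infty_{x'}},
\end{equation*}
and since $\partial_t\varepsilon\sim\varepsilon'(\mathcal E)\,\tilde\varepsilon^{-1}\nabla\times\mathcal H$, the first term requires $\nabla^2\mathcal H\in L^\infty_{x'}$ (which needs $s>\tfrac72$) and the second requires $\mathcal H\in H^{\sigma+1}$ with $\sigma+1>s$; note also that $\partial_t\varepsilon\in H^{s-1}$ with $s-1<\tfrac32$ does not embed into any positive H\"older class, so paraproduct refinements do not rescue it either (a sharp bilinear commutator bound would only close for $s\gtrsim 2.44$, above the claimed threshold). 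Integrating by parts in time in Duhamel's formula merely trades the time derivative for a spatial derivative on the bracket and runs into the same loss. The paper's resolution is to first rewrite the system in non-divergence form: with $\tilde\varepsilon_i=\varphi_i(\mathcal E_i)+\varphi_i'(\mathcal E_i)\mathcal E_i$ one has $\partial_t(\varepsilon(\mathcal E)\mathcal E)=\tilde\varepsilon(\mathcal E)\partial_t\mathcal E$ and $\nabla\cdot(\varepsilon\mathcal E)=\eta(x,D)\mathcal E$, so the linear estimate is applied with coefficients $\tilde\varepsilon$, the operators $\tilde P'$ and $\eta(x,D)$ annihilate $(\mathcal E,\mathcal H)$, and the only commutators are $[\tilde\varepsilon,\langle D'\rangle^{\tilde s+1}]\partial_t\mathcal E$ and $[\tilde\varepsilon^i,\langle D'\rangle^{\tilde s+1}]\partial_i\mathcal E$, with no time derivative falling on the bracket; both Kato--Ponce contributions are then bounded by $\|u\|_{H^s}$ and $\|\nabla_{x'}u\|_{L^4_TL^\infty_{x'}}$. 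You should incorporate this reduction (and note that the zeroth-order difference $\tilde P-\tilde P'=-(\partial_t\tilde\varepsilon)\cdot$ is harmless since $\partial_t\tilde\varepsilon\in L^2_TL^\infty_{x'}$); without it the argument does not reach $s>2+\tfrac{7}{18}$.
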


\begin{proof}
Suppose that the smooth solution $u= (\mathcal{D},\mathcal{H})$ exists for $[0,T_0]$. Let $0<T\le T_0$.
The proof follows from bootstrapping the energy estimate \eqref{eq:EnergyEstimateAux}
\begin{equation*}
E^s(u(t)) \lesssim_A e^{C \int_0^T B(s) ds} E^s(u(0))
\end{equation*}
and the Strichartz estimate
\begin{equation}
\label{eq:StrichartzQuasilinearAux}
\| \partial_{x'} u \|_{L_T^4 L_{x'}^\infty} \lesssim \| \langle D' \rangle^s u \|_{L_T^\infty L_{x}^2}
\end{equation}
for $s\ge s_0 > 2 + \frac{7}{18}$. The constant in the Strichartz estimate has to be uniform provided that $T$, $\| \partial_x u \|_{L_T^4 L_{x'}^\infty}$, $\| u \|_{C^{\frac{8}{9}+\sigma}_x}$ are bounded and $s \ge s_0$.

 We have to establish \eqref{eq:StrichartzQuasilinearAux}, for which we make use of commutator arguments. These only apply
 after changing Maxwell equations to non-divergence form.  Let
\begin{equation*}
\varepsilon(\mathcal{E}) = \text{diag}(\varphi_1(\mathcal{E}_1),\varphi_2(\mathcal{E}_2),\varphi_3(\mathcal{E}_3))
\end{equation*}
 and denote 
\begin{equation*}
\tilde{\varepsilon}(\mathcal{E}) = \text{diag}(\tilde{\varepsilon}_1(\mathcal{E}_1),\tilde{\varepsilon}_2(\mathcal{E}_2),\tilde{\varepsilon}_3(\mathcal{E}_3)) \quad \text{with \ } \tilde{\varepsilon}_i(\mathcal{E}) = \varphi_i(\mathcal{E}_i) + \varphi_i'(\mathcal{E}_i) \mathcal{E}_i.
\end{equation*} 
Note that $\partial_x (\varepsilon(\mathcal{E}) \mathcal{E}) = \tilde{\varepsilon}(\mathcal{E}) \partial_x \mathcal{E}$.


For the proof of \eqref{eq:StrichartzQuasilinearAux} we have to show
\begin{equation*}
\| \partial_{x'} (\mathcal{E},\mathcal{H}) \|_{L_T^4 L_{x'}^\infty} \lesssim \| (\mathcal{E},\mathcal{H}) \|_{L_T^\infty H^s_{x'}}
\quad \text{for \ } s \ge s_0> \frac{3}{2} + \frac{8}{9}.
\end{equation*}
Via a continuity argument, we shall control
\begin{equation*}
\| \langle D' \rangle^{1+\kappa} (\mathcal{E},\mathcal{H}) \|_{L_T^{4+ \beta} L_{x'}^q} \lesssim \| (\mathcal{E},\mathcal{H}) \|_{L_T^\infty H^s_{x'}} \quad  \text{for \ } s\ge s_0 > \frac{3}{2} + \frac{8}{9},
\end{equation*}
$q$ large enough, and $(4+\beta,q)$ being a sharp Strichartz pair.  The slightly larger exponent in $L_t^p$ allows us to bring powers
of $T$ into play. The positive number $\kappa$ is chosen such that $\| f \|_{L^\infty(\R^3)} \lesssim \| \langle D' \rangle^{\kappa} f \|_{L^q(\R^3)}$ by Sobolev embedding.

We consider the Maxwell system
\begin{equation*}
\left\{ \begin{array}{rlrl}
\partial_t (\tilde{\varepsilon} \mathcal{E}) \!\!\!\!&= \nabla \times \mathcal{H},& \quad
          \nabla \cdot (\tilde{\varepsilon} \mathcal{E}) \!\!\!\!&= \rho_e, \\
\partial_t \mathcal{H} \!\!\!\! &= - \nabla \times \mathcal{E},& \quad \nabla \cdot \mathcal{H}\!\!\!\!& = 0,
\end{array} \right.
\end{equation*}
and denote
\begin{equation*}
\tilde{P} = 
\begin{pmatrix}
- \partial_t (\tilde{\varepsilon} \cdot) & \nabla \times \\
\nabla \times & \partial_t
\end{pmatrix}
, \quad v = (v_1,v_2): \R \times \R^3 \to \R^3 \times \R^3.
\end{equation*}
We can choose $\kappa, \beta,\sigma > 0$, $\gamma\in(0,\frac19)$, and $q<\infty$ such that $W^{\kappa,q}_{x'}\hookrightarrow L^\infty_{x'}$,
$\frac32+\frac89+\gamma+\sigma\le s_0$ and such that the proof of Theorem \ref{thm:StrichartzEstimatesFullyAnisotropic},
see \eqref{eq:EnergyEstimateI}, yields
\begin{equation}
\label{eq:StrichartzAuxiliaryQuasilinear}
\| \langle D' \rangle^{-\hat{s}} v \|_{L_T^{4+\beta} L_{x'}^q}  \lesssim_{T_0,\beta,C}
   \| v \|_{L_T^\infty L_{x'}^2} + \| \tilde{P} v \|_{L_T^2 L_{x'}^2} + \| \nabla \cdot (\tilde{\varepsilon} v_1) \|_{L_T^2 L_{x'}^2}
\end{equation}
for $\hat{s}+\kappa\ge  s_0-1$, $\alpha = \frac{8}{9} + \gamma$
and $C= \| \partial_x \tilde{\varepsilon} \|_{L_T^2 L_{x'}^\infty} + \| \tilde{\varepsilon} \|_{C_x^{\frac{8}{9}+\gamma}}$.
By Moser estimates, we have $\| \tilde{\varepsilon} \|_{C^\alpha_x} \lesssim_{\| \mathcal{E} \|_{L^\infty}} \| \mathcal{E} \|_{C^\alpha_x}$. To estimate the space-time H\"older regularity, write
\begin{equation*}
\| (\mathcal{E},\mathcal{H}) \|_{C_x^\alpha} \lesssim \|(\mathcal{E},\mathcal{H}) \|_{C^\alpha_tL^\infty_{x'}}
                                           + \| (\mathcal{E},\mathcal{H}) \|_{L_t^\infty C_{x'}^\alpha}.
\end{equation*}
 We have
\begin{equation*}
\| (\mathcal{E},\mathcal{H}) \|_{L_t^\infty C_{x'}^\alpha}
   \lesssim \| (\mathcal{E},\mathcal{H}) \|_{L_t^\infty H_{x'}^{\frac{3}{2}+\alpha+\sigma}}, \quad
\|(\mathcal{E},\mathcal{H}) \|_{C^\alpha_tL^\infty_{x'}}
\lesssim \| (\mathcal{E},\mathcal{H}) \|_{L_t^\infty H_{x'}^{\frac{3}{2}+\alpha+\sigma}}.
\end{equation*}
The first estimate is immediate from Sobolev embedding. The latter estimate follows for $\alpha \in \{ 0, 1 \}$ from Sobolev embedding and
the equation. Then we interpolate to obtain bounds in $C^\alpha L^\infty$  with $\alpha \in (0,1)$.

\smallskip

We pass to non-divergence form in the Maxwell  system setting
\begin{equation*}
\eta(x,D) = \begin{pmatrix} \tilde{\varepsilon}_1 \partial_1 & \tilde{\varepsilon}_2 \partial_2 & \tilde{\varepsilon}_3 \partial_3 \end{pmatrix} \quad\text{ and } \quad
\tilde{P}' =
\begin{pmatrix}
- \tilde{\varepsilon} \partial_t & \nabla \times \\
\nabla \times & \partial_t
\end{pmatrix}
.
\end{equation*}
By H\"older's inequality and distributing derivatives, \eqref{eq:StrichartzAuxiliaryQuasilinear} gives
\begin{equation*}
\| \langle D' \rangle^{-\tilde{s}} v \|_{L_T^4 L_{x'}^\infty}
  \lesssim  \| \langle D' \rangle^{-\hat{s}} v \|_{L_T^{4+\beta} L_{x'}^q}
  \lesssim \| v \|_{L_T^\infty L_{x'}^2} + \| \tilde{P}' v \|_{L_T^2 L_{x'}^2} + \| \eta(x,D) v_1 \|_{L_{T}^2 L_{x'}^2}.
\end{equation*}
with $\tilde{s}=\hat{s}+\kappa\ge s_0-1$.
We apply these inequalities to $v = \langle D' \rangle^{\tilde{s}+1} (\mathcal{E},\mathcal{H})$, obtaining
\begin{align}
\| \langle D' \rangle (\mathcal{E},\mathcal{H}) \|_{L_T^4 L_{x'}^\infty}
  &\lesssim \| \langle D' \rangle^{\tilde{s}+1} (\mathcal{E},\mathcal{H}) \|_{L_T^\infty L_{x'}^2} + \| \tilde{P}'(\langle D' \rangle^{\tilde{s}+1} (\mathcal{E},\mathcal{H})) \|_{L_T^2 L_{x'}^2}\notag\\
&\quad + \| \eta(x,D) (\langle D' \rangle^{\tilde{s}+1} \mathcal{E}) \|_{L_T^2 L_{x'}^2},\label{eq:strich-quasi}\\
\|\langle D'\rangle^{1+\kappa} (\mathcal{E},\mathcal{H}) \|_{L_T^{4+\beta} L_{x'}^q}
  &\lesssim \|\langle D' \rangle^{\tilde{s}+1} (\mathcal{E},\mathcal{H}) \|_{L_T^\infty L_{x'}^2} + \| \tilde{P}'(\langle D' \rangle^{\tilde{s}+1} (\mathcal{E},\mathcal{H})) \|_{L_T^2 L_{x'}^2}\notag\\
&\quad + \| \eta(x,D) (\langle D' \rangle^{\tilde{s}+1} \mathcal{E}) \|_{L_T^2 L_{x'}^2}.\label{eq:strich-quasi1}
\end{align}
To conclude the argument, we prove commutator estimates for the last two terms. Here we need to have operators in non-divergence form.
For solutions to \eqref{eq:QuasilinearMaxwell}, we note that $\tilde{P}'(\mathcal{E},\mathcal{H})$ and $\eta(x,D) \mathcal{E}$
vanish since $\partial_t (\varepsilon \mathcal{E}) = \tilde{\varepsilon} \partial_t \mathcal{E}$ and
$\nabla \cdot ( \varepsilon \mathcal{E}) = \eta(x,D) \mathcal{E}$.

\smallskip

We turn to the proof of the first commutator estimate and compute
\begin{equation*}
\| \tilde{P}'(\langle D' \rangle^{\tilde{s}+1} (\mathcal{E},\mathcal{H})) \|_{L_T^2 L_{x'}^2} \lesssim \| \langle D' \rangle^{\tilde{s}+1} \tilde{P}'(\mathcal{E},\mathcal{H}) \|_{L_T^2 L_{x'}^2} + \| [\tilde{\varepsilon},\langle D' \rangle^{\tilde{s}+1}] \partial_t \mathcal{E} \|_{L_T^2 L_{x'}^2}.
\end{equation*}
Here and below the implicit constants depend on $\| (\mathcal{E},\mathcal{H}) \|_{L^\infty_x}$.
The Kato-Ponce commutator estimate at fixed times and H\"older's inequality imply
\begin{align*}
\| [\tilde{\varepsilon}, \langle D' \rangle^{\tilde{s}+1} ] \partial_t \mathcal{E} \|_{L_T^2 L_{x'}^2} &\lesssim \| \langle D' \rangle^{\tilde{s}+1} \tilde{\varepsilon} \|_{L_T^\infty L_{x'}^2} \| \partial_t \mathcal{E} \|_{L_T^2 L_{x'}^\infty} \notag \\
&\quad+ \| \partial_x \tilde{\varepsilon} \|_{L_T^4 L_{x'}^\infty} \| \langle D' \rangle^{\tilde{s}} \partial_t \mathcal{E} \|_{L_T^4 L_{x'}^2}.
\end{align*}
By Moser estimates, we have
\begin{equation*}
\| \langle D' \rangle^{\tilde{s}+1} \tilde{\varepsilon} \|_{L_T^\infty L_{x'}^2} \lesssim \| \langle D' \rangle^{\tilde{s}+1} \mathcal{E} \|_{L_T^\infty L_{x'}^2}.
\end{equation*}
For the second term we rewrite the Maxwell equation as $\partial_t \mathcal{E} = \tilde{\varepsilon}^{-1} \nabla \times \mathcal{H}$
and estimate by the fractional Leibniz rule
\begin{align*}
 \| \langle D' \rangle^{\tilde{s}}& (\tilde{\varepsilon}^{-1} \nabla \times \mathcal{H}) \|_{L_T^4 L_{x'}^2} \\
 &\lesssim \| \langle D' \rangle^{\tilde{s}} (\tilde{\varepsilon}^{-1}) \|_{L_T^\infty L_{x'}^2} \| \nabla \times \mathcal{H} \|_{L_T^4 L_{x'}^\infty} + \| \tilde{\varepsilon}^{-1} \|_{L_x^\infty} \| \langle D' \rangle^{\tilde{s}} \nabla \times \mathcal{H} \|_{L_T^4 L_{x'}^2}.
\end{align*}
Using again the fractional Leibniz rule, Moser estimates and the Maxwell system, we derive (with $w=(\mathcal{E},\mathcal{H})$)
\begin{align} \label{eq:AuxAPrioriII}
 \| \tilde{P}'(\langle D' \rangle^{\tilde{s}+1}w) \|_{L_T^2 L_{x'}^2}
& \lesssim \| \langle D' \rangle^{\tilde{s}+1} w \|_{L_T^\infty L_{x'}^2} \| \langle D' \rangle w  \|_{L_T^2 L_{x'}^\infty}
      +  \| \langle D' \rangle w \|_{L^4_T L_{x'}^\infty} \notag \\
&\qquad  \cdot \big(\| \langle D' \rangle^{\tilde{s}} w\|_{L_T^\infty L_{x'}^2} \| \langle D' \rangle w  \|_{L^4_T L_{x'}^\infty}
        +\| \langle D' \rangle^{\tilde{s}+1} w  \|_{L_T^4 L_{x'}^2}\big).
\end{align}
 We turn to the last term in  \eqref{eq:strich-quasi}, which leads to
 \begin{equation*}
 \| \eta(x,D) (\langle D'\rangle^{\tilde{s}+1} \mathcal{E}) \|_{L_T^2 L_{x'}^2} \leq \| \langle D' \rangle^{\tilde{s}+1} (\eta(x,D) \mathcal{E}) \|_{L_T^2 L_{x'}^2} + \| [\tilde{\varepsilon}^i, \langle D' \rangle^{\tilde{s}+1}] \partial_i \mathcal{E} \|_{L_T^2 L_{x'}^2}.
 \end{equation*}
By the Kato-Ponce commutator estimate at fixed times and H\"older's inequality, we obtain
\begin{equation}
\label{eq:AuxAPrioriIII}
\begin{split}
\| [\tilde{\varepsilon}^i, \langle D' \rangle^{\tilde{s}+1}] \partial_i \mathcal{E} \|_{L_T^2 L_{x'}^2} &\lesssim \| \langle D' \rangle^{\tilde{s}+1} \tilde{\varepsilon} \|_{L_T^\infty L_{x'}^2} \| \partial_i \mathcal{E} \|_{L_T^2 L_{x'}^\infty} \\
&\qquad + \| \partial \tilde{\varepsilon}^i \|_{L_T^2 L_{x'}^\infty} \| \langle D' \rangle^{\tilde{s}} \partial_i \mathcal{E} \|_{L_T^\infty L_{x'}^2}.
\end{split}
\end{equation}
This estimate can be handled as in \eqref{eq:AuxAPrioriII}.

We turn to the continuity argument. Let $F(T) = \| \langle D' \rangle^{1+\beta} (\mathcal{E},\mathcal{H}) \|_{L_T^{4+\beta} L_{x'}^q}$ and $E^s(T) = \sup_{t \in [0,T]} \| u(t) \|_{H^s}$ with $s = \tilde{s}+ 1>2 + \frac{7}{18}$. Inequalities
\eqref{eq:strich-quasi}--\eqref{eq:AuxAPrioriIII} and \eqref{eq:EnergyEstimateAux} imply
\begin{equation*}
\left\{ \begin{array}{cl}
F(T)\! \!\!\!&\lesssim E^s(T) + T^{\frac{1}{4}} F(T) E^s(T) + T^{\frac{\beta}{4(4+\beta)}} F(T)^2 E^s(T) + T^{\frac{1}{4}} E^s(T), \\
E^s(T) \! \!\!\!&\lesssim e^{T^{\frac{1}{4}} F(T)} \| u(0) \|_{H^s}.
\end{array} \right.
\end{equation*}
At this point a continuity argument allows us to choose $T^* = T(\| u_0 \|_{H^s})$ such that $F(T^*) \lesssim E^s(T^*) \lesssim \| u (0) \|_{H^s}$. The proof is complete.
\end{proof}

We give the statement on the $L^2$-bound for differences of solutions.
\begin{proposition}
Let $u^i = (\mathcal{E}^i,\mathcal{H}^i)$, $i=1,2$ be solutions to \eqref{eq:QuasilinearMaxwell} under the assumptions of Theorem \ref{thm:QuasilinearAnisotropicMaxwellEquation} with finite $A$ and $B$, and set $v = u^1-u^2$. Then the estimate
\begin{equation*}
\| v(t) \|^2_{L^2} \lesssim e^{c(A) \int_0^t B(s') ds'} \| v(0) \|^2_{L^2}
\end{equation*}
with 
\begin{equation*}
A= \| u^1 \|_{L_x^\infty} + \| u^2 \|_{L_{x}^\infty} \text{ and } B(s) = \| \partial_{x'} u^1(s) \|_{L^\infty_{x'}} + \| \partial_{x'} u^2(s) \|_{L^\infty_{x'}}
\end{equation*}
holds true. Moreover, if $s>\frac{3}{2}+\frac{8}{9}$, there is a time $T=T(\| u^i(0) \|_{H^s})$ such that $T$ is lower semicontinuous and
\begin{equation*}
\sup_{t \in [0,T]} \| v(t) \|_{L^2} \lesssim_{\| u^i(0) \|_{H^s}} \| v(0) \|_{L^2}.
\end{equation*}
\end{proposition}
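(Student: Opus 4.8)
The plan is to run a weighted $L^2$ energy estimate for the linear symmetric hyperbolic system satisfied by the difference $v$, using the symmetrizer $C(u^1)$ furnished by Proposition~\ref{prop:ExistenceSymmetrizer}, and then to turn the resulting Gr\o nwall inequality into a genuine finite-time bound by controlling $\int_0^T B$ through the Strichartz estimates from Proposition~\ref{prop:APrioriQuasilinear}.

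First I would write \eqref{eq:QuasilinearMaxwell} in conservative form $\partial_t u = \mathcal{A}^j(u)\partial_j u$ with $u=(\mathcal{D},\mathcal{B})$ and $\mathcal{A}^j$ of the block shape used in the proof of Proposition~\ref{prop:ExistenceSymmetrizer}, and subtract the equations for $u^1$ and $u^2$ to get
\[
\partial_t v = \mathcal{A}^j(u^1)\partial_j v + \big(\mathcal{A}^j(u^1)-\mathcal{A}^j(u^2)\big)\partial_j u^2 .
\]
Since $u\mapsto\mathcal{A}^j(u)$ is smooth and $\|u^i\|_{L^\infty_x}\le A$, the source term is bounded pointwise by $c(A)\,|v|\,|\partial_{x'}u^2|$. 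Next I would set $\mathcal{E}_v(t)=\langle v(t),C(u^1(t))v(t)\rangle$, which by Proposition~\ref{prop:ExistenceSymmetrizer} satisfies $\mathcal{E}_v(t)\approx_A\|v(t)\|_{L^2}^2$ thanks to uniform ellipticity of $C(u^1)$. Differentiating in time, the principal contribution $\langle\mathcal{A}^j(u^1)\partial_j v,C(u^1)v\rangle+\langle v,C(u^1)\mathcal{A}^j(u^1)\partial_j v\rangle$ is rewritten using the symmetrizing identity $\mathcal{A}^j(u^1)^tC(u^1)=C(u^1)\mathcal{A}^j(u^1)$ and an integration by parts as $-\langle v,\partial_j(C(u^1)\mathcal{A}^j(u^1))v\rangle$, hence is $\lesssim c(A)\|\partial_{x'}u^1\|_{L^\infty_{x'}}\|v\|_{L^2}^2$; the term $\langle v,(\partial_t C(u^1))v\rangle$ is treated via $\partial_t C(u^1)=(D_uC)(u^1)\mathcal{A}^j(u^1)\partial_j u^1$ and is likewise $\lesssim c(A)\|\partial_{x'}u^1\|_{L^\infty_{x'}}\|v\|_{L^2}^2$; and the source term contributes $\lesssim c(A)\|\partial_{x'}u^2\|_{L^\infty_{x'}}\|v\|_{L^2}^2$ by the pointwise bound together with boundedness of $C(u^1)$. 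Collecting these, $\tfrac{d}{dt}\mathcal{E}_v(t)\lesssim c(A)B(t)\mathcal{E}_v(t)$, and Gr\o nwall's lemma together with $\mathcal{E}_v\approx_A\|v\|_{L^2}^2$ yields the first inequality.

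For the second assertion I would invoke the Strichartz bound $\|\partial_{x'}u^i\|_{L_T^4L^\infty_{x'}}\lesssim\|u^i\|_{L_T^\infty H^s_{x'}}$ for $s>\tfrac32+\tfrac89$, proved in Proposition~\ref{prop:APrioriQuasilinear}, together with the a priori estimate there, which gives $\|u^i\|_{L_T^\infty H^s_{x'}}\lesssim\|u^i(0)\|_{H^s}$ on an interval $[0,T]$ with $T=T(\|u^i(0)\|_{H^s})$; taking the smaller of the two lifespans one may work with a common $T$, which is lower semicontinuous in $(\|u^1(0)\|_{H^s},\|u^2(0)\|_{H^s})$ because the a priori bound is stable under small perturbations of the data. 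By H\"older in time, $\int_0^T B(s')\,ds'\le T^{3/4}\big(\|\partial_{x'}u^1\|_{L_T^4L^\infty_{x'}}+\|\partial_{x'}u^2\|_{L_T^4L^\infty_{x'}}\big)\lesssim_{\|u^i(0)\|_{H^s}}1$, and $A\lesssim_{\|u^i(0)\|_{H^s}}1$ by Sobolev embedding; inserting this into the exponential bound gives $\sup_{t\in[0,T]}\|v(t)\|_{L^2}\lesssim_{\|u^i(0)\|_{H^s}}\|v(0)\|_{L^2}$.

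The step I expect to be the main obstacle is the bookkeeping in the energy identity: one must verify that \emph{every} error term, and in particular the genuinely quasilinear source $\big(\mathcal{A}^j(u^1)-\mathcal{A}^j(u^2)\big)\partial_j u^2$ whose coefficient difference cannot be moved onto a derivative, collapses to the single structure $c(A)B(t)\|v\|_{L^2}^2$. This is precisely where the existence of a symmetrizer $C(u^1)$ of the block form \eqref{eq:AnsatzSymmetrizer}, equivalently the validity of the symmetry condition \eqref{eq:ConditionsCoefficientsForSymmetrizer}, enters; without it the $\mathcal{D}$-component of the principal part would fail to be symmetrizable and the $L^2$ bound for differences would lose a derivative, obstructing the continuous-dependence step that follows.
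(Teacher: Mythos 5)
Your proposal is correct and follows exactly the route the paper intends: the paper itself gives no details here but states that the proof is an obvious modification of \cite[Proposition~6.2]{SchippaSchnaubelt2022}, i.e.\ precisely the weighted $L^2$ energy identity for the difference equation with the symmetrizer $C(u^1)$ from Proposition~\ref{prop:ExistenceSymmetrizer}, Gr\o nwall, and then control of $\int_0^T B$ via H\"older in time and the $L^4_T L^\infty_{x'}$ Strichartz bound of Proposition~\ref{prop:APrioriQuasilinear}. All the error terms you list are the right ones and collapse to $c(A)B(t)\|v\|_{L^2}^2$ as you claim, so no gap remains.
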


\noindent
The proof is an obvious modification of the proof of \cite[Proposition~6.2]{SchippaSchnaubelt2022}. 

Lastly, continuous dependence is proved using frequency envelopes (cf. \cite{SchippaSchnaubelt2022,IfrimTataru2020}). We refer to \cite[pp.~358ff]{SchippaSchnaubelt2022} for the details. This finishes the proof of Theorem \ref{thm:QuasilinearAnisotropicMaxwellEquation}. 


\section{Existence and regularity of eigenvectors for self-adjoint matrices}
\label{section:RegularityEigenvectors}
In this section we prove regularity results for eigenvectors of differentiably varying self-adjoint matrices.
\subsection{Local existence and regularity}
We start with local existence and regularity of eigenvectors of differentiably varying self-adjoint matrices provided that the eigenvalue is simple. This is well-known in the literature, but included for the sake of completeness.

\begin{lemma}
\label{lem:LocalSolutionsSimpleEigenvalue}
Let $k,n,l \in \N$, and $U \subseteq \R^k$ be open. Suppose that $A\in C^l(U;\R^{n \times n})$ and  that
$\lambda_z$ is a simple eigenvalue of $A(z)$  for some $z \in U$. Then, there is $V \subseteq \R^k$ open with $z \in V$ and $v \in C^l(V; \R^n)$, $\lambda \in C^l(V;\R)$ such that $\lambda(z) = \lambda_z$, $\| v \|_2 = 1$ and $A(x) v(x) = \lambda(x) v(x)$ for any $x \in V$.
\end{lemma}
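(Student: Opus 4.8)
The plan is to use the implicit function theorem, or equivalently a direct Riesz projection argument. I will present the Riesz projection approach, which is cleaner and generalizes well. First I would fix $z \in U$ with $\lambda_z$ a simple eigenvalue of $A(z)$. Since $\lambda_z$ is isolated in the spectrum $\sigma(A(z))$, I may choose a small circle $\Gamma \subseteq \C$ centered at $\lambda_z$ such that $\lambda_z$ is the only eigenvalue of $A(z)$ enclosed by $\Gamma$ and $\Gamma \cap \sigma(A(z)) = \emptyset$. By continuity of the eigenvalues (or just continuity of $x \mapsto A(x)$ together with the fact that $\det(A(x) - \zeta) \neq 0$ for $\zeta \in \Gamma$ on a neighborhood, using compactness of $\Gamma$), there is an open $V_0 \ni z$ such that for all $x \in V_0$ we have $\Gamma \cap \sigma(A(x)) = \emptyset$ and $\Gamma$ encloses exactly one eigenvalue $\lambda(x)$ of $A(x)$, which is simple. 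Then I define the Riesz projection
\begin{equation*}
 P(x) = \frac{1}{2\pi i} \oint_\Gamma (\zeta - A(x))^{-1} \, d\zeta, \qquad x \in V_0.
\end{equation*}

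Next I would check the regularity of $P$. The map $(x,\zeta) \mapsto (\zeta - A(x))^{-1}$ is $C^l$ in $x$ (entries of the inverse are rational functions of the entries of $A(x)$, with nonvanishing denominator $\det(\zeta - A(x))$ on the compact set $V_0' \times \Gamma$ for a slightly smaller neighborhood $V_0'$, so one may differentiate under the integral sign $l$ times), hence $P \in C^l(V_0'; \C^{n \times n})$; since $A(x)$ is real symmetric and $\Gamma$ can be chosen symmetric about the real axis, $P(x)$ is actually real, and it is the orthogonal projection onto $\ker(A(x) - \lambda(x))$, a one-dimensional space. The eigenvalue itself is recovered as $\lambda(x) = \operatorname{tr}(A(x) P(x))$, which is then $C^l$ on $V_0'$ and satisfies $\lambda(z) = \lambda_z$.

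To produce a $C^l$ unit eigenvector I would fix a vector $w \in \R^n$ with $P(z) w \neq 0$ (possible since $P(z) \neq 0$), shrink to an open $V \subseteq V_0'$ containing $z$ on which $P(x) w \neq 0$ (by continuity of $x \mapsto P(x)w$), and set
\begin{equation*}
 v(x) = \frac{P(x) w}{\| P(x) w \|_2}, \qquad x \in V.
\end{equation*}
Since $x \mapsto P(x)w$ is $C^l$ and nonvanishing on $V$, and $t \mapsto t/\|t\|_2$ is smooth on $\R^n \setminus \{0\}$, the vector $v$ is $C^l(V;\R^n)$ with $\|v(x)\|_2 = 1$; moreover $P(x)$ maps $\R^n$ onto the eigenline, so $A(x) v(x) = \lambda(x) v(x)$ for all $x \in V$, as desired.

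The main obstacle is really just bookkeeping rather than a conceptual difficulty: one must be careful that the circle $\Gamma$ and the neighborhood can be chosen uniformly so that $\det(\zeta - A(x)) \neq 0$ for all $(x,\zeta)$ in a compact neighborhood of $\{z\} \times \Gamma$, which justifies both the holomorphic functional calculus and differentiation under the integral sign; this uses only continuity of $A$ and compactness of $\Gamma$. The only other point to verify is that $P(x)$ has rank exactly one on $V$ — this follows because $\operatorname{rank} P(x) = \dim(\text{range of spectral projection}) $ is the number of eigenvalues (with multiplicity) of $A(x)$ inside $\Gamma$, which is locally constant in $x$ and equals $1$ at $x = z$. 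Alternatively, one can avoid the functional calculus entirely and instead apply the implicit function theorem to the system $A(x) v = \lambda v$, $\langle v, v(z)\rangle = 1$ in the variables $(v, \lambda)$, whose derivative at $(v(z), \lambda_z)$ is invertible precisely because $\lambda_z$ is simple; I would mention this as the standard alternative but carry out the projection argument as the main line.
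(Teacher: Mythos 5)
Your proposal is correct, but it takes a genuinely different route from the paper. The paper applies the implicit function theorem directly to the map $F(x,v,\lambda) = \bigl((A(x)-\lambda)v, \ \|v\|_2^2-1\bigr)$ and checks that the linearization in $(v,\lambda)$ at the eigenpair is injective, using simplicity of $\lambda_z$ (via $(A(z)-\lambda_z)^2u=0 \Rightarrow u\in\operatorname{span}(w)$) together with the normalization constraint — this is exactly the ``standard alternative'' you mention in your last paragraph but do not carry out. Your main line instead builds the Riesz projection $P(x)=\frac{1}{2\pi i}\oint_\Gamma(\zeta-A(x))^{-1}\,d\zeta$, gets $C^l$ regularity by differentiating under the contour integral, recovers $\lambda(x)=\operatorname{tr}(A(x)P(x))$, and normalizes $P(x)w$. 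All the steps you flag as needing care (uniform nonvanishing of $\det(\zeta-A(x))$ on a compact neighborhood of $\{z\}\times\Gamma$, local constancy of $\operatorname{rank}P(x)$, reality of $P(x)$ for $\Gamma$ symmetric about the real axis) are handled correctly, so there is no gap. The trade-off: the paper's IFT argument is more elementary (purely finite-dimensional linear algebra, no holomorphic functional calculus), whereas your projection argument yields more — in particular, $C^l$ dependence of the spectral projection persists for an isolated \emph{group} of eigenvalues even when the eigenspace is higher-dimensional, which is precisely the situation the paper says after the lemma it does not know how to handle (the remaining obstruction there is only the choice of a regular basis of the eigenspace, not the regularity of the projection itself).
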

\begin{proof}
The claim follows by applying the implicit function theorem (IFT) to
\begin{equation*}
F(x,v,\lambda) = 
\begin{pmatrix}
(A(x) - \lambda) v \\ \| v \|^2_2 -1
\end{pmatrix}
.
\end{equation*}
Note that by assumption there are exactly two vectors $w \in \R^n$ with $F(z,w,\lambda_z) = 0$. We compute
\begin{equation}
\label{eq:Derivative}
\partial_{(v,\lambda)} F(x,v,\lambda) = 
\begin{pmatrix}
A(x) - \lambda & 2v \\ 
2v^t & 0
\end{pmatrix}
.
\end{equation}
To apply the IFT, we prove for $(u,\lambda) \in \ker(\partial_{(v,\lambda)} F(z,w,\lambda_z))$ that $(u,\lambda) = 0$. By the second line of \eqref{eq:Derivative}, $\langle u,w \rangle = 0$. By multiplying $(A(z) - \lambda_z) u - \lambda w = 0$ with $(A(z) - \lambda_z)$, we find $(A(z) - \lambda_z)^2 u = 0$. This implies $u = \alpha w$, which is easy to see after changing to Jordan normal form. Hence, $u = 0$ as $u \perp w$. Thus, the IFT applies and yields eigenpairs as smooth as the matrix.
\end{proof}

%
%
 We do not know how to generalize the argument to eigenspaces of higher dimensions. Possibly, one can consider maps
\begin{equation*}
F: \R^k \times Gr(2,n) \to Gr(n), \quad F(x,E) = (A(x)- \lambda) E
\end{equation*}
to at least show regular dependence of the eigenspaces. 

%

\subsection{Globalizing non-degenerate eigenpairs}
It is unclear how to construct a mapping $\tilde{F}: U \to (\R^n)^k \in C^l$ from $F:U \to Gr(k,n) \in C^l$ in the general case. Indeed, we have the following counterexample:
\begin{example}
\label{ex:DomainNot1Connected}
We remark that globalization of the local solutions is not always possible as this example 
\begin{equation*}
A(\xi) = 
\begin{pmatrix}
\xi_1 & \xi_2 \\
\xi_2 & - \xi_1
\end{pmatrix}
\end{equation*}
for $\xi \in \mathbb{S}^1$ shows. For $\| \xi \|_2 = 1$, $A(\xi)$ has the eigenvalues $\pm 1$. We identify $\R^2 \equiv \C$, parametrize $\xi \in \mathbb{S}^1$ as $\xi = e^{i \varphi}$, denote $z \in \C$ and $A(\xi)$ as mapping $z \mapsto e^{i \varphi} \bar{z}$ with $e^{i \varphi}$. Then, denoting the eigenvector to $1$ by $z(\varphi)$, we have
\begin{equation*}
e^{i \varphi} \bar{z} = z(\varphi) \Rightarrow e^{i \varphi} = z^2(\varphi).
\end{equation*}
But the square root cannot be continuously extended to the unit disk.

\end{example}

In case of non-degenerate eigenvalues on simply connected domains, we can glue together the local solutions to obtain a global solution. We use the results of Rheinboldt \cite{Rheinboldt1969}, whose terminology is repeated here for convenience.

Let $X$, $Y$ be topological spaces with the Hausdorff property. For simplicity, we consider only relations $\phi \subseteq X \times Y$ with $D(\phi) = X$.
\begin{definition}
 A relation $\phi \subseteq X \times Y$ is said to be a \emph{local mapping relation}, or to have the \emph{local mapping property}, if for each $(x_0,y_0) \in \phi$ there exist (relatively) open neighbourhoods $U(x_0)$ of $x_0$ and $V(y_0)$ of $y_0$ such that the restriction $\varphi = \phi \cap (U(x_0) \times V(y_0))$ is a continuous mapping from $U(x_0)$ into $V(y_0)$.
\end{definition} 
  For $Q \subseteq X$, $\mathcal{P}(Q)$ denotes the set of all possible continuous paths $p:[0,1] \to Q$. $p,q \in \mathcal{P}(Q)$ are called equal, $p \equiv q$, if $p(\tau(t)) = q(t)$, $t \in [0,1]$, where $\tau:[0,1] \to [0,1]$, $\tau(0) = 0$, $\tau(1) = 1$ is a continuous, strictly monotone mapping.

In our context, we consider the local mapping relation relating $x \in \R^m$ with the normalized eigenvectors $\| v(x) \|_2 = 1$, of which there are exactly two if the eigenspace is one-dimensional. The local continuity follows from Lemma \ref{lem:LocalSolutionsSimpleEigenvalue}.

 To prove existence of global solutions derived from a local mapping relation, Rheinboldt introduced the continuation and path-lifting property.
 \begin{definition}
 A relation $\phi \subseteq X \times Y$ is said to have the continuation property for the subset $\mathcal{P}_X \subseteq \mathcal{P}(X)$ if for any $p \in \mathcal{P}_X$ and any continuous function $q:[0,\hat{t}) \subseteq J \to Y$ with $p(t) \phi q(t)$ for any $t \in [0,\hat{t})$ there exists a sequence $(t_k) \subseteq [0,\hat{t})$ with $\lim_{k \to \infty} t_k = \hat{t}$ such that $\lim_{k \to \infty} q(t_k) = \hat{y}$ and $p(\hat{t}) \phi \hat{y}$.
 \end{definition}
Observe that for Example \ref{ex:DomainNot1Connected}, the continuation property fails. We turn to the second definition:
\begin{definition}
A relation $\phi \subseteq X \times Y$ is said to have the path-lifting property for a set $\mathcal{P}_Y \subseteq \mathcal{P}(X)$ if for any $p \in \mathcal{P}_X$ and any $y_0 \in \phi[p(0)]$, there exists a path $q \in \mathcal{P}(Y)$ such that $q(0) = y_0$ and $p(t) \phi q(\tau(t))$. We call $q$ a lifting of $p$ through $y_0$ and write $p \phi q$.
\end{definition}
One of Rheinboldt's main results is the equivalence of the path-lifting and continuation property:
\begin{theorem}
\label{thm:EquivalencePathLiftingContinuation}
Let $\phi \subseteq X \times Y$ be a local mapping relation. Then $\phi$ has the path-lifting property for $\mathcal{P}_X \subseteq \mathcal{P}(X)$ if and only if $\phi$ has the continuation property for $\mathcal{P}_X$.
\end{theorem}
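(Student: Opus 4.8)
The statement to prove is Theorem~\ref{thm:EquivalencePathLiftingContinuation}: for a local mapping relation $\phi\subseteq X\times Y$, the path-lifting property for $\mathcal{P}_X\subseteq\mathcal{P}(X)$ is equivalent to the continuation property for $\mathcal{P}_X$. This is a result of Rheinboldt, so the plan is to reproduce the standard argument adapted to the present setting; since the paper cites \cite{Rheinboldt1969}, I expect the proof here to be short and to simply indicate the mechanism, but a self-contained argument runs as follows.

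\textbf{Path-lifting $\Rightarrow$ continuation.} The plan is to argue by contradiction. Fix $p\in\mathcal{P}_X$ and a continuous $q:[0,\hat t)\to Y$ with $p(t)\,\phi\,q(t)$ for all $t<\hat t$, and suppose no sequence $t_k\uparrow\hat t$ has $q(t_k)$ converging to some $\hat y$ with $p(\hat t)\,\phi\,\hat y$. By the path-lifting property applied to $p$ through $y_0=q(0)$, there is a lifting $\tilde q\in\mathcal{P}(Y)$ with $\tilde q(0)=y_0$ and $p(t)\,\phi\,\tilde q(\tau(t))$ for a reparametrization $\tau$. First I would use the local mapping property to show that a lifting is \emph{locally unique}: if two liftings agree at a parameter value, they agree on a neighbourhood, hence (by a connectedness argument on $[0,1]$) $q$ and $\tilde q\circ\tau$ coincide on $[0,\hat t)$ up to reparametrization. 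Then $\tilde q$ is continuous at the parameter corresponding to $\hat t$, so $q(t)\to\tilde q(\cdot)=:\hat y$ along \emph{every} sequence $t_k\uparrow\hat t$, and since $\phi$ is closed near $(p(\hat t),\hat y)$ — being locally a continuous map, its graph is locally closed — we get $p(\hat t)\,\phi\,\hat y$, contradicting the assumption.

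\textbf{Continuation $\Rightarrow$ path-lifting.} Here I would fix $p\in\mathcal{P}_X$ and $y_0\in\phi[p(0)]$ and run a standard maximal-lifting argument. Consider the set $S$ of all $s\in[0,1]$ for which there exists a continuous lifting $q:[0,s]\to Y$ (up to reparametrization) with $q(0)=y_0$ and $p(t)\,\phi\,q(t)$ on $[0,s]$. The local mapping property shows $S$ is nonempty ($0\in S$) and that $S$ is \emph{open to the right}: given a lifting on $[0,s]$ with $s<1$, the local continuous selection of $\phi$ near $(p(s),q(s))$ extends it slightly past $s$. Let $\hat t=\sup S$. The continuation property applied to $p$ and to the lifting $q$ on $[0,\hat t)$ produces $\hat y$ with $p(\hat t)\,\phi\,\hat y$ and $q(t_k)\to\hat y$ along some sequence; combined with local uniqueness of liftings this forces $q$ to extend continuously to $[0,\hat t]$ with value $\hat y$, so $\hat t\in S$. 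If $\hat t<1$ the right-openness contradicts maximality, so $\hat t=1$ and we have the desired global lifting $q\in\mathcal{P}(Y)$ through $y_0$.

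\textbf{Main obstacle.} The technical heart of both directions is the \emph{local uniqueness of liftings}, i.e.\ that two liftings of the same path agreeing at one parameter agree near it, and the attendant connectedness argument that upgrades this to agreement on the whole parameter interval (modulo reparametrization). This is exactly where the local mapping property — $\phi$ restricted to a product neighbourhood is a \emph{continuous function}, so on overlaps the two liftings solve the same single-valued equation — is used, and it is the step one must state carefully; everything else is bookkeeping with reparametrizations $\tau$. In the present paper's application the relation $\phi$ pairs $x$ with its two unit eigenvectors $\pm v(x)$ of a simple eigenvalue, and Lemma~\ref{lem:LocalSolutionsSimpleEigenvalue} supplies precisely the local continuous (indeed $C^l$) selection that makes $\phi$ a local mapping relation, so the theorem applies verbatim; on a simply connected $U$ the path-lifting property then yields a single global continuous (smooth) choice of eigenvector, which is the conclusion needed for Proposition~\ref{prop:RegularDiagonalizationPermittivity}.
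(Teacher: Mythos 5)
The paper does not actually prove this theorem: it is quoted verbatim from Rheinboldt \cite{Rheinboldt1969} and used as a black box, so there is no in-paper proof to compare your argument against. Your sketch is a correct reconstruction of the standard argument (essentially Rheinboldt's own): the key input is exactly the local uniqueness of liftings that you isolate, which follows from the definition because near any $(x_0,y_0)\in\phi$ the restriction $\phi\cap(U(x_0)\times V(y_0))$ is single-valued, so two liftings of $p$ agreeing at a parameter value agree on a neighbourhood, and an open--closed argument on the connected parameter interval upgrades this to global agreement. Two points deserve to be made explicit if you write this out in full. First, in the direction ``continuation $\Rightarrow$ path-lifting'' the liftings on the various intervals $[0,s]$, $s\in S$, must be glued (again via uniqueness) into one continuous lifting on $[0,\sup S)$ before the continuation property can be applied. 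Second, the continuation property only yields convergence of $q(t_k)$ to $\hat y$ along \emph{some} sequence; to extend $q$ continuously to $\hat t$ one uses the local selection $\varphi$ on a product neighbourhood of $(p(\hat t),\hat y)$ and the open--closed argument to get $q(t)=\varphi(p(t))$ for all $t$ close to $\hat t$, whence $q(t)\to\varphi(p(\hat t))=\hat y$. Both are routine, and you have identified the correct mechanism, so the proposal is sound.
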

Next, we suppose that $X$ is $\mathcal{P}_X$-simply connected if it is path-connected under $\mathcal{P}_X$ and if any two paths of $\mathcal{P}_X$ with the same endpoints are $\mathcal{P}_X$-homotopic. In this case, we have the following global result.
\begin{theorem}
\label{thm:GlobalExistence}
Let $\phi \subseteq X \times Y$ be a local mapping relation with the path-lifting property for $\mathcal{P}_X \subseteq \mathcal{P}(X)$ and suppose that $X$ is $\mathcal{P}_X$-simply connected and locally $\mathcal{P}_X$-path-connected. Then, there exists a family of continuous mappings $F_\mu: X \to Y$ such that $x \phi F_\mu (x)$ and $\phi[x] = \big( F_\mu(x) \big)_{\mu \in M}$.
\end{theorem}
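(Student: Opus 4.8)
The plan is to run a monodromy argument in the style of the classical homotopy-lifting theorem, using the path-lifting property as a substitute for a covering-space structure. First I would fix a base point $x_0 \in X$ and set $M = \phi[x_0]$ (the fibre over $x_0$, which serves as the index set). For each $\mu \in M$ and each $x \in X$, choose a path $p \in \mathcal{P}_X$ from $x_0$ to $x$ — possible since $X$ is path-connected under $\mathcal{P}_X$ — use the path-lifting property to obtain a lift $q \in \mathcal{P}(Y)$ with $q(0) = \mu$, and tentatively define $F_\mu(x) = q(1)$. The whole proof then reduces to three points: (i) $F_\mu(x)$ does not depend on the choices made; (ii) each $F_\mu$ is continuous; (iii) $\phi[x] = \{ F_\mu(x) : \mu \in M \}$ for every $x$.

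For (i), the decisive tool is \emph{local uniqueness of lifts}: since $\phi$ is a local mapping relation, around any $(x,y) \in \phi$ the restriction of $\phi$ to $U(x) \times V(y)$ is a genuine continuous map, so two lifts of the same path which agree at one parameter value agree on a neighbourhood of that value; a connectedness argument on $[0,1]$ upgrades this to uniqueness of the lift of a fixed path through a fixed starting point. Granted this, independence of the choice of $p$ follows by the standard subdivision argument: two paths $p_1,p_2 \in \mathcal{P}_X$ with the same endpoints are $\mathcal{P}_X$-homotopic by hypothesis, the homotopy square $[0,1]^2$ is compact, and refining it so that each small rectangle is carried into a single local-mapping chart allows one to transport the lift across the homotopy while keeping its terminal point fixed. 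This is essentially Rheinboldt's argument, and I would either cite it directly or reproduce it in this form, invoking Theorem \ref{thm:EquivalencePathLiftingContinuation} to move freely between the path-lifting and continuation formulations.

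For (ii), I would exploit local $\mathcal{P}_X$-path-connectedness: given $x$ and a chart $U(x) \times V(y)$ on which $\phi$ restricts to the continuous map $\varphi$, every $x'$ in a sufficiently small $\mathcal{P}_X$-path-connected neighbourhood of $x$ can be joined to $x$ by a short path lying inside $U(x)$; concatenating a fixed path from $x_0$ to $x$ with this short path and lifting it shows that on that neighbourhood $F_\mu$ coincides with $\varphi$, hence is continuous at $x$. For (iii), the inclusion $\{F_\mu(x)\} \subseteq \phi[x]$ is immediate from $x\,\phi\,F_\mu(x)$; for the reverse, given $y \in \phi[x]$ I would lift a path from $x$ to $x_0$ through $y$, land at some $\mu \in \phi[x_0] = M$, and then use uniqueness of lifts applied to the reversed path to conclude $F_\mu(x) = y$.

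The main obstacle I anticipate is step (i): making rigorous the homotopy-invariance of the terminal point of a lift when $\mathcal{P}_X$ is only a prescribed subclass of paths rather than all continuous paths, so that every auxiliary path and every intermediate stage of the homotopy must stay inside $\mathcal{P}_X$ while one still has enough freedom to subdivide and relift. This is exactly where the $\mathcal{P}_X$-simple-connectedness and local-$\mathcal{P}_X$-path-connectedness hypotheses, together with Theorem \ref{thm:EquivalencePathLiftingContinuation}, must be used with care; the remaining steps are routine once local uniqueness of lifts is in hand.
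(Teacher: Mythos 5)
This theorem is not proved in the paper at all: it is quoted verbatim from Rheinboldt \cite{Rheinboldt1969} as background for Theorem \ref{thm:GlobalEigenvectors}, so there is no in-paper argument to compare against. Your monodromy sketch is, in substance, the standard proof and the one Rheinboldt gives: fix a base point, index the global selections by the fibre $M=\phi[x_0]$, define $F_\mu(x)$ as the terminal value of a lift, and establish well-definedness via local uniqueness of lifts plus a Lebesgue-number subdivision of the homotopy square, continuity via the local charts and local $\mathcal{P}_X$-path-connectedness, and exhaustion of the fibres by lifting backwards. Your local-uniqueness argument is sound: on a chart $U(x_0)\times V(y_0)$ the restriction of $\phi$ is single-valued, so the agreement set of two lifts through the same point is open, and it is closed by continuity and the Hausdorff property of $Y$.

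The one point you do not flag explicitly is that steps (i) and (iii) quietly use closure of $\mathcal{P}_X$ under reversal and concatenation (you reverse a path from $x$ to $x_0$ in (iii), and you concatenate a fixed path with short local paths in (ii)); in Rheinboldt's framework these closure properties are built into the admissible path classes, and in the paper's application one simply takes $\mathcal{P}_X=\mathcal{P}(X)$, so nothing is lost, but a self-contained proof should state the hypothesis. The difficulty you anticipate in (i) --- keeping every intermediate path of the homotopy inside $\mathcal{P}_X$ --- is likewise resolved by the definition of $\mathcal{P}_X$-homotopy, which already requires the intermediate paths to lie in $\mathcal{P}_X$. With those provisos your outline is correct and complete in the sense relevant here.
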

By considering local mapping relations for the eigenvectors, we would like to apply the above theorem to find global parametrizations of eigenvectors. It turns out that the following is sufficient to yield the path-lifting property:
\begin{definition}
A relation $\phi \subseteq X \times Y$ with $D(\phi) = X$ is called a \emph{covering relation} if for each $x \in X$ there exists an open neighbourhood $U(x)$ such that $\phi[U(x)] = \bigcup_{\mu \in M} V_\mu$, where the $V_\mu$ are disjoint open sets in $R(\phi)$ and for each $\mu \in M$ the restriction $\varphi_\mu = \phi \cap (U(x) \times V_\mu)$ is a continuous mapping from $U(x)$ into $V_\mu$. $U(x)$ is called an admissible neighbourhood of $x$. 
\end{definition}
The above certainly applies to our eigenvector mappings as the normalized eigenvectors are antipodal. Hence, the following theorem can be applied:
\begin{theorem}
\label{thm:CoveringRelationsArePathLifting}
A covering relation $\phi \subseteq X \times Y$ is a local mapping relation with the path-lifting property for $\mathcal{P}(X)$.
\end{theorem}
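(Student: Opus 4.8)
The plan is to show that a covering relation $\phi \subseteq X \times Y$ satisfies the hypotheses of Theorem~\ref{thm:EquivalencePathLiftingContinuation}, namely that it is a local mapping relation with the continuation property for $\mathcal{P}(X)$; the path-lifting property then follows immediately from that theorem. So the proof splits into two verifications.

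First I would check that a covering relation is a local mapping relation. This is almost tautological from the definitions: given $(x_0,y_0)\in\phi$, take an admissible neighbourhood $U(x_0)$ with $\phi[U(x_0)] = \bigcup_{\mu\in M} V_\mu$ the disjoint open decomposition. Since $y_0 \in \phi[x_0] \subseteq \phi[U(x_0)]$, there is a unique $\mu_0$ with $y_0 \in V_{\mu_0}$, and the restriction $\varphi_{\mu_0} = \phi\cap(U(x_0)\times V_{\mu_0})$ is by hypothesis a continuous mapping $U(x_0)\to V_{\mu_0}$. Thus $U(x_0)$ and $V(y_0) := V_{\mu_0}$ are the required neighbourhoods making $\phi\cap(U(x_0)\times V(y_0))$ a continuous map, so $\phi$ has the local mapping property.

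Second I would verify the continuation property for every $p\in\mathcal{P}(X)$. Let $q:[0,\hat t)\to Y$ be continuous with $p(t)\,\phi\,q(t)$ for all $t\in[0,\hat t)$, and set $\hat x = p(\hat t)$. Choose an admissible neighbourhood $U(\hat x)$ with $\phi[U(\hat x)] = \bigsqcup_{\mu} V_\mu$. By continuity of $p$ there is $t_0 < \hat t$ with $p(t)\in U(\hat x)$ for all $t\in[t_0,\hat t)$. For such $t$, $q(t)\in\phi[p(t)]\subseteq \phi[U(\hat x)] = \bigsqcup_\mu V_\mu$; since $t\mapsto q(t)$ is continuous on the connected interval $[t_0,\hat t)$ and the $V_\mu$ are disjoint and open, $q([t_0,\hat t))$ lies entirely in a single $V_{\mu_1}$. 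On $U(\hat x)$ the restriction $\varphi_{\mu_1}$ is a continuous map into $V_{\mu_1}$, and $q(t) = \varphi_{\mu_1}(p(t))$ for $t\in[t_0,\hat t)$ because $q(t)\in V_{\mu_1}$ and $\varphi_{\mu_1}$ is single-valued. Hence $q(t) = \varphi_{\mu_1}(p(t)) \to \varphi_{\mu_1}(\hat x) =: \hat y$ as $t\to\hat t$, using continuity of $\varphi_{\mu_1}$ and of $p$ together with the Hausdorff property of $Y$. Taking any sequence $t_k\nearrow\hat t$ gives $q(t_k)\to\hat y$, and $\hat x\,\phi\,\hat y$ since $\hat y = \varphi_{\mu_1}(\hat x)\in\phi[\hat x]$. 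This is exactly the continuation property.

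With both verifications in hand, Theorem~\ref{thm:EquivalencePathLiftingContinuation} applies and yields the path-lifting property for $\mathcal{P}(X)$, completing the proof. I do not expect a genuine obstacle here: the only mildly delicate point is the connectedness argument showing that the already-constructed partial lift $q$ cannot jump between the disjoint sheets $V_\mu$, which is what forces $q$ to coincide with one of the continuous branches $\varphi_\mu\circ p$ near the endpoint and thus to extend continuously.
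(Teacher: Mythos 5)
Your proof is correct. Note that the paper itself gives no proof of this statement: Theorem \ref{thm:CoveringRelationsArePathLifting} is quoted from Rheinboldt \cite{Rheinboldt1969}, so there is no in-paper argument to compare against. Your route --- verify the local mapping property (tautologically from the definition of a covering relation) and the continuation property, then invoke Theorem \ref{thm:EquivalencePathLiftingContinuation} --- is the standard covering-space-style argument, and the one delicate step is handled properly: since the sheets $V_\mu$ over the admissible neighbourhood $U(\hat x)$ are disjoint and open in $R(\phi)$, continuity of $q$ on the connected interval $[t_0,\hat t)$ forces the partial lift into a single sheet $V_{\mu_1}$, whence $q=\varphi_{\mu_1}\circ p$ there and the limit $\hat y=\varphi_{\mu_1}(p(\hat t))$ exists with $p(\hat t)\,\phi\,\hat y$. (You in fact prove convergence of $q(t)$ as $t\to\hat t$, which is stronger than the sequential convergence the continuation property demands.)
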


This guarantees the path-lifting property and by Theorem \ref{thm:EquivalencePathLiftingContinuation} the continuation property. We can prove the following global result on eigenvector parametrizations.
\begin{theorem}
\label{thm:GlobalEigenvectors}
Let $U \subseteq \R^k$ be simply connected and $A \in C^l(U;\R^{n \times n})$ such that for any $x \in U$, $A(x)$ is symmetric. Suppose that there is $\lambda \in C(U;\R)$ such that  $\lambda(x)$ is a simple eigenvalue of $A(x)$ for any $x \in U$. Then, there is $F \in C^l (U; \R^n)$ such that for any $x \in U$, $\| F(x) \|_2 = 1$ and $A(x) F(x) = \lambda(x) F(x)$.
\end{theorem}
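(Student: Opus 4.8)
The plan is to deduce Theorem~\ref{thm:GlobalEigenvectors} from the abstract path-lifting machinery of Rheinboldt recalled above, exactly as the preceding discussion sets up. First I would define the relation $\phi \subseteq U \times \mathbb{S}^{n-1}$ by declaring $x\,\phi\,v$ if and only if $\|v\|_2 = 1$ and $A(x)v = \lambda(x)v$. Since $\lambda(x)$ is a simple eigenvalue of the symmetric matrix $A(x)$ for every $x\in U$, the eigenspace $\ker(A(x)-\lambda(x))$ is one-dimensional, so $\phi[x]$ consists of exactly the two antipodal unit vectors $\pm v(x)$; in particular $D(\phi) = U$. The key structural claim is that $\phi$ is a \emph{covering relation} in the sense defined above: given $x_0\in U$, Lemma~\ref{lem:LocalSolutionsSimpleEigenvalue} produces an open neighbourhood $V\ni x_0$ and $v\in C^l(V;\R^n)$ with $\|v\|_2=1$ and $A(x)v(x)=\lambda(x)v(x)$ on $V$. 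Then $-v$ is the other branch, and shrinking $V$ if necessary so that the graphs of $v$ and $-v$ land in disjoint open subsets of $\mathbb{S}^{n-1}$ (possible since $v(x_0)\neq -v(x_0)$ and $v$ is continuous), we get $\phi[V] = V_+ \sqcup V_-$ with $\phi\cap(V\times V_\pm)$ equal to the continuous map $\pm v$. This is precisely the covering-relation condition with admissible neighbourhood $V$ and index set $M=\{+,-\}$.

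Once $\phi$ is known to be a covering relation, Theorem~\ref{thm:CoveringRelationsArePathLifting} gives that $\phi$ is a local mapping relation with the path-lifting property for all of $\mathcal{P}(U)$. Because $U\subseteq\R^k$ is open and simply connected (in the ordinary topological sense), it is $\mathcal{P}(U)$-simply connected and locally $\mathcal{P}(U)$-path-connected — open subsets of $\R^k$ are locally path-connected, and for open connected subsets of Euclidean space ordinary simple connectedness coincides with the $\mathcal{P}_X$-notion used by Rheinboldt. Theorem~\ref{thm:GlobalExistence} then yields a family of \emph{continuous} maps $F_\mu\colon U\to\mathbb{S}^{n-1}$, $\mu\in M$, with $x\,\phi\,F_\mu(x)$ for all $x$ and $\phi[x] = \{F_\mu(x)\}_{\mu\in M}$. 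Pick one such $F := F_{\mu_0}$. By construction $\|F(x)\|_2 = 1$ and $A(x)F(x) = \lambda(x)F(x)$ for every $x\in U$, which is the desired global eigenvector.

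It remains to upgrade the continuity of $F$ to $C^l$-regularity. This is local: fix $x_0\in U$ and let $V$, $v\in C^l(V;\R^n)$ be as in Lemma~\ref{lem:LocalSolutionsSimpleEigenvalue}. On the connected neighbourhood $V$ the continuous map $F$ takes values in the one-dimensional eigenspace and has unit norm, so $F(x) = \sigma(x)v(x)$ with $\sigma(x) = \langle F(x),v(x)\rangle \in \{+1,-1\}$; since $\sigma$ is continuous and $V$ is connected (shrink $V$ to a ball), $\sigma$ is constant on $V$. Hence $F = \pm v \in C^l(V;\R^n)$, and as $x_0$ was arbitrary, $F\in C^l(U;\R^n)$.

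The main obstacle is verifying carefully that the hypotheses of Rheinboldt's global theorem genuinely hold — in particular matching the topological notion of simple connectedness of $U$ with the $\mathcal{P}_X$-simple connectedness required in Theorem~\ref{thm:GlobalExistence}, and confirming that the disjointness needed for the covering-relation property can always be arranged (this uses that the two eigenvectors are strictly antipodal, so their images stay a fixed distance apart locally). Everything else is bookkeeping: the local existence input is Lemma~\ref{lem:LocalSolutionsSimpleEigenvalue}, and the constancy-of-sign argument that propagates regularity is elementary. Example~\ref{ex:DomainNot1Connected} shows the simple-connectedness hypothesis cannot be dropped, which is a useful sanity check on the argument.
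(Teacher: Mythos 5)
Your proposal is correct and follows essentially the same route as the paper: interpret the two normalized eigenvectors as a covering relation via Lemma \ref{lem:LocalSolutionsSimpleEigenvalue}, invoke Theorem \ref{thm:CoveringRelationsArePathLifting} and Theorem \ref{thm:EquivalencePathLiftingContinuation} to obtain the path-lifting/continuation property, and conclude with Theorem \ref{thm:GlobalExistence}. The only addition is that you spell out the upgrade from continuity to $C^l$ regularity via the locally constant sign $\sigma = \langle F, v\rangle$, a step the paper leaves implicit; this is a welcome clarification since Theorem \ref{thm:GlobalExistence} only produces continuous selections.
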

\begin{proof}
Lemma \ref{lem:LocalSolutionsSimpleEigenvalue} yields $\lambda \in C^l(U;\R)$. Moreover, Lemma \ref{lem:LocalSolutionsSimpleEigenvalue} shows that $\phi \subseteq U \times \R^n$ with $x \phi w(x)$ for $w(x)$ the two normalized eigenvectors of $A(x)$ is a covering relation. Hence, Theorem \ref{thm:CoveringRelationsArePathLifting} yields the path-lifting property and by Theorem \ref{thm:EquivalencePathLiftingContinuation} the continuation property. Applying Theorem \ref{thm:GlobalExistence} finishes the proof.
\end{proof}

We remark that even in the case of higher dimensional eigenspaces, we have global parametrizations $\lambda \in C^{1}(U;\R)$ of the eigenvalues, e.g.,
by Theorem~1.1~(E) in \cite{Rainer2013} for $l \geq 2$. Theorem \ref{thm:GlobalEigenvectors} also yields that, if $A \in C^l(U;\R^{n \times n})$, where $U$ is simply connected, such that for any $x \in U$, $A(x)$ is symmetric with simple eigenvalues, we find the eigenvalues to be given by functions $\lambda_1,\ldots,\lambda_n \in C^l(U;\R)$ and corresponding eigenvectors given by $v_1,\ldots,v_n \in C^l(U;\R^n)$.

\section*{Acknowledgements}

Funded by the Deutsche Forschungsgemeinschaft (DFG, German Research Foundation)  Project-ID 258734477 – SFB 1173.

\end{document}